\documentclass[a4paper,12pt]{amsart}
\usepackage{amsmath, amsfonts, amssymb, textcomp, mysymb-private,  latexsym,paralist,soul, tikz, mathtools}

\usepackage[utf8]{inputenc}
\usepackage[all]{xy}
\usepackage[colorlinks=true,linkcolor=blue,citecolor=blue, breaklinks=true]{hyperref}
\usepackage[shortcuts]{extdash}
\usepackage{mathrsfs}
\usepackage{cleveref}
\usepackage{enumitem}
\usepackage{quiver}
\usepackage{adjustbox}

\counterwithin{equation}{section}
\allowdisplaybreaks

\theoremstyle{plain}

\newtheorem{thm}{Theorem}[section]
\newtheorem{lem}[thm]{Lemma}
\newtheorem{cor}[thm]{Corollary}
\newtheorem{prop}[thm]{Proposition}

\theoremstyle{definition}

\newtheorem{defn}[thm]{Definition}
\newtheorem{eg}[thm]{Example}
\newtheorem{rmk}[thm]{Remark}

\newtheorem{constr}[thm]{Construction}

\theoremstyle{plain}
\newcounter{thmintroctr}

\newtheorem{thmintro}[thmintroctr]{Theorem}

\theoremstyle{definition}
\newcounter{goalintroctr}


\begin{document}



\newcommand{\shom}{\starhyphen homomorphism}
\newcommand{\fm}{Fredholm module}

\newcommand{\ind}{\iind}

\newcommand{\suspgrad}[1]{\salg\gradtensor{#1}}

\newcommand{\alg}{\aelem}

\newcommand{\clifft}[2][]{\contfualg{#1}\left(#2, \cliffc \tanbndl{#2} \right)}
\newcommand{\clifftpare}[2][]{\contfualg{#1}\left(#2, \cliffc \tanbndl{(#2)} \right)}
\newcommand{\clifftrel}[3][]{\contfualg{#1}\left(#2, \cliffc \tanbndl_{#2}({#3}) \right)}

\newcommand{\spcofmet}{\mathrm{Met}}

\newcommand{\lrep}[1][\gam]{L_{#1}}

\newcommand{\arep}[2][\rho]{#1: \aalg \to \bh[#2]}

\newcommand{\ray}[2][]{\mathfrak{R}_{#1}^{#2}}
\newcommand{\rayinfty}[3][]{(\ray[#1]{#2}{#3})|_\infty}

\newcommand{\ksubeveng}[2][\ggrp]{\kfunctr_0^{#1}(#2)}
\newcommand{\ksuboddg}[2][\ggrp]{\kfunctr_1^{#1}(#2)}
\newcommand{\ksupeveng}[2][\ggrp]{\kfunctr^0_{#1}(#2)}
\newcommand{\ksupoddg}[2][\ggrp]{\kfunctr^1_{#1}(#2)}
\newcommand{\kk}{\kkfunctr}

\newcommand{\rkkggrp}[1][*]{\kkfunctr_{#1}^\ggrp}
\newcommand{\rkkgam}[1][*]{\kkfunctr_{#1}^\gamgrp}

\newcommand{\AofM}[1][\aContField]{\aalg\left({#1}\right)}
\newcommand{\AevenofM}[1][\aContField]{\aalg_{\mathrm{ev}}\left({#1}\right)}
\newcommand{\AofMrel}[1]{\aalg\left(\aContField,#1\right)}
\newcommand{\AevenofMrel}[1]{\aalg_{\mathrm{ev}}\left(\aContField,#1\right)}

\newcommand{\AIofM}[1][\aContField]{\aalg_{[0,1]}\left(#1\right)}

\newcommand{\cliffmult}[1][s]{C_{#1}}
\newcommand{\botthom}[1][s]{\bottmap_{#1}}

\newcommand{\alfinalfind}{\alfind\in\alfindset}

\newcommand{\bh}[1][]{\boprs({#1})}
\newcommand{\kh}[1][]{\koprs({#1})}

\newcommand{\csds}{C*-dynamical system}
\newcommand{\crep}{covariant representation}
\newcommand{\cp}{crossed product}
\newcommand{\rcp}{reduced crossed product}

\newcommand{\lam}{\lambda}
\newcommand{\Lam}{\Lambda}
\newcommand{\act}[1][\gam]{\alpha_{#1}}
\newcommand{\acsds}{(\aalg\,,\Gam,\act[])}
\newcommand{\acrep}{(\rho,\uopr)}
\newcommand{\urep}[1][\gam]{\uopr_{#1}}
\newcommand{\ging}{{\gam\in\Gam}}
\newcommand{\ling}{{\lam\in\Gam}}
\newcommand{\lact}[1][\xsp]{\Gam\curvearrowright#1}

\newcommand{\trep}{\tilde{\rho}}
\newcommand{\lgh}{\lsp{\Gam}\otimes\hil}
\newcommand{\aacp}[1][]{\aalg\rtimes_{#1alg}\Gam}
\newcommand{\acp}[1][]{\aalg\rtimes_{#1}\Gam}
\newcommand{\arcp}[1][]{\aalg\rtimes_{#1r}\Gam}
\newcommand{\cpr}[1][]{\rtimes_{#1r}}

\newcommand{\chaf}[1][(\gam_i\Lam)]{X_{#1}}

\renewcommand{\salg}{{C_0(\rbbd)}}


\newcommand{\hhs}{{Hilbert\-/Hadamard space}}
\newcommand{\admhhs}{{admissible Hilbert-Hadamard space}}


\newcommand{\calC}{\mathcal{C}}
\newcommand{\calD}{\mathcal{D}}
\newcommand{\calM}{\mathcal{M}}
\newcommand{\calN}{\mathcal{N}}
\newcommand{\calB}{\mathcal{B}}
\newcommand{\calP}{\mathcal{P}}
\newcommand{\calQ}{\mathcal{Q}}
\newcommand{\calU}{\mathcal{U}}
\newcommand{\calV}{\mathcal{V}}

\newcommand{\rtimesred}{\rtimes_{\operatorname{r}}}


\newcommand{\spaceOfSections}{\Gamma}
\newcommand{\baseSpace}[1]{\underline{#1}}
\newcommand{\actionBaseSpace}[1]{\underline{#1}\mathclap{_{\lrcorner}}\hspace{.1em}}
\renewcommand{\actionBaseSpace}[1]{\underline{#1\smash{_{\lrcorner}}}}
\renewcommand{\actionBaseSpace}[1]{\hspace{.15em}\underline{\hspace{-.15em}\underline{#1}\hspace{-.15em}}\hspace{.15em}}
\newcommand{\aHHS}{X}
\newcommand{\aContField}{\calC}
\newcommand{\aMeasField}{\calM}
\newcommand{\midpoint}{\operatorname{midpoint}}

\newcommand{\anotContField}{\calD}
\newcommand{\anotMeasField}{\calN}
\newcommand{\CFHHS}{\mathfrak{C}}
\newcommand{\MFHHS}{\mathfrak{M}}
\newcommand{\spaceMeas}{\operatorname{M_c}}
\newcommand{\spaceProbMeas}{\operatorname{Prob}}
\newcommand{\measAlg}{\mathcal{M}}

\newcommand{\meansym}[1]{\Theta_{#1}}

\title[Novikov conjecture \& fields of Hilbert-Hadamard spaces]{The Novikov conjecture, the group of diffeomorphisms and continuous fields of Hilbert-Hadamard spaces}

\author{Sherry Gong}\address{S.~Gong: Department of Mathematics, Texas A\&M University, College Station, TX, USA}\email{sgongli@tamu.edu} \thanks{The first author is partially supported by NSF 2340465.}

\author{Jianchao Wu}\address{J.~Wu: Shanghai Center for Mathematical Sciences, Fudan University, Shanghai, China}\email{jianchao{\textunderscore}wu@fudan.edu.cn}\thanks{The second author is partially supported by National Key R\&D Program 
	of China 2022YFA100700 and NSFC Key Program No. 12231005.}

\author{Zhizhang Xie}\address{Z.~Xie: Department of Mathematics, Texas A\&M University, College Station, TX, USA}\email{xie@tamu.edu}\thanks{The third author is partially supported by NSF  1952693 and 2247322.}

\author{Guoliang Yu}\address{G.~Yu: Department of Mathematics, Texas A\&M University, College Station, TX, USA}\email{guoliangyu@tamu.edu}\thanks{The fourth author is partially supported by NSF 2247313.}

\date{}

\begin{abstract}
	In this paper, we prove the Novikov conjecture for a class of highly non-linear groups, namely discrete subgroups of the diffeomorphism group of a compact smooth manifold. 
	This removes the volume-preserving condition in a previous work. 
	This result is proved by studying  operator $K$-theory and group actions on continuous fields of infinite dimensional non-positively curved spaces.
\end{abstract}

\maketitle


\tableofcontents

\section{Introduction}
\label{sec:intro}

The Novikov conjecture states that higher signatures are invariant under
(oriented) homotopy equivalences \cite{Novikov1970}. 
This conjecture is  a central problem in differential topology of higher dimensional manifolds, since
the classification problem for higher dimensional manifolds
can be essentially  reduced to  the Novikov conjecture  by surgery theory. 

Noncommutative geometry provides a very successful approach to the Novikov conjecture via higher index theory.  
Powerful tools such as Connes' cyclic cohomology theory and Kasparov's $KK$-theory were developed to attack the Novikov conjecture. 
In this approach, one studies the higher index of the signature operators, which turns out to be always invariant under homotopy equivalences. 
Thus in order to verify the Novikov conjecture, it suffices to show higher signatures are encoded by the higher index of the signature operators. 
This encoding is implied by the (rational) strong Novikov conjecture, which gives an algorithm for computing the
higher index of elliptic operators. 
In addition, if one applies the rational strong Novikov conjecture to the Dirac operator instead of the signature operators, it predicts that the resulting higher index contains higher $A$-genus as part of the information, and, as a result, it also implies the Gromov-Lawson conjecture on scalar curvature.

The goal of this article is to prove the rational strong Novikov conjecture for countable subgroups of the  diffeomorphism group of a compact smooth manifold which are discrete (in a sense we make precise below). As an application, our result implies the Novikov conjecture and the Gromov-Lawson conjecture for manifolds with such groups as their fundamental groups. Our result strengthens the main result in \cite{GongWuYu2021}, which verifies the Novikov conjecture for geometrically discrete subgroups of the group of all \emph{volume-preserving} diffeomorphisms of a compact smooth manifold. 

The main result of the current paper   can be viewed as an infinite-dimensional analogue of Kasparov's fundamental theorem \cite{kasparov1}, which proves the Novikov conjecture for all discrete subgroups of classical Lie groups. Indeed, diffeomorphism groups are  infinite-dimensional analogues of classical Lie groups. While Kasparov's theorem relies on analyzing discrete group actions on finite-dimensional symmetric spaces, our approach involves studying discrete group actions on their infinite-dimensional analogues—such as the spaces of $L^2$-Riemannian metrics. This shift to the setting of infinite-dimensional ``symmetric spaces" introduces significant challenges that our proof must address, rendering our main result inaccessible through existing methods such as the approach using the geometry of Hilbert spaces.   

The study of diffeomorphism groups is central to both geometry and topology. For instance, the group of symplectic diffeomorphisms of a symplectic manifold plays a pivotal role in symplectic geometry as well as in classical mechanics \cite{MR1826128}.   Although the Novikov conjecture has been established for many classes of groups—such as hyperbolic groups \cite{connesmoscovici}, groups acting properly on ``bolic" spaces \cite{kasparov1} \cite{kasparovskandalis2}, a-T-menable groups \cite{higsonkasparov}, linear groups \cite{guentnerhigsonweinberger}, and groups that coarsely embed into Hilbert space \cite{yu3}—it remains a wide open question whether the conjecture holds for  subgroups of diffeomorphism groups. A particularly exciting development in this area is Connes’ seminal result, which shows that the Novikov conjecture holds for Gelfand–Fuchs cohomology classes of diffeomorphism groups \cite{connes1986}.  In other words, when one restricts to the higher signatures that arise from Gelfand-Fuchs cohomology classes, Connes' theorem shows that these higher signatures are indeed invariant under homotopy equivalences. While Connes' proof is a tour-de-force using cyclic cohomology theory,  our proof is based on a different strategy: the study of $K$-theory for $C^\ast$-algebras modelled after infinite-dimensional spaces and the study of group actions on  continuous fields of the infinite-dimensional nonpositively curved spaces.


Let us first explain what we mean by ``discrete'' for countable subgroups of the diffeomorphism group of a compact smooth manifold $N$. Roughly speaking, we regard a subgroup $\Gamma$ of the group $\operatorname{Diff}(N)$ of diffeomorphisms of a closed smooth manifold $N$ to be ``discrete'' if its natural  action on the space of Riemannian metrics on $N$ is metrically proper. In particular, if an infinite subgroup $\Gamma$ is ``discrete'', then it should not fix any given Riemannian metric $g$ on $N$; hence being ``discrete'' is a strong negation of being a subgroup of the group of isometries on $(N, g)$. 

To be precise, we introduce, for a regular Borel probability measure $\mu$ on $N$, the notion of \emph{$\mu$-discreteness} for subgroups of the group $\operatorname{Diff}(N)$ of diffeomorphisms. 
It has a number of equivalent characterizations (see \Cref{lem:geometrically-discrete}), among which we now present the most explicit one. 
To this end, we temporarily fix a Riemannian metric $g$ on $N$. For any diffeomorphism $\varphi \in \operatorname{Diff}(N)$ and $x \in N$, we write $D_x \varphi \colon T_x N \to T_{\varphi(x)} N$ for the derivative of $\varphi$ at $x$, viewed as a linear operator between finite-dimensional real Hilbert spaces, and write $\left\| D_x \varphi \right\|_g$ for its operator norm. 
We then define 
a pseudometric on $\operatorname{Diff}(N)$:
\begin{align*}
d_{\mu, g} (\varphi, \psi ) := \left( \int_{x \in N}  \left( \log \left( \left\| D_{\varphi^{-1}(x)} \left(\psi^{-1} \varphi\right) \right\|_g \vee  \left\| D_{\psi^{-1}(x)} \left(\varphi^{-1} \psi\right) \right\|_g \right) \right)^2    \, \operatorname{d} \mu (x) \right)^{\frac{1}{2}} 
\end{align*}
where $\vee$ stands for the operation of taking the greater value between the two. 
Observe that when $\psi^{-1} \varphi$ fixes $g$, 
we have $d_{\mu, g} (\varphi, \psi) = 0$, which suggests that $d_{\mu, g} (\varphi, \psi)$ is a measurement of how far $\psi^{-1} \varphi$ is from being isometric. It is also not hard to see that for different Riemannian metrics $g$ and $g'$, $\left| d_{\mu, g} - d_{\mu, g'} \right|$ is uniformly bounded, and for any diffeomorphisms $\varphi, \psi, \tau \in \operatorname{Diff}(N)$, we have $d_{\tau_\ast \mu, g} (\varphi, \psi ) = d_{\mu, g} (\tau^{-1} \varphi, \tau^{-1} \psi )$, where $\tau_\ast \mu$ denotes the pushforward measure. 
See \Cref{constr:pseudometric-integral} for details. 

For a countable subgroup $\Gamma$ of $\operatorname{Diff}(N)$, we say $\Gamma$ is \emph{$\mu$-discrete} if 
\[
\inf_{\tau \in \Gamma} d_{\tau_\ast \mu, g} (\gamma, 1_\Gamma) \xrightarrow{\gamma \to \infty \text{ in } \Gamma} \infty \; ,
\]
that is, for any $N>0$, only finitely many $\gamma \in \Gamma$ makes the left-hand side above fall below $N$. 
It follows from the above that this property is independent of the choice of $g$. 
Also note that in the special case where $\mu$ is a density on $N$ that is invariant under $\Gamma$, $\mu$-discreteness corresponds exactly to the notion of geometric discreteness introduced in \cite{GongWuYu2021}.

Now we are ready to state our main theorem. 

\begin{thmintro}\label{main-theorem}
	Let $\Gamma$ be a countable subgroup of the diffeomorphism group of  a closed smooth manifold $N$. If $\Gamma$ is $\mu$-discrete for some regular Borel measure $\mu$ on $N$, then the rational strong Novikov conjecture holds for $\Gamma$. 
\end{thmintro}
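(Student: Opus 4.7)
The plan is to follow the overall strategy of \cite{GongWuYu2021}, with one essential modification forced by the lack of a $\Gamma$-invariant measure: instead of a single admissible Hilbert-Hadamard space, I will use a continuous field of such spaces whose base carries a compatible $\Gamma$-action. In \cite{GongWuYu2021}, one attaches to $(N,\mu)$ the admissible Hilbert-Hadamard space of $L^2_\mu$-Riemannian metrics on $N$; geometric discreteness of $\Gamma$ translates to metric properness of the isometric pullback action, and the result then follows from the Novikov-type theorem for proper isometric actions on admissible Hilbert-Hadamard spaces. When $\mu$ is merely $\mu$-discrete and not $\Gamma$-invariant, pullback by $\gamma\in\Gamma$ no longer preserves $L^2_\mu$, but it does isometrically identify $L^2_\mu$-metrics with $L^2_{\gamma_\ast\mu}$-metrics, and this is the starting point for the continuous-field picture.

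First, I would build the base space and the total space. Let $B$ be a $\Gamma$-invariant topological space of regular Borel probability measures on $N$ containing (the normalization of) $\mu$, with $\Gamma$ acting continuously by pushforward. Over each $\nu\in B$ I place the admissible Hilbert-Hadamard space $X_\nu$ of $L^2_\nu$-Riemannian metrics on $N$, and assemble them into a continuous field $\mathcal{X}\to B$ of admissible Hilbert-Hadamard spaces, using the continuous-field formalism for Hilbert-Hadamard spaces developed later in the paper. The diagonal action $\gamma\cdot(\nu, g)=(\gamma_\ast\nu, \gamma^\ast g)$ is then fiberwise isometric and covers the $\Gamma$-action on $B$. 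The routine but technical task here is verifying that the field is continuous in the appropriate sense and that each fiber is admissible; both reduce to uniform fiberwise $L^2$-estimates of the kind underlying the pseudometric $d_{\mu,g}$.

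Next, I would translate $\mu$-discreteness into a properness statement for this continuous-field action. Fixing a background Riemannian metric $g$ on $N$ and interpreting $g$ as a canonical (measurable) section of $\mathcal{X}$, the fiberwise distance from $g\in X_\nu$ to $\gamma^\ast g\in X_{\gamma_\ast^{-1}\nu}$ is comparable, up to a bounded additive constant depending only on $g$, to the pseudometric $d_{\nu,g}(1_\Gamma, \gamma)$. Evaluating at $\nu=\tau_\ast\mu$ and using the hypothesis $\inf_\tau d_{\tau_\ast\mu, g}(\gamma,1_\Gamma)\to\infty$ then gives exactly the statement that the diagonal $\Gamma$-action on $\mathcal{X}$ is metrically proper over $B$, in the equivariant sense needed as input to the index theorem.

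The final step is to invoke the main $KK$-theoretic index result of the paper: for $\Gamma$ acting properly and fiberwise isometrically on a continuous field of admissible Hilbert-Hadamard spaces over a $\Gamma$-space, the Baum-Connes assembly map for $\Gamma$ is rationally injective, which is equivalent to the rational strong Novikov conjecture. I expect this last input to be the main obstacle: generalizing the Higson-Kasparov-style Dirac-dual-Dirac construction from one Hilbert-Hadamard space to a continuous family over a non-trivial $\Gamma$-space $B$ requires assembling the Bott element, the Dirac element, and the rotation homotopy inside a suitably equivariant $KK$-bivariant group for the $C^\ast$-algebra of the continuous field $\mathcal{X}\to B$, so that the base action by $\Gamma$ (absent in the volume-preserving case) is correctly absorbed. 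Once that is in place, applying it to the field constructed above yields \Cref{main-theorem}.
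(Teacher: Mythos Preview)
Your overall plan matches the paper's approach: build the continuous field of $L^2_\nu$-Riemannian metrics over a $\Gamma$-invariant compact set of probability measures (the paper takes $Z=\overline{\operatorname{convex}(\Gamma\cdot\mu)}$), use $\mu$-discreteness to get metric properness of the induced action (this is \Cref{lem:geometrically-discrete}), and then invoke \Cref{main-theorem-fields}.

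There is, however, a genuine gap in what you label ``routine but technical''. The hypothesis of \Cref{main-theorem-fields} is not that the fibers of $\mathcal{X}$ are admissible Hilbert-Hadamard spaces; it is that $\mathcal{X}$ is an \emph{admissible continuous field} in the sense of \Cref{def:admissible-field}, which additionally demands that the randomization $\mathcal{X}^{[0,1]}$ be a \emph{trivial} continuous field. Verifying this for $\operatorname{Riem}(N)|_Z$ is not a formality: one must show that (i) the locally trivial field $\operatorname{Riem}(N)$ has trivial randomization (\Cref{cor:quasitrivial-locally-trivial}), (ii) triviality survives the variation-of-measures construction over $\operatorname{Prob}(N)$ (\Cref{lem:trivialize-prob-field-specify}, which rests on the delicate \Cref{lem:trivialization-prob-interval-trick}), and (iii) randomization commutes with variation of measures (\Cref{lem:continuum-product-field-augmentation-maps}\eqref{lem:continuum-product-field-augmentation-maps:variation}). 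This trivialization machinery is precisely what makes the deformation argument inside the proof of \Cref{main-theorem-fields} go through, and it is where a substantial part of the paper's new work lives. Your proposal correctly anticipates that the ``absorption of the base action'' is the hard part of \Cref{main-theorem-fields}, but you should not expect the admissibility check to come for free; in the paper it occupies all of Section~7.
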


This extends \cite[Theorem~1.3]{GongWuYu2021} by removing the volume\-/preserving condition and allowing for more general regular Borel measures $\mu$. As is remarked after \cite[Theorem~1.3]{GongWuYu2021}, countable subgroups of $\operatorname{Diff}(N)$ on which $d_{\mu, g}$ vanishes for some Riemannian metric $g$ and some regular Borel measure $\mu$ with full support is contained in a compact Lie group $\operatorname{Isom}(N,g)$ and thus also satisfies the strong Novikov conjecture by \cite{guentnerhigsonweinberger}. Since our condition of $\mu$-discreteness is a strong negation of $\Gamma$ being isometric, combining these two results gives us hope to verify the rational strong Novikov conjecture for all countable subgroups of $\operatorname{Diff}(N)$ with a unified approach. 

As in \cite{GongWuYu2021}, the proof of \Cref{main-theorem} involves the geometry and $K$-theory of a class of nonpositively curved, manifold-like, yet possibly infinite-dimensional spaces that we termed {\hhs}s (see \Cref{defn:hhs}). 
The link between groups of diffeomorphisms and these infinite-dimensional spaces stems from the following series of observations: 
\begin{enumerate}
	\item given a closed smooth manifold $N$ of dimension $n$, since the space of inner products on an arbitrary tangent space of $N$ \textemdash\ an $n$-dimensional Euclidean space \textemdash\ can be canonically identified with the symmetric space $GL(n,\mathbb{R}) / O(n)$, it follows that the space of Riemannian metrics on $N$ can be identified with the space of smooth sections of a $GL(n,\mathbb{R}) / O(n)$-bundle $\operatorname{Riem}(N)$ over $N$; 
	\item given a finite regular Borel measure on $N$, this space of smooth sections 
	may be completed into a {\hhs} $\operatorname{Riem}(N)_\mu$ when equipped with an $L^2$-type metric given essentially by integrating, over the measure space $(N, \mu)$, the canonical metric on the symmetric space $GL(n,\mathbb{R}) / O(n)$, which has nonpositive curvature; 
	\item the group $\operatorname{Diff}(N)$ has a canonical action on the space of all Riemannian metrics via ``pushforwards'';
	\item \label{intro-link-volume-preserving} if a subgroup $\Gamma \leq \operatorname{Diff}(N)$ fixes $\mu$ via pushforward, then said canonical action preserves the above $L^2$-type metric and thus extends to an isometry on the completion $\operatorname{Riem}(N)_\mu$. 
\end{enumerate}
Furthermore, when considering a $\mu$-preserving subgroup of $\operatorname{Diff}(N)$ that is also $\mu$-discrete, we obtain an isometric action on $\operatorname{Riem}(N)_\mu$ that is also (metrically) proper. These observations provide the link between the two main results in \cite{GongWuYu2021}.

In the current paper, however,
we need to deal with diffeomorphisms $\gamma \in \operatorname{Diff}(N)$ that are not necessarily $\mu$-preserving. 
In this more general case, item~\eqref{intro-link-volume-preserving} above may not be applicable; instead, we obtain an isometry from $\operatorname{Riem}(N)_\mu$ to a possibly different completion $\operatorname{Riem}(N)_{\gamma_* \mu}$. 
This leads us naturally to 
the study of not just a single {\hhs}, but rather a family of {\hhs}s. 
More precisely, 
if we collect the completions $\operatorname{Riem}(N)_\mu$ with $\mu$ ranging over the space $\operatorname{Prob}(N)$ of all probability measures on $N$, it constitutes an example of what we call a \emph{contiuous field of {\hhs}s}, 
denoted by $\operatorname{Riem}(N)|_{\operatorname{Prob}(N)}$, where the base space
$\operatorname{Prob}(N)$ is equipped with the weak-$^*$ topology. 
Here ``continuity'' arises naturally since this family of {\hhs}s consists of completions of the same space with regard to a family of metrics depending on a continuously varying family of measure. 
Now given any diffeomorphism $\gamma$ of $N$, the aforementioned isometries $\operatorname{Riem}(N)_\mu \to \operatorname{Riem}(N)_{\gamma_* \mu}$ combine to form a \emph{continuous isometric automorphism} of $\operatorname{Riem}(N)|_{\operatorname{Prob}(N)}$. 
Following this line of ideas, we eventually see that any $\mu$-discrete subgroup of $\operatorname{Diff}(N)$ possesses a \emph{proper} action on a suitable contiuous field of {\hhs}s. 

Therefore, the majority of this paper is devoted to the development of a theory of continuous fields of {\hhs}s and isometric group actions on them. 
This culminates in the following result regarding \emph{admissible} continuous fields of {\hhs}s, a notion whose precise meaning will be made clear in \Cref{def:admissible-field}. 
We may deduce \Cref{main-theorem} from this result by following the ideas discussed above.

\begin{thmintro}\label{main-theorem-fields}
	Let $\Gamma$ be a countable group that acts isometrically and metrically properly on an admissible continuous field of {\hhs}s. Then the rational strong Novikov conjecture holds for $\Gamma$. 
\end{thmintro}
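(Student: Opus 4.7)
The plan is to extend the Dirac-dual-Dirac argument from \cite{GongWuYu2021}, which handled a single admissible {\hhs}, to the setting of continuous fields by building a $\Gamma$-equivariant $C_0(B)$-algebra out of the field and producing compatible Bott and Dirac elements in Kasparov's equivariant $KK$-theory. The target is to exhibit a factorization of the identity in $KK^\Gamma(\mathbb{C}, \mathbb{C})$ (after passing to $\mathbb{Q}$-coefficients) through an auxiliary equivariant $C^*$-algebra coming from the field.

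First, I would attach to the admissible continuous field, with base space $B$, a $\Gamma$-equivariant $C_0(B)$-algebra $\mathcal{A}$ whose fiber over $b \in B$ is the $C^*$-algebra associated in \cite{GongWuYu2021} to the individual {\hhs} $X_b$, namely an inductive limit of Clifford-section algebras along an exhaustion of $X_b$ by finite-dimensional totally geodesic submanifolds. Admissibility (see \Cref{def:admissible-field}) should ensure that these fiberwise exhaustions can be chosen to vary continuously over $B$ and $\Gamma$-equivariantly, so that the fiber algebras genuinely assemble into a $C_0(B)$-algebra and the action on the field lifts to a covariant $\Gamma$-action on $\mathcal{A}$.

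Second, using the exhaustion by finite-dimensional sub-fields, I would build a Bott element $\beta$ in the $\mathcal{R}KK^\Gamma$-group relative to $B$ as an inductive limit of fiberwise Kasparov Bott classes arising from the tangent bundles of finite-dimensional symmetric-space sub-bundles. Separately, the metrically proper $\Gamma$-action on the field should yield a Dirac element $\alpha$ in the opposite direction via a Kasparov-type construction, with metric properness supplying the compactness needed to make the associated unbounded operator well defined. One then shows that the Kasparov product $\alpha \otimes_{\mathcal{A}} \beta$ equals a $\gamma$-element which, after descent and composition with the Baum-Connes assembly map, splits the assembly map for $\Gamma$ rationally---this is precisely the rational strong Novikov conjecture.

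The main obstacle I anticipate is the coherence of the finite-dimensional approximations across the base $B$. In the single-space case of \cite{GongWuYu2021}, one picks a nested sequence of finite-dimensional totally geodesic submanifolds and checks equivariance by hand; in the continuous-field case, those submanifolds must depend continuously on $b$ and be $\Gamma$-equivariant with respect to the action on the base as well as the fibers, and the fiberwise Bott/Dirac elements must glue to produce honest $\mathcal{R}KK^\Gamma$-classes rather than just a compatible family of fiber classes. Turning admissibility into such a global gluing, and then verifying the Kasparov product identity $\alpha \otimes_{\mathcal{A}} \beta = \gamma$ at the $\mathcal{R}KK^\Gamma(B;-,-)$ level, is where I expect the bulk of the technical work to reside.
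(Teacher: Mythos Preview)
Your proposal follows a classical Dirac-dual-Dirac template, but this is precisely the route the paper (and already \cite{GongWuYu2021}) \emph{avoids}, because no one knows how to build a Dirac element for an admissible {\hhs}. Even for a single such space, the $K$-theory of the associated $C^*$-algebra is not computed, and there is no candidate for $\alpha$ that would give $\beta \otimes \alpha = 1$ in $KK^\Gamma(\mathbb C,\mathbb C)$. So the step ``the metrically proper action should yield a Dirac element $\alpha$'' is not a gap you can fill by working harder on compactness; it is an open problem already in the fiberwise case. Your anticipated obstacle (coherent finite-dimensional exhaustions over $B$) is also real, but it is downstream of this more basic missing ingredient.

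Relatedly, your reading of admissibility is off. In \Cref{def:admissible-field} the condition is not that the fiberwise finite-dimensional exhaustions vary continuously and equivariantly; it is that the \emph{randomization} $\aContField^{[0,1]}$ is a trivial continuous field. The paper exploits this in a completely different way: pass to the larger field $\anotContField = (\aContField^{[0,1]})|_{\operatorname{Prob}(\baseSpace{\aContField})}$, which remains trivial with contractible base; construct only the Bott element $[\beta]$ into $\AofM[\anotContField]$; use metric properness to make $\AofM[\anotContField]$ a proper $\Gamma$-algebra so the assembly map with these coefficients is an isomorphism; then, instead of a Dirac element, use a \emph{deformation} (\Cref{lem:deformation}) of the $\Gamma$-action on the trivial field to a fiberwise trivial action, together with real-coefficient $KK$-theory (\Cref{lem:KKR-EGam-inj}) to handle torsion. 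After these reductions the injectivity of $[\beta]$ is checked against the trivial action on $\AofM[X]$, which is exactly what \cite{GongWuYu2021} already established. None of the randomization, variation-of-measures, or deformation machinery appears in your outline, and it is what replaces the missing Dirac element.
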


Let us highlight a few novelties in the proof of \Cref{main-theorem-fields}, which turns out to call for a somewhat in-depth study of the structure of continuous fields of {\hhs}s. 
\begin{enumerate}
	\item On top of developing the theory of continuous fields of {\hhs}s, 
	we associate a noncommutative $C^*$-algebra to these infinite\-/dimensional objects (see \Cref{def:AofM}) in order to study their $K$-theoretic properties. 
	This directly generalizes the $C^*$-algebraic construction in \cite{GongWuYu2021} and also provides a simpler replacement for the $C^*$-algebra of Tu in the context of continuous fields of affine Hilbert spaces \cite{Tu1999La}. 
	
	\item Due to difficulty in fully computing the $K$-theory of (the $C^*$-algebras of) admissible {\hhs}s, our proof needs to deviate from the classical Dirac-dual-Dirac method as in, for example, \cite{yu3,higsonkasparov,Tu1999La}. 
	Such difficulty was already present in \cite{GongWuYu2021}, where we developed a deformation technique to help us compute the dual-Dirac map (or ``Bott map'') on the left-hand side of the assembly map (and also made use of $KK$-theory with real coefficients \cite{antoniniazzaliskandalis} to deal with groups with torsion). 
	More precisely, this technique allowed us to replace a given proper action of our group on a {\hhs} by a proper action on a ``larger'' {\hhs}, which can then be deformed to the trivial action without changing the equivariant $KK$-groups on the left-hand side of the assembly. 
	In the current paper, 
	the topological complications of continuous fields present new challenges to us, 
	so we introduce a handful new ideas to make this strategy work in the case of continuous fields. 
	
	\item The first among these ideas is a systematic way to generalize the construction of a ``larger'' {\hhs} in the last paragraph to the case of continuous fields. This new construction is now dubbed a \emph{randomization} (see \Cref{def:randomization}), which is a special case of \emph{continuum products} of continuous fields of {\hhs}s (see \Cref{def:continuum-product}). 
	For a {\hhs} $X$, its randomization is the set of all $L^2$-integrable random variables in $X$, which, under the metric of ``expected squared distances'', is a typically much larger {\hhs}. When applying this randomization construction to a continuous field $\aContField$ of {\hhs}s, it often has the effect of neutralizing various topological complications of $\aContField$. 
	For example, we show that the randomization of any locally trivial continuous field is not only locally trivial, but in fact trivial.  
	
	\item In our $KK$-theoretic computations in the proof of \Cref{main-theorem-fields}, it becomes important to extend the original continuous field so that the base space becomes contractible. 
	Such an operation also played an essential role in the proof in \cite{Tu1999La}, where the convex structure of the space of conditionally negative-type kernels (a concept closely related to the geometry of Hilbert spaces) was exploited. 
	In our proof, 
	since we have no obvious counterpart of conditionally negative-type kernels for the geometry of {\hhs}s, we need a different idea. 
	To this end, we introduce a construction dubbed \emph{variation of measures} (see \Cref{def:continuum-product-field}), 
	which 
	extends a continuous field $\aContField$ of {\hhs}s with base space $\baseSpace{\aContField}$ 
	to a new continuous field over the space $\operatorname{Prob}(\baseSpace{\aContField})$ of regular Borel probability measures on $\baseSpace{\aContField}$, where $\baseSpace{\aContField}$ embeds into $\operatorname{Prob}(\baseSpace{\aContField})$ as point masses, and the fiber at each probability measure $\mu$ is the continuum product {\hhs}  with regard to $\mu$. 
	Note that $\operatorname{Prob}(\baseSpace{\aContField})$ is contractible by a linear homotopy. 
	As a side note, if one is purely interested in the setting of \Cref{main-theorem}, one may bypass the variation-of-measures construction by directly making use of the convex structure of regular Borel probably measure on $N$. 
	
	\item As in \cite{GongWuYu2021}, the construction of randomizations holds the key to the deformation technique. However, in the setting of continuous fields, an action may be nontrivial both on the base space and also in the ``fiber direction'', and since randomization does not change the base space, it does not help us with simplifying the action on the base space at all. The best we could hope for is a deformation to a ``fiberwise trivial'' action (see \Cref{lem:deformation}). 
	Thanks to the contractibility of the base space (arranged according to the previous paragraph), a ``fiberwise trivial'' action is just as good as a trivial action, as far as equivariant $KK$-theory is concerned. 
	
	\item However, it is not clear how to even make sense of ``fiberwise trivial'' actions for general continuous fields, unless, for example, the continuous field itself is trivial. 
	Hence we develop some techniques that allow us to replace the original admissible continuous field of {\hhs}s by a trivial one. 
	In this regard, the technically most involved step is making sure triviality is preserved under the variation-of-measures construction; now this is not possible for general continuous fields of {\hhs}s, e.g., those with finite-dimensional fibers, as even local triviality may fail after the variation-of-measures construction \textemdash\ the fiber at a point mass is finite-dimensional, while at any nearby diffuse measure, the fiber becomes infinite-dimensional \textemdash\ but surprisingly, for randomizations of trivial continuous fields, the variation-of-measures construction again yields trivial continuous fields. 
	To prove this, we establish a refined version of the fact that all standard probability spaces are isomorphic (see \Cref{lem:trivialization-prob-interval-trick}) and exploit the flexibility in the functoriality of continuum products of continuous fields of {\hhs}s (see, for example, \Cref{lem:continuum-product-field-maps}). 
\end{enumerate}

The paper is organized as follows: After covering some preliminary materials in \Cref{sec:prelim}, we develop in \Cref{sec:cont-fields} and \Cref{sec:topologies} the basic theory of continuous fields of {\hhs}s. Then in \Cref{sec:meas-fields}, we do the same for measurable and measured fields of {\hhs}s, to an extent paralleling the results in the previous section. 
\Cref{sec:continuum_product} introduces a key construction called $L^2$-continuum products, on which other important constructions of the paper depend, such as variation of measures and $L^2$-continuum powers, the study of the latter being deferred to \Cref{sec:continuum-power}. \Cref{sec:trivialization} builds upon the results in the previous sections and produces a handful trivialization and deformation techniques for continuous fields of {\hhs}s; they will be instrumental in the proof of our main results. 
\Cref{sec:diffeo} discusses diffeomorphisms of a closed smooth manifold $N$ and how they are related to automorphisms of continuous fields of {\hhs}s of $L^2$-Riemannian metrics on $N$. 
\Cref{sec:AofM} constructs a noncommutative $C^*$-algebra associated to a continuous field of {\hhs}s and discusses its properties. 
Finally in \Cref{sec:main}, we provide the proofs of \Cref{main-theorem-fields} and \Cref{main-theorem}. 

\section{Preliminaries}
\label{sec:prelim}

In this section, we review the basics of (equivariant) $KK$-theory, the (rational) strong Novikov conjecture and Hilbert-Hadamard spaces. 

\subsection{Equivariant $KK$-theory and the rational strong Novikov conjecture}
\label{subsec:KK}

Kasparov's equivariant $KK$-theory (cf.\,\cite{kasparov1, kasparov95}) associates to a locally compact and $\sigma$-compact group $\Gamma$ and two separable $\Gamma$-$C^*$-algebras $\aalg$ and $\balg$ the abelian group $KK^\Gamma(A, B)$. 
It may be thought of as consisting of generalized equivariant morphisms from $A$ to $B$, with motivating examples coming from elliptic pseudodifferential operators on manifolds (\cite{atiyah2}) and (semisplit) extensions of $C^*$-algebras (\cite{browndouglasfillmore1977extensions}). 
In particular, it is contravariant in $A$ and covariant in $B$, both with respect to equivariant {\shom}s. It is equivariantly homotopy-invariant, stably invariant, preserves equivariant split exact sequences, and satisfies \emph{Bott periodicity}, i.e., there are natural isomorphisms
\[
KK^\Gamma(A, B) \cong KK^\Gamma(\Sigma^2 A, B) \cong KK^\Gamma(\Sigma A, \Sigma B)  \cong KK^\Gamma(A, \Sigma^2 B) 
\]
where $\Sigma^i A$ stands for $C_0(\rbbd^i,A)$ with $i\in \nbbd$ and $\Gamma$ acting trivially on $\rbbd$. It follows that a short exact sequence $0 \to J \to E \to A \to 0$ of $\Gamma$-{\cstaralg}s and equivariant {\shom}s induces a \emph{six-term exact sequence} in the second variable, and with extra conditions such as that $E$ is a nuclear (in particular, commutative) proper $\Gamma$-{\cstaralg}, 
it also induces a six-term exact sequence in the first variable (see \cite[Appendix]{kasparovskandalis2} and \cite[Chapter~VI]{guentnerhigsontrout}), though this fails in general (\cite{Skandalis1991Le}). 
When one of the two variables is $\cbbd$, equivariant $KK$-theory recovers, respectively, equivariant $K$-theory ($KK^\Gamma(\cbbd, B) \cong K^\Gamma_0(B)$) and equivariant $K$-homology ($KK^\Gamma(A, \cbbd) \cong K_\Gamma^0(A)$). 

\begin{rmk}
	Although equivariant $KK$-theory is defined for \emph{graded {\cstaralg}s}. However, throughout this article, we treat every $C^*$-algebra as ungraded (even if it happens to carry a grading). 
\end{rmk}

The \emph{Kasparov product} is a group homomorphism
\[
KK^\Gamma(A, B) \otimes_\zbbd KK^\Gamma(B, C) \to KK^\Gamma(A, C) , \quad x \otimes_\zbbd y \mapsto x \otimes_B y
\]
for any three separable $\Gamma$-$C^*$-algebras $A$, $B$, and $C$. 
The Kasparov product is associative.

\begin{defn}\label{defn:KK-Gam-compact}
	Given a countable discrete group $\Gamma$, a Hausdorff space $X$ with a $\Gamma$-action, a $\Gamma$-$C^*$-algebra $B$, and $i \in \nbbd$, we write $KK^\Gamma_i(X, B)$ for the inductive limit of the equivariant $KK$-groups $KK^\Gamma \left(C_0(Z ), C_0(\rbbd^i, A) \right)$, where $Z$ ranges over $\Gamma$-invariant and $\Gamma$-compact subsets of $X$ and $A$ ranges over $\Gamma$-invariant separable $C^*$-subalgebras of $B$, both directed by inclusion. 
	
	We write $K^\Gamma_i(X)$ for $KK^\Gamma_i(X, \cbbd)$ and call it the \emph{$\Gamma$-equivariant $K$-homology of $X$ with $\Gamma$-compact supports}. 
\end{defn}

\begin{rmk}\label{defn:KK-Gam-trivial-coeff}
	If the action of $\Gamma$ on $B$ is trivial and the action on $X$ is free, then there is a natural isomorphism $KK^\Gamma_i(X, B) \cong KK_i(X / \Gamma, B)$. See, e.g., \cite[Remark~2.2]{GongWuYu2021}. 
\end{rmk}

It is clear from Bott periodicity that there is a natural isomorphism $KK^\Gamma_i(X, B) \cong KK^\Gamma_{i+2}(X, B)$. Thus we can view the index $i$ as an element of $\zbbd / 2 \zbbd$. Also note that this construction is covariant both in $X$ with respect to continuous maps and in $B$ with respect to equivariant {\shom}s. Partially generalizing the functoriality in the second variable, the Kasparov product gives us a natural product $KK^\Gamma_i(X, B) \otimes_\zbbd KK^\Gamma(B,C) \to KK^\Gamma_i(X, C)$ for any separable $\Gamma$-{\cstaralg}s $B$ and $C$ (the separability condition can be dropped by extending the definition of $KK^\Gamma(B,C)$ through taking limits). 

We may think of $KK^\Gamma_i(-, B)$ as an extraordinary homology theory in the sense of Eilenberg-Steenrod. In the non-equivariant case, the coefficient algebra $B$ plays a rather minor role in this picture. 

\begin{lem}\label{lem:KK-separate-variables}
	For any CW-complex $X$, any $C^*$-algebra $B$, and any $i \in \zbbd / 2 \zbbd$, there is a natural isomorphism
	\[
	KK_i(X, B) \otimes_{\zbbd} \qbbd \cong \bigoplus_{j \in \zbbd / 2 \zbbd} K_j(X) \otimes_\zbbd K_{i-j}(B)  \otimes_{\zbbd} \qbbd
	\]
\end{lem}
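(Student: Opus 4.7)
The plan is to reduce this to the Rosenberg--Schochet Künneth theorem for $KK$-theory and then tensor with $\mathbb{Q}$ to eliminate the Tor contributions.

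First I would unwind \Cref{defn:KK-Gam-compact} in the non-equivariant case: $KK_i(X,B)$ is the direct limit of the groups $KK_i\bigl(C_0(Z),A\bigr)$, where $Z$ ranges over compact subsets of $X$ (by $\Gamma$-compact subsets for trivial $\Gamma$) and $A$ ranges over separable $C^*$-subalgebras of $B$. Without loss of generality I may replace compact subsets of $X$ by finite CW-subcomplexes, since any compact subset of a CW-complex is contained in a finite subcomplex. For any such finite subcomplex $Z$, the algebra $C_0(Z)$ belongs to the Rosenberg--Schochet bootstrap category $\mathcal{N}$, as this class is closed under the standard operations (extensions, inductive limits, $KK$-equivalences) by which finite CW-complexes are built up from points.

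Next I would invoke the Künneth theorem of Rosenberg and Schochet: for $C_0(Z)\in\mathcal{N}$ and any separable $C^*$-algebra $A$, there is a natural short exact sequence
\[
0 \to \bigoplus_{j+k=i} K_j\bigl(C_0(Z)\bigr) \otimes_\mathbb{Z} K_k(A) \to KK_i\bigl(C_0(Z),A\bigr) \to \bigoplus_{j+k=i-1} \operatorname{Tor}^\mathbb{Z}_1\bigl(K_j(C_0(Z)),K_k(A)\bigr) \to 0,
\]
which splits unnaturally. Since $\mathbb{Q}$ is flat over $\mathbb{Z}$ and $\operatorname{Tor}^\mathbb{Z}_1(-,-)\otimes_\mathbb{Z}\mathbb{Q}=0$, tensoring with $\mathbb{Q}$ gives a natural isomorphism
\[
KK_i\bigl(C_0(Z),A\bigr) \otimes_\mathbb{Z} \mathbb{Q} \;\cong\; \bigoplus_{j\in\mathbb{Z}/2\mathbb{Z}} K_j(Z) \otimes_\mathbb{Z} K_{i-j}(A) \otimes_\mathbb{Z} \mathbb{Q}.
\]

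Finally I would pass to the direct limit on both sides. Since tensor product and direct sum commute with filtered colimits of abelian groups, and since $K_*(X)=\varinjlim_Z K_*(Z)$ (as $X$ is a CW-complex) and $K_*(B)=\varinjlim_A K_*(A)$ (as $K$-theory commutes with inductive limits of $C^*$-algebras), the right-hand side assembles to $\bigoplus_{j}K_j(X)\otimes_\mathbb{Z} K_{i-j}(B)\otimes_\mathbb{Z}\mathbb{Q}$, while the left-hand side assembles to $KK_i(X,B)\otimes_\mathbb{Z}\mathbb{Q}$ by definition. Naturality of the Künneth isomorphism in both variables is what makes these colimits match up.

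The main point that requires a moment of care is the naturality of the Künneth map after rationalization: the unrationalized sequence splits only unnaturally, but the Künneth \emph{map} $K_*(C_0(Z))\otimes K_*(A)\to KK_*(C_0(Z),A)$ is natural, and it becomes an isomorphism after $\otimes_\mathbb{Z}\mathbb{Q}$, which is exactly what is needed to commute with the inductive limit over $Z$ and $A$. There is no serious obstacle beyond bookkeeping.
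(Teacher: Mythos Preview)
Your overall strategy matches the paper's, which simply cites the Rosenberg--Schochet K\"unneth theorem and refers to \cite[Lemma~2.4]{GongWuYu2021} for a sketch. However, the short exact sequence you wrote down is not quite right as stated. The Rosenberg--Schochet paper provides two theorems: a K\"unneth sequence for $K_*(A\otimes B)$ involving $K_*(A)\otimes K_*(B)$ and $\operatorname{Tor}$, and a UCT for $KK_*(A,B)$ involving $\operatorname{Hom}(K_*(A),K_*(B))$ and $\operatorname{Ext}$. There is no sequence of the shape
\[
K_*(C_0(Z))\otimes_{\zbbd} K_*(A)\;\longrightarrow\; KK_*(C_0(Z),A)\;\longrightarrow\;\operatorname{Tor}\bigl(K_*(C_0(Z)),K_*(A)\bigr).
\]
Moreover, $K_*(C_0(Z))$ is the $K$-\emph{cohomology} $K^*(Z)$, which is contravariant in $Z$, whereas the groups $K_j(X)$ appearing in the statement are $K$-\emph{homology} groups $KK_j(C_0(Z),\cbbd)$, which are covariant. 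Your final direct-limit step (``$K_*(X)=\varinjlim_Z K_*(Z)$'') silently swaps one for the other; the limit of $K^*(Z)$ over an expanding system of finite subcomplexes does not compute $K_*(X)$.

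The fix is standard and keeps the rest of your argument intact. For a finite CW-complex $Z$, the algebra $C_0(Z)$ is $KK$-dualizable: there is $D\in\mathcal N$ (for instance obtained from an embedding $Z\hookrightarrow\rbbd^n$) with natural isomorphisms $KK_*(C_0(Z),A)\cong K_*(D\otimes A)$ and $K_*(D)\cong K_*(Z)$ ($K$-homology). Applying the genuine Rosenberg--Schochet K\"unneth theorem to $K_*(D\otimes A)$ now yields
\[
0\to \bigoplus_{j+k=i} K_j(Z)\otimes_{\zbbd} K_k(A)\to KK_i(C_0(Z),A)\to \bigoplus_{j+k=i-1}\operatorname{Tor}^{\zbbd}_1\bigl(K_j(Z),K_k(A)\bigr)\to 0,
\]
with $K_j(Z)$ the $K$-homology groups. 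From here your rationalization and colimit steps go through verbatim, since the K\"unneth map $K_*(Z)\otimes K_*(A)\to KK_*(C_0(Z),A)$ (given by the Kasparov product) is natural in both variables.
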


This follows from a version of the K\"{u}nneth Theorem \cite{RosenbergSchochet1987}. See for example \cite[Lemma~2.4]{GongWuYu2021} for a sketch of the proof. 

Let $\univspfree\Gamma$ denote a \emph{universal space} for free and proper $\Gamma$-actions, that is, $\univspfree\Gamma$ is a free and proper $\Gamma$-space such that any free and proper $\Gamma$-space $X$ admits a $\Gamma$-equivariant continuous map into $\univspfree\Gamma$ that is unique up to $\Gamma$-equivariant homotopy. 
Let $B \Gamma$ be the quotient of $\univspfree\Gamma$ by $\Gamma$. 
Similarly, $\univspproper\Gamma$ denotes a \emph{universal space} for proper $\Gamma$-actions. 
These constructions are unique up to ($\Gamma$-equivariant) homotopy equivalence, and thus there is no ambiguity in writing $KK^\Gamma_i(\univspfree\Gamma, B)$, $KK_i(B\Gamma, B)$ and $KK^\Gamma_i(\univspproper\Gamma, B)$ for a $\Gamma$-{\cstaralg} $B$. By definition, there is a $\Gamma$-equivariant continuous map $\univspfree\Gamma \to \univspproper\Gamma$, regardless of the choice of models. 

The \emph{reduced Baum-Connes assembly map} for a countable discrete group $\Gamma$ and a $\Gamma$-{\cstaralg} $B$ is a group homomorphism 
\[
\mu \colon KK^\Gamma_i(\univspproper\Gamma, B) \to K_i(B \rtimes_{\operatorname{r}} \Gamma) \; .
\]
It is natural in $B$ with respect to $\Gamma$-equivariant {\shom}s or more generally with respect to taking Kasparov products, in the sense that any element $\delta \in KK^\Gamma(B,C)$ induces a commuting diagram
\begin{equation}\label{eq:BC-assembly-natural}
\xymatrix{
	KK^\Gamma_i(\univspproper\Gamma, B) \ar[r]^\mu \ar[d]^{\delta} & K_i(B \rtimes_{\operatorname{r}} \Gamma) \ar[d]^{\delta \rtimes_{\operatorname{r}} \Gamma} \\
	KK^\Gamma_i(\univspproper\Gamma, C) \ar[r]^\mu & K_i(C \rtimes_{\operatorname{r}} \Gamma)
}
\end{equation}
where  $\delta \rtimes_{\operatorname{r}} \Gamma \colon K_i(B \rtimes_{\operatorname{r}} \Gamma) \to K_i(C \rtimes_{\operatorname{r}} \Gamma)$ is a homomorphism naturally induced by $\delta$.

The case when $B = \cbbd$ is of special interest. 
\begin{defn}\label{defn:strong-Novikov}
	The \emph{rational strong Novikov conjecture} asserts that the composition
	\[
	K^\Gamma_i(\univspfree\Gamma) \to K^\Gamma_i(\univspproper\Gamma) \overset{\mu}{\to} K_i(C^*_{\operatorname{r}} \Gamma) 
	\] 
	is injective after tensoring each term by $\qbbd$.  
\end{defn}
The {rational strong Novikov conjecture} implies the Novikov conjecture,
the Gromov-Lawson conjecture on the nonexistence of positive scalar curvature for aspherical manifolds (cf.\,\cite{rosenberg1983c}) and Gromov's zero-in-the-spectrum conjecture. We refer the reader to  \cite{MR716254,miscenko74,kasparov1,connesmoscovici,connesgromovmoscovici,MR1042862,MR1388301, kasparovskandalis91,kasparovskandalis2,higsonkasparov,MR1748916, guentnerhigsonweinberger,yu2,yu3,Higson2000Bivariant,skandalistuyu,MattheyOyono-OyonoPitsch2008Homotopy, MR1817505,MR1869625, mathai, GongWuYu2021, Weinberger1990Aspects} for more details on the progress of the (rational) strong Novikov conjecture in the past few decades. 

On the other hand, it has proven extremely useful to have the flexibility of a general $\Gamma$-algebra $B$ in the picture, largely due to the following key observation, which is based on a theorem of Green \cite{green1982} and Julg \cite{julg1981} and an equivariant cutting-and-pasting argument on $B$. 
Here saying $B$ is a proper $\Gamma$-$X$-{\cstaralg} for some locally compact Hausdorff space $X$ means that 
there is a fixed $\Gamma$-action on $X$ that is proper in the sense that for any compact subset $K$ in $X$, we have $K \cap g K = \varnothing$ for all but finitely many $g \in \Gamma$, and 
there is a $\Gamma$-equivariant $*$-homomorphism $\varphi \colon C_0(X) \to Z(M(B))$, the center of the multiplier algebra of $B$, such that $\varphi(C_0(X)) \cdot B$ is dense in $B$.

\begin{thm}[{cf.\,\cite[Theorem~13.1]{guentnerhigsontrout}}]\label{thm:proper-GHT}
	For any countable discrete group $\Gamma$, and a $\Gamma$-{\cstaralg} $B$, if $B$ is a proper $\Gamma$-$X$-{\cstaralg} for some locally compact Hausdorff space $X$, then the reduced Baum-Connes assembly map 
	\[
	\mu \colon KK^\Gamma_i(\univspproper\Gamma, B) \to K_i(B \rtimes_{\operatorname{r}} \Gamma) \; .
	\]
	is a bijection. \qed
\end{thm}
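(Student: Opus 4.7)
The strategy is a Mayer--Vietoris reduction to the case of an algebra induced from a finite subgroup, combined with the Green--Julg theorem for compact groups.

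First I would reduce to the case where $B$ has a particularly simple form. Since $\Gamma$ is discrete and acts properly on $X$, every point $x \in X$ has a finite stabilizer $K_x \leq \Gamma$ and a $K_x$-invariant open ``slice'' $U$ whose $\Gamma$-orbit is $\Gamma$-equivariantly homeomorphic to $\Gamma \times_{K_x} U$. Via the structural map $C_0(X) \to Z(M(B))$, the algebra $B$ decomposes over such slices, and any $\Gamma$-invariant separable subalgebra is covered by finitely many of them up to an approximation. A Mayer--Vietoris / five-lemma argument, using the six-term exact sequences in both variables of $KK^\Gamma$ available for proper coefficient algebras (as mentioned in the excerpt), reduces the theorem to the case
\[
B \;=\; \operatorname{Ind}_K^\Gamma D \;=\; \{f \in C_b(\Gamma, D) : f(\gamma k) = k^{-1} \cdot f(\gamma) \text{ and } \gamma K \mapsto \|f(\gamma)\| \in C_0(\Gamma/K)\},
\]
where $K \leq \Gamma$ is a finite subgroup and $D$ is a $K$-$C^*$-algebra.

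In this induced case, Green's imprimitivity theorem (\cite{green1982}, \cite{julg1981}) furnishes a canonical Morita equivalence
\[
(\operatorname{Ind}_K^\Gamma D) \rtimes_{\operatorname{r}} \Gamma \;\sim\; D \rtimes K,
\]
which identifies the right-hand side of the assembly map with $K_i(D \rtimes K)$. On the left-hand side, a standard induction/compression isomorphism exploits the fact that $\univspproper\Gamma$ is, in particular, a universal proper $K$-space, producing a natural isomorphism
\[
KK^\Gamma_i(\univspproper\Gamma,\, \operatorname{Ind}_K^\Gamma D) \;\cong\; KK^K_i(\univspproper K,\, D) \;=\; K^K_i(D),
\]
the last identification because $K$ is finite (hence compact) so $\univspproper K$ may be taken to be a point. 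The Green--Julg theorem for compact groups then yields $K^K_i(D) \cong K_i(D \rtimes K)$, and a direct check of naturality shows that the composite of these three isomorphisms coincides with the Baum--Connes assembly map $\mu$ (this is where the concrete description of $\mu$ on classes coming from proper subspaces is used).

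The main obstacle is the Mayer--Vietoris reduction step. Concretely, one must construct a $\Gamma$-equivariant cover of any given $\Gamma$-compact subset of $X$ whose members are induced from finite subgroups, and verify the five-lemma inputs on both sides. The right-hand side behaves well under ideals and quotients of $B \rtimes_{\operatorname{r}} \Gamma$ by the standard six-term sequence in $K$-theory; on the left-hand side the key technical input is the six-term exact sequence in the \emph{first} variable of $KK^\Gamma$, which, as noted in the excerpt, fails in general but holds under the proper (in particular, nuclear) hypothesis that is precisely present here. Once this is settled, handling the inductive colimits defining $KK^\Gamma_i(\univspproper\Gamma, -)$ over $\Gamma$-compact subspaces and separable $\Gamma$-subalgebras is routine, as is the passage from compactly supported pieces to the whole of $B$.
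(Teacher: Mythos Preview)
The paper does not actually prove this theorem: the statement ends with a \qed\ and is attributed to \cite[Theorem~13.1]{guentnerhigsontrout}, so there is no in-paper proof to compare against. Your sketch is a faithful outline of the standard argument that appears in the cited reference---reduction via slices and Mayer--Vietoris to algebras induced from finite subgroups, then Green's imprimitivity plus the Green--Julg isomorphism---so there is no discrepancy to report.
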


This is the basis of the \emph{Dirac-dual-Dirac} method (cf.\,\cite{kasparov1,kasparov95}; also see \cite[Chapter~9]{Valette2002} and \cite{MR4300553}), which was applied very successfully to the study of the Baum-Connes assembly map. It is based on the construction of a proper $\Gamma$-$X$-{\cstaralg} $B$ together with $KK$-elements $\alpha \in KK^\Gamma(B, \cbbd)$ and $\beta \in KK^\Gamma(\cbbd, B)$ such that $\beta \otimes_B \alpha$ is equal to the identity element in $KK^\Gamma(\cbbd, \cbbd)$. When this is possible, Theorem~\ref{thm:proper-GHT} allows us to conclude that the Baum-Connes assembly map for $\Gamma$ is an isomorpism and the rational strong Novikov conjecture for $\Gamma$ follows. 
Although we do not directly apply this method to prove the main results of the present paper, our strategy still calls for a proper $\Gamma$-$X$-{\cstaralg} $B$ and a $KK$-element $\beta \in KK^\Gamma(\cbbd, B)$. 

It is not hard to see that whenever $\Gamma$ is infinite and $B$ is a proper $\Gamma$-$X$-{\cstaralg}, there is no $\Gamma$-equivariant {\shom} from $\cbbd$ to $B$. Thus one must look beyond $\Gamma$-equivariant {\shom}s in order to construct a suitable element $\beta \in KK^\Gamma(\cbbd, B)$. Many of such elements come from \emph{$\Gamma$-equivariant asymptotic morphisms} (cf.\,\cite{ConnesHigson1990, guentnerhigsontrout}). We will only make use of a special type of such morphisms, given below. 

\begin{constr}	\label{constr:KK-facts-asymptotic} 	
	Let $B$ be a $\Gamma$-$C^*$-algebra and let $\varphi_t \colon \salg \to B$ be a family of $*$\-/homomorphisms indexed by $t \in [1, \infty)$ that is
	\begin{enumerate}
		\item \emph{pointwise continuous}, i.e., $t \mapsto \varphi_t(f)$ is continuous for any $f \in \salg$, and 
		\item \emph{asymptotically invariant}, i.e., $\lim_{t \to \infty} \left\| g \cdot \left(\varphi_t(f)\right) - \varphi_t(f) \right\| = 0$ for any $f \in \salg$ and any $g \in \Gamma$. 
	\end{enumerate}
	Then by \cite[Definition~7.4]{higsonkasparov}, there is an element 
	\[
	\left[ (\varphi_t) \right] \in KK^\Gamma_0 \left(C_0(\rbbd), B \right) \cong  KK^\Gamma_1(\cbbd, B)
	\]
	whose image under the forgetful map 
	\[
	KK^\Gamma_0(\cbbd, C_0(\rbbd, B)) \to KK(\cbbd, C_0(\rbbd, B)) \cong KK(\salg, B)
	\]
	is equal to the element $[\varphi_t]$ induced by the homomorphism $\varphi_t$, for any $t \in [0, \infty)$. 
\end{constr}

To conclude our preparation of equivariant $KK$-theory, we recall the construction of equivariant $KK$-theory with real coefficients, recently introduced by Antonini, Azzali and Skandalis. 

\begin{constr}[cf.\,\cite{antoniniazzaliskandalis2016}] \label{constr:KKR}
	The \emph{equivariant $KK$-theory with real coefficients} is a bivariant theory that associates, to each pair $(A,B)$ of $\Gamma$-$C^*$-algebras, the groups  
	\[
	KK^\Gam_{\rbbd} (A, B) = \varinjlim_{N} KK^\Gam (A, B \otimes N)
	\]
	where $\otimes$ stands for the minimal tensor product and the inductive limit is taken over all II$_1$-factors $N$ with unital $*$\-/homomorphisms as connecting maps. This theory is contravariant in the first variable and covariant in the second, and there is a natural map from $KK^\Gam(A, B) \otimes_{\zbbd} \rbbd$ to $KK^\Gam_{\rbbd} (A, B)$ since $K_0(N) \cong \rbbd$ for any II$_1$-factor $N$. This map is an isomorphism when $\Gamma$ is trivial, $A = \cbbd$ and $B$ is in the bootstrap class (i.e., the class $\mathcal{N}$ in \cite{RosenbergSchochet1987}). Moreover, the Kasparov product extends to this theory. 
	
	Given a discrete group $\Gamma$, a Hausdorff space $X$ with a $\Gamma$-action, and a $C^*$-algebra $B$ with a $\Gamma$-action, we define $KK^\Gamma_{\rbbd,*}(X, B)$ in the same way as in Construction~\ref{defn:KK-Gam-compact}. Then the universal coefficient theorem allows us to identify $KK_{\rbbd, *} (X, \cbbd)$ with $K_*(X) \otimes_{\zbbd} \rbbd$ in a natural way. 
\end{constr}

The key reason we consider $KK$-theory with real coefficients is the following convenient fact. 

\begin{lem} \label{lem:KKR-EGam-inj}
	For any discrete group $\Gam$ and $\Gam$-$C^*$-algebra $A$, the homomorphism 
	\[
	\pi_* \colon KK^\Gam_{\rbbd, *} (\univspfree\Gam, A) \to KK^\Gam_{\rbbd, *} (\univspproper\Gam, A) \; ,
	\]
	which is induced by the natural $\Gam$-equivariant continuous map $\pi \colon \univspfree\Gam \to \univspproper\Gam$, is injective. 
\end{lem}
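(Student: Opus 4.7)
\textbf{Proof proposal for \Cref{lem:KKR-EGam-inj}.} The plan is to reduce, via the $\Gamma$-CW structure of $\underline{E}\Gamma$, to the case of a finite stabilizer subgroup $F \leq \Gamma$, and then exploit the rational acyclicity of $BF$ together with the divisibility of $K_0$ of a II$_1$-factor.

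\textbf{Step 1: compact supports.} By the inductive-limit definition of $KK^\Gamma_{\rbbd,*}$ (combining \Cref{defn:KK-Gam-compact} with \Cref{constr:KKR}), an element in $\ker \pi_*$ is represented by some class $x \in KK^\Gamma_*(C_0(Y), \aalg \otimes N)$ with $Y \subseteq \univspfree\Gam$ a $\Gamma$-compact free $\Gamma$-invariant subcomplex and $N$ a II$_1$-factor, such that the image of $x$ in $KK^\Gamma_*(C_0(Z), \aalg \otimes N')$ vanishes for some $\Gamma$-compact $\Gamma$-invariant $Z \subseteq \univspproper\Gam$ containing $\pi(Y)$ and some $N' \supseteq N$. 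After enlarging $N$ to $N'$ we may work within a fixed pair $(Y, Z)$ with $\pi(Y) \subseteq Z$.

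\textbf{Step 2: cell-by-cell reduction to a finite subgroup.} Choose a model of $\univspproper\Gam$ as a proper $\Gamma$-CW complex, so that $Z$ is a finite $\Gamma$-sub-CW-complex built by attaching $\Gamma$-cells of the form $\Gamma/F_j \times D^{n_j}$ with $F_j \leq \Gamma$ finite. Repeated use of the six-term exact sequence in the first variable of $KK^\Gamma$ (available for commutative proper $\Gamma$-$C^*$-algebras, cf.\ \cite[Appendix]{kasparovskandalis2}) together with the naturality of $\pi_*$ reduces the desired injectivity to the case of a single cell. After contracting $D^{n_j}$ and applying the standard compression isomorphism $KK^\Gamma_{\rbbd,*}(\Gamma \times_F W, \aalg) \cong KK^F_{\rbbd,*}(W, \aalg)$, the problem becomes: for every finite $F \leq \Gamma$ and every $F$-$C^*$-algebra $\aalg$, the collapsing map induces an injection
\[
KK^F_{\rbbd, *}(\univspfree F, \aalg) \;\longrightarrow\; KK^F_{\rbbd, *}(\mathrm{pt}, \aalg).
\]

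\textbf{Step 3: the finite-group case.} For finite $F$, a Green-Julg type identification converts this into the map $KK_*(C_0(BF), (\aalg \otimes N) \rtimes F) \to K_*((\aalg \otimes N) \rtimes F)$ induced by the point inclusion $\mathrm{pt} \hookrightarrow BF$. Taking a finite CW model for $BF$ and applying the K\"unneth splitting of \Cref{lem:KK-separate-variables}, this map becomes, after $\otimes \qbbd$, the projection of $\bigoplus_j K_j(BF) \otimes_\zbbd K_{*-j}((\aalg \otimes N) \rtimes F) \otimes \qbbd$ onto its $j = 0$ summand. Since $BF$ is rationally acyclic ($\widetilde{H}_*(BF;\qbbd) = 0$) for every finite $F$, the higher summands vanish rationally and this projection is a rational isomorphism onto a direct summand of the target. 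Because $K_0(N) \cong \rbbd$ is a $\qbbd$-vector space, the rational statement upgrades directly to an injectivity statement at the level of $KK^F_{\rbbd, *}$, completing the reduction.

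\textbf{Main obstacle.} The most delicate point is the last step: one needs to ensure that the K\"unneth splitting is natural enough in the II$_1$-coefficient algebra to survive the inductive limit defining $KK_{\rbbd}$, and that the Green-Julg identification is compatible with this limit. A cleaner alternative, in the spirit of the Antonini-Azzali-Skandalis construction, is to build for each finite $F$ an explicit trace-class element in $KK^F_{\rbbd, 0}(\cbbd, \cbbd)$ that splits the augmentation $C^*(F) \twoheadrightarrow \cbbd$; this would bypass the K\"unneth argument entirely, though it still rests on the rational triviality of the regular representation of $F$ modulo the trivial one, which is ultimately the same input as rational acyclicity of $BF$.
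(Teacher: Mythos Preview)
The paper's proof is a one-line citation: Antonini--Azzali--Skandalis construct a global idempotent $[\tau] \in KK^\Gamma_{\rbbd,0}(\cbbd,\cbbd)$ whose action on $KK^\Gamma_{\rbbd,*}(\univspproper\Gamma,A)$ splits off a direct summand (the $\tau$-part), and $\pi_*$ identifies $KK^\Gamma_{\rbbd,*}(\univspfree\Gamma,A)$ isomorphically with that summand. Injectivity is then immediate.

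Your route is genuinely different, and Step~2 contains a real gap. Injectivity on individual cells does \emph{not} assemble to injectivity on the whole complex via six-term exact sequences: the four-lemma requires surjectivity on a neighbouring term, which you do not have. Even if you upgrade Step~3 to split injectivity for each finite $F$ (which your K\"unneth argument does yield), split injections assemble only when the splittings are natural with respect to the boundary maps of the long exact sequences --- and a K\"unneth-type splitting has no reason to commute with those boundaries. So the obstacle you flag at the end is not merely a technicality in Step~3; it is precisely what blocks the induction in Step~2.

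Your suggested ``cleaner alternative'' --- a trace-class idempotent in $KK^F_{\rbbd,0}(\cbbd,\cbbd)$ splitting the augmentation $C^*(F)\twoheadrightarrow\cbbd$ --- is essentially the Antonini--Azzali--Skandalis $\tau$-element, restricted to $F$. The point is that this idempotent exists \emph{globally} in $KK^\Gamma_{\rbbd,0}(\cbbd,\cbbd)$ and acts by Kasparov product on all the groups in sight, compatibly with every map. Once you have that, the cell-by-cell argument becomes unnecessary: the global $\tau$ already produces a natural retraction of $\pi_*$. That is exactly what the paper invokes.
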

\begin{proof}
	It follows from \cite[Section~5]{antoniniazzaliskandalis} that the above homomorphism gives rise to an isomorphism between $KK^\Gam_{\rbbd, *} (\univspfree\Gam, A)$ and $KK^\Gam_{\rbbd, *} (\univspproper\Gam, A) _\tau$, which is a subgroup of $KK^\Gam_{\rbbd, *} (\univspproper\Gam, A)$ called its $\tau$-part. 
\end{proof}

\begin{lem}\label{lem:KK-homotopy-eq-nonequivariant}
	Let $\Gamma$ be a discrete group, let $X$ be a free and proper $\Gamma$-space, let $A$ and $B$ be two $\Gamma$-$C^*$-algebras, and let $\varphi \colon A \to B$ be a $*$-homomorphism that is $\Gamma$-equivariant and a homotopy equivalence, i.e., there exists a (possibly non-equivariant) $*$-homomorphism $\psi \colon B \to A$ such that $\psi \circ \varphi$ and $\varphi \circ \psi$ are homotopic to the identity maps on $A$ and $B$, respectively. Then the homomorphism 
	\[
	\rkkgam(X, A) \to \rkkgam(X, B)
	\]
	induced by $\varphi$ is an isomorphism. 
\end{lem}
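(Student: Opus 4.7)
The plan is to use a mapping-cone construction together with a Mayer--Vietoris / induction reduction, so as to isolate the one place where the non-equivariance of $\psi$ matters. Form the mapping cone $C_\varphi = \{(a, f) \in A \oplus C_0([0,1), B) : f(0) = \varphi(a)\}$; since $\varphi$ is $\Gamma$-equivariant, $C_\varphi$ is naturally a $\Gamma$-$C^*$-algebra fitting into a $\Gamma$-equivariant short exact sequence $0 \to \Sigma B \to C_\varphi \to A \to 0$. The standard Puppe-sequence characterization of homotopy equivalences asserts that a $*$-homomorphism of $C^*$-algebras is a homotopy equivalence if and only if its mapping cone is contractible, so the existence of a non-equivariant homotopy inverse $\psi$ to $\varphi$ ensures that $C_\varphi$ is contractible as an ungraded $C^*$-algebra (even though the contracting homotopy need not be $\Gamma$-equivariant). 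The six-term exact sequence for $\rkkgam(X, -)$ in the second variable, together with the usual identification of the boundary map as $\varphi_*$ (up to sign) via the suspension isomorphism, reduces the lemma to proving $\rkkgam(X, C_\varphi) = 0$.

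To establish this vanishing, recall that $\rkkgam(X, -)$ is computed as an inductive limit over $\Gamma$-invariant $\Gamma$-compact subsets $Z \subseteq X$ of the groups $\rkkgam(C_0(Z), -)$, so it suffices to treat each such $Z$. Since $\Gamma$ is discrete and acts freely and properly, the map $Z \to Z/\Gamma$ is a principal $\Gamma$-bundle over a compact Hausdorff base, and hence admits a finite open cover $V_1, \ldots, V_n$ of $Z/\Gamma$ trivializing the bundle---that is, $Z|_{V_i} \cong \Gamma \times V_i$ $\Gamma$-equivariantly, and likewise over all finite intersections. Applying the Mayer--Vietoris six-term exact sequence coming from the pullback $C_0(U \cup W) = C_0(U) \times_{C_0(U \cap W)} C_0(W)$ (valid for $\rkkgam(-, C_\varphi)$ since the first variable is commutative) inductively reduces the problem to the vanishing of $\rkkgam(C_0(\Gamma \times V), C_\varphi)$ for each trivializing open $V$ and every finite intersection thereof. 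For such a trivial piece, Kasparov's induction isomorphism (induction from the trivial subgroup of $\Gamma$) yields $\rkkgam(C_0(\Gamma \times V), C_\varphi) \cong KK_*(C_0(V), C_\varphi)$, and the right-hand side vanishes because $C_\varphi$ is contractible as a $C^*$-algebra.

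The main obstacle is producing the contraction of $C_\varphi$ explicitly: from the homotopies witnessing $\psi \circ \varphi \simeq \operatorname{id}_A$ and $\varphi \circ \psi \simeq \operatorname{id}_B$, one must build a $*$-homotopy from $\operatorname{id}_{C_\varphi}$ to $0$. This is the single step at which the non-equivariance of $\psi$ enters the argument; once it is in hand, the rest of the proof reduces to formal exact-sequence manipulations together with the cited induction and Mayer--Vietoris isomorphisms.
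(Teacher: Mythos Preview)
Your approach matches the cutting-and-pasting argument the paper invokes: the mapping-cone reduction to $KK^\Gamma_*(X, C_\varphi) = 0$ is clean, and the decomposition of each $\Gamma$-compact $Z$ into $\Gamma$-trivial slices followed by the induction isomorphism $KK^\Gamma_*(C_0(\Gamma \times V), C_\varphi) \cong KK_*(C_0(V), C_\varphi)$ is exactly the intended mechanism for dropping equivariance.

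One technical correction: the pullback identity $C_0(U \cup W) = C_0(U) \times_{C_0(U \cap W)} C_0(W)$ holds for \emph{closed} subsets (via restriction maps), not open ones---for open $U$ there is no $*$-homomorphism $C_0(U) \to C_0(U \cap W)$. To run your Mayer--Vietoris induction, first refine the trivializing open cover of the compact base $Z/\Gamma$ to a closed cover (always possible by a shrinking argument), or alternatively phrase the induction through the ideal/quotient six-term sequences $0 \to C_0(U) \to C_0(Z) \to C_0(Z \setminus U) \to 0$ instead.
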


\begin{proof}
	This uses a cutting-and-pasting argument similar to \cite[Lemma~8.3]{GongWuYu2021}.
\end{proof}

\subsection{{\hhs}s}
\label{subsec:hhs}
In this section, we review some basics of  \emph{{\hhs}s} (cf. \cite[Section 3]{GongWuYu2021}).

\begin{defn}\label{defn:CAT0-equiv-defn-Bruhat-Tits}
	A metric space $(X,d)$ is \emph{CAT(0)} if for any $p,q,r,m \in X$ satisfying $d(q,m) = d(r,m) = \frac{1}{2}d(q,r)$, the following \emph{CN inequality} of Bruhat and Tits \cite{BruhatTits1972Groupes} (also called the \emph{semi parallelogram law}; see \cite[XI, {\S}3]{lang})
	\[
	d(p,q)^2 + d(p,r)^2 \geq 2 d(m,p)^2 + \frac{1}{2} d(q,r)^2 \; .
	\]
\end{defn}

\begin{rmk}\label{rmk:CAT0-facts}
	Here are some facts about CAT(0) spaces. Let $X$ be a CAT(0) space. 
	\begin{enumerate}
		\item\label{rmk:CAT0-facts-unique-geodesic} The metric space $X$ is uniquely geodesic, that is, any two points $x,y \in X$ are connected by a unique (affinely parametrized) geodesic segment 
		\begin{equation}\label{eq:notation-geodesic}
		[x,y] \colon [0,1] \to X \; ,
		\end{equation} 
		i.e., we have $[x,y](0) = x$ and $[x,y](1) = y$, and $d(x, [x,y](t)) = t d(x,y)$. 
		\item\label{rmk:CAT0-facts-bicombing} The map  
		\[
		X \times X \times [0,1] \to X \, , \qquad (x, y, t) \mapsto [x, y](t )
		\]
		is continuous and is referred to as the \emph{geodesic bicombing}. It follows that $X$ is contractible. 
		\item\label{rmk:CAT0-facts-Lipschitz} For any $x,y,x',y'$ in $X$, we have
		\[
		d \big( [x, y](t ) , [x', y'](t ) \big) \leq \max\{ d(x,y), d(x',y')\} \; .
		\] 
	\end{enumerate}
\end{rmk}

Next we review the notions of angle and tangent cone. Let $(X,d)$ be a geodesic metric space. For three distinct points $x,y,z \in X$, we define the comparison angle $\widetilde{\angle} xyz$ to be the angle at $\widetilde{y}$ of the Euclidean comparison triangle $\widetilde{xyz}$. More explicitly, we have
\[\widetilde{\angle} xyz = \arccos\left(\frac{d(x,y)^2 + d(y,z)^2 - d(x,z)^2}{2d(x,y)d(y,z)}\right).\]

Given two nontrivial geodesic segments $\alpha$ and $\beta$ emanating from a point $p$ in $X$, meaning that $\alpha(0) = \beta(0) = p$, we define the angle between them, $\angle(\alpha,\beta)$, to be 
\[\angle(\alpha,\beta) = \lim_{s,t \to 0} \widetilde{\angle} (\alpha(s),p,\beta(t)) \; ,\]
provided that the limit exists. 
By \cite[Theorem~3.6.34]{burago}, angles satisfy the triangle inequality. 

\begin{rmk}\label{rmk:CAT0-equiv-defn-thin-triangle}
	Using the notion of comparison angles, we get  the following alternative (but equivalent) definition for CAT(0) spaces. Namely, a metric space $(X,d)$ is \emph{CAT(0)} if it is geodesic and for any points $p,q,r, x,y$ in $X$ with $x$ on a geodesic segment connecting $p$ and $q$ and $y$ on a geodesic segment connecting $p$ and $r$, we have
	\[
	\widetilde{\angle} xpy \leq \widetilde{\angle} qpr \; . 
	\]
	It follows from this definition that in a CAT(0) space, the angle between any two nontrivial geodesic segments emanating at the same point exists. 
\end{rmk}

Now suppose the geodesic metric space $(X,d)$ satisfies that the angle between any two nontrivial geodesic segments emanating at the same point exists. 
For a point $p \in X$, let $\Sigma_p'$ denote the metric space consisting of all equivalence classes of geodesic segments emanating from $p$, where two geodesic segments are identified if they have zero angle and the distance $d([\alpha],[\beta])$ between two classes of geodesic segments is the angle $\angle(\alpha,\beta)$. 
Note, in particular, from our definition of angles, that $d([\alpha],[\beta]) \leq \pi$ for any geodesic segments $\alpha$ and $\beta$ emanating from $p$.
Let $\Sigma_p$ denote the completion of $\Sigma_p'$. 

The \textit{tangent cone} $T_p$ at a point $p$ in $X$ is then defined to be a metric space which is, as a topological space, the cone of $\Sigma_p$, that is,
\[\Sigma_p \times [0,\infty)/\Sigma_p \times \{0\} \;. \]
The metric on it is given as follows. For two points $p,q \in T_p$ we can express them as $p=[(\alpha,t)]$ and $q = [(\beta,s)]$. Then the metric is given by
\[d(p,q) = \sqrt{t^2+s^2-2st\cos(d([\alpha],[\beta]))} \; . \]
In other words, it is what the distance would be if we went along straight lines in a Euclidean plane with the same angle between them as the angle between the corresponding directions in $X$. A key motivation for this definition is that when $X$ is a Riemannian manifold, this construction of the tangent cone at a point recovers the tangent space equipped with the metric induced by the inner product. 

The concept of {\hhs}s is inspired by \cite[Page~2]{fishersilberman}.
Roughly speaking, this is a class of (possibly infinite\-/dimensional) non-positively curved spaces. 

\begin{defn}\label{defn:hhs} 
	A \emph{\hhs} is a complete geodesic CAT(0) metric space (i.e., a Hadamard space) all of whose tangent cones are isometrically embeddable into Hilbert spaces. 
	
	For any point $x$ in a {\hhs} $X$, we define the \emph{tangent Hilbert space} $\hhil_x M$ to be the Hilbert space $\hhil_{T_x M}$ spanned by the tangent cone $T_x M$ such that the origin is at the tip of the cone, via the following Construction~\ref{constr:Hilbert-space-span}.
\end{defn}

\begin{constr}\label{constr:Hilbert-space-span}
	It is well known from the work of Schoenberg \cite{schoenberg38} that a metric space $(X,d)$ embeds isometrically into a Hilbert space if and only if the bivariant function $(x_1, x_2) \mapsto \left(d(x_1, x_2)\right)^2$ is a conditionally negative-type kernel. Given such a metric space $(X,d)$ and a fixed base point $x_0 \in X$, there is a canonical way to construct the smallest Hilbert space that contains it with $x_0$ being the origin. See, for example, \cite[Proposition~3.1]{higsonguentner}. More precisely, we define $\hhil_{X, d, x_0}$, the \emph{Hilbert space spanned by $(X,d)$ centered at $x_0$}, to be the completion of the real vector space $\rbbd_0 [X]$, which consists of formal finite linear combinations of elements in $X$ whose coefficients sum up to zero, under the pseudometric induced from the positive semidefinite blinear form 
	\[
	\left\langle \sum_{x \in X} a_x x, \sum_{y \in X} b_y y \right\rangle = -\frac{1}{2} \sum_{x, y \in X} a_x b_y \left(d(x, y)\right)^2 \; .
	\]
	Here a completion under a pseudometric is meant to also identify elements of zero distance to each other. There is a canonical isometric embedding from $(X,d)$ into $\hhil_{X, x_0, d}$ that maps each $x \in X$ to the linear combination $x - x_0$. Given an isometric embedding from $(X, d)$ to another metric space $(Y,d')$ that maps $x_0$ to $y_0$, there is a unique isometric linear embedding from $\hhil_{X, d, x_0}$ to $\hhil_{Y, d', y_0}$ that intertwines the canonical embeddings. It is straightforward to see that these assignments form a functor from the category of pointed metric spaces and isometric base-point-fixing embeddings to the category of Hilbert spaces and linear isometric embeddings. 
\end{constr}

We mostly focus on \emph{separable} {\hhs}s, i.e., those that contain countable dense subsets.

\begin{eg}\label{ex:Riemannian-Hilbertian}
	A Riemannian manifold without boundary is a {\hhs} if and only if it is complete, connected, and simply connected, and has non-positive sectional curvature. The same statement holds for \emph{Riemannian-Hilbertian manifolds} (cf.\,\cite{lang}), which are a kind of infinite\-/dimensional generalizations of Riemannian manifolds defined using charts which are open subsets in Hilbert spaces, instead of Euclidean spaces, in a way that a large part of differential geometry, including sectional curvatures, still makes sense. To see why the statement holds, observe that in this case, every tangent cone is itself a Hilbert space, and the equivalence between the CAT(0) condition and being connected, simply connected and non-positively curved follows from \cite[XI, Proposition~3.4 and Theorem~3.5]{lang}.  
\end{eg}

\begin{constr}\label{constr:log-map}
	A CAT(0) space $X$ is always uniquely geodesic. For any $x_0 \in X$, using the notation in Equation~\eqref{eq:notation-geodesic}, we define the \emph{logarithm function at $x_0$} by
	\[
	\log_{x_0} \colon X \to T_{x_0} X \; , \quad x \mapsto [([x_0, x], d(x_0, x))] \; .
	\]
	The CAT(0) condition (e.g., Remark~\ref{rmk:CAT0-equiv-defn-thin-triangle}) implies $\log_{x_0}$ is \emph{non-expansive} (also called \emph{weakly contractive} or \emph{short} by some authors), 
	i.e., 
	\[
	d\left(  \log_{x_0} (x) ,   \log_{x_0} (x') \right) \leq d(x, x')
	\]
	for any $x, x' \in X$ and, in particular, continuous. Moreover, it preserves the metric on each geodesic emanating from $x_0$, that is, 
	\[
	d\left(  \log_{x_0} (x_0) ,   \log_{x_0} (x) \right) = d(x_0, x)
	\]
	for any $x \in X$. 
\end{constr}

Recall that a subset of a geodesic metric space is called \emph{convex} if it is again a geodesic metric space when equipped with the restricted metric. We observe that a closed convex subset of a {\hhs} is itself a {\hhs}.

\begin{defn}\label{defn:hhs-admissible}
	A separable {\hhs} $M$ is called \emph{admissible} if there is an increasing sequence of closed convex subsets isometric to finite\-/dimensional Riemannian manifolds, whose union is dense in $M$. 
\end{defn}

The notion of {\hhs}s is more general than Example~\ref{ex:Riemannian-Hilbertian}, due to the following construction. 

\begin{constr}[cf.\,\cite{fishersilberman}] \label{constr:continuum-product}
	Let $X$ be a metric space. Let $(Y, \mu)$ be a measure space with $\mu(Y) < \infty$. The \emph{($L^2$-)continuum product} of $X$ over $(Y, \mu)$ is the space $L^2(Y,\mu,X)$ of equivalence classes of measurable maps $\xi$ from $Y$ to $X$ satisfying 
	\[
	\int_Y d_X(\xi(y),x_0)^2 \, d\mu(y) < \infty \; ,
	\] 
	where $x_0$ is a fixed point in $X$ and two functions are identified if they differ only on a measure-zero subset of $Y$. It follows from the triangle inequality that the above condition does not depend on the choice of $x_0$. Moreover, the Minkowski inequality implies that the formula 
	\[
	d(\xi, \eta) = \left( \int_Y d_X(\xi(y),\eta(y))^2 \, d\mu(y) \right)^{\frac{1}{2}}
	\]
	defines a metric on $L^2(Y,\mu,X)$. 
\end{constr}

Recall that a measure space $(Y,\mu)$ is called \emph{separable} if there is a countable family $\{A_n \colon n \in \nbbd \}$ of measurable subsets such that for any $\varepsilon >0$ and any measurable subset $A$ in $Y$, we have $\mu(A \bigtriangleup A_n) < \varepsilon$ for some $n$. For example, it is easy to see that any outer regular finite measure on a separable metric space is separable. This includes, in particular, any measure induced from a density on a closed smooth manifold.

\begin{prop}[{\cite[Proposition 3.13]{GongWuYu2021}}]\label{prop:continuum-product-hhs-summary}
	Let $M$ be a {\hhs} and $(Y, \mu)$ be a finite measure space. Then
	\begin{enumerate}
		\item the continuum product $L^2(Y,\mu,M)$ is again a {\hhs}; 
		\item if $(Y, \mu)$ is separable and $M$ is admissible (respectively, separable), then $L^2(Y,\mu,M)$ is also admissible (respectively, separable).  
	\end{enumerate}
\end{prop}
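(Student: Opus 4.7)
The plan is to verify the four defining properties of a Hilbert-Hadamard space for $L^2(Y,\mu,M)$ by fiberwise arguments, and then establish admissibility and separability via finite-dimensional approximation.

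For the metric, geodesic, and CAT(0) structure, I would proceed as follows. Completeness follows by the standard Banach-space-style argument: extract a fast Cauchy subsequence, use Chebyshev's inequality to obtain almost everywhere convergence, invoke pointwise completeness of $M$, and apply Fatou's lemma to place the limit back in $L^2$. Geodesics are built fiberwise: for $\xi,\eta \in L^2(Y,\mu,M)$, the curve $\gamma_t(y) := [\xi(y),\eta(y)](t)$ is measurable by joint continuity of the bicombing on $M$ (Remark~\ref{rmk:CAT0-facts}) and satisfies $d_M(\gamma_t(y),\xi(y)) = t\, d_M(\xi(y),\eta(y))$, so it lies in $L^2$ and realizes the affine geodesic from $\xi$ to $\eta$. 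The CN inequality then transfers from $M$: given $\xi,\eta,\zeta,m$ with $m$ a midpoint of $\eta,\zeta$ in $L^2$, uniqueness of midpoints in CAT(0) spaces forces $m(y) = [\eta(y),\zeta(y)](1/2)$ almost everywhere, so integrating the pointwise CN inequality yields the global one.

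The main analytic obstacle will be embedding each tangent cone $T_\xi L^2(Y,\mu,M)$ isometrically into a Hilbert space. My plan is to map it into the direct integral $\int^{\oplus}_Y \hhil_{\xi(y)} M\, d\mu(y)$, which is itself a Hilbert space, by sending a geodesic segment $\alpha$ emanating from $\xi$ to the logarithm field $y \mapsto \log_{\xi(y)} \alpha(1)(y) \in T_{\xi(y)} M \subset \hhil_{\xi(y)} M$. Nonexpansiveness of the logarithm (Construction~\ref{constr:log-map}) together with the radial identity $d_M(\alpha(s)(y),\xi(y)) = s\, d_M(\alpha(1)(y),\xi(y))$ supplies the uniform $L^2$-domination needed to apply dominated convergence to the comparison-angle formula, thereby identifying the angle in $L^2(Y,\mu,M)$ between two geodesics from $\xi$ with the inner-product angle of their logarithm fields in the direct integral. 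Extending linearly and continuously to the full tangent cone then produces the required Hilbert-space embedding, completing the verification of Definition~\ref{defn:hhs}.

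For part~(2), assume $(Y,\mu)$ is separable and let $\{N_k\}$ be an increasing sequence of closed convex finite-dimensional Riemannian submanifolds of $M$ with dense union witnessing admissibility. For any finite measurable partition $\mathcal{P} = \{A_1,\dots,A_r\}$ of $Y$ into sets of positive measure, the subset $L^2(\mathcal{P}, N_k) \subset L^2(Y,\mu,M)$ consisting of $\mathcal{P}$-simple functions valued in $N_k$ is closed and convex, and is isometric to the finite product $\prod_{i=1}^r N_k$ with the $i$-th factor rescaled by $\sqrt{\mu(A_i)}$, hence is itself a finite-dimensional Riemannian manifold embedded as a convex subset. Taking a nested cofinal sequence of such partitions generated by the countable family witnessing separability of $(Y,\mu)$ and letting $k \to \infty$ along the diagonal, one obtains an increasing sequence of closed convex finite-dimensional Riemannian submanifolds of $L^2(Y,\mu,M)$ whose union is dense, yielding admissibility. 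Separability follows by the same approximation further restricted to values in a countable dense subset of $\bigcup_k N_k$.
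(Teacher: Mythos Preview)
The paper does not give its own proof of this proposition; it is simply quoted from \cite[Proposition~3.13]{GongWuYu2021} with no argument supplied here (the related Lemma~\ref{lem:continuum-product-HHS} is likewise dispatched in one line by reference to \cite[Proposition~9.3]{GongWuYu2021}, described only as ``elementary but somewhat technical''). So there is nothing in the present paper to compare your argument against.

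That said, your outline is the expected one and is essentially correct. The fiberwise transfer of completeness, geodesics, and the CN inequality is standard, and your dominated-convergence computation for the angle does go through: the integrand $\frac{1}{2st}\bigl(s^2|v_\alpha(y)|^2 + t^2|v_\beta(y)|^2 - d_M(\alpha(s)(y),\beta(t)(y))^2\bigr)$ is bounded in absolute value by $|v_\alpha(y)|\,|v_\beta(y)|$ via the triangle inequality, and this is integrable by Cauchy--Schwarz. Two places deserve a bit more care than you gave them. First, to speak of the direct integral $\int^\oplus_Y H_{\xi(y)}M\,d\mu(y)$ you must endow the family $(H_{\xi(y)}M)_{y}$ with a measurable-field structure and check that your logarithm fields are measurable sections; this is routine but not automatic. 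Second, in part~(2) your separability argument should not refer to $\bigcup_k N_k$, which exists only in the admissible case; when $M$ is merely separable, use simple functions over the countable generating algebra with values in a fixed countable dense subset of $M$.
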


\section{Continuous fields of {\hhs}s}
\label{sec:cont-fields}

In this section, we introduce the notion of a continuous field of {\hhs}s and establish some basic properties and terminologies. 
The definition we adopt below is reminiscent of the notion of a continuous field of Hilbert spaces (\cite[Definition~10.1.2]{Dixmier1977C}). 

\begin{defn} \label{def:cont-field-HHS}
	A \emph{continuous field} $\aContField$ of {\hhs}s consists of a locally compact paracompact Hausdorff space $\baseSpace{\aContField}$ (called the \emph{base space} of $\aContField$), a tuple $\left( \aContField_z \right)_{z \in \baseSpace{\aContField}}$ of {\hhs}s (called the \emph{fibers}), and a subset $\spaceOfSections$ (called \emph{the set of continuous sections} and often denoted by $\spaceOfSections_{\operatorname{cont}} (\aContField)$) of $\prod_{z \in \baseSpace{\aContField}} \aContField_z$ that is 
	{
		\renewcommand{\labelenumi}{\textup{(\theenumi)}} \renewcommand{\theenumi}{\roman{enumi}}
		\begin{enumerate}
			\item \label{def:cont-field-HHS::convex} 
			\emph{convex} in the sense that for any $s_{-1}, s_1 \in \spaceOfSections$ and any $s_{0} \in \prod_{z \in \baseSpace{\aContField}} \aContField_z$, if for any $z \in \baseSpace{\aContField}$, $s_{0} (z)$ is the midpoint of $s_{-1} (z)$ and $s_1 (z)$, then $s_{0} \in \spaceOfSections$, 
			
			\item \label{def:cont-field-HHS::dense} 
			\emph{pointwise dense} in the sense that 
			for any $z_0 \in \baseSpace{\aContField}$, the quotient map $\prod_{z \in \baseSpace{\aContField}} \aContField_z \to \aContField_{z_0}$ maps $\spaceOfSections$ to a dense subset, 
			
			\item \label{def:cont-field-HHS::continuous} 
			\emph{mutually co-continuous} in the sense that 
			any two elements $s, s' \in \spaceOfSections$ are \emph{co-continuous}, which means the function $\baseSpace{\aContField} \ni z \allowbreak \mapsto d_{\aContField_z} \left( s(z) , s'(z) \right) \in [0, \infty)$ is continuous, and
			
			\item \label{def:cont-field-HHS::complete} 
			\emph{co-continuity-closed} in the sense that 
			for any $s \in \prod_{z \in \baseSpace{\aContField}} \aContField_z$ satisfying 
			$\spaceOfSections \cup \{s\}$ is mutually co-continuous, 
			we have $s \in \spaceOfSections$.  
		\end{enumerate}
	}
	
	Sometimes, to emphasize the base space $\baseSpace{\aContField}$ is equal to a given topological space $Z$, we write $\aContField_Z$ instead of $\aContField$ and $\spaceOfSections_{\operatorname{cont}} (Z, \aContField)$ instead of $\spaceOfSections_{\operatorname{cont}} (\aContField)$, and say $\aContField$ is a continuous field of {\hhs}s \emph{over $Z$}. 
\end{defn}

\begin{rmk}\label{rmk:cont-field-HHS-Tu}
	Note that the conditions in \Cref{def:cont-field-HHS} make sense for general tuples of metric spaces, as opposed to just {\hhs}s\footnote{However, condition~\eqref{def:cont-field-HHS::convex} seems to be appropriate only for uniquely geodesic spaces.}. 
	
	We may also extend this definition beyond the case of locally compact paracompact Hausdorff base spaces, though following \cite{Tu1999La}, it may be more appropriate to use local sections instead of global sections, i.e., to let $\spaceOfSections_{\operatorname{cont}} (\aContField)$ be a subset of $ \bigsqcup_{U \subseteq \baseSpace{\aContField} \text{ open}} \left(\prod_{z \in U} \aContField_z\right)$ that is closed under taking restrictions and satisfies versions of conditions~\eqref{def:cont-field-HHS::convex}-\eqref{def:cont-field-HHS::complete} adapted to the setting instead. Almost all results below about continuous fields of {\hhs}s extend to this setting, with more verbose proofs. Since we do not need this generality, we choose to stay within the case of locally compact paracompact Hausdorff base spaces. 
\end{rmk}

The following lemma demonstrates how \Cref{def:cont-field-HHS}\eqref{def:cont-field-HHS::complete} interacts with conditions~\eqref{def:cont-field-HHS::dense} and~\eqref{def:cont-field-HHS::continuous}. 

\begin{lem}\label{lem:cont-field-HHS-Diximier-generate}
	Let $Z$ be a topological space and let $\left( \aContField_z \right)_{z \in Z}$ be a tuple of metric spaces. 
	Let $\spaceOfSections$ be a subset of $\prod_{z \in Z} \aContField_z$ with $\spaceOfSections$.  
	Let $s \in \prod_{z \in \baseSpace{\aContField}} \aContField_z$. 
	Consider the following conditions: 
	\begin{enumerate}
		\item \label{lem:cont-field-HHS-Diximier-generate::complete} 
		$\spaceOfSections \cup \{s\}$ is mutually co-continuous;  
		
		\item \label{lem:cont-field-HHS-Diximier-generate::approx} 
		for any $z \in Z$ and any $\varepsilon > 0$, there exists $s' \in \spaceOfSections$ and a neighborhood $U$ of $z$ such that 
		\[
		d_{\aContField_{z'}} \left( s \left( z' \right) , s' \left( z' \right) \right) \leq \varepsilon \quad \text{ for any } z' \in U \; .
		\]
	\end{enumerate}
	Then 
	\begin{itemize}
		\item \eqref{lem:cont-field-HHS-Diximier-generate::complete} $\Rightarrow$ \eqref{lem:cont-field-HHS-Diximier-generate::approx} if $\spaceOfSections$ satisfies \Cref{def:cont-field-HHS}\eqref{def:cont-field-HHS::dense} (with $\baseSpace{\aContField}$ replaced by $Z$);  
		
		\item \eqref{lem:cont-field-HHS-Diximier-generate::approx} $\Rightarrow$ \eqref{lem:cont-field-HHS-Diximier-generate::complete} if $\spaceOfSections$ satisfies \Cref{def:cont-field-HHS}\eqref{def:cont-field-HHS::continuous} (with $\baseSpace{\aContField}$ replaced by $Z$). 
	\end{itemize}
\end{lem}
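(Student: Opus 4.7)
The plan is to verify each implication by an elementary argument combining the triangle inequality with the given assumption on $\spaceOfSections$. Both directions hinge on the observation that condition~\eqref{lem:cont-field-HHS-Diximier-generate::approx} is essentially the statement that $s$ can be locally approximated uniformly by elements of $\spaceOfSections$, whereas condition~\eqref{lem:cont-field-HHS-Diximier-generate::complete} is a global continuity-type statement against each fixed element of $\spaceOfSections$.

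\textbf{\eqref{lem:cont-field-HHS-Diximier-generate::complete} $\Rightarrow$ \eqref{lem:cont-field-HHS-Diximier-generate::approx}.} Fix $z\in Z$ and $\varepsilon > 0$. First I would use \Cref{def:cont-field-HHS}\eqref{def:cont-field-HHS::dense} at the single point $z$ to produce some $s'\in\spaceOfSections$ with $d_{\aContField_{z}}(s(z),s'(z)) < \varepsilon/2$. Then by the assumed co-continuity of $\{s,s'\}$ (part of \eqref{lem:cont-field-HHS-Diximier-generate::complete}), the function $z'\mapsto d_{\aContField_{z'}}(s(z'),s'(z'))$ is continuous, so it remains below $\varepsilon$ on a sufficiently small neighborhood $U$ of $z$. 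This is the desired local approximation.

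\textbf{\eqref{lem:cont-field-HHS-Diximier-generate::approx} $\Rightarrow$ \eqref{lem:cont-field-HHS-Diximier-generate::complete}.} I need to show that for every fixed $s'\in\spaceOfSections$, the function $\varphi\colon z\mapsto d_{\aContField_{z}}(s(z),s'(z))$ is continuous at each $z_0\in Z$. Given $\varepsilon>0$, apply \eqref{lem:cont-field-HHS-Diximier-generate::approx} at $z_0$ with tolerance $\varepsilon/3$ to get some $s''\in\spaceOfSections$ and a neighborhood $V$ of $z_0$ on which $d_{\aContField_{z}}(s(z),s''(z))\leq\varepsilon/3$. Because $s',s''\in\spaceOfSections$ and $\spaceOfSections$ satisfies \Cref{def:cont-field-HHS}\eqref{def:cont-field-HHS::continuous}, the function $z\mapsto d_{\aContField_{z}}(s'(z),s''(z))$ is continuous, so on some neighborhood $W\subseteq V$ of $z_0$ it differs from its value at $z_0$ by at most $\varepsilon/3$. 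Two applications of the reverse triangle inequality give
\[
\bigl|\varphi(z)-d_{\aContField_{z}}(s''(z),s'(z))\bigr|\leq\varepsilon/3\quad\text{and}\quad\bigl|\varphi(z_0)-d_{\aContField_{z_0}}(s''(z_0),s'(z_0))\bigr|\leq\varepsilon/3
\]
for $z\in W$, and combining these three estimates yields $|\varphi(z)-\varphi(z_0)|\leq\varepsilon$ on $W$. Since $s'\in\spaceOfSections$ was arbitrary, $\spaceOfSections\cup\{s\}$ is mutually co-continuous, establishing \eqref{lem:cont-field-HHS-Diximier-generate::complete}.

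Neither direction is expected to present real difficulty; the main thing to keep track of is which hypothesis on $\spaceOfSections$ is in force at each step, since only pointwise density is needed for the first implication and only mutual co-continuity of $\spaceOfSections$ is needed for the second. The triple $\varepsilon/3$ split in the second implication is the only piece that requires a moment's thought, and it amounts to the standard trick that uniform closeness to a continuous family propagates continuity.
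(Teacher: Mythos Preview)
Your proposal is correct and follows essentially the same approach as the paper's proof: both directions use the same ideas (pointwise density plus co-continuity for the first implication, local approximation plus a triangle-inequality sandwich against the known co-continuity inside $\spaceOfSections$ for the second). The only cosmetic difference is that you split $\varepsilon$ into thirds to land on $\varepsilon$, while the paper keeps $\varepsilon$ throughout and obtains a final bound of $3\varepsilon$.
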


\begin{proof}	
	To prove the first statement, we assume $s$ satisfies \eqref{lem:cont-field-HHS-Diximier-generate::complete}
	and fix arbitrary $z \in Z$ and $\varepsilon > 0$. 
	By \Cref{def:cont-field-HHS}\eqref{def:cont-field-HHS::dense}, there is $s' \in \spaceOfSections$ such that 
	\[d_{\calC_{z}}(s(z),s'(z))< \frac{\varepsilon}{2}.\]
	Because $s$ and $s'$ are co-continuous by assumption, i.e., $z' \mapsto d_{\calC_{z'}}(s(z'),s'(z'))$ is continuous, so there is an open neighborhood $U$ of $z$ such that for any $z' \in U$, we have $d_{\calC_{z'}}(s(z'),s'(z'))<\varepsilon$, as desired.

	To prove the second statement, we assume $s$ satisfies \eqref{lem:cont-field-HHS-Diximier-generate::approx}, 
	i.e., 
	for any $z \in Z$ and $\varepsilon>0$, there is $s'_{z, \varepsilon} \in \spaceOfSections$ and a neighborhood $U_{z, \varepsilon}$ of $z$ such that $d_{\calC_{z'}}(s(z'), s'_{z, \varepsilon}(z')) \leq \varepsilon$ for any $z' \in U_{z, \varepsilon}$. 
	Let $s''$ be an arbitrary element of $\spaceOfSections$. 
	For any $z \in Z$ and $\varepsilon>0$, since $s'_{z, \varepsilon}$ and $s''$ are co-continuous by condition~\eqref{def:cont-field-HHS::continuous}, there is a neighborhood $V_{z, \varepsilon}$ of $z$ such that 
	$$\left| d_{\calC_{z'}}(s'_{z, \varepsilon}(z'), s''(z')) - d_{\calC_{z}}(s'_{z, \varepsilon}(z), s''(z)) \right| \leq \varepsilon$$ 
	for any $z' \in V_{z, \varepsilon}$, 
	whence $\left| d_{\calC_{z'}}(s(z'), s''(z')) - d_{\calC_{z}}(s(z), s''(z)) \right| \leq 3 \varepsilon$ for any $z' \in U_{z, \varepsilon} \cap V_{z, \varepsilon}$. 
	It follows that $s$ and $s''$ are co-continuous. Since $s''$ was chosen arbitrarily, $\spaceOfSections \cup \{s\}$ is mutually co-continuous, 
	as desired. 
\end{proof}

Using the lemma above, 
we may replace condition~\eqref{def:cont-field-HHS::complete} in \Cref{def:cont-field-HHS} by one that more closely resembles the corresponding condition~(iv) in \cite[Definition~10.1.2]{Dixmier1977C}. 

\begin{cor}\label{cor:cont-field-HHS-Diximier}
	Let $\left( \aContField_z \right)_{z \in Z}$ and $\spaceOfSections$ be as in \Cref{lem:cont-field-HHS-Diximier-generate}. 
	Assume $\spaceOfSections$ satisfies conditions~\eqref{def:cont-field-HHS::dense} and~\eqref{def:cont-field-HHS::continuous} in \Cref{def:cont-field-HHS} (with $\baseSpace{\aContField}$ replaced by $Z$). 
	Then $\spaceOfSections$ satisfies \Cref{def:cont-field-HHS}\eqref{def:cont-field-HHS::complete} 
	if and only if it satisfies the following: 
	{
		\renewcommand{\labelenumi}{\textup{(\theenumi)}} \renewcommand{\theenumi}{iv'}
		\begin{enumerate}
			\item \label{def:cont-field-HHS::approx} for any $s \in \prod_{z \in Z} \aContField_z$, if for any $z \in Z$ and any $\varepsilon > 0$, there exists $s' \in \spaceOfSections$ and a neighborhood $U$ of $z$ such that 
			\[
			d_{\aContField_{z'}} \left( s \left( z' \right) , s' \left( z' \right) \right) \leq \varepsilon \quad \text{ for any } z' \in U \; ,
			\]
			then $s \in \spaceOfSections$.
		\end{enumerate}
	}
\end{cor}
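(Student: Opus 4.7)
The plan is to derive this equivalence as an immediate consequence of \Cref{lem:cont-field-HHS-Diximier-generate}. Observe that both \Cref{def:cont-field-HHS}\eqref{def:cont-field-HHS::complete} and the proposed condition~\textup{(iv')} are universal statements of the form ``for every $s \in \prod_{z \in Z} \aContField_z$ satisfying a certain hypothesis on $s$, we have $s \in \spaceOfSections$''. The two hypotheses in question are precisely conditions~\eqref{lem:cont-field-HHS-Diximier-generate::complete} and~\eqref{lem:cont-field-HHS-Diximier-generate::approx} of \Cref{lem:cont-field-HHS-Diximier-generate}, so it suffices to translate the two implications of that lemma, which are available because the standing hypotheses of the corollary include both \Cref{def:cont-field-HHS}\eqref{def:cont-field-HHS::dense} and~\eqref{def:cont-field-HHS::continuous}.

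For the forward direction, I would assume $\spaceOfSections$ satisfies \Cref{def:cont-field-HHS}\eqref{def:cont-field-HHS::complete}, and take any $s$ possessing the local approximation property of~\textup{(iv')}. The second bullet of \Cref{lem:cont-field-HHS-Diximier-generate} (which uses only condition~\eqref{def:cont-field-HHS::continuous}) upgrades this property to the mutual co-continuity of $\spaceOfSections \cup \{s\}$, at which point condition~\eqref{def:cont-field-HHS::complete} forces $s \in \spaceOfSections$. Conversely, assuming~\textup{(iv')} and given $s$ such that $\spaceOfSections \cup \{s\}$ is mutually co-continuous, the first bullet of the lemma (which uses only condition~\eqref{def:cont-field-HHS::dense}) produces the local approximation hypothesis for $s$, and~\textup{(iv')} then places $s$ in $\spaceOfSections$.

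There is no real obstacle to overcome, since the analytic content has already been packaged into \Cref{lem:cont-field-HHS-Diximier-generate}; the corollary is just a matter of recording that, in the presence of conditions~\eqref{def:cont-field-HHS::dense} and~\eqref{def:cont-field-HHS::continuous}, the mutual-co-continuity hypothesis and the local-approximation hypothesis are interchangeable formulations of the same restriction on candidate sections, so the two versions of the closure axiom coincide.
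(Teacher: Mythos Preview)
Your proposal is correct and matches the paper's approach exactly: the paper's proof is simply ``This follows directly from \Cref{lem:cont-field-HHS-Diximier-generate},'' and you have faithfully unpacked that direct implication. Your spelled-out argument for both directions is accurate and uses the two bullets of the lemma in precisely the way intended.
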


\begin{proof}
	This follows directly from \Cref{lem:cont-field-HHS-Diximier-generate}. 
\end{proof}

This corollary provides an equivalent characterization for continuous fields of {\hhs}s, which will be convenient in showing certain sections belong to $\spaceOfSections_{\operatorname{cont}} (\aContField)$. A first example of these involves taking convex combinations of two continuous sections, where the weights in the convex combinations may be allowed to change continuously over the base space. 

\begin{cor}\label{cor:cont-field-HHS-geodesic-approximate}
	Let $\aContField$ be a continuous field of {\hhs}s. 
	Let $f \colon \baseSpace{\aContField} \to [0,1]$ be a continuous function. 
	Then for any $s_{0}, s_1 \in \spaceOfSections_{\operatorname{cont}} (\aContField)$, 
	if we use the notation in \Cref{rmk:CAT0-facts}\eqref{rmk:CAT0-facts-bicombing} to define $s_f \in \prod_{z \in \baseSpace{\aContField}} \aContField_z$ so that 
	\[
	s_f (z) = \left[ s_{0} \left( z \right) , s_{1} \left( z \right)  \right] ( f(z) )
	\quad \text{ for any } z \in \baseSpace{\aContField} \; ,
	\]
	then $s_f \in \spaceOfSections_{\operatorname{cont}} (\aContField)$. 
\end{cor}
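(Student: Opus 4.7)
The plan is to verify that $s_f$ belongs to $\spaceOfSections_{\operatorname{cont}}(\aContField)$ by checking the criterion in \Cref{cor:cont-field-HHS-Diximier}, namely local uniform approximability by already-known continuous sections. The strategy is to first build a continuous section $\sigma_t \in \spaceOfSections_{\operatorname{cont}}(\aContField)$ for each dyadic rational $t \in [0,1]$ by iterating the midpoint operation, and then approximate $s_f$ locally by such a $\sigma_t$ for a suitable $t$.

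\textbf{Step 1: Dyadic sections.} For each dyadic rational $t = k/2^n \in [0,1]$, I define $\sigma_t \in \prod_{z \in \baseSpace{\aContField}} \aContField_z$ by $\sigma_t(z) = [s_0(z), s_1(z)](t)$. I claim $\sigma_t \in \spaceOfSections_{\operatorname{cont}}(\aContField)$ by induction on $n$. The base case $n = 0$ gives $\sigma_0 = s_0$ and $\sigma_1 = s_1$, which are in $\spaceOfSections_{\operatorname{cont}}(\aContField)$ by assumption. For the induction step, since geodesics in CAT(0) spaces are affinely parametrized, $\sigma_{(2k+1)/2^{n+1}}(z)$ is the midpoint of $\sigma_{k/2^n}(z)$ and $\sigma_{(k+1)/2^n}(z)$ in $\aContField_z$, so \Cref{def:cont-field-HHS}\eqref{def:cont-field-HHS::convex} gives $\sigma_{(2k+1)/2^{n+1}} \in \spaceOfSections_{\operatorname{cont}}(\aContField)$.

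\textbf{Step 2: Local approximation.} Fix $z_0 \in \baseSpace{\aContField}$ and $\varepsilon > 0$. Set $M := d_{\aContField_{z_0}}(s_0(z_0), s_1(z_0)) + 1$. By \Cref{def:cont-field-HHS}\eqref{def:cont-field-HHS::continuous}, the function $z \mapsto d_{\aContField_z}(s_0(z), s_1(z))$ is continuous, so there is a neighborhood $V$ of $z_0$ on which it is at most $M$. Choose a dyadic rational $t \in [0,1]$ with $|t - f(z_0)| < \varepsilon/(3M)$. By continuity of $f$, there is a neighborhood $W$ of $z_0$ on which $|f(z') - f(z_0)| < \varepsilon/(3M)$. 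Set $U := V \cap W$. For every $z' \in U$, both $\sigma_t(z')$ and $s_f(z')$ lie on the (affinely parametrized) geodesic from $s_0(z')$ to $s_1(z')$, so
\[
d_{\aContField_{z'}}\bigl(s_f(z'), \sigma_t(z')\bigr) \;=\; |f(z') - t| \cdot d_{\aContField_{z'}}(s_0(z'), s_1(z')) \;\leq\; \frac{2\varepsilon}{3M} \cdot M \;<\; \varepsilon.
\]

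\textbf{Step 3: Conclusion.} Since $\sigma_t \in \spaceOfSections_{\operatorname{cont}}(\aContField)$ by Step 1, the neighborhood $U$ and section $\sigma_t$ exhibit the local approximability condition~(iv') of \Cref{cor:cont-field-HHS-Diximier} for $s_f$. Therefore $s_f \in \spaceOfSections_{\operatorname{cont}}(\aContField)$, as claimed.

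There is no real obstacle here; the only subtlety is remembering that distances along a single geodesic scale linearly with the parameter (so the ``varying-$t$'' estimate I need is much cleaner than the ``varying-endpoints'' Lipschitz estimate of \Cref{rmk:CAT0-facts}\eqref{rmk:CAT0-facts-Lipschitz}), and that the midpoint condition \Cref{def:cont-field-HHS}\eqref{def:cont-field-HHS::convex} alone suffices, via dyadic iteration, to produce continuous sections at every rational parameter.
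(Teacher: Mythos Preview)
Your proof is correct and follows essentially the same approach as the paper: build dyadic-parameter sections by iterating the midpoint axiom, then locally approximate $s_f$ by a suitable dyadic $\sigma_t$ using that distances along a single geodesic scale linearly in the parameter, and conclude via the local-approximability criterion~(iv'). The only cosmetic difference is that the paper directly arranges $|f(z)-\alpha|<\varepsilon/M$ on the neighborhood (rather than passing through $|f(z)-f(z_0)|$), but the content is identical.
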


\begin{proof}
	We first observe that the case when $f$ is a constant function with a dyadic value follows by repeatedly applying \Cref{def:cont-field-HHS}\eqref{def:cont-field-HHS::convex}. 
	
	For the general situation, using Condition \eqref{def:cont-field-HHS::approx}, it suffices to show that for any $z_0$ in $\baseSpace{\aContField}$ and $\varepsilon>0$, there is a section $s$ and an open neighborhood $U$ of $z_0$ such that for any $z \in U$, $d_{\aContField_z}(s(z), s_f(z)) \leq \varepsilon$.
	
	Choose a real number $M>d_{\aContField_{z_0}}(s_0(z_0), s_1(z_0))$. Because $s_0$ and $s_1$ are co-continuous, we have that there is an open neighbourhood $U_1$ of $z_0$ such that for any $z \in U_1$, $d_{\aContField_{z}}(s_0(z), s_1(z))<M$.
	
	Dyadic values are dense in $[0,1]$, so we can find a dyadic value $\alpha$ such that $|f(z_0)-\alpha|<\frac{\varepsilon}{M}$. By continuity of $f$, this means that there is an open neighborhood $U_2$ of $z_0$ such that on $U_2$, $|f(z)-\alpha|<\frac{\varepsilon}{M}$
	
	Let $U := U_1 \cap U_2$, which is an open neighborhood of $z_0$. 
	Consider $s := s_\alpha \in \prod_{z \in \baseSpace{\aContField}} \aContField_z$ with $s_\alpha (z) = \left[ s_{0} \left( z \right) , s_{1} \left( z \right)  \right] ( \alpha )$ for any $z \in \baseSpace{\aContField}$. 
	Then by our construction and \Cref{rmk:CAT0-facts}\eqref{rmk:CAT0-facts-bicombing}, we have, for any $z \in U$, 
	\[d_{\aContField_{z}}(s_\alpha(z), s_f(z)) = |f(z)-\alpha| \, d_{\aContField_{z}}(s_0(z), s_1(z)) < 
	\frac{\varepsilon}{M} \cdot M = \varepsilon,\]
	so we have found the desired $\varepsilon$ and $s$. 
\end{proof}

We then discuss how a continuous field of {\hhs}s may be ``generated'' by a collection of sections. 
In fact, any subset of tuples satisfying conditions~\eqref{def:cont-field-HHS::convex} and~\eqref{def:cont-field-HHS::continuous} in \Cref{def:cont-field-HHS} can be used to generate a continuous field of {\hhs}s, by virtue of the following observations. 

\begin{lem} \label{lem:cont-field-HHS-generate}
	Let $Z$ and $\left( \aContField_z \right)_{z \in Z}$ be as in \Cref{lem:cont-field-HHS-Diximier-generate}. 
	Let $\Sigma$ be a subset of $\prod_{z \in Z} \aContField_z$ satisfying conditions~\eqref{def:cont-field-HHS::dense} and~\eqref{def:cont-field-HHS::continuous} in \Cref{def:cont-field-HHS} (with $\baseSpace{\aContField}$ replaced by $Z$). 
	Define
	\[
	\langle \Sigma \rangle_{\operatorname{cont}} := \left\{ s \in \prod_{z \in Z} \aContField_z \colon \Sigma \cup \{s\} \text{ is mutually co-continuous} \right\} \; .
	\]
	Then $\langle \Sigma \rangle_{\operatorname{cont}}$ is the unique 
	subset of $\prod_{z \in Z} \aContField_z$ that contains $\Sigma$ and satisfies conditions~\eqref{def:cont-field-HHS::dense}-\eqref{def:cont-field-HHS::complete} in \Cref{def:cont-field-HHS} (in place of $\spaceOfSections$).  
	
	Moreover, if $\Sigma$ satisfies \Cref{def:cont-field-HHS}\eqref{def:cont-field-HHS::convex} (in place of $\spaceOfSections$), then so does $\langle \Sigma \rangle_{\operatorname{cont}}$. 
\end{lem}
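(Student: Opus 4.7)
The plan is to verify the conclusions in three steps: (a) the inclusion $\Sigma \subseteq \langle \Sigma \rangle_{\operatorname{cont}}$ together with the fact that $\langle \Sigma \rangle_{\operatorname{cont}}$ satisfies conditions~\eqref{def:cont-field-HHS::dense}--\eqref{def:cont-field-HHS::complete} of \Cref{def:cont-field-HHS}; (b) uniqueness of such a set; (c) preservation of convexity~\eqref{def:cont-field-HHS::convex} under $\Sigma \mapsto \langle \Sigma \rangle_{\operatorname{cont}}$.

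For (a), the inclusion $\Sigma \subseteq \langle \Sigma \rangle_{\operatorname{cont}}$ is immediate from the assumed mutual co-continuity of $\Sigma$, and condition~\eqref{def:cont-field-HHS::dense} descends to any superset of $\Sigma$. Condition~\eqref{def:cont-field-HHS::complete} is tautological: if $\langle \Sigma \rangle_{\operatorname{cont}} \cup \{s\}$ is mutually co-continuous, then so is its subset $\Sigma \cup \{s\}$, hence $s \in \langle \Sigma \rangle_{\operatorname{cont}}$ by the very definition. The substantive step is mutual co-continuity~\eqref{def:cont-field-HHS::continuous} of $\langle \Sigma \rangle_{\operatorname{cont}}$ itself. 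Given $s, s' \in \langle \Sigma \rangle_{\operatorname{cont}}$, $z_0 \in Z$ and $\varepsilon > 0$, I would use pointwise density of $\Sigma$ to pick $s'' \in \Sigma$ with $d_{\aContField_{z_0}}(s''(z_0), s'(z_0)) < \varepsilon$; since $s'$ is co-continuous with $s'' \in \Sigma$, there is a neighborhood $U$ of $z_0$ on which $d_{\aContField_z}(s''(z), s'(z)) < \varepsilon$. Since $s$ is co-continuous with $s''$, the function $z \mapsto d_{\aContField_z}(s(z), s''(z))$ is continuous, and a standard triangle-inequality sandwich bounds the oscillation of $z \mapsto d_{\aContField_z}(s(z), s'(z))$ on a small neighborhood of $z_0$ by at most a constant multiple of $\varepsilon$ plus the oscillation of the continuous function $z \mapsto d_{\aContField_z}(s(z), s''(z))$, yielding continuity at $z_0$.

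For (b), suppose $\spaceOfSections$ contains $\Sigma$ and satisfies~\eqref{def:cont-field-HHS::dense}--\eqref{def:cont-field-HHS::complete}. The inclusion $\spaceOfSections \subseteq \langle \Sigma \rangle_{\operatorname{cont}}$ is direct: for any $s \in \spaceOfSections$, mutual co-continuity of $\spaceOfSections$ together with $\Sigma \subseteq \spaceOfSections$ forces $s$ to be co-continuous with every element of $\Sigma$, so $s \in \langle \Sigma \rangle_{\operatorname{cont}}$. For the reverse inclusion, given $s \in \langle \Sigma \rangle_{\operatorname{cont}}$, I would apply~\eqref{def:cont-field-HHS::complete} for $\spaceOfSections$ after verifying that $\spaceOfSections \cup \{s\}$ is mutually co-continuous; the argument for an arbitrary $s' \in \spaceOfSections$ is essentially the same three-$\varepsilon$ estimate as in (a), now inserting an auxiliary $s'' \in \Sigma$ approximating $s'$ at the chosen base point.

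For (c), suppose $\Sigma$ satisfies~\eqref{def:cont-field-HHS::convex}. By part (a), $\langle \Sigma \rangle_{\operatorname{cont}}$ satisfies~\eqref{def:cont-field-HHS::dense} and~\eqref{def:cont-field-HHS::continuous}, so \Cref{cor:cont-field-HHS-Diximier} reduces membership in $\langle \Sigma \rangle_{\operatorname{cont}}$ to the approximation condition~\eqref{def:cont-field-HHS::approx}. Given $s_{-1}, s_1 \in \langle \Sigma \rangle_{\operatorname{cont}}$ with pointwise midpoint $s_0$, and given $z_0 \in Z$ with $\varepsilon > 0$, I would select $a_{-1}, a_1 \in \Sigma$ with $d_{\aContField_{z_0}}(a_{\pm 1}(z_0), s_{\pm 1}(z_0)) < \varepsilon$ by pointwise density, propagate these proximities to a neighborhood $U$ of $z_0$ using co-continuity, and form the pointwise midpoint $a_0$, which lies in $\Sigma$ by the assumed convexity. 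The standard CAT(0) continuity of midpoints --- a consequence of the bicombing estimate in \Cref{rmk:CAT0-facts}\eqref{rmk:CAT0-facts-Lipschitz} or directly of the CN inequality --- controls $d_{\aContField_z}(a_0(z), s_0(z))$ uniformly by the proximities of the endpoints on $U$, delivering the required approximation. The main obstacle is this last midpoint-continuity estimate in (c): one must ensure that the CAT(0) bound is robust enough to approximate $s_0$ uniformly on a neighborhood by a genuine section in $\Sigma$; the rest of the proof consists of routine triangle-inequality bookkeeping on top of the definitions.
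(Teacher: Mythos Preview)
Your proposal is correct and follows essentially the same route as the paper. The only cosmetic difference is that the paper packages your triangle-inequality/approximation arguments in (a), (b), and (c) as repeated invocations of \Cref{lem:cont-field-HHS-Diximier-generate} (passing back and forth between its conditions~\eqref{lem:cont-field-HHS-Diximier-generate::complete} and~\eqref{lem:cont-field-HHS-Diximier-generate::approx}), whereas you unwrap those invocations inline; in particular, for (c) the paper applies \Cref{lem:cont-field-HHS-Diximier-generate} directly with $\Sigma$ rather than going through \Cref{cor:cont-field-HHS-Diximier} with $\langle \Sigma \rangle_{\operatorname{cont}}$, but the substance---approximate $s_{\pm 1}$ locally by $a_{\pm 1}\in\Sigma$, take the midpoint $a_0\in\Sigma$, and use the bicombing estimate of \Cref{rmk:CAT0-facts}\eqref{rmk:CAT0-facts-Lipschitz}---is identical.
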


\begin{proof}
	It is clear that if a subset $\spaceOfSections$ of $\prod_{z \in Z} \aContField_z$ that contains $\Sigma$ and satisfies \Cref{def:cont-field-HHS}\eqref{def:cont-field-HHS::complete}, then it satisfies \eqref{lem:cont-field-HHS-Diximier-generate::complete} (and thus also \eqref{lem:cont-field-HHS-Diximier-generate::approx}) in \Cref{lem:cont-field-HHS-Diximier-generate}, whence $\langle \Sigma \rangle_{\operatorname{cont}} \subseteq \Gamma$. This proves the part about being the ``smallest'', from which uniqueness also follows. 
	
	Since $\Sigma$ is assumed to satisfy \Cref{def:cont-field-HHS}\eqref{def:cont-field-HHS::continuous}, it is immediate that $\Sigma \subseteq \langle \Sigma \rangle_{\operatorname{cont}}$. 
	This in turn has a few consequences: 
	
	\begin{itemize}
		\item Since, by assumption, $\Sigma$ satisfies \Cref{def:cont-field-HHS}\eqref{def:cont-field-HHS::dense}, so does $\langle \Sigma \rangle_{\operatorname{cont}}$. 
		
		\item Since, by definition, $\langle \Sigma \rangle_{\operatorname{cont}}$ contains all sections $s$ such that $\Sigma \cup \{s\}$ is mutually co-continuous, it thus contains all sections $s$ such that $\langle \Sigma \rangle_{\operatorname{cont}} \cup \{s\}$ is mutually co-continuous, i.e., $\langle \Sigma \rangle_{\operatorname{cont}}$ satisfies \Cref{def:cont-field-HHS}\eqref{def:cont-field-HHS::complete}. 
		
		\item We claim that $\langle \Sigma \rangle_{\operatorname{cont}}$ satisfies \Cref{def:cont-field-HHS}\eqref{def:cont-field-HHS::continuous}. To see this, fix $s, s' \in \langle \Sigma \rangle_{\operatorname{cont}}$. 	
		By definition, $s$ satisfies \Cref{lem:cont-field-HHS-Diximier-generate}\eqref{lem:cont-field-HHS-Diximier-generate::complete} with $\Sigma$ replacing $\spaceOfSections$. 
		Since, by assumption, $\Sigma$ satisfies \Cref{def:cont-field-HHS}\eqref{def:cont-field-HHS::dense}, it follows that $s$ also satisfies \Cref{lem:cont-field-HHS-Diximier-generate}\eqref{lem:cont-field-HHS-Diximier-generate::approx} with $\Sigma$ replacing $\spaceOfSections$. 
		It then follows trivially that $s$ satisfies \Cref{lem:cont-field-HHS-Diximier-generate}\eqref{lem:cont-field-HHS-Diximier-generate::approx} with $\Sigma \cup \{s'\}$ replacing $\spaceOfSections$. 
		Since $\Sigma \cup \{s'\}$ is also mutually co-continuous by definition, it follows from \Cref{lem:cont-field-HHS-Diximier-generate} that $s$ satisfies \Cref{lem:cont-field-HHS-Diximier-generate}\eqref{lem:cont-field-HHS-Diximier-generate::complete} with $\Sigma \cup \{s'\}$ replacing $\spaceOfSections$, 
		i.e., $\Sigma \cup \{s'\} \cup \{s\}$ is mutually co-continuous. Hence $s$ and $s'$ are co-continuous. 
	\end{itemize}
	
	Let $\spaceOfSections$ be an arbitrary subset of $\prod_{z \in Z} \aContField_z$ that contains $\Sigma$ and satisfies conditions~\eqref{def:cont-field-HHS::dense}-\eqref{def:cont-field-HHS::complete} in \Cref{def:cont-field-HHS}. It follows from \Cref{def:cont-field-HHS}\eqref{def:cont-field-HHS::continuous} that for any $s \in \spaceOfSections$, $\Sigma \cup \{s\}$ is mutually co-continuous, for it is a subset of $\spaceOfSections$. Hence $\spaceOfSections \subseteq \langle \Sigma \rangle_{\operatorname{cont}}$. 
	On the other hand, It follows from \Cref{def:cont-field-HHS}\eqref{def:cont-field-HHS::complete} that 	
	$\spaceOfSections$ contains all sections $s$ such that $\spaceOfSections \cup \{s\}$ is mutually co-continuous, which include all sections $s$ such that $\langle \Sigma \rangle_{\operatorname{cont}} \cup \{s\}$ is mutually co-continuous. 
	These latter sections belong to $\langle \Sigma \rangle_{\operatorname{cont}}$, since we have shown above that $\langle \Sigma \rangle_{\operatorname{cont}}$ satisfies \Cref{def:cont-field-HHS}\eqref{def:cont-field-HHS::continuous}. 
	Therefore $\spaceOfSections = \langle \Sigma \rangle_{\operatorname{cont}}$, which proves uniqueness. 
	
	Finally, we wish to show that  if $\Sigma$ satisfies \Cref{def:cont-field-HHS}\eqref{def:cont-field-HHS::convex} (in place of $\spaceOfSections$), then so does $\langle \Sigma \rangle_{\operatorname{cont}}$. 
	More precisely, we wish to show that for any $s_{-1}, s_{0}, s_1 \in \prod_{z \in Z} \aContField_z$ satisfying that for any $z \in Z$, $s_{0} (z)$ is the midpoint of $s_{-1} (z)$ and $s_1 (z)$, the containment $\{ s_{-1}, s_1 \} \subseteq \langle \Sigma \rangle_{\operatorname{cont}}$ implies $s_{0} \in \langle \Sigma \rangle_{\operatorname{cont}}$, provided that the same holds for $\Sigma$.

	To this end, it suffices to show, in view of the definition of $\langle \Sigma \rangle_{\operatorname{cont}}$ and \Cref{lem:cont-field-HHS-Diximier-generate}, that for any $\varepsilon$ and $z \in Z$, there is $s_0'$ in $\Sigma$ and a neighborhood $U$ of $z$ such that for any $z' \in U$, $d_{\mathcal{C}_{z'}}(s_{0}(z'), s_{0}'(z'))<\varepsilon$. 
	
	To do this, we apply \Cref{lem:cont-field-HHS-Diximier-generate} again to $s_{-1}, s_1 \in \langle \Sigma \rangle_{\operatorname{cont}}$ to obtain, for $i \in \{-1, 1\}$, $s_{i}' \in \Sigma$ and a neighborhood $U_i$ of $z$ such that for any $z' \in U_i$, $d_{\mathcal{C}_{z'}}(s_{i}(z'), s_{i}'(z'))<\varepsilon$. 
	
	We let $U = U_{-1} \cap U_1$ and define $s'_0 \in \prod_{z' \in Z} \aContField_{z'}$ such that for any $z' \in Z$, $s'_{0} (z') = \left[ s'_{-1} (z') , s'_{1} (z') \right] \left( 1/2 \right)$, i.e., $s'_{0} (z')$ is the midpoint of $s'_{-1} (z')$ and $s'_1 (z')$. 
	By \Cref{rmk:CAT0-facts}\eqref{rmk:CAT0-facts-Lipschitz}, we have, for any $z' \in U$
	\begin{align*}
	d_{\mathcal{C}_{z'}} \left( s_{0} (z') ,  s'_{0} (z') \right) \\
	= & \ d_{\mathcal{C}_{z'}} \left( \left[ s_{-1} (z') , s_{1} (z') \right] \left( 1/2 \right) , \left[ s'_{-1} (z') , s'_{1} (z') \right] \left( 1/2 \right) \right) \\
	\leq & \ \max \left\{ d_{\mathcal{C}_{z'}} \left( s_{-1} (z') ,  s'_{-1} (z') \right) , d_{\mathcal{C}_{z'}} \left( s_{1} (z') ,  s'_{1} (z') \right)  \right\} < \varepsilon \; ,
	\end{align*}
	as desired. 
\end{proof}

\begin{rmk} \label{rmk:cont-field-HHS-generate}
	Let $Z$ be a locally compact paracompact Hausdorff space and let $\left( \aContField_z \right)_{z \in Z}$ be a tuple of {\hhs}s. Let $\Sigma$ be a subset $\prod_{z \in Z} \aContField_z$ satisfying conditions~\eqref{def:cont-field-HHS::convex} and~\eqref{def:cont-field-HHS::continuous} in \Cref{def:cont-field-HHS}. For each $z \in Z$, write $\Sigma_z := \left\{ s(z) \colon s \in \Sigma \right\}$, which is a convex subset of $\aContField_z$ and whose closure is thus also a {\hhs}. It follows that $\Sigma$ satisfies conditions~\eqref{def:cont-field-HHS::convex}-\eqref{def:cont-field-HHS::continuous} in \Cref{def:cont-field-HHS} when considered as a subset of $\prod_{z \in Z} \overline{\Sigma_z}$. 
\end{rmk}

\begin{defn} \label{def:cont-field-HHS-generate}
	We say the set $\Sigma$ in \Cref{rmk:cont-field-HHS-generate} is 
	a \emph{generating set} for the continuous field of {\hhs}s that consists of the base space $Z$, the tuple $\left( \overline{\Sigma_z} \right)_{z \in Z}$, and the set of continuous sections equal to $\langle \Sigma \rangle_{\operatorname{cont}}$ (as in \Cref{lem:cont-field-HHS-generate}). 
\end{defn}

\begin{eg} \label{eg:cont-fields-trivial}
	Let $Z$ be a locally compact paracompact Hausdorff space and let $\aHHS$ be a {\hhs}. The \emph{constant continuous field of {\hhs}s} with base $Z$ and fibers $\aHHS$, denoted by $(\aHHS)_Z$ or $\aHHS_Z$, is given by the constant tuple $(\aHHS)_{z \in Z}$ of {\hhs}s together with $\spaceOfSections_{\operatorname{cont}} \left( (\aHHS)_Z \right)$ given by $C(Z, \aHHS)$, the set of continuous functions from $Z$ to $\aHHS$. Note that $\spaceOfSections_{\operatorname{cont}} \left( (\aHHS)_Z \right)$ is generated by the constant sections $\left\{ (x)_{z \in Z} \in \prod_{z \in Z} \aHHS \colon x \in \aHHS  \right\}$. 
\end{eg}

\begin{eg}
	Continuous fields of affine real Euclidean spaces over locally compact paracompact Hausdorff spaces in the sense of \cite[Définition~3.2]{Tu1999La} are precisely continuous fields of {\hhs}s with each fiber being a Hilbert space. 
	Indeed, although \cite[Définition~3.2]{Tu1999La} is formulated in terms of local sections, since we restrict ourselves to locally compact paracompact Hausdorff base spaces, as pointed out by the author at the beginning of page~221, it suffices to look at global sections (or more precisely, restrictions of global sections to open subsets of the base space). 
	Upon this realization, we see that \cite[Définition~3.2]{Tu1999La} corresponds almost exactly to \Cref{def:cont-field-HHS} with \eqref{def:cont-field-HHS::complete} replaced by \eqref{def:cont-field-HHS::approx} as in \Cref{cor:cont-field-HHS-Diximier}, provided we show that for any $\lambda \in \mathbb{R}$ and any continuous sections $s_0, s_1$ in a continuous field $\aContField$ of {\hhs}s with each fiber being a Hilbert space, the section $s_\lambda$ obtained by taking the affine combination $s_\lambda (z) = (1-\lambda) s_0 (z) + \lambda s_1 (z)$ for each $z \in \baseSpace{\aContField}$ is still continuous. This latter statement is true thanks to \Cref{def:cont-field-HHS}\eqref{def:cont-field-HHS::complete} since $s_\lambda$ is cocontinuous with any continuous section $s$ in $\aContField$, for elementary Euclidean geometry yields the formula 
	\[
	d_{\aContField_z} \left( s_\lambda (z) , s(z) \right) = \sqrt{ \lambda^2 \, d_{01} (z)^2 - \lambda \left( d_{01} (z)^2 +  d_{0} (z)^2 -  d_{1} (z)^2  \right) + d_{0} (z)^2 } \; ,
	\]
	where $d_{01}, d_{0}, d_{1} \colon \baseSpace{\aContField} \to [0,\infty)$ are continuous functions with $d_{01} (z) = d_{\aContField_z} \left( s_0 (z) , s_1 (z) \right)$, $d_{0} (z) = d_{\aContField_z} \left( s (z) , s_0 (z) \right)$ and  $d_{1} (z) = d_{\aContField_z} \left( s (z) , s_1 (z) \right)$. 
	In particular, it follows that for continuous fields of {\hhs}s with each fiber being a Hilbert space, condition~\eqref{def:cont-field-HHS::convex} in \Cref{def:cont-field-HHS} is redundant. 
\end{eg}

We discuss morphisms between continuous fields of {\hhs}s. 
\begin{defn}\label{defn:cont-fields-morphism}
	Let $\aContField$ and $\anotContField$ be two continuous fields of {\hhs}s. 
	\begin{enumerate}
		\item \label{defn:cont-fields-morphism:morhpism} 
		An \emph{isometric continuous morphism} (or simply an \emph{isometric morphism} or even a \emph{morphism} if there is no risk of confusion) $\varphi \colon \aContField \to \anotContField$ is a tuple $\left( \, \baseSpace{\varphi},  \left( \varphi_y \right)_{y \in \baseSpace{\anotContField}} \right)$, where $\baseSpace{\varphi} \colon \baseSpace{\anotContField} \to \baseSpace{\aContField}$ is a continuous map and for each $y \in \baseSpace{\anotContField}$, $\varphi_y \colon \aContField_{\baseSpace{\varphi}(y)} \to \anotContField_y$ is an isometric embedding, such that 
		for any $s \in \spaceOfSections_{\operatorname{cont}} (\aContField)$, we have $\left( \varphi_y \left( s \left( \baseSpace{\varphi}(y) \right) \right) \right)_{y \in \baseSpace{\anotContField}} \in \spaceOfSections_{\operatorname{cont}} (\anotContField)$. 
		The latter element in $\spaceOfSections_{\operatorname{cont}} (\anotContField)$ is usually denoted by $\varphi(s)$. 
		\item \label{defn:cont-fields-morphism:composition} Let $\anotContField'$ be another continuous field of {\hhs}s. Given two isometric continuous morphisms $\varphi \colon \aContField \to \anotContField$ and $\psi \colon \anotContField \to \anotContField'$, their composition $\psi \circ \varphi \colon \aContField \to \anotContField'$ is given by the tuple 
		\[
		\left( \left( \, \baseSpace{\varphi} \circ \baseSpace{\psi} \colon \baseSpace{\anotContField'} \to \baseSpace{\aContField} \right),  \left( \psi_{y'} \circ \varphi_{\baseSpace{\psi}(y')} \colon \aContField_{\baseSpace{\varphi} \circ \baseSpace{\psi}(y')} \to \anotContField'_{y'} \right)_{y' \in \baseSpace{\anotContField'}} \right) \; .
		\]
		
		\item \label{defn:cont-fields-morphism:category} The category $\CFHHS$ has all continuous fields of {\hhs}s as its objects and all isometric continuous morphisms as its morphisms. Apparently, the associations $\aContField \mapsto \baseSpace{\aContField}$ and $\varphi \mapsto \baseSpace{\varphi}$ yields a contravariant functor from $\CFHHS$ to the category of topological spaces and continuous maps. 
		
		\item \label{defn:cont-fields-morphism:category-fixed-base} Given a continuous map $f \colon \baseSpace{\anotContField} \to \baseSpace{\aContField}$, we write $\CFHHS_{f}(\aContField, \anotContField)$ for the subset of $\CFHHS(\aContField, \anotContField)$ that consists of all isometric continuous morphisms $\varphi$ from $\aContField$ to $\anotContField$ such that $\baseSpace{\varphi} = f$. 
		
		\item \label{defn:cont-fields-morphism:isomorhpism} An isometric continuous morphism $\varphi \colon \aContField \to \anotContField$ is an \emph{isometric continuous isomorphism} if there is an isometric continuous morphism $\psi \colon \anotContField \to \aContField$ such that $\psi \circ \varphi$ and $\varphi \circ \psi$ are the identity morphisms on $\aContField$ and $\anotContField$, respectively. 
		
		\item \label{defn:cont-fields-morphism:trivial} We say $\aContField$ is \emph{trivial} if there is a isometric continuous isomorphism from it to a constant continuous field of {\hhs}s.  
	\end{enumerate}
\end{defn}

We establish a few simplifying criteria regarding \Cref{def:cont-field-HHS-maps}. 

\begin{lem}\label{lem:cont-fields-HHS-map-generators}
	Let $\aContField$ and $\anotContField$ be two continuous fields of {\hhs}s. 
	Let $\Sigma$ and $\Xi$ be generating sets of $\aContField$ and $\anotContField$, respectively, in the sense of \Cref{def:cont-field-HHS-generate}. 
	Let $f \colon \baseSpace{\anotContField} \to \baseSpace{\aContField}$ be a continuous map and for each $y \in \baseSpace{\anotContField}$, let $\varphi_y \colon \aContField_{f(y)} \to \anotContField_y$ be an isometric embedding. 
	Then the tuple $\left( f,  \left( \varphi_y \right)_{y \in \baseSpace{\anotContField}} \right)$ constitutes an isometric continuous morphism if and only if 
	for any $s \in \Sigma$, the section $\left( \varphi_y \left( s \left( f(y) \right) \right) \right)_{y \in \baseSpace{\anotContField}} \in \prod_{y \in \baseSpace{\anotContField}} \anotContField_y$ is co-continuous with $s'$ for any $s' \in \Xi$. 
\end{lem}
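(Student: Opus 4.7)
The forward implication is immediate from the definition: if $\left(f, (\varphi_y)_{y\in\baseSpace{\anotContField}}\right)$ is an isometric continuous morphism, then for every $s\in\Sigma\subseteq\spaceOfSections_{\operatorname{cont}}(\aContField)$, the section $\varphi(s)=\left(\varphi_y(s(f(y)))\right)_{y\in\baseSpace{\anotContField}}$ lies in $\spaceOfSections_{\operatorname{cont}}(\anotContField)$, and thus, by \Cref{def:cont-field-HHS}\eqref{def:cont-field-HHS::continuous}, is co-continuous with every $s'\in\Xi\subseteq\spaceOfSections_{\operatorname{cont}}(\anotContField)$.

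For the converse, the plan is to show that for every $s\in\spaceOfSections_{\operatorname{cont}}(\aContField)=\langle\Sigma\rangle_{\operatorname{cont}}$, the pushed-forward section $\varphi(s)$ lies in $\langle\Xi\rangle_{\operatorname{cont}}=\spaceOfSections_{\operatorname{cont}}(\anotContField)$. Since $\Sigma$ is a generating set, it satisfies conditions~\eqref{def:cont-field-HHS::dense} and~\eqref{def:cont-field-HHS::continuous} of \Cref{def:cont-field-HHS} (with $\spaceOfSections$ replaced by $\Sigma$), so by \Cref{lem:cont-field-HHS-Diximier-generate}, any $s\in\langle\Sigma\rangle_{\operatorname{cont}}$ has the local approximation property: for any $z_{0}\in\baseSpace{\aContField}$ and $\varepsilon>0$, there exist $\sigma\in\Sigma$ and a neighborhood $U$ of $z_{0}$ with $d_{\aContField_{z}}(s(z),\sigma(z))\leq\varepsilon$ for all $z\in U$.

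Given any $y_{0}\in\baseSpace{\anotContField}$ and $\varepsilon>0$, apply the above at $z_{0}=f(y_{0})$ to obtain such $\sigma$ and $U$, and set $V:=f^{-1}(U)$, a neighborhood of $y_{0}$. Because each $\varphi_{y}$ is an isometric embedding, for every $y\in V$ we have
\[
d_{\anotContField_{y}}\bigl(\varphi(s)(y),\varphi(\sigma)(y)\bigr) = d_{\aContField_{f(y)}}\bigl(s(f(y)),\sigma(f(y))\bigr) \leq \varepsilon.
\]
Now fix any $s'\in\Xi$. By hypothesis, $\varphi(\sigma)$ is co-continuous with $s'$, so the function $y\mapsto d_{\anotContField_{y}}(\varphi(\sigma)(y),s'(y))$ is continuous, and the triangle inequality yields
\[
\bigl|\,d_{\anotContField_{y}}(\varphi(s)(y),s'(y))-d_{\anotContField_{y}}(\varphi(\sigma)(y),s'(y))\,\bigr|\leq\varepsilon \quad\text{for all } y\in V.
\]
Since $\varepsilon>0$ and $y_{0}$ were arbitrary, it follows that $y\mapsto d_{\anotContField_{y}}(\varphi(s)(y),s'(y))$ is continuous at every point, i.e., $\varphi(s)$ is co-continuous with $s'$.

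Thus $\Xi\cup\{\varphi(s)\}$ is mutually co-continuous for every $s\in\spaceOfSections_{\operatorname{cont}}(\aContField)$, so $\varphi(s)\in\langle\Xi\rangle_{\operatorname{cont}}=\spaceOfSections_{\operatorname{cont}}(\anotContField)$ by \Cref{lem:cont-field-HHS-generate}, which shows the tuple constitutes an isometric continuous morphism. The only non-routine ingredient is recognizing that the generation lemma reduces the continuity check from all of $\spaceOfSections_{\operatorname{cont}}(\aContField)$ to the generating set $\Sigma$, and similarly for $\Xi$ on the target side; everything else is bookkeeping with isometries and the triangle inequality.
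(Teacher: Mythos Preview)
Your proof is correct and follows essentially the same approach as the paper's. The only cosmetic difference is in packaging: the paper first notes that for $\sigma\in\Sigma$ the section $\varphi(\sigma)$ already lies in $\spaceOfSections_{\operatorname{cont}}(\anotContField)=\langle\Xi\rangle_{\operatorname{cont}}$ (by \Cref{lem:cont-field-HHS-generate}), and then invokes condition~\eqref{def:cont-field-HHS::approx} of \Cref{cor:cont-field-HHS-Diximier} to place $\varphi(s)$ there via local approximation; you instead keep the hypothesis on $\varphi(\sigma)$ at the level of co-continuity with $\Xi$ and run the $3\varepsilon$ triangle-inequality argument by hand to show $\varphi(s)$ is co-continuous with each $s'\in\Xi$, then appeal to the definition of $\langle\Xi\rangle_{\operatorname{cont}}$. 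Both routes are the same argument.
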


\begin{proof}
	The ``only if'' direction follows directly from \Cref{def:cont-field-HHS-maps}\eqref{defn:cont-fields-morphism:morhpism} and \Cref{def:cont-field-HHS}\eqref{def:cont-field-HHS::continuous}. 
	For the ``if'' direction, we first apply \Cref{def:cont-field-HHS-generate} and \Cref{lem:cont-field-HHS-generate} to see that for any $s \in \Sigma$, the section $\left( \varphi_y \left( s \left( f(y) \right) \right) \right)_{y \in \baseSpace{\anotContField}}$ is contained in $\langle \Xi \rangle_{\operatorname{cont}} = \spaceOfSections_{\operatorname{cont}} (\anotContField)$. 
	Now for any $s \in \spaceOfSections_{\operatorname{cont}} (\aContField) = \langle \Sigma \rangle_{\operatorname{cont}}$, since $\Sigma \cup \{s\}$ is mutually co-continuous and $\Sigma$ satisfies \Cref{def:cont-field-HHS}\eqref{def:cont-field-HHS::dense} in $\prod_{z \in \baseSpace{\aContField}} \aContField_z$, it follows from \Cref{lem:cont-field-HHS-Diximier-generate} that 
	for any $z \in \baseSpace{\aContField}$ and any $\varepsilon > 0$, there exists $s'_{z,\varepsilon} \in \Sigma$ and a neighborhood $U_{z,\varepsilon}$ of $z$ such that 
	\[
	d_{\aContField_{z'}} \left( s \left( z' \right) , s'_{z,\varepsilon} \left( z' \right) \right) \leq \varepsilon \quad \text{ for any } z' \in U_{z,\varepsilon} \; .
	\]
	Hence, for any $y \in \baseSpace{\anotContField}$ and any $\varepsilon > 0$, we have, for any $y'$ in the open neighborhood $f^{-1} \left( U_{f(y),\varepsilon} \right)$ of $y$, 
	\[
	d_{\anotContField_{y'}} \left( \varphi_{y'} \left( s \left( f(y') \right) \right) , \varphi_{y'} \left( s'_{f(y),\varepsilon}  \left( f(y') \right) \right) \right) 
	= d_{\aContField_{f(y')}} \left( s \left( f(y') \right) ,  s'_{f(y),\varepsilon}  \left( f(y') \right) \right) 
	\leq \varepsilon  \; .
	\]
	Since, by we have shown above, $\left( \varphi_{y'} \left( s'_{f(y),\varepsilon} \left( f(y') \right) \right) \right)_{y' \in \baseSpace{\anotContField}} \in  \spaceOfSections_{\operatorname{cont}} (\anotContField)$, 
	it follows from \Cref{cor:cont-field-HHS-Diximier} that $\left( \varphi_y \left( s \left( f(y) \right) \right) \right)_{y \in \baseSpace{\anotContField}}$ is contained in $\spaceOfSections_{\operatorname{cont}} (\anotContField)$. 
	Therefore $\left( f,  \left( \varphi_y \right)_{y \in \baseSpace{\anotContField}} \right)$ constitutes an isometric continuous morphism. 
\end{proof}

\begin{lem}\label{lem:cont-fields-isomorphism}
	An isometric continuous morphism $\varphi \colon \aContField \to \anotContField$ is an isometric isomorphism if and only if $\baseSpace{\varphi}$ is a homeomorphism and each $\varphi_y \colon \aContField_{\baseSpace{\varphi}(y)} \to \anotContField_y$ is an isometric bijection. 
\end{lem}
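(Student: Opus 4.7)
The plan is to handle the two directions separately, with the ``only if'' direction being a direct unwinding of the composition rule in \Cref{defn:cont-fields-morphism}\eqref{defn:cont-fields-morphism:composition} and the ``if'' direction requiring a careful construction of the inverse tuple together with a verification that it indeed yields an isometric continuous morphism.

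For the ``only if'' direction, suppose $\psi \colon \anotContField \to \aContField$ is an isometric continuous inverse of $\varphi$. Then the identities $\psi \circ \varphi = \operatorname{id}_\aContField$ and $\varphi \circ \psi = \operatorname{id}_\anotContField$, read off at the level of base spaces via \Cref{defn:cont-fields-morphism}\eqref{defn:cont-fields-morphism:composition}, give $\baseSpace{\varphi} \circ \baseSpace{\psi} = \operatorname{id}_{\baseSpace{\aContField}}$ and $\baseSpace{\psi} \circ \baseSpace{\varphi} = \operatorname{id}_{\baseSpace{\anotContField}}$, so $\baseSpace{\varphi}$ is a homeomorphism. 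Reading the same identities fiberwise shows $\psi_z \circ \varphi_{\baseSpace{\psi}(z)} = \operatorname{id}_{\aContField_z}$ for every $z \in \baseSpace{\aContField}$ and $\varphi_y \circ \psi_{\baseSpace{\varphi}(y)} = \operatorname{id}_{\anotContField_y}$ for every $y \in \baseSpace{\anotContField}$; substituting $z = \baseSpace{\varphi}(y)$ in the first and using $\baseSpace{\psi}(z) = y$ yields that $\varphi_y$ is an isometric bijection with inverse $\psi_{\baseSpace{\varphi}(y)}$.

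For the ``if'' direction, assume $\baseSpace{\varphi}$ is a homeomorphism and each $\varphi_y$ is an isometric bijection. Define a candidate inverse $\psi$ by setting $\baseSpace{\psi} := \baseSpace{\varphi}^{-1} \colon \baseSpace{\aContField} \to \baseSpace{\anotContField}$ and, for each $z \in \baseSpace{\aContField}$, taking $\psi_z := \varphi_{\baseSpace{\psi}(z)}^{-1} \colon \anotContField_{\baseSpace{\psi}(z)} \to \aContField_z$, which makes sense since $\baseSpace{\varphi}(\baseSpace{\psi}(z)) = z$. The main step, and the only one which is not purely formal, is to check that this tuple actually defines a morphism in $\CFHHS$, i.e.\ that it sends continuous sections to continuous sections.

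To verify this, fix $s \in \spaceOfSections_{\operatorname{cont}}(\anotContField)$ and let $\psi(s) \in \prod_{z \in \baseSpace{\aContField}} \aContField_z$ be the tuple $\psi(s)(z) = \varphi_{\baseSpace{\psi}(z)}^{-1}(s(\baseSpace{\psi}(z)))$. For any $t \in \spaceOfSections_{\operatorname{cont}}(\aContField)$, applying that $\varphi_{\baseSpace{\psi}(z)}$ is an isometry gives
\[
d_{\aContField_z}\bigl(\psi(s)(z),\, t(z)\bigr)
= d_{\anotContField_{\baseSpace{\psi}(z)}}\bigl(s(\baseSpace{\psi}(z)),\, \varphi_{\baseSpace{\psi}(z)}(t(z))\bigr)
= d_{\anotContField_{\baseSpace{\psi}(z)}}\bigl(s(\baseSpace{\psi}(z)),\, \varphi(t)(\baseSpace{\psi}(z))\bigr),
\]
where the last equality uses the definition of $\varphi(t)$ at the point $\baseSpace{\psi}(z)$, noting $\baseSpace{\varphi}(\baseSpace{\psi}(z)) = z$. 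Since $\varphi$ is an isometric continuous morphism, $\varphi(t) \in \spaceOfSections_{\operatorname{cont}}(\anotContField)$, so by \Cref{def:cont-field-HHS}\eqref{def:cont-field-HHS::continuous} the function $y \mapsto d_{\anotContField_y}(s(y), \varphi(t)(y))$ is continuous on $\baseSpace{\anotContField}$; precomposing with the continuous map $\baseSpace{\psi}$ shows that $z \mapsto d_{\aContField_z}(\psi(s)(z), t(z))$ is continuous. As this holds for all $t \in \spaceOfSections_{\operatorname{cont}}(\aContField)$, the co-continuity-closure axiom \Cref{def:cont-field-HHS}\eqref{def:cont-field-HHS::complete} yields $\psi(s) \in \spaceOfSections_{\operatorname{cont}}(\aContField)$. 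Hence $\psi$ is an isometric continuous morphism, and the identities $\psi \circ \varphi = \operatorname{id}_\aContField$ and $\varphi \circ \psi = \operatorname{id}_\anotContField$ follow by direct comparison with the composition formula in \Cref{defn:cont-fields-morphism}\eqref{defn:cont-fields-morphism:composition}. The potentially subtle point here is the continuity of $\psi$ on sections, but it is resolved cleanly thanks to the axiom \Cref{def:cont-field-HHS}\eqref{def:cont-field-HHS::complete}, which lets us verify continuity purely by co-continuity tests against arbitrary continuous sections of the target, with no need to invoke local approximations as in \Cref{cor:cont-field-HHS-Diximier}.
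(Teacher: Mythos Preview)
Your proof is correct and follows essentially the same approach as the paper's own proof: construct the candidate inverse $\psi$ by inverting $\baseSpace{\varphi}$ and each $\varphi_y$, then verify that $\psi$ takes continuous sections to continuous sections by reducing the co-continuity test against an arbitrary $t \in \spaceOfSections_{\operatorname{cont}}(\aContField)$ to the known co-continuity of $s$ and $\varphi(t)$ in $\anotContField$, and invoke co-continuity-closedness. The paper omits the ``only if'' direction you included, presumably as obvious.
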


\begin{proof}
	We define an isometric continuous morphism $\psi \colon \anotContField \to \aContField$ such that $\baseSpace{\psi} = \baseSpace{\varphi}^{-1}$ and for any $z \in \baseSpace{\aContField}$, we have $\psi_z = \left( \varphi_{\baseSpace{\psi} (z)} \right)^{-1} \colon \anotContField_{\psi(z)} \to \aContField_z$. 
	Note that this is well-defined: for any $s \in \spaceOfSections_{\operatorname{cont}} (\anotContField)$, 	
	we have $\left( \psi_z \left( s \left( \baseSpace{\psi}(z) \right) \right) \right)_{z \in \baseSpace{\aContField}} \in \spaceOfSections_{\operatorname{cont}} (\aContField)$ by the co-continuity-closedness of $\aContField$, 
	as for any $s' \in \spaceOfSections_{\operatorname{cont}} (\aContField)$, the function 
	\[
	\baseSpace{\aContField} \ni z \mapsto d_{\aContField_z} \left( \psi_z \left( s \left( \baseSpace{\psi}(z) \right) \right) , s'(z) \right) 
	=  d_{\anotContField_{\psi(z)}} \left( s \left( \baseSpace{\psi}(z) \right), \varphi(s') \left( \baseSpace{\psi}(z) \right) \right)
	\]
	is continuous by the mutual co-continuity of $\anotContField$. 
	It is routine to verify that $\psi \circ \varphi$ and $\varphi \circ \psi$ are the identity morphisms on $\aContField$ and $\anotContField$, respectively. 
\end{proof}

\begin{defn} \label{def:cont-field-HHS-maps}
	Let $Y$ be a locally compact paracompact Hausdorff space and let $\aContField$ be a continuous field of {\hhs}s. 
	\begin{enumerate}
		\item \label{def:cont-field-HHS-maps:induce} Given a continuous map $f \colon Y \to \baseSpace{\aContField}$, the \emph{continuous field of {\hhs}s over $Y$ induced from $\aContField$ by $f$}, denoted by $f^* \aContField$ or $\left(f^*\aContField\right)_Y$, 
		is given by 
		defining $\left(f^*\aContField\right)_{y} = \aContField_{f(y)}$ for any $y \in Y$ and letting $\spaceOfSections_{\operatorname{cont}} (Y, f^*\aContField)$ be the set of continuous sections generated by the image of $\spaceOfSections_{\operatorname{cont}} (\aContField)$ under the map 
		\[
		\prod_{z \in \baseSpace{\aContField}} \aContField_z \to \prod_{y \in Y} \aContField_{f(y)} \, , \quad s \mapsto s \circ f \; .
		\]
		In this case, there is a canonical isometric continuous morphism $f^* \colon \aContField \to \left(f^*\aContField\right)_Y$ where $\underline{f^*} = f$ and, for each $y \in Y$, $(f^*)_y \colon \aContField_{f(y)} \to \left(f^*\aContField\right)_{y}$ is the identity map. 
		
		\item \label{def:cont-field-HHS-maps:restrict} For the sake of notational convenience, when the continuous map $f \colon Y \to \baseSpace{\aContField}$ is made clear from the context, we may sometimes also write $\aContField|_Y$ in place of $f^* \aContField$. Two primary examples of this usage are: 
		\begin{itemize}
			\item If $Y$ is a subspace of $\baseSpace{\aContField}$, then $f \colon Y \to \baseSpace{\aContField}$ is understood to be the inclusion map, and $\aContField|_Y$ is also called the \emph{restriction} or \emph{reduction} of $\aContField$ to $Y$. 
			\item If $Y = \baseSpace{\aContField} \times Z$ or $Z \times \aContField$ for some other locally compact paracompact Hausdorff space $Z$, then $f \colon Y \to \baseSpace{\aContField}$ is understood to be the projection onto the factor $\baseSpace{\aContField}$, and $\aContField|_{\baseSpace{\aContField} \times Z}$ or $\aContField|_{Z \times \baseSpace{\aContField}}$ is also called the \emph{extension} of $\aContField$ over the Cartesian product with $Z$. 
		\end{itemize}
		There is another important case where we use this notation, which will be explained in \Cref{def:continuum-product-field}. 	
		
		\item Given an isometric continuous morphism $\varphi \colon \anotContField \to \aContField$ and assuming $Y$ is a subspace of $\baseSpace{\aContField}$, we obtain an isometric continuous morphism $\varphi|_{Y} \colon \anotContField|_{\varphi(Y)} \to \aContField|_{Y}$ such that $\baseSpace{\varphi|_{Y}} := \baseSpace{\varphi}|_{Y}$ and for any $y \in Y$, $\left(\varphi |_{Y}\right)_y := \varphi_y$. 
	\end{enumerate}
\end{defn}

\begin{eg}  \label{eg:cont-field-HHS-maps-singleton}
	For any continuous field $\aContField$ of {\hhs}s and any $z \in \baseSpace{\aContField}$, we can canonically identify $\aContField|_{\{z\}}$ with $\aContField_z$, viewed as a (constant) continuous field over a singleton. 
\end{eg}

\begin{rmk} \label{rmk:cont-field-HHS-maps-trivial}
	It is clear that given a trivial continuous field $\aHHS_Z$ of {\hhs}s and a continuous map $f \colon Y \to Z$, the induced continuous field $f^* \left( \aHHS_Z \right)$ is the trivial continuous field $\aHHS_Y$ of {\hhs}s. 
\end{rmk}

\begin{rmk} \label{rmk:cont-field-HHS-maps-functorial}
	It is also clear that if $Y$, $\aContField$ and $f$ are as in \Cref{def:cont-field-HHS-maps} and $h \colon Z \to Y$ is another continuous map from a locally compact paracompact Hausdorff space, then we have a canonical isometric continuous isomorphism $h^* \left( f^* \aContField \right) \cong (f \circ h)^* \aContField$ that fits into the following commutative diagram 
	\[
	\xymatrix{
		\aContField \ar[r]^{(f \circ h)^*} \ar[d]_{f^*} & (f \circ h)^* \aContField \ar[d]^{\cong} \\
		f^* \aContField \ar[r]_{h^*} & h^* \left( f^* \aContField \right) 
	}
	\]
\end{rmk}

\begin{rmk} \label{rmk:cont-field-HHS-maps-factorization}
	It also follows from \Cref{defn:cont-fields-morphism} and \Cref{def:cont-field-HHS-maps} that any isometric continuous morphism $\varphi \colon \aContField \to \anotContField$ factors into the composition of $\baseSpace{\varphi}^* \colon \aContField \to \baseSpace{\varphi}^* \aContField = \left(\baseSpace{\varphi}^* \aContField\right)_{\baseSpace{\anotContField}}$ and an isometric continuous morphism $\dot{\varphi} \colon \left(\baseSpace{\varphi}^* \aContField\right)_{\baseSpace{\anotContField}} \to \anotContField$ defined so that $\underline{(\dot{\varphi})} = \operatorname{id}_{\baseSpace{\anotContField}}$ and, for each $y \in {\baseSpace{\anotContField}}$, we identify $\dot{\varphi}_y \colon \left( \baseSpace{\varphi}^* \aContField \right)_y  \to \anotContField_y$ with $\varphi_y \colon \aContField_{\baseSpace{\varphi}(y)} \to \anotContField_y$. 
\end{rmk}

\begin{defn}\label{defn:isometric-action}
	Let $\aContField$ be a continuous field of {\hhs}s and let $G$ be a group. An \emph{isometric (left) action} $\alpha$ of $G$ on $\aContField$ is a homomorphism from $G$ to the group $\operatorname{Isom}(\aContField)$ of isometric continuous isomorphisms from $\aContField$ to itself, 
	that is, we have $\alpha_g \circ \alpha_{g'} = \alpha_{g g'}$ for any $g, g' \in G$. 
	
	The \emph{induced (right) action} $\baseSpace{\alpha}$ of $\alpha$ on the base space $\baseSpace{\aContField}$, is the homomorphism from $G$ to $\operatorname{Homeo}(\baseSpace{\aContField})^{\operatorname{op}}$ taking $g$ to $\baseSpace{\alpha}_g := \baseSpace{\alpha_g}$ for each $g \in G$, 
	that is, we have $\baseSpace{\alpha}_{g'} \circ \baseSpace{\alpha}_{g} = \baseSpace{\alpha}_{g g'}$ for any $g, g' \in G$. 
\end{defn}

We remark that in the case where $G$ is the diffeomorphism group $\operatorname{Diff}(N)$ of a given closed manifold $N$, the induced (right) action $\baseSpace{\alpha}$ of $\operatorname{Diff}(N)$ on $N$ is given by $\baseSpace{\alpha}_\varphi(z) = \varphi^{-1}(z)$ for $\varphi \in \operatorname{Diff}(N)$ and $z\in N$. 
\section{Topological spaces associated to a continuous field of {\hhs}s}\label{sec:topologies}

In this section, we start with a continuous field $\aContField$ of {\hhs}s and construct topologies on a number of spaces associated to $\aContField$, such as the space of sections, the space of automorphisms, and the so-called total space. These constructions will be useful in the later sections, particularly when we discuss Borel measurable fields of {\hhs}s in \Cref{sec:meas-fields} and homotopies of actions in \Cref{sec:trivialization}. 

\begin{defn}\label{defn:isometric-action-topologize}
	Given a continuous field $\aContField$ of {\hhs}s, there is a natural way to topologize the group $\operatorname{Isom}(\aContField)$. To do this, let us first equip $\spaceOfSections_{\operatorname{cont}} (\aContField)$ with a compact-open topology: for a section $s \in \spaceOfSections_{\operatorname{cont}} (\aContField)$, we have a local base consisting of 
	\[
	\left\{ W_{s, K, \varepsilon} \colon K \subseteq \baseSpace{\aContField} \text{ precompact}, \varepsilon > 0 \right\}
	\]
	where 
	\[
	W_{s, K, \varepsilon} := \left\{ s' \in \spaceOfSections_{\operatorname{cont}} (\aContField) \colon d_{\aContField_{z}} \left( s (z) , s' (z) \right) < \varepsilon \text{ for any } z \in K \right\} \; .
	\]
	This allows us to equip the space $\CFHHS (\anotContField, \aContField)$ of morphisms from another continuous field $\anotContField$ to $\aContField$ (see \Cref{defn:cont-fields-morphism}\eqref{defn:cont-fields-morphism:category}) with 
	the topology of compact-open convergence on the base space and pointwise convergence on $\spaceOfSections_{\operatorname{cont}} (\aContField)$ in the compact-open topology: 
	for any $\varphi \in \CFHHS (\anotContField, \aContField)$, we have a local subbase consisting of\footnote{Using Urysohn's lemma, it is not hard to see that as long as each fiber $\anotContField_z$ has nonzero diameter, then in order to generate this topology, it suffices to look at those $U_{\varphi, L, V, s, K, \varepsilon}$ with $V = \baseSpace{\aContField}$, that is, the requirement $\baseSpace{\psi}(L) \in V$ becomes vacuous. We leave the proof of this fact to the reader as we do not need to use it. }
	\begin{align*}
	\Big\{ U_{\varphi, L, V, s, K, \varepsilon} \colon &\ V \subseteq \baseSpace{\anotContField} \text{ open},  L \subseteq \baseSpace{\varphi}^{-1} (V) \text{ compact}, \\ 
	&\ s \in \spaceOfSections_{\operatorname{cont}} (\anotContField), K \subseteq \baseSpace{\aContField} \text{ compact}, \varepsilon > 0 \Big\}
	\end{align*}
	where 
	\[
	U_{\varphi, L, V, s, K, \varepsilon} := \left\{ \psi \in \CFHHS (\anotContField, \aContField) \colon \baseSpace{\psi}(L) \in V \text{ and } \psi(s) \in W_{\varphi(s), K, \varepsilon} \right\} \; .
	\]
	
	Finally, we equip $\operatorname{Isom}(\aContField)$ with the subspace topology inherited from $\CFHHS (\aContField, \aContField)$. 
	Hence given a topological group $G$, an isometric action of $G$ on $\aContField$ is \emph{continuous} if the underlying homomorphism $G \to \operatorname{Isom}(\aContField)$ is continuous. 
\end{defn}

\begin{rmk}\label{rmk:isometric-action-topologize-fixed-base-space}
	When we restrict the topology on $\CFHHS (\anotContField , \aContField)$ to $\CFHHS_f (\anotContField , \aContField)$ defined in \Cref{defn:cont-fields-morphism}\eqref{defn:cont-fields-morphism:category-fixed-base}, we observe that for any $\varphi \in \CFHHS_f (\anotContField, \aContField)$, any $s \in \spaceOfSections_{\operatorname{cont}} (\anotContField)$, any $K \subseteq \baseSpace{\aContField}$ compact, and any $\varepsilon > 0$, we have
	\[
	U_{\varphi, L, V, s, K, \varepsilon} \cap \CFHHS_f (\anotContField , \aContField) = U_{\varphi, \varnothing, \baseSpace{\anotContField}, s, K, \varepsilon} \cap \CFHHS_f (\anotContField , \aContField) \; ,
	\]
	and thus we may simply drop $L$ and $V$ in the definition. 
\end{rmk}

\begin{rmk}\label{rmk:isometric-action-topologize-generating-set}
	In \Cref{defn:isometric-action-topologize}, if $\spaceOfSections_{\operatorname{cont}}(\anotContField)$ is generated by $\Sigma$, 
	then for any $\varphi \in \CFHHS (\anotContField, \aContField)$, the collection 
	\begin{align*}
	\Big\{ U_{\varphi, L, V, s, K, \varepsilon} \colon &\ V \subseteq \baseSpace{\anotContField} \text{ open},  L \subseteq \baseSpace{\varphi}^{-1} (V) \text{ compact}, \\ 
	&\ s \in \Sigma, K \subseteq \baseSpace{\aContField} \text{ compact}, \varepsilon > 0 \Big\}
	\end{align*}
	also forms a local subbase. 
	Indeed, this follows from \Cref{lem:cont-field-HHS-Diximier-generate}
	and the straightforward fact that for any open set $V \subseteq \baseSpace{\anotContField}$, any compact set $L \subseteq \varphi^{-1} (V)$, any $s, s_1, \ldots, s_n \in \spaceOfSections_{\operatorname{cont}} (\anotContField)$, any compact set $K \subseteq \baseSpace{\aContField}$, and any $\varepsilon, \delta > 0$, if for any $z \in K$ there is $j \in \{1, \ldots, n\}$ such that $d_{\anotContField_{\baseSpace{\varphi}(z)}} \left( s(z) , s_j (z) \right) < \delta$, then
	\[
	\varphi \in \bigcap_{j = 1}^n U_{\varphi, L, V, s_j, K, \varepsilon} \subseteq U_{\varphi, L, V, s, K, \varepsilon + 2 \delta} \; .
	\]
\end{rmk}

We establish some basic properties regarding these topologies. 

\begin{lem}\label{lem:isometric-action-topologize1}
	Let $\aContField$, $\anotContField$ and $\anotContField'$ be continuous fields of {\hhs}s. 
	Then composition of isometric continuous morphisms yields a continuous map 
	\[
	\CFHHS (\anotContField', \anotContField) \times \CFHHS(\anotContField, \aContField) \to \CFHHS (\anotContField', \aContField) \; .
	\]
\end{lem}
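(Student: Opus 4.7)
The plan is to verify continuity at an arbitrary pair $(\psi_0,\varphi_0)$ by showing that the preimage under composition of any subbasic open $U_{\chi_0, L', V', s', K', \varepsilon}$ about $\chi_0:=\varphi_0\circ\psi_0$ contains a product of subbasic open neighborhoods of $(\psi_0,\varphi_0)$. Recall that $\baseSpace{\chi_0}=\baseSpace{\psi_0}\circ\baseSpace{\varphi_0}$ and, for any $\varphi\in\CFHHS(\anotContField,\aContField)$, $\psi\in\CFHHS(\anotContField',\anotContField)$, and $z\in\baseSpace{\aContField}$, the defining identity is $(\varphi\circ\psi)(s')(z)=\varphi_z\bigl(\psi(s')(\baseSpace{\varphi}(z))\bigr)$.

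The first step is a local-compactness interpolation. Since $\baseSpace{\varphi_0}(L'\cup K')$ is a compact subset of the open set $\baseSpace{\psi_0}^{-1}(V')\subseteq\baseSpace{\anotContField}$, I would use the fact that $\baseSpace{\anotContField}$ is locally compact Hausdorff to produce an open $V\subseteq\baseSpace{\anotContField}$ with $\baseSpace{\varphi_0}(L'\cup K')\subseteq V\subseteq\overline{V}\subseteq\baseSpace{\psi_0}^{-1}(V')$ and $\overline{V}$ compact. Next, setting $t:=\psi_0(s')\in\spaceOfSections_{\operatorname{cont}}(\anotContField)$, I would take the subbasic neighborhoods
\[N_\varphi := U_{\varphi_0,\,L'\cup K',\,V,\,t,\,K',\,\varepsilon/2}, \qquad N_\psi := U_{\psi_0,\,\overline{V},\,V',\,s',\,\overline{V},\,\varepsilon/2},\]
which are well-defined since $L'\cup K'\subseteq\baseSpace{\varphi_0}^{-1}(V)$ and $\overline{V}\subseteq\baseSpace{\psi_0}^{-1}(V')$ by construction.

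It remains to check that for any $(\psi,\varphi)\in N_\psi\times N_\varphi$, the composition $\chi:=\varphi\circ\psi$ lies in $U_{\chi_0, L', V', s', K', \varepsilon}$. The base-space condition is immediate: $\baseSpace{\chi}(L')=\baseSpace{\psi}(\baseSpace{\varphi}(L'))\subseteq\baseSpace{\psi}(V)\subseteq\baseSpace{\psi}(\overline{V})\subseteq V'$. For the section-distance condition at $z\in K'$, set $w:=\baseSpace{\varphi}(z)\in V\subseteq\overline{V}$ and apply the triangle inequality, using that $\varphi_z$ is an isometric embedding and that $\varphi(t)(z)=\varphi_z(t(w))$ by definition of $t$:
\[d_{\aContField_z}\bigl(\chi(s')(z),\chi_0(s')(z)\bigr)\leq d_{\anotContField_w}\bigl(\psi(s')(w),\psi_0(s')(w)\bigr)+d_{\aContField_z}\bigl(\varphi(t)(z),\varphi_0(t)(z)\bigr)<\tfrac{\varepsilon}{2}+\tfrac{\varepsilon}{2}.\]
The first summand is controlled by $N_\psi$ since $w\in\overline{V}$, and the second by $N_\varphi$ since $z\in K'$ and $t$ is the witnessing section there.

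The only slightly technical step is the interpolation producing $V$ with compact closure; otherwise the proof is bookkeeping across the six parameters of a subbasic open. I do not expect any serious obstacle: the argument is essentially a uniform-continuity-of-composition statement, where the isometric property of each $\varphi_z$ ensures that fiberwise distances in $\anotContField$ transfer unchanged into $\aContField$, so the two error contributions split cleanly into one controlled by $\psi$ and one by $\varphi$.
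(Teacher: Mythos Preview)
Your approach is the same as the paper's---a local-compactness interpolation in $\baseSpace{\anotContField}$ followed by a triangle-inequality split---but there is a genuine gap in your first step. You assert that $\baseSpace{\varphi_0}(L'\cup K')\subseteq\baseSpace{\psi_0}^{-1}(V')$, but this containment is only guaranteed for $L'$ (since $L'\subseteq\baseSpace{\chi_0}^{-1}(V')$). The compact set $K'$ in the subbasic open $U_{\chi_0,L',V',s',K',\varepsilon}$ is an \emph{arbitrary} compact subset of $\baseSpace{\aContField}$, tied only to the section-distance condition; there is no reason for $\baseSpace{\psi_0}(\baseSpace{\varphi_0}(K'))$ to lie inside $V'$. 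Consequently your single interpolating set $V$ with $\overline V\subseteq\baseSpace{\psi_0}^{-1}(V')$ need not exist.

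The fix is small: treat $L'$ and $K'$ separately. Choose a precompact open $V_L\subseteq\baseSpace{\anotContField}$ with $\baseSpace{\varphi_0}(L')\subseteq V_L\subseteq\overline{V_L}\subseteq\baseSpace{\psi_0}^{-1}(V')$, and independently choose any precompact open $V_K\subseteq\baseSpace{\anotContField}$ with $\baseSpace{\varphi_0}(K')\subseteq V_K$ (no containment in $\baseSpace{\psi_0}^{-1}(V')$ required). Then take
\[
N_\varphi:=U_{\varphi_0,\,L',\,V_L,\,t,\,K',\,\varepsilon/2}\cap U_{\varphi_0,\,K',\,V_K,\,t,\,\varnothing,\,1},\qquad
N_\psi:=U_{\psi_0,\,\overline{V_L},\,V',\,s',\,\overline{V_K},\,\varepsilon/2}.
\]
Now for $\varphi\in N_\varphi$ and $z\in K'$ you have $w=\baseSpace{\varphi}(z)\in V_K\subseteq\overline{V_K}$, which is precisely the compact set over which $N_\psi$ controls $\psi(s')$ against $\psi_0(s')$; the rest of your triangle-inequality estimate then goes through unchanged. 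The paper's own proof is a one-sentence sketch that is loose on exactly this point (it uses the fixed $\baseSpace{\varphi}(K)$ as the compact for the $\varphi'$-neighborhood without arranging that the \emph{variable} $\baseSpace{\psi}(K)$ stays inside it), so your attempt to trap $\baseSpace{\varphi}(K')$ in a fixed compact is actually the right instinct---you just need to decouple it from the $V'$-containment.
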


\begin{proof}
	This follows from the observation that for any $\varphi \in \CFHHS (\anotContField, \aContField)$, any $\varphi' \in \CFHHS (\anotContField', \anotContField)$, 
	any open set $V \subseteq \baseSpace{\anotContField}$,  any compact set $L \subseteq \baseSpace{\varphi'}^{-1} \circ \baseSpace{\varphi}^{-1} (V)$, any $s \in \spaceOfSections_{\operatorname{cont}} \left( \anotContField' \right)$, any compact set $K \subseteq \baseSpace{\aContField}$, and any $\varepsilon > 0$,  
	as soon as we exploit the regularity of $\baseSpace{\anotContField}$ to choose a precompact open set $V'$ with $\baseSpace{\varphi'} (L) \subseteq V' \subseteq \overline{V'} \subseteq \baseSpace{\varphi}^{-1} (V)$, we see that the composition map takes the open neighborhood $U_{\varphi', L, V', s, \baseSpace{\varphi} (K), \varepsilon} \times U_{\varphi, \overline{V'}, V, \varphi'(s), K, \varepsilon}$ of $(\varphi', \varphi)$ into the open neighborhood $U_{\varphi' \circ \varphi, L, V, s, K, 2 \varepsilon}$ of $\varphi' \circ \varphi$. 
\end{proof}

The next lemmas show that the topologies defined in \Cref{defn:isometric-action-topologize} behave well with regard to currying. 

\begin{lem}\label{lem:isometric-action-topologize}
	Let $\aContField$ and $\anotContField$ be continuous fields of {\hhs}s and let $Y$ be a locally compact paracompact Hausdorff space. Then the following hold: \begin{enumerate}
		\item\label{lem:isometric-action-topologize:sections} A map $f \colon Y \to \spaceOfSections_{\operatorname{cont}} (\aContField)$ is continuous if and only if the section $\displaystyle \left( f(y)(z) \right)_{(y,z) \in Y \times \baseSpace{\aContField} } \in \prod_{(y,z) \in Y \times \baseSpace{\aContField}} \aContField_{z}$ belongs to $\spaceOfSections_{\operatorname{cont}} \left( \aContField |_{Y \times \baseSpace{\aContField}} \right) $ (see \Cref{def:cont-field-HHS-maps}\eqref{def:cont-field-HHS-maps:restrict}). 
		
		\item\label{lem:isometric-action-topologize:isometries} 
		A map $g \colon Y \to \CFHHS (\anotContField, \aContField)$ is continuous if and only if the map $Y \times \baseSpace{\aContField} \to \baseSpace{\anotContField}$, $(y,z) \mapsto \baseSpace{g(y)} (z)$, is continuous and for any $s \in \spaceOfSections_{\operatorname{cont}} (\anotContField)$, the section $\displaystyle \left( \left(g(y)(s)\right)_z \right)_{(y,z) \in Y \times \baseSpace{\aContField} } \in \prod_{(y,z) \in Y \times \baseSpace{\aContField}} \aContField_{z}$ belongs to $\spaceOfSections_{\operatorname{cont}} \left( \aContField |_{Y \times \baseSpace{\aContField}} \right) $. 
	\end{enumerate}
\end{lem}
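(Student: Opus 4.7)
The plan is to verify both parts by combining the generator characterization of continuous sections in \Cref{cor:cont-field-HHS-Diximier} (condition (iv')) with standard tube-lemma arguments. Throughout, let $\pi_{\baseSpace{\aContField}} \colon Y \times \baseSpace{\aContField} \to \baseSpace{\aContField}$ denote the projection, and recall from \Cref{def:cont-field-HHS-maps} that $\spaceOfSections_{\operatorname{cont}}(\aContField|_{Y \times \baseSpace{\aContField}})$ is generated, in the sense of \Cref{def:cont-field-HHS-generate}, by the pullbacks $s \circ \pi_{\baseSpace{\aContField}}$ with $s \in \spaceOfSections_{\operatorname{cont}}(\aContField)$.

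For part (1), the forward direction runs as follows: given $(y_0, z_0)$ and $\varepsilon > 0$, choose a compact neighborhood $K$ of $z_0$ and use the continuity of $f$ at $y_0$ applied to the basic open set $W_{f(y_0), K, \varepsilon}$ to find a neighborhood $V$ of $y_0$ with $d_{\aContField_z}(f(y)(z), f(y_0)(z)) < \varepsilon$ for all $(y,z) \in V \times K$; this places the section $(y,z) \mapsto f(y)(z)$ in $\spaceOfSections_{\operatorname{cont}}(\aContField|_{Y \times \baseSpace{\aContField}})$ via condition (iv') with generator $s = f(y_0)$. For the reverse direction, the co-continuity of $(y,z) \mapsto f(y)(z)$ with $f(y_0) \circ \pi_{\baseSpace{\aContField}}$ produces a continuous function $(y,z) \mapsto d_{\aContField_z}(f(y)(z), f(y_0)(z))$ that vanishes on $\{y_0\} \times K$; the tube lemma applied to the compact set $K$ then yields a neighborhood $V$ of $y_0$ on which this function is everywhere less than $\varepsilon$, showing $f(V) \subseteq W_{f(y_0), K, \varepsilon}$.

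Part (2) proceeds in parallel. The fiber-side assertion in the forward direction is handled exactly as in part (1), replacing $f(y)$ by $g(y)(s)$ for each $s \in \spaceOfSections_{\operatorname{cont}}(\anotContField)$ and using the subbase elements $U_{g(y_0), \varnothing, \baseSpace{\anotContField}, s, K, \varepsilon}$ (with empty $L$). The base-map continuity is obtained by, given $(y_0, z_0)$ and an open neighborhood $W$ of $\baseSpace{g(y_0)}(z_0)$, first using local compactness of $\baseSpace{\aContField}$ together with continuity of $\baseSpace{g(y_0)}$ to pick a compact neighborhood $L$ of $z_0$ with $\baseSpace{g(y_0)}(L) \subseteq W$, then invoking continuity of $g$ at the subbase neighborhood $U_{g(y_0), L, W, s, \varnothing, \varepsilon}$ (in which the fiber condition is vacuous because $K = \varnothing$) to extract a neighborhood $V_Y$ of $y_0$ with $\baseSpace{g(y)}(L) \subseteq W$ for all $y \in V_Y$; the product $V_Y \times L$ is the desired neighborhood of $(y_0, z_0)$. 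In the reverse direction, given a subbase element $U_{g(y_0), L, V, s, K, \varepsilon}$, the condition on $\baseSpace{g(y)}(L)$ is obtained by a tube lemma using the assumed joint continuity of the base map applied to the compact $L$ inside the open preimage of $V$, while the fiber condition is obtained by the tube-lemma argument from part (1).

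The main obstacle is bookkeeping rather than conceptual: the subbase of the topology on $\CFHHS(\anotContField, \aContField)$ entangles base-map and fiber information in a single element, and one must decouple them by exploiting the empty-$L$ or empty-$K$ special cases at the appropriate moments. Once this decoupling is in place, both directions of both parts reduce to the same combination of condition (iv') from \Cref{cor:cont-field-HHS-Diximier} and the tube lemma.
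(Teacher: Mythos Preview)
Your proposal is correct and follows essentially the same strategy as the paper: both use the approximation criterion from \Cref{cor:cont-field-HHS-Diximier} (equivalently, \Cref{lem:cont-field-HHS-Diximier-generate}) together with tube-lemma arguments to pass between the compact-open characterization of continuity and membership in $\spaceOfSections_{\operatorname{cont}}(\aContField|_{Y\times\baseSpace{\aContField}})$, and for part~(2) both decouple the base-map and fiber conditions via the currying property of the compact-open topology. The paper phrases the intermediate step as a chain of equivalent pointwise conditions and then quotes the ``standard fact that compact-open topologies are compatible with currying,'' whereas you make the decoupling explicit by using subbase elements with $K=\varnothing$ or $L=\varnothing$, but the content is the same.
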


\begin{proof}
	To prove \eqref{lem:isometric-action-topologize:sections}, we start by observing 
	that by \Cref{defn:isometric-action-topologize}, $f \colon Y \to \spaceOfSections_{\operatorname{cont}} (\aContField)$ is continuous if and only if 
	for any $y \in Y$, any precompact subset $K$ in $\baseSpace{\aContField}$, and any $\varepsilon > 0$, there is an open neighborhood $U$ of $y$ in $Y$ such that 
	\begin{equation}\label{lem:isometric-action-topologize:proof:equation}
	d_{\aContField_{z'}} \left( f(y)(z') , f(y')(z') \right) < \varepsilon \quad \text{ for any } z' \in K \text{ and } y' \in U \; .
	\end{equation}
	Since $\baseSpace{\aContField}$ and $Y$ are locally compact, we may, without loss of generality, require $K$ to be open and $U$ to be precompact in the above characterization. 
	By a standard argument involving tubular neighborhoods, we see that the last characterization is equivalent to that for any $y \in Y$, any $z \in \baseSpace{\aContField}$, and any $\varepsilon > 0$, there is a precompact open neighborhood $U$ of $y$ in $Y$ and a precompact open neighborhood $K$ of $z$ in $\baseSpace{\aContField}$ such that \eqref{lem:isometric-action-topologize:proof:equation} holds. 
	Finally, observing that the term $f(y)(z')$ in \eqref{lem:isometric-action-topologize:proof:equation} is equal to $(f(y) \circ \pi) (y', z')$, where $\pi \colon Y \times \baseSpace{\aContField} \to \baseSpace{\aContField}$ is the canonical projection, we deduce by \Cref{lem:cont-field-HHS-Diximier-generate} that the last characterization is equivalent to that the collection 
	\[
	\left\{ \left( f(y)(z) \right)_{(y,z) \in Y \times \baseSpace{\aContField} } \right\} \cup \left\{ s \circ \pi \colon s \in \spaceOfSections_{\operatorname{cont}} (\aContField) \right\} \subseteq   \prod_{(y,z) \in Y \times \baseSpace{\aContField}} \aContField_{z}
	\]
	of sections is mutually co-continuous, which is in turn equivalent to that the section $\left( f(y)(z) \right)_{(y,z) \in Y \times \baseSpace{\aContField} }$ belongs to $\spaceOfSections_{\operatorname{cont}} \left( \aContField |_{Y \times \baseSpace{\aContField}} \right) $, thanks to \Cref{def:cont-field-HHS-maps}, \Cref{def:cont-field-HHS-generate}, and \Cref{lem:cont-field-HHS-generate}. 
	
	To prove \Cref{lem:isometric-action-topologize:isometries}, 
	we combine the argument above for \eqref{lem:isometric-action-topologize:sections} with the standard fact that compact-open topologies are compatible with currying: in particular, the map $Y \times \baseSpace{\aContField} \to \baseSpace{\anotContField}$, $(y,z) \mapsto \baseSpace{g(y)} (z)$, is continuous if and only if for any open subset $V \subseteq \baseSpace{\anotContField}$ and compact subset $L \subseteq \baseSpace{\aContField}$, the set $\left\{ y \in Y \colon \baseSpace{g(y)} (L) \subseteq V \right\}$ is open in $Y$. 
\end{proof}

\begin{lem}\label{lem:isometric-action-topologize-curry}
	Let $\aContField$, $\anotContField$ and $Y$ be as in \Cref{lem:isometric-action-topologize}, 
	and let $\Lambda$ be a topological space. 
	For a map $g \colon \Lambda \times Y \to \CFHHS (\anotContField, \aContField)$, the following are equivalent: 
	\begin{enumerate}
		\item\label{lem:isometric-action-topologize:isometries-curry:orig}  The map $g$ is continuous. 
		
		\item\label{lem:isometric-action-topologize:isometries-curry:one-sided} There is a continuous map $\widehat{g} \colon \Lambda \to \CFHHS \left(\anotContField, \aContField|_{Y \times \baseSpace{\aContField}} \right)$ such that for any $\lambda \in \Lambda$, $y \in Y$ and $z \in \baseSpace{\aContField}$, we have 
		\[
		\hspace{0.7cm} \baseSpace{\widehat{g} (\lambda)} (y,z)=  \baseSpace{g(\lambda,y)} (z) \  \text{ and } \  \widehat{g(\lambda)}_{(y,z)} = g(\lambda,y)_z \colon \anotContField_{\baseSpace{\widehat{g}(\lambda)} (y,z)} \to \aContField_z \, .
		\]
		
		\item\label{lem:isometric-action-topologize:isometries-curry:two-sided} There is a continuous map $\widetilde{g} \colon \Lambda \to \CFHHS \left(\anotContField|_{Y \times \baseSpace{\anotContField}}, \aContField|_{Y \times \baseSpace{\aContField}} \right)$ such that for any $\lambda \in \Lambda$, $y \in Y$ and $z \in \baseSpace{\aContField}$, we have 
		\[
		\hspace{0.2cm} \baseSpace{\widetilde{g} (\lambda)} (y,z)=  \left( y, \baseSpace{g(\lambda,y)} (z) \right) \  \text{ and } \  \widetilde{g(\lambda)}_{(y,z)} = g(\lambda,y)_z \colon \anotContField_{\baseSpace{\widetilde{g}(\lambda)} (y,z)} \to \aContField_z \, .
		\]
	\end{enumerate}
\end{lem}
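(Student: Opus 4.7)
The strategy is to reduce each of the three conditions to an equivalent pair of conditions about the continuity of the underlying base map $(\lambda, y, z) \mapsto \baseSpace{g(\lambda, y)}(z)$ and of the ``sectionwise'' maps $(\lambda, y, z) \mapsto g(\lambda, y)(s)(z)$ for sections $s$, and then to observe that all three reductions yield the same pair.

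For (1) $\Leftrightarrow$ (2): Both conditions can be analyzed via \Cref{lem:isometric-action-topologize}\eqref{lem:isometric-action-topologize:isometries}. Applying it to $g$ viewed as a map from the product base $\Lambda \times Y$, (1) becomes: the function $(\lambda, y, z) \mapsto \baseSpace{g(\lambda, y)}(z) \in \baseSpace{\anotContField}$ is continuous, and for every $s \in \spaceOfSections_{\operatorname{cont}}(\anotContField)$, the section $(\lambda, y, z) \mapsto g(\lambda, y)(s)(z)$ lies in $\spaceOfSections_{\operatorname{cont}}(\aContField|_{\Lambda \times Y \times \baseSpace{\aContField}})$. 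Applying the same lemma to $\widehat{g} \colon \Lambda \to \CFHHS(\anotContField, \aContField|_{Y \times \baseSpace{\aContField}})$, condition (2) unpacks to the same pair once \Cref{rmk:cont-field-HHS-maps-functorial} is used to identify the pullback $(\aContField|_{Y \times \baseSpace{\aContField}})|_{\Lambda \times (Y \times \baseSpace{\aContField})}$ with $\aContField|_{\Lambda \times Y \times \baseSpace{\aContField}}$.

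For (2) $\Leftrightarrow$ (3): Let $\pi_{\anotContField} \colon Y \times \baseSpace{\anotContField} \to \baseSpace{\anotContField}$ denote the projection onto the second factor, inducing the canonical morphism $\pi_{\anotContField}^* \colon \anotContField \to \anotContField|_{Y \times \baseSpace{\anotContField}}$ from \Cref{def:cont-field-HHS-maps}\eqref{def:cont-field-HHS-maps:induce}. Unwinding base maps and fiberwise isometries yields the identity $\widehat{g}(\lambda) = \widetilde{g}(\lambda) \circ \pi_{\anotContField}^*$ for every $\lambda \in \Lambda$. Hence, by \Cref{lem:isometric-action-topologize1} together with the fact that $\pi_{\anotContField}^*$ is constant in $\lambda$, if $\widetilde{g}$ is continuous then so is $\widehat{g}$. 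For the converse I would apply \Cref{lem:isometric-action-topologize}\eqref{lem:isometric-action-topologize:isometries} to $\widetilde{g}$: continuity of its base map follows from that of $\widehat{g}$, since its second coordinate coincides with $\baseSpace{\widehat{g}(\lambda)}$, and for the section condition, \Cref{rmk:isometric-action-topologize-generating-set} permits checking only generators of $\spaceOfSections_{\operatorname{cont}}(\anotContField|_{Y \times \baseSpace{\anotContField}})$. A natural such generating set consists of the pullbacks $t_s \colon (y, w) \mapsto s(w)$ with $s \in \spaceOfSections_{\operatorname{cont}}(\anotContField)$, and a direct unravelling gives $\widetilde{g}(\lambda)(t_s)(y, z) = \widehat{g}(\lambda)(s)(y, z)$, so the required continuity follows from (2).

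The main obstacle will be managing the currying bookkeeping and reconciling the minor technical mismatch that \Cref{lem:isometric-action-topologize} is stated for a locally compact paracompact source, whereas $\Lambda$ here is only assumed to be a topological space. This should be resolvable by observing that the topology on $\CFHHS(\anotContField, \aContField)$ is defined via an explicit subbase (\Cref{defn:isometric-action-topologize}) whose continuity can be tested on an arbitrary source, so that the characterization of \Cref{lem:isometric-action-topologize} extends, with only cosmetic modifications, to the present setting; the essential local compactness is that of $\baseSpace{\aContField}$ and $Y$, both of which are assumed.
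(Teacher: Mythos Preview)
Your approach is correct and close in spirit to the paper's, though organized differently: you argue via the pair of equivalences (1)$\Leftrightarrow$(2) and (2)$\Leftrightarrow$(3), each reduced through \Cref{lem:isometric-action-topologize}\eqref{lem:isometric-action-topologize:isometries} to a common description, whereas the paper runs the cycle (1)$\Rightarrow$(3)$\Rightarrow$(2)$\Rightarrow$(1), establishing (1)$\Rightarrow$(3) and (2)$\Rightarrow$(1) by working directly with the local subbase from \Cref{defn:isometric-action-topologize} and using tubular-neighborhood arguments over the compact subsets of $Y$ and $\baseSpace{\aContField}$, and deriving (3)$\Rightarrow$(2) via postcomposition with $\pi^*$ just as you do. The ingredients (subbase, currying, composition with $\pi^*$, generating sections) are the same.

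Two remarks. First, your final paragraph correctly spots the issue that \Cref{lem:isometric-action-topologize} is stated for a locally compact paracompact Hausdorff source, while $\Lambda$ is arbitrary; but calling the fix ``cosmetic'' undersells it. The object $\aContField|_{\Lambda \times Y \times \baseSpace{\aContField}}$ is not even a continuous field in the sense of \Cref{def:cont-field-HHS} when $\Lambda$ is not locally compact, so your reduction has to be rephrased as ``co-continuous with each pulled-back section'' rather than ``lies in $\spaceOfSections_{\operatorname{cont}}(\ldots)$''. Carrying this out amounts to exactly the direct subbase computations the paper performs; the essential compactness used is that of $K\subseteq\baseSpace{\aContField}$ and of slices in $Y$, not of anything in $\Lambda$. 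Second, in your (2)$\Rightarrow$(3) you should explicitly note that each $\widetilde g(\lambda)$ is a morphism (i.e.\ lands in $\CFHHS$) before discussing its continuity in $\lambda$; this follows from the same generator computation $\widetilde g(\lambda)(t_s)=\widehat g(\lambda)(s)$ via \Cref{lem:cont-fields-HHS-map-generators}, but it is a separate point from \Cref{rmk:isometric-action-topologize-generating-set}, which concerns only the topology.
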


\begin{proof}
	To prove \eqref{lem:isometric-action-topologize:isometries-curry:orig} implies \eqref{lem:isometric-action-topologize:isometries-curry:two-sided}, we first observe that for any $\lambda \in \Lambda$, its image $\widetilde{g}(\lambda)$ is indeed an isometric continuous morphism from $\anotContField|_{Y \times \baseSpace{\anotContField}}$ to $\aContField|_{Y \times \baseSpace{\aContField}}$, thanks to \Cref{lem:isometric-action-topologize}\eqref{lem:isometric-action-topologize:isometries} and the fact from \Cref{def:cont-field-HHS-maps} that 	
	$\spaceOfSections_{\operatorname{cont}} \left( \anotContField |_{\Lambda \times \baseSpace{\anotContField}} \right) $ is generated by $\spaceOfSections_{\operatorname{cont}} \left( \anotContField \right) $, which allows us to apply
	\Cref{lem:cont-fields-HHS-map-generators}. 
	
	To show $\widetilde{g}$ is continuous, it suffices to show that given any $\lambda_0 \in \Lambda$, 
	any open set $V \subseteq \baseSpace{\anotContField}$,  any compact set $L \subseteq \baseSpace{\widetilde{g}(\lambda_0)}^{-1} (V)$, any $s \in \spaceOfSections_{\operatorname{cont}} \left( \anotContField |_{Y \times \baseSpace{\aContField}} \right)$, any compact sets $M \subseteq Y $ and $K \subseteq \baseSpace{\aContField}$, and any $\varepsilon > 0$,  
	there exists an open neighborhood $W$ of $\lambda_0$ such that for any $\lambda \in W$, any $y \in M$ and $z \in K$, 
	we have 
	\[
	\baseSpace{\widetilde{g}(\lambda)} (L) \subseteq V 
	\quad \text{ and } \quad 
	d_{ \aContField_{z} } \left( \widetilde{g}(\lambda) (s) (y, z), \widetilde{g}(\lambda_0) (s) (y, z)  \right) < \varepsilon \; ,
	\]
	or equivalently, 
	\begin{align*}
	& \left( y', \baseSpace{g(\lambda,y')} (z') \right) \subseteq V \ \text{ for any } (y',z') \in L
	\quad \text{ and } \quad  \\
	& d_{ \aContField_{z} } \left( g(\lambda, y)_z \left( s \left( y, \baseSpace{g(\lambda,y)} (z) \right) \right),  g(\lambda_0, y)_z \left( s \left( y, \baseSpace{g(\lambda_0,y)} (z) \right) \right)  \right) < \varepsilon \; .
	\end{align*}
	In view of \Cref{rmk:isometric-action-topologize-generating-set} and \Cref{def:cont-field-HHS-maps}\eqref{def:cont-field-HHS-maps:restrict}, we may assume without loss of generality that $s \in \spaceOfSections_{\operatorname{cont}} \left( \anotContField \right)$, viewed as a subset of $\spaceOfSections_{\operatorname{cont}} \left( \anotContField |_{Y \times \baseSpace{\aContField}} \right)$ and thus the last inequality becomes 
	\[
	d_{ \aContField_{z} } \left( g(\lambda, y)_z \left( s \left( \baseSpace{g(\lambda,y)} (z) \right) \right),  g(\lambda_0, y)_z \left( s \left( \baseSpace{g(\lambda_0,y)} (z) \right) \right)  \right) < \varepsilon \; ,
	\]
	or equivalently, 
	\[
	d_{ \aContField_{z} } \left( {g}(\lambda, y) (s) (z), {g}(\lambda_0, y) (s) (z)  \right) < \varepsilon \; .
	\]
	Since $y'$ and $y$ above range over compact sets, we may apply a standard tubular neighborhood argument to find the desired open neighborhood $W$ of $\lambda_0$. 
	
	To prove \eqref{lem:isometric-action-topologize:isometries-curry:two-sided} implies \eqref{lem:isometric-action-topologize:isometries-curry:one-sided}, we simply define $\widehat{g}(\lambda) = \pi^* \circ \widetilde{g}(\lambda)$ for any $\lambda \in \Lambda$, where $\pi \colon Y \times \baseSpace{\anotContField} \to \baseSpace{\anotContField}$ is the canonical quotient map. 
	
	To prove \eqref{lem:isometric-action-topologize:isometries-curry:one-sided} implies \eqref{lem:isometric-action-topologize:isometries-curry:orig}, it suffices to show that given by $\lambda_0 \in \Lambda$, any $y_0 \in Y$, any open set $V \subseteq \baseSpace{\anotContField}$,  any compact set $L \subseteq \baseSpace{{g}(\lambda_0, y_0)}^{-1} (V)$, any $s \in \spaceOfSections_{\operatorname{cont}} \left( \anotContField \right)$, any compact set $K \subseteq \baseSpace{\aContField}$, and any $\varepsilon > 0$,  
	there exists open neighborhoods $W$ of $\lambda_0$ and $U$ of $y_0$ such that for any $\lambda \in W$, any $y \in U$ and any $z \in K$, we have 
	\[
	\baseSpace{{g}(\lambda, y)} (L) \subseteq V 
	\quad \text{ and } \quad 
	d_{ \aContField_{z} } \left( {g}(\lambda, y) (s) (z), {g}(\lambda_0, y_0) (s) (z)  \right) < \varepsilon \; ,
	\]
	or equivalently, 
	\[
	\baseSpace{\widehat{g}(\lambda)} (U \times L) \subseteq V 
	\quad \text{ and } \quad 
	d_{ \aContField_{z} } \left( \widehat{g}(\lambda) (s) (y, z), \widehat{g}(\lambda_0) (s) (y_0, z)  \right) < \varepsilon \; . 
	\]
	Hence we may first exploit the compactness of $L$ and $K$ to choose $U$ to be a precompact open neighborhood of $y_0$ such that 
	\[
	\baseSpace{\widehat{g}(\lambda_0)} (U \times L) \subseteq V 
	\quad \text{ and } \quad 
	d_{ \aContField_{z} } \left( \widehat{g}(\lambda_0) (s) (y, z), \widehat{g}(\lambda_0) (s) (y_0, z)  \right) < \varepsilon \; ,
	\]
	and then use the continuity of $\widehat{g}$ to choose $W$ to satisfy the desired conditions above. 
\end{proof}

\begin{lem}\label{lem:isometric-action-topological-group-curry}
	Let $\aContField$ and $Y$ be as in \Cref{lem:isometric-action-topologize}, and 
	let $G$ be a topological group. 
	For a map $\alpha \colon Y \times G \to \CFHHS(\aContField, \aContField)$, $(y, g) \mapsto \alpha_{y, g}$, the following are equivalent: 
	\begin{enumerate}
		\item\label{lem:isometric-action-topological-group-curry:isometries-curry:pointwise}  The map $\alpha$ is continuous and for any $y \in Y$, the assignment $g \mapsto \alpha_{y, g}$ yields a group homomorphism $G \to \operatorname{Isom}(\aContField)$. 
		
		\item\label{lem:isometric-action-topological-group-curry:isometries-curry:global} There is a continuous homomorphism $\widetilde{\alpha} \colon G \to \operatorname{Isom} \left( \aContField |_{Y \times \baseSpace{\aContField}} \right)$ such that for any $g \in G$, $y \in Y$ and $z \in \baseSpace{\aContField}$, we have 
		\[
		\baseSpace{\widetilde{\alpha}_{g}} (y, z)  = \left( y,  \baseSpace{\alpha_{y,g}} (z)  \right) \quad \text{ and } \quad \left( \widetilde{\alpha}_g \right)_{(y,z)} = \left( \alpha_{y,g} \right)_{z} 
		\colon \aContField_{\baseSpace{\alpha_{y,g}} (z)} \to \aContField_z
		\; .
		\]
	\end{enumerate}
\end{lem}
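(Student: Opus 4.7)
The plan is to reduce the statement to \Cref{lem:isometric-action-topologize-curry} (applied with $\Lambda = G$ and $\anotContField = \aContField$) and then check that the homomorphism and isomorphism conditions transfer correctly between the two formulations.

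First I would apply \Cref{lem:isometric-action-topologize-curry} to $\alpha \colon G \times Y \to \CFHHS(\aContField, \aContField)$. This yields a bijective correspondence between continuous maps $\alpha$ and continuous maps $\widetilde{\alpha} \colon G \to \CFHHS \left( \aContField|_{Y \times \baseSpace{\aContField}}, \aContField|_{Y \times \baseSpace{\aContField}} \right)$ satisfying
\[
\baseSpace{\widetilde{\alpha}_g}(y,z) = (y, \baseSpace{\alpha_{y,g}}(z)) \qquad \text{and} \qquad (\widetilde{\alpha}_g)_{(y,z)} = (\alpha_{y,g})_z \, .
\]
Thus the content beyond \Cref{lem:isometric-action-topologize-curry} is precisely that the homomorphism conditions on the two sides match, and that each $\widetilde{\alpha}_g$ lands in $\operatorname{Isom}$ (rather than merely in $\CFHHS$).

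Next I would verify the equivalence of the homomorphism conditions. Given $g, g' \in G$, the definition of composition of morphisms (\Cref{defn:cont-fields-morphism}\eqref{defn:cont-fields-morphism:composition}) gives, on the base space $Y \times \baseSpace{\aContField}$,
\[
\baseSpace{\widetilde{\alpha}_{g'} \circ \widetilde{\alpha}_g}(y,z) = \baseSpace{\widetilde{\alpha}_g} \bigl( \baseSpace{\widetilde{\alpha}_{g'}}(y,z) \bigr) = \bigl( y, \baseSpace{\alpha_{y,g}} \bigl( \baseSpace{\alpha_{y,g'}}(z) \bigr) \bigr) \, ,
\]
and on the fiber at $(y,z)$, $(\widetilde{\alpha}_{g'} \circ \widetilde{\alpha}_g)_{(y,z)} = (\alpha_{y,g'})_z \circ (\alpha_{y,g})_{\baseSpace{\alpha_{y,g'}}(z)}$. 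Comparing with the formulas defining $\widetilde{\alpha}_{gg'}$, we see that $\widetilde{\alpha}_{gg'} = \widetilde{\alpha}_{g'} \circ \widetilde{\alpha}_g$ if and only if $\alpha_{y,gg'} = \alpha_{y,g'} \circ \alpha_{y,g}$ for every $y$, i.e., if and only if each $g \mapsto \alpha_{y,g}$ is a homomorphism into $\CFHHS(\aContField, \aContField)$ in the sense of composition in that category.

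For the isomorphism aspect, I would use \Cref{lem:cont-fields-isomorphism}: an isometric continuous morphism is invertible precisely when its base map is a homeomorphism and each fiber map is an isometric bijection. Under the homomorphism condition, $\widetilde{\alpha}_g$ has two-sided inverse $\widetilde{\alpha}_{g^{-1}}$ in \eqref{lem:isometric-action-topological-group-curry:isometries-curry:global} iff $\alpha_{y,g}$ has two-sided inverse $\alpha_{y, g^{-1}}$ for every $y$ in \eqref{lem:isometric-action-topological-group-curry:isometries-curry:pointwise}; by the fibered form of the formulas this checks out coordinatewise. Combining these observations with the currying equivalence completes the proof.

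The only mildly delicate point will be bookkeeping the order of composition (since the induced action on base spaces is a right action, per \Cref{defn:isometric-action}) and making sure the identification in \Cref{lem:isometric-action-topologize-curry}\eqref{lem:isometric-action-topologize:isometries-curry:two-sided} is compatible with composition; but no substantially new idea is required beyond the lemmas already established.
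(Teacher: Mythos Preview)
Your approach is essentially identical to the paper's: reduce the continuity statement to \Cref{lem:isometric-action-topologize-curry} (with $\Lambda = G$, $\anotContField = \aContField$, using part~\eqref{lem:isometric-action-topologize:isometries-curry:two-sided}), and then observe that the homomorphism/isomorphism conditions on $\widetilde{\alpha}$ and on each $g \mapsto \alpha_{y,g}$ correspond fiberwise. The paper is terser---it simply calls the second check ``straightforward''---while you spell out the composition and invertibility bookkeeping; your caution about the order of composition is warranted but, as you note, routine.
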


\begin{proof}
	This follows directly from \Cref{lem:isometric-action-topologize-curry} that the map $\alpha \colon G \times Y \to \CFHHS(\aContField, \aContField)$ is continuous if and only if there is an induced continuous map $\widetilde{\alpha} \colon G \to \CFHHS \left(  \aContField |_{\baseSpace{\aContField} \times Y},   \aContField |_{\baseSpace{\aContField} \times Y}\right)$ satisfying the conditions as stated in the current lemma. 
	It is then straightforward to check that $\widetilde{\alpha}$ is a group homomorphism into $\operatorname{Isom} \left( \aContField |_{\baseSpace{\aContField} \times Y} \right)$ if and only for any $y \in Y$, the assignment $g \mapsto \alpha_{y, g}$ yields a group homomorphism $G \to \operatorname{Isom}(\aContField)$. 
\end{proof}

Let us discuss a slightly different perspective on continuous fields of {\hhs}s that is sometimes convenient, e.g., when we study the Borel measurable sections associated to a continuous field of {\hhs}s below. 

\begin{defn}\label{def:cont-field-HHS-total-space}
	Let $\aContField$ be a continuous field of {\hhs}s. 
	The \emph{total space} of $\aContField$, denoted by $|\aContField|$, is a topological space with an underlying set $\bigsqcup_{z \in \baseSpace{\aContField}} \aContField_z$ and a topology generated by a base consisting of sets of the form
	\[
	V_{U, s, \varepsilon} := \left\{ (z, x) \in \{z\} \times \aContField_z \subseteq \bigsqcup_{z' \in \baseSpace{\aContField}} \aContField_{z'} \colon z \in U , \ d_{\aContField_z} (x, s(z)) < \varepsilon \right\} \; ,
	\]
	where $U$, $s$ and $\varepsilon$ range over all open subsets of $\baseSpace{\aContField}$, all continuous sections of $\aContField$, and all positive real numbers, respectively. 
	This is indeed a base: 
	\begin{enumerate}
		\item It follows from \Cref{def:cont-field-HHS}\eqref{def:cont-field-HHS::dense} that 
		\[
		X = \bigcup_{\text{open } U \subseteq X} \bigcup_{s \in \spaceOfSections_{\operatorname{cont}}(\aContField)} \bigcup_{\varepsilon > 0} V_{U, s, \varepsilon} \; .
		\]
		\item For any open subsets $U_1$ and $U_2$ of $\baseSpace{\aContField}$, any continuous sections $s_1$ and $s_2$ of $\aContField$, any $\varepsilon_1, \varepsilon_2 \in (0, \infty)$, and any $(z,x) \in V_{U_1, s_1, \varepsilon_1} \cap V_{U_2, s_2, \varepsilon_2} $, we let $\varepsilon = \frac{1}{3} \min \left\{ \varepsilon_i - d_{\aContField_z} (s_i(z), x) \colon i = 1, 2 \right\} > 0$, 
		apply \Cref{def:cont-field-HHS}\eqref{def:cont-field-HHS::dense} to obtain a continuous section $s$ of $\aContField$ with $d_{\aContField_z} (s(z), x) < \varepsilon$, 
		and apply \Cref{def:cont-field-HHS}\eqref{def:cont-field-HHS::continuous} to obtain an open neighborhood $U$ of $z$ such that $U \subseteq U_1 \cap U_2$ and for any $z' \in U$ and any $i \in \{1,2\}$, we have $d_{\aContField_{z'}} \left( s_i(z') , s(z') \right) < d_{\aContField_{z}} \left( s_i(z) , s(z) \right) + \varepsilon$. 
		Now for any $(z',x') \in V_{U, s, \varepsilon}$ and any $i \in \{1,2\}$, we have $z' \in U \subseteq U_i$ and 
		\begin{align*}
		d_{\aContField_{z'}} \left( s_i(z') , x' \right)  & <
		d_{\aContField_{z'}} \left( s_i(z') , s(z') \right) + \varepsilon \\
		& < d_{\aContField_{z}} \left( s_i(z) , s(z) \right) + 2 \varepsilon \\
		& < d_{\aContField_{z}} \left( s_i(z) , x \right) + 3 \varepsilon 
		\qquad \leq \varepsilon_i \; .
		\end{align*}
		Thus we have $(z,x) \in V_{U, s, \varepsilon} \subseteq V_{U_1, s_1, \varepsilon_1} \cap V_{U_2, s_2, \varepsilon_2}$, as desired. 
	\end{enumerate}
	
	The canonical surjection $\pi_{\aContField} \colon |\aContField| \to \baseSpace{\aContField}$ is given by $\pi_{\aContField} (z,x) = z$. 
\end{defn}

\begin{lem}\label{lem:cont-field-HHS-total-space} 
	Let $\aContField$ be a continuous field of {\hhs}s. Then the following hold: 
	\begin{enumerate}
		\item \label{lem:cont-field-HHS-total-space:quotient} The canonical surjection $\pi_{\aContField}$ is a topological quotient map and is open. 
		
		\item \label{lem:cont-field-HHS-total-space:embedding} For any generating set $\spaceOfSections_0$ of $\aContField$ as in \Cref{def:cont-field-HHS-generate}, the map 
		\[
		|\aContField| \to \baseSpace{\aContField} \times \prod_{s \in \spaceOfSections_0} [0, \infty) \, ,  \quad (z,x) \mapsto \left( z, \left( d_{\aContField_z} (x, s(z)) \right)_{s \in \spaceOfSections_0} \right)
		\]
		is an embedding of topological spaces. 
		
		\item \label{lem:cont-field-HHS-total-space:metric} Write $|\aContField| \underset{\baseSpace{\aContField}}{\times} |\aContField|$ for the subspace 
		\[
		\left\{ \left( (z,x) , (z',x') \right) \in |\aContField| {\times} |\aContField| \colon z = z' \right\} \; .
		\]
		Then the map 
		\[
		d \colon |\aContField| \underset{\baseSpace{\aContField}}{\times} |\aContField| \to [0, \infty) \, , \quad \left( (z,x) , (z,x') \right) \mapsto d_{\aContField_z} (x, x')
		\]
		is continuous. 
		
		\item \label{lem:cont-field-HHS-total-space:midpoint} 
		With notations as in \Cref{rmk:CAT0-facts}\eqref{rmk:CAT0-facts-bicombing}, 
		The map
		\[ \kappa \colon \left( |\aContField| \underset{\baseSpace{\aContField}}{\times} |\aContField| \right) \times [0,1] \to |\aContField| \, , \  \left( ((z,x_1),(z,x_2)), t \right) \mapsto \left(z, [x_1,x_2] (t) \right)\]
		is continuous.
		
		\item \label{lem:cont-field-HHS-total-space:sections} For any $s \in \prod_{z \in \baseSpace{\aContField}} \aContField_{z}$, we have $s \in \spaceOfSections_{\operatorname{cont}} (\aContField)$ if and only if the map 
		\[
		\breve{s} \colon \baseSpace{\aContField} \to |\aContField| \, , \quad z \mapsto (z, s(z))
		\]
		is continuous. 
	\end{enumerate}
\end{lem}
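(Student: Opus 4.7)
For part~\eqref{lem:cont-field-HHS-total-space:quotient}, I will first observe that each basic open set $V_{U,s,\varepsilon}$ satisfies $\pi_{\aContField}(V_{U,s,\varepsilon}) = U$: the containment $\subseteq$ is trivial, while the reverse uses \Cref{def:cont-field-HHS}\eqref{def:cont-field-HHS::dense} to produce, for any $z \in U$, a section $s' \in \spaceOfSections_{\operatorname{cont}}(\aContField)$ with $d_{\aContField_z}(s(z),s'(z))<\varepsilon$, yielding $(z,s'(z)) \in V_{U,s,\varepsilon}$ projecting to $z$. Hence $\pi_{\aContField}$ is an open surjection, and any open surjection of topological spaces is automatically a quotient map.

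Parts~\eqref{lem:cont-field-HHS-total-space:metric} and~\eqref{lem:cont-field-HHS-total-space:midpoint} will be handled by parallel ``$\varepsilon/3$''-style density-and-co-continuity arguments. For~\eqref{lem:cont-field-HHS-total-space:metric}, given $((z,x_1),(z,x_2))$ and tolerance $\eta > 0$, I will use \Cref{def:cont-field-HHS}\eqref{def:cont-field-HHS::dense} to produce $s_1, s_2 \in \spaceOfSections_{\operatorname{cont}}(\aContField)$ with $d_{\aContField_z}(s_i(z), x_i) < \eta/6$, then invoke mutual co-continuity (\Cref{def:cont-field-HHS}\eqref{def:cont-field-HHS::continuous}) to obtain an open neighborhood $U$ of $z$ on which $|d_{\aContField_{z'}}(s_1(z'),s_2(z')) - d_{\aContField_z}(s_1(z),s_2(z))| < \eta/6$, and conclude via two triangle inequalities inside each fiber that on the open neighborhood $(V_{U,s_1,\eta/6} \times V_{U,s_2,\eta/6}) \cap (|\aContField| \underset{\baseSpace{\aContField}}{\times} |\aContField|)$, the value of $d$ stays within $\eta$ of $d_{\aContField_z}(x_1,x_2)$. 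Part~\eqref{lem:cont-field-HHS-total-space:midpoint} follows the same template, but additionally exploits the bicombing Lipschitz estimate \Cref{rmk:CAT0-facts}\eqref{rmk:CAT0-facts-Lipschitz} inside each fiber (to control how $[x_1,x_2](t)$ moves under perturbation of $x_1$, $x_2$ and $t$) together with \Cref{cor:cont-field-HHS-geodesic-approximate}, which supplies continuous sections of the form $z' \mapsto [s_1(z'),s_2(z')](t_0)$ to use as approximants in the target.

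For part~\eqref{lem:cont-field-HHS-total-space:embedding}, I will begin with the auxiliary claim that sets of the form $V_{U,s,\varepsilon}$ with $s$ restricted to lie in the generating set $\spaceOfSections_0$ already form a base for the topology of $|\aContField|$. Given $(z,x) \in V_{U,s,\varepsilon}$ for an arbitrary continuous section $s$, I set $d_0 := d_{\aContField_z}(x,s(z)) < \varepsilon$, pick $\eta > 0$ with $3\eta < \varepsilon - d_0$, apply \Cref{lem:cont-field-HHS-Diximier-generate} to produce $s' \in \spaceOfSections_0$ and an open neighborhood $U_1 \subseteq U$ of $z$ with $d_{\aContField_{z'}}(s(z'),s'(z'))<\eta$ on $U_1$, and verify via the triangle inequality that $(z,x) \in V_{U_1,s',d_0+\eta} \subseteq V_{U,s,\varepsilon}$. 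Granted this reduction, both continuity and openness of the embedding in~\eqref{lem:cont-field-HHS-total-space:embedding} become transparent: each coordinate function $(z,x)\mapsto d_{\aContField_z}(x,s(z))$ for $s \in \spaceOfSections_0$ is continuous by a direct density-and-co-continuity argument (a simplified version of the one for~\eqref{lem:cont-field-HHS-total-space:metric}), while each basic $V_{U,s,\varepsilon}$ with $s \in \spaceOfSections_0$ is visibly the preimage of a basic open set in the target. Injectivity uses that $\{s(z) : s \in \spaceOfSections_0\}$ is dense in the metric space $\aContField_z$ and therefore separates points there via the distance function.

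Part~\eqref{lem:cont-field-HHS-total-space:sections} concludes the proof. If $s \in \spaceOfSections_{\operatorname{cont}}(\aContField)$, then for any $V_{U,s',\varepsilon}$ containing $\breve s(z)$, mutual co-continuity of $s$ with $s'$ produces an open neighborhood $U'$ of $z$ on which $\breve{s}$ lands in $V_{U,s',\varepsilon}$, so $\breve{s}$ is continuous. Conversely, if $\breve{s}$ is continuous, then $\breve{s'}$ is continuous for every $s' \in \spaceOfSections_{\operatorname{cont}}(\aContField)$ by the direction just proved, and composing the resulting continuous map $(\breve{s}, \breve{s'}) \colon \baseSpace{\aContField} \to |\aContField| \underset{\baseSpace{\aContField}}{\times} |\aContField|$ with the continuous function $d$ from~\eqref{lem:cont-field-HHS-total-space:metric} shows that $z \mapsto d_{\aContField_z}(s(z),s'(z))$ is continuous, so $s$ is co-continuous with every continuous section; \Cref{def:cont-field-HHS}\eqref{def:cont-field-HHS::complete} then yields $s \in \spaceOfSections_{\operatorname{cont}}(\aContField)$. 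I expect the main obstacle to be the reduction-to-generators claim in~\eqref{lem:cont-field-HHS-total-space:embedding} and the extra bookkeeping for the bicombing in~\eqref{lem:cont-field-HHS-total-space:midpoint}; once these are handled, the rest is a clean interplay between the density and co-continuity conditions built into \Cref{def:cont-field-HHS}.
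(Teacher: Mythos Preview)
Your proposal is correct and follows essentially the same approach as the paper's proof: density plus co-continuity to approximate by sections, the Lipschitz estimate for the bicombing in \eqref{lem:cont-field-HHS-total-space:midpoint}, and the reduction to $\spaceOfSections_0$ in \eqref{lem:cont-field-HHS-total-space:embedding}. Two minor differences worth noting: in \eqref{lem:cont-field-HHS-total-space:quotient} you invoke the general fact that open surjections are quotient maps, whereas the paper gives a direct description of the interior of $\pi_{\aContField}^{-1}(Y)$ (and note that your density argument for $\pi_{\aContField}(V_{U,s,\varepsilon})\supseteq U$ is unnecessary, since $(z,s(z))$ itself lies in $V_{U,s,\varepsilon}$); and in the converse of \eqref{lem:cont-field-HHS-total-space:sections} you route through part~\eqref{lem:cont-field-HHS-total-space:metric} to obtain co-continuity and then apply \eqref{def:cont-field-HHS::complete}, while the paper instead verifies condition~\eqref{def:cont-field-HHS::approx} of \Cref{cor:cont-field-HHS-Diximier} directly --- your route is arguably cleaner.
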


\begin{proof}
	\begin{enumerate}
		\item 
		The openness of $\pi_{\aContField}$ follows from the observation that $\pi_{\aContField} \left( V_{U, s, \varepsilon} \right) = U$ for any open subset $U$ of $\baseSpace{\aContField}$, any continuous section $s$ of $\aContField$ and any $\varepsilon > 0$. 
		On the other hand, $\pi_{\aContField}$ is a quotient map since for any subset $Y \subseteq \baseSpace{\aContField}$, the interior of $\pi_{\aContField} ^{-1} (Y)$ is given by 
		\[
		\bigcup_{\text{open } U \subseteq Y} \bigcup_{s \in \spaceOfSections_{\operatorname{cont}}(\aContField)} \bigcup_{\varepsilon > 0} V_{U, s, \varepsilon} \; ,
		\]
		which agrees with $\pi_{\aContField} ^{-1} (Y)$ if and only $Y$ is open. 
		
		\item The injectivity of this map follows from the requirement that $\spaceOfSections_0$ is pointwise dense. To see that it is a topological embedding, 
		we need to show the original topology $\mathcal{T}$ on $|\aContField|$ as in \Cref{def:cont-field-HHS-total-space} agrees with the topology $\mathcal{T}'$ inherited from $\baseSpace{\aContField} \times \prod_{s \in \spaceOfSections_0} [0, \infty)$. Observe that $\mathcal{T}'$ is generated by a subbase consisting of sets of the form 
		\[
		W_{U, s, I} := \left\{ (z, x) \in \{z\} \times \aContField_z \subseteq \bigsqcup_{z' \in \baseSpace{\aContField}} \aContField_{z'} \colon z \in U , \ d_{\aContField_z} (x, s(z)) \in I \right\} \; ,
		\]
		where $U$, $s$ and $I$ range over all open subsets of $\baseSpace{\aContField}$, all continuous sections of $\aContField$, and all relatively open intervals inside $[0, \infty)$, respectively. Hence clearly $\mathcal{T} \subseteq \mathcal{T}'$. On the other hand, it is routine to check that each $W_{U, s, I}$ as above is in $\mathcal{T}$, using the triangle inequality and the pointwise density of $\spaceOfSections_0$, whence $\mathcal{T} = \mathcal{T}'$. 
		
		
		\item To show this, it suffices to show that for any $((z_0,x_0),(z_0,x_0'))$ such that $\alpha<d_{\aContField_{z_0}}(x_0,x_0')<\beta$, there are neighborhoods $U$ of $(z_0,x_0)$ and $U'$ of $(z_0,x_0')$ in $|\aContField|$ such that for any $((z,x),(z,x')) \in U \times U'$, we have $\alpha<d_{\aContField_{z}}(x,x')<\beta$.

		Consider real numbers $\alpha_1,\beta_1$ such that 
		\[\alpha<\alpha_1<d_{\aContField_{z_0}}(x_0,x_0')<\beta_1<\beta.\]
		Note that for any $\varepsilon_1>0$ there are continuous sections $s,s'$ such that 
		\[d_{\aContField_{z_0}}(x_0,s(z_0))<\varepsilon_1 \text{, and }
		d_{\aContField_{z_0}}(x_0',s'(z_0))<\varepsilon_1,\]
		which, in particular, means by the triangle inequality that
		\[|d_{\aContField_{z_0}}(s(z_0),s'(z_0))-d_{\aContField_{z_0}}(x_0,x_0')|<2\varepsilon_1\]
		Picking
		\[\varepsilon_1<\frac{1}{2}\min(d_{\aContField_{z_0}}(x_0,x_0')-\alpha_1, \beta_1-d_{\aContField_{z_0}}(x_0,x_0'),\alpha_1-\alpha,\beta-\beta_1),\]
		ensures that $\alpha_1<d_{\aContField_{z_0}}(s(z_0),s'(z_0))<\beta_1$. (Actually to ensure these inequalities we only need $\varepsilon_1<\frac{1}{2}\min(d_{\aContField_{z_0}}(x_0,x_0')-\alpha_1, \beta_1-d_{\aContField_{z_0}}(x_0,x_0'))$, but we will use the latter terms later.)
		Because $s$ and $s'$ are co-continuous, there is an open neighborhood $U_0$ of $z_0$ in $\baseSpace{\aContField}$ such that for any $z \in U_0$, we have
		\[\alpha_1<d_{\aContField_{z}}(s(z),s'(z))<\beta_1.\]
		
		Pick $\varepsilon_2$ satisfying $0<\varepsilon_2= \frac{1}{2}\min(\alpha_1-\alpha,\beta-\beta_1)$. Then pick 
		\[U = V_{U_0,s,\varepsilon_2} = \{(z,x)|z \in U_0, d_{\aContField_{z}}(x,s(z))<\varepsilon_2)\}\]
		and
		\[U' = V_{U_0,s',\varepsilon_2} = \{(z,x)|z \in U_0, d_{\aContField_{z}}(x',s'(z))<\varepsilon_2)\}.\]
		
		Then for $((z,x),(z,x')) \in U \times U'$, 
		\[|d_{\aContField_{z}}(x,x')-d_{\aContField_{z}}(s(z),s'(z))|<2\varepsilon_2,\]
		which, combined with our earlier inequality $\alpha_1<d_{\aContField_{z}}(s(z),s'(z))<\beta_1$, implies that
		\[\alpha<d_{\aContField_{z}}(x,x')<\beta.\]
		Moreover, we have that $((z_0,x_0),(z_0,x_0'))$ is in $U \times U'$, because we chose $\varepsilon_1\leq\varepsilon_2$. Thus, we have found the desired neighborhoods $U$ and $U'$.
		
		\item 
		To show $\kappa$ is continuous, it suffices to show, for any open subset $U$ of $\baseSpace{\aContField}$, any continuous section $s$ of $\aContField$, and any $\varepsilon > 0$, the preimage $\kappa^{-1} \left( V_{U, s, \varepsilon} \right)$ is open. 
		To this end, we fix $\left( ((z,x_0),(z,x_1)), t \right) \in \kappa^{-1} \left( V_{U, s, \varepsilon} \right)$, which simply means $z \in U$ and $d_{\aContField_{z}} \left( \left[ x_0, x_1 \right] (t) , s(z)\right) < \varepsilon$. We seek an open subset $U'$ of $\baseSpace{\aContField}$, continuous sections $s_0$ and $s_1$ of $\aContField$, and $\varepsilon', \delta \in (0, \infty)$ such that $(z,x_i) \in V_{U', s_i, \varepsilon'}$ for $i \in \{1,2\}$ and $\kappa^{-1} \left( V_{U, s, \varepsilon} \right)$ contains
		\begin{align*}
		\left( \left( V_{U', s_0, \varepsilon'} \times V_{U', s_1, \varepsilon'} \right) \times (t -\delta, t+ \delta) \right) \cap \left( \left( |\aContField| \underset{\baseSpace{\aContField}}{\times} |\aContField| \right) \times [0,1] \right) \; ,
		\end{align*}
		that is, for any $z' \in U'$, $t' \in (t -\delta, t+ \delta) \cap [0,1]$ and $x'_0, x'_1 \in \aContField_{z'}$ with $d_{\aContField_{z'}} (x'_i, s_i(z')) < \varepsilon'$, we want to show $z' \in U$ and  
		\[
		d_{\aContField_{z'}} \left( \left[ x'_0, x'_1 \right] (t') , s(z')\right) < \varepsilon \; .
		\]
		
		To do this, we set  $\varepsilon' = \frac{1}{4} \left( \varepsilon - d_{\aContField_{z}} \left( \left[ x_0, x_1 \right] (t) , s(z)\right) \right) > 0$ and 
		apply \Cref{def:cont-field-HHS}\eqref{def:cont-field-HHS::dense} to obtain continuous sections $s_0$ and $s_1$ of $\aContField$ with $d_{\aContField_z} (s_i(z), x_i) < \varepsilon'$. 
		By \Cref{cor:cont-field-HHS-geodesic-approximate}, the section $s_t \colon z' \mapsto \left[ s_0 (z'), s_1 (z') \right] (t)$ is continuous. 
		Hence we may apply \Cref{def:cont-field-HHS}\eqref{def:cont-field-HHS::continuous} to obtain an open neighborhood $U'$ of $z$ such that $U' \subseteq U$ and for any $z' \in U'$, we have $d_{\aContField_{z'}} \left( s_t(z') , s(z') \right) < d_{\aContField_{z}} \left( s_t(z) , s(z) \right) + \varepsilon'$ and $d_{\aContField_{z'}} \left( s_0(z') , s_1(z') \right) < d_{\aContField_{z}} \left( s_0(z) , s_1(z) \right) + \varepsilon'$. 
		Finally we set 
		\[
		\delta = \frac{\varepsilon'}{d_{\aContField_{z}} \left( s_0(z) , s_1(z) \right) + 3 \varepsilon'} > 0 \; .
		\]
		Now in view of \Cref{rmk:CAT0-facts}\eqref{rmk:CAT0-facts-Lipschitz} we have, for any $z' \in U'$, $t' \in (t -\delta, t+ \delta) \cap [0,1]$ and $x'_0, x'_1 \in \aContField_{z'}$ with $d_{\aContField_{z'}} (x_i, s_i(z')) < \varepsilon'$,  
		\begin{align*}
		&\ d_{\aContField_{z'}} \left( \left[ x'_0, x'_1 \right] (t') , s(z')\right) \\
		\leq &\ d_{\aContField_{z'}} \left( \left[ x'_0, x'_1 \right] (t) , s(z')\right) + d_{\aContField_{z'}} \left( \left[ x'_0, x'_1 \right] (t) , \left[ x'_0, x'_1 \right] (t'))\right) \\
		= &\ d_{\aContField_{z'}} \left( \left[ x'_0, x'_1 \right] (t) , s(z')\right) + |t - t'| \, d_{\aContField_{z'}} \left( x'_0 , x'_1 \right) \\
		< &\ d_{\aContField_{z'}} \left( \left[ x'_0, x'_1 \right] (t) , s(z')\right) + \delta \, \left( d_{\aContField_{z'}} \left( s_0(z') , s_1(z') \right) + 2 \varepsilon' \right) \\
		< &\ d_{\aContField_{z'}} \left( \left[ x'_0, x'_1 \right] (t) , s(z')\right) + \delta \, \left( d_{\aContField_{z}} \left( s_0(z) , s_1(z) \right) + 3 \varepsilon' \right) \\
		= &\ d_{\aContField_{z'}} \left( \left[ x'_0, x'_1 \right] (t) , s(z')\right) + \varepsilon' \\
		\leq &\ d_{\aContField_{z'}} \left( s_t(z') , s(z')\right) + d_{\aContField_{z'}} \left( \left[ s_0 (z'), s_1 (z') \right] (t) , \left[ x'_0, x'_1 \right] (t) \right) + \varepsilon' \\
		\leq &\ d_{\aContField_{z'}} \left( s_t(z') , s(z')\right) + \max \left\{ d_{\aContField_{z'}} \left( s_i (z') ,  x'_i \right) \colon i \in \{0,1\} \right\} + \varepsilon' \\
		\leq &\ d_{\aContField_{z'}} \left( s_t(z') , s(z')\right) + 2 \varepsilon' \\
		< &\ d_{\aContField_{z}} \left( s_t(z) , s(z)\right) + 3 \varepsilon' \\
		\leq &\ d_{\aContField_{z}} \left( s_t(z) , \left[ x_0, x_1 \right] (t)\right) + d_{\aContField_{z}} \left( \left[ x_0, x_1 \right] (t) , s(z)\right) + 3\varepsilon' \\
		\leq &\ \max \left\{ d_{\aContField_{z}} \left( s_i (z) ,  x_i \right) \colon i \in \{0,1\} \right\}  + d_{\aContField_{z}} \left( \left[ x_0, x_1 \right] (t) , s(z)\right) + 3\varepsilon' \\
		< &\ d_{\aContField_{z}} \left( \left[ x_0, x_1 \right] (t) , s(z)\right) + 4\varepsilon' =  \varepsilon \; ,
		\end{align*}
		as desired. 
		
		\item Note that $\breve{s}: \baseSpace{\aContField} \to |\aContField|$ is continuous if and only if for each open set $V_{U, s_1, \varepsilon}$ in the subbase defining the topology of $|\aContField|$, $\breve{s}^{-1}(V_{U,s_1, \varepsilon})$ is open. Moreover,
		\[\breve{s}^{-1}(V_{U,s_1, \varepsilon}) = \{z \in U| d_{\aContField_{z}}(s(z),s_1(z))<\varepsilon\}.\]
		Thus, $\breve{s}$ is continuous if and only if for any open set $U$, continuous section $s_1$, and positive real number $\varepsilon$, the set $U \allowbreak \cap \left\{ z \in \baseSpace{\aContField}| d_{\aContField_{z}}(s(z),s_1(z))<\varepsilon \right\}$ is open. This happens if and only if for any continuous section $s_1$ and positive real number $\varepsilon$, the set
		\[\{z \in \baseSpace{\aContField}| d_{\aContField_{z}}(s(z),s_1(z))<\varepsilon\}\]
		is open. 
		
		If $s$ is in $ \spaceOfSections_{\operatorname{cont}} $, then the above set is open because $s$ and $s_1$ are co-continuous.
		
		For the other direction, if $\breve{s}$ is continuous, we want to show that $s$ is in $ \spaceOfSections_{\operatorname{cont}} $. For this it suffices to show Condition \eqref{def:cont-field-HHS::approx} for $s$, that is, it suffices to show that for any $z_0 \in \baseSpace{\aContField}$, and $\varepsilon>0$, there is an $s_1 \in \spaceOfSections_{\operatorname{cont}}$ and a neighborhood $U$ of $z_0$, such that $d_{\aContField_{z}}(s(z),s_1(z)) \leq \varepsilon$ for any $z \in U$.
		But for any $z_0 \in \baseSpace{\aContField}$, there is $s_1$ such that $d_{\aContField_{z_0}}(s(z_0),s_1(z_0)) <\frac{\varepsilon}{2}$, and by the condition that $\breve{s}$ is continuous, we have that 
		\[U=\{z \in \baseSpace{\aContField}| d_{\aContField_{z}}(s(z),s_1(z))<\varepsilon\}\]
		is open. It clearly contains $z_0$, so we have the desired continuous section and open set.
	\end{enumerate}
\end{proof}

\begin{rmk} \label{rmk:cont-field-HHS-total-space-maps}
	Let $\varphi \colon \aContField \to \anotContField$ be an isometric continuous morphism between continuous fields of {\hhs}s. Then 
	there are continuous maps 
	\begin{align*}
	\baseSpace{\varphi}_* \colon \left| \baseSpace{\varphi}^* \aContField \right| &\to \left| \aContField \right| \, , & (y, x) &\mapsto (\baseSpace{\varphi}(y) , x)  & \text{ for any } y \in \baseSpace{\anotContField} \text{ and } x \in \aContField_{\baseSpace{\varphi}(y)} \; , \\
	\dot{\varphi}_* \colon \left| \baseSpace{\varphi}^* \aContField \right| &\hookrightarrow \left| \anotContField \right| \, , & (y, x) &\mapsto (y , \varphi_y (x) )  & \text{ for any } y \in \baseSpace{\anotContField} \text{ and } x \in \aContField_{\baseSpace{\varphi}(y)} \; .
	\end{align*}
	With the notations in \Cref{rmk:cont-field-HHS-maps-factorization} and \Cref{lem:cont-field-HHS-total-space}\eqref{lem:cont-field-HHS-total-space:sections}, it follows directly from the definitions that for any $s \in \spaceOfSections_{\operatorname{cont}} (\aContField)$, 
	the section $\baseSpace{\varphi}^* (s) \in \spaceOfSections_{\operatorname{cont}} \left( \baseSpace{\varphi}^* \aContField \right) $ is the only element $s'$ in $\prod_{y \in \baseSpace{\anotContField}} \aContField_{\baseSpace{\varphi}(y)}$ 
	such that the diagram
	\[
	\xymatrix{
		\left| \aContField \right| & \left| \baseSpace{\varphi}^* \aContField \right| \ar[l]_{\baseSpace{\varphi}_*} \\
		\baseSpace{\aContField} \ar[u]^{\breve{s}} & \baseSpace{\anotContField} \ar[l]^{\baseSpace{\varphi}} \ar[u]_{\breve{s'}}
	}
	\]
	commutes, 
	while for any $s' \in \spaceOfSections_{\operatorname{cont}}\left( \baseSpace{\varphi}^* \aContField \right)$,  
	the section $\dot{\varphi} (s') \in \spaceOfSections_{\operatorname{cont}} \left( \anotContField \right) $ is the only element $s''$ in $\prod_{y \in \baseSpace{\anotContField}} \anotContField_{y}$ 
	such that the diagram
	\[
	\xymatrix{
		\left| \baseSpace{\varphi}^* \aContField \right| \ar@{^{(}->}[rr]^{\dot{\varphi}_*} && \left| \anotContField \right| \\
		& \baseSpace{\anotContField}  \ar[ul]^{\breve{s'}}  \ar[ur]_{\breve{s''}} &
	}
	\]
	commutes. 
\end{rmk}

\section{Measurable fields of {\hhs}s}
\label{sec:meas-fields}

In this
section, we discuss 
a measure-theoretic variant of \Cref{def:cont-field-HHS}, analogous to the notion of a measurable field of Hilbert spaces (\cite[A~69]{Dixmier1977C}).

\begin{defn}  \label{def:meas-field-HHS}
	A \emph{measurable field} $\aMeasField$ of {\hhs}s consists of a measurable space $\baseSpace{\aMeasField}$ (called the \emph{base space} of $\aMeasField$), a tuple $\left( \aMeasField_z \right)_{z \in \baseSpace{\aMeasField}}$ of {\hhs}s, and a subset $\spaceOfSections_{\operatorname{meas}} (\aMeasField)$ (called \emph{the set of measurable sections}) of $\prod_{z \in \baseSpace{\aMeasField}} \aMeasField_z$ that is 
	\begin{enumerate}
		\item \label{def:meas-field-HHS::convex} 
		\emph{convex} in the sense of \Cref{def:cont-field-HHS}\eqref{def:cont-field-HHS::convex}, 
		
		\item \label{def:meas-field-HHS::dense} 
		\emph{pointwise dense} in the sense of \Cref{def:cont-field-HHS}\eqref{def:cont-field-HHS::dense},  
		
		\item \label{def:meas-field-HHS::measurable} 
		\emph{mutually co-measurable} in the sense 
		that any two elements $s, s' \in \spaceOfSections_{\operatorname{meas}} (\aMeasField)$ are \emph{co-measurable}, which means the function $\baseSpace{\aMeasField} \ni z \mapsto d_{\aMeasField_z} \left( s(z) , s'(z) \right) \in [0, \infty)$ is measurable, and
		
		\item \label{def:meas-field-HHS::complete} 
		\emph{co-measurability-closed} in the sense that 
		for any $s \in \prod_{z \in \baseSpace{\aMeasField}} \aMeasField_z$ satisfying 
		$\spaceOfSections_{\operatorname{meas}} (\aMeasField) \cup \{s\}$ is mutually co-measurable, 
		we have $s \in \spaceOfSections_{\operatorname{meas}} (\aMeasField)$.  
	\end{enumerate}
	Sometimes, to emphasize the base space $\baseSpace{\aMeasField}$ is equal to a given measurable space $(Z, \calB)$, we write $\aMeasField_Z$ or $\aMeasField_{(Z, \calB)}$ instead of $\aMeasField$ and $\spaceOfSections_{\operatorname{meas}} (Z, \aMeasField)$ or $\spaceOfSections_{\operatorname{meas}} (Z, \calB, \aMeasField)$ instead of $\spaceOfSections_{\operatorname{meas}} (\aMeasField)$, and say $\aMeasField$ is a measurable field of {\hhs}s \emph{over $(Z, \calB)$}. 
	
	A measured field of {\hhs}s is a tuple $(\aMeasField, \mu)$ where $\aMeasField$ is a measurable field of {\hhs}s and $\mu$ is a measure on $\baseSpace{\aMeasField}$. 
	In this case, we write $\spaceOfSections_{\operatorname{meas}} (\aMeasField, \mu)$ for the quotient of $\spaceOfSections_{\operatorname{meas}} (\aMeasField)$ by identifying sections that differ on null sets, that is, $s, s' \in \spaceOfSections_{\operatorname{meas}} (\aMeasField)$ are identified if and only if $\mu \left\{ z \in \baseSpace{\aMeasField} \colon d_{\aMeasField_z} \left( s(z) , s'(z) \right) > 0 \right\} = 0$. 
	Following a standard convention, when there is no danger of confusion, we do not distinguish between measurable sections from their equivalence classes, and thus we often write $s$ for $[s]$. 
\end{defn}

We can turn continuous fields of {\hhs}s into measurable fields.

\begin{lem} \label{lem:cont-field-HHS-meas}
	Let $\aContField$ be a continuous field of {\hhs}s. Let $\Sigma$ be a generating set of $\aContField$. Let $\calB$ be a $\sigma$-algebra on $\baseSpace{\aContField}$ that contains all the open subsets. Then we have 
	\begin{align*}
	& \bigg\{ s \in \prod_{z \in \baseSpace{\aContField}} \aContField_z \colon 
	\Sigma  \cup \{s\} \text{ is mutually co-measurable}
	\bigg\} \\
	= & \bigg\{ s \in \prod_{z \in \baseSpace{\aContField}} \aContField_z \colon 
	\text{the map $\baseSpace{\aContField} \to |\aContField|$, $z \mapsto (z, s(z))$ is measurable}
	\bigg\}
	\end{align*}
	and this set defines a measurable field of {\hhs}s. 
\end{lem}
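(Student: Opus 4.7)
The plan is to split the argument into two parts: first establishing the equivalence of the two descriptions of the set, and then verifying that this common set satisfies the four axioms of \Cref{def:meas-field-HHS}. The unifying tool will be the description of the topology on the total space $|\aContField|$ from \Cref{def:cont-field-HHS-total-space} together with \Cref{lem:cont-field-HHS-total-space}\eqref{lem:cont-field-HHS-total-space:embedding}: the topology on $|\aContField|$ admits a sub-base consisting of $\pi_{\aContField}^{-1}(U)$ for open $U \subseteq \baseSpace{\aContField}$ together with sets of the form $\{(z, x) \in |\aContField| : d_{\aContField_z}(x, s'(z)) \in I\}$ for $s' \in \Sigma$ and open $I \subseteq [0, \infty)$.

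For the equivalence itself, the forward direction is immediate: if the section $\breve{s}$ of \Cref{lem:cont-field-HHS-total-space}\eqref{lem:cont-field-HHS-total-space:sections} is measurable, then for each $s' \in \Sigma$ and $\varepsilon > 0$, we have $\breve{s}^{-1}\{(z,x) : d_{\aContField_z}(x, s'(z)) < \varepsilon\} = \{z \in \baseSpace{\aContField} : d_{\aContField_z}(s(z), s'(z)) < \varepsilon\} \in \calB$, whence $s$ is co-measurable with every element of $\Sigma$. Conversely, assuming co-measurability with every $s' \in \Sigma$, the preimage under $\breve{s}$ of each sub-basic open set above lies in $\calB$ (using that $\calB$ contains open sets for the first type, and the measurability of $z \mapsto d_{\aContField_z}(s(z), s'(z))$ for the second), so $\breve{s}$ is measurable.

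To verify the four axioms for the common set $M$: condition \eqref{def:meas-field-HHS::dense} is immediate since $\Sigma \subseteq M$ and $\Sigma$ is pointwise dense, while \eqref{def:meas-field-HHS::complete} follows from the very definition of $M$ together with the fact that $\Sigma$ is mutually co-measurable (as its elements are mutually co-continuous and $\calB$ contains opens). For mutual co-measurability \eqref{def:meas-field-HHS::measurable}, given $s, s' \in M$, I would invoke the equivalence to view $\breve{s}$ and $\breve{s'}$ as measurable maps, form the measurable pair $(\breve{s}, \breve{s'})$ into the fiber product $|\aContField| \times_{\baseSpace{\aContField}} |\aContField|$, and compose with the continuous distance function of \Cref{lem:cont-field-HHS-total-space}\eqref{lem:cont-field-HHS-total-space:metric} to obtain measurability of $z \mapsto d_{\aContField_z}(s(z), s'(z))$. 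For convexity \eqref{def:meas-field-HHS::convex}, the same strategy applies via the continuous midpoint map $\kappa(\cdot, 1/2)$ from \Cref{lem:cont-field-HHS-total-space}\eqref{lem:cont-field-HHS-total-space:midpoint}: the pointwise midpoint $s_{0}$ of $s_{-1}, s_{1} \in M$ satisfies $\breve{s_{0}} = \kappa(\cdot, 1/2) \circ (\breve{s_{-1}}, \breve{s_{1}})$, hence $\breve{s_{0}}$ is measurable and by the equivalence $s_{0} \in M$.

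The main technical obstacle is reconciling the potentially differing $\sigma$-algebras on the non\-/second\-/countable space $|\aContField|$ and its fiber product. I expect the cleanest route is to work consistently with the $\sigma$-algebra generated by the sub-base above, and to verify by direct inspection of preimages that the continuous operations (the distance function and the midpoint map) pull sub-basic opens back to sets expressible through the coordinate sub-basic opens of the fiber product, using the pointwise density of $\Sigma$ in approximation arguments when needed. Conceptually this parallels the role played by \Cref{cor:cont-field-HHS-Diximier} and \Cref{cor:cont-field-HHS-geodesic-approximate} in the continuous setting.
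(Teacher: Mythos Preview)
Your proposal is correct and follows essentially the same route as the paper: first establish the set equality via the (sub-)base description of the topology on $|\aContField|$ coming from \Cref{lem:cont-field-HHS-total-space}\eqref{lem:cont-field-HHS-total-space:embedding}, then verify the four axioms of \Cref{def:meas-field-HHS} by using the co-measurability description for \eqref{def:meas-field-HHS::dense} and \eqref{def:meas-field-HHS::complete} and the measurable-graph description together with the continuous maps $d$ and $\kappa$ of \Cref{lem:cont-field-HHS-total-space} for \eqref{def:meas-field-HHS::convex} and \eqref{def:meas-field-HHS::measurable}. You are in fact more explicit than the paper in flagging the $\sigma$-algebra issue on the possibly non-second-countable fiber product; the paper simply asserts that the pair map into $|\aContField| \times_{\baseSpace{\aContField}} |\aContField|$ is Borel measurable and that composing with the continuous maps $d$ and $\kappa$ preserves measurability.
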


\begin{proof}
	Note that for $s$ to be in 
	\[S_1 := \bigg\{ s \in \prod_{z \in \baseSpace{\aContField}} \aContField_z \colon 
	\Sigma  \cup \{s\} \text{ is mutually co-measurable}
	\bigg\},\]
	means that for any $s' \in \Sigma$, the map $z \mapsto d_{\aContField_z}(s'(z), s(z))$ is measurable, that is to say, the preimage of any open set is measurable. 	
	In other words, $s$ is in the above set if and only if for any $a,b \in \mathbb{R}$, and any $s' \in \Sigma$, the set
	\[\{z  \in \baseSpace{\aContField} \colon  a < d_{\aContField_z}(s'(z), s(z))<b\}\]
	is measurable.
	
	On the other hand, for $s$ to be in 
	\[S_2 := \bigg\{ s \in \prod_{z \in \baseSpace{\aContField}} \aContField_z \colon 
	\text{the map $\baseSpace{\aContField} \to |\aContField|$, $z \mapsto (z, s(z))$ is measurable}
	\bigg\}\]
	means that for any of the base open sets 
	\[V_{U,s',\varepsilon} = \left\{ (z,x) \in \{z\} \times \aContField_z \subset \bigsqcup_{z' \in \underline{\aContField}} \aContField_{z'} \colon  z \in U, d_{\aContField_z}(x, s'(z))< \varepsilon \right\}\]
	as in \Cref{def:cont-field-HHS-total-space}, we have that the preimage is measurable.
	That is to say, for any open set $U \subseteq \baseSpace{\aContField}$, $s' \in \Sigma$ and $\varepsilon>0$, we have 
	\[\left\{ z \in \baseSpace{\aContField} \colon  d_{\aContField_z}(s(z), s'(z))<\varepsilon \right\} \cap U\]
	is measurable. Note that we may fix $U =  \baseSpace{\aContField}$ without loss of generality here. 
	
	Comparing the two reformulations above, we see immediately that $S_1 \subseteq S_2$ (say, we take $a= -1$ and $b = \varepsilon$). In the other direction, for any $a, b \in \mathbb{R}$, we have 
	\begin{align*}
	&\ \{z  \in \baseSpace{\aContField} \colon  a < d_{\aContField_z}(s'(z), s(z))<b\} \\
	= &\  \bigcup_{k = 1}^\infty \left( \left\{ z  \in \baseSpace{\aContField} \colon  d_{\aContField_z}(s'(z), s(z))<b \right\} \setminus \left\{z  \in \baseSpace{\aContField} \colon  d_{\aContField_z}(s'(z), s(z))< a + {1}/{k} \right\} \right) \; , 
	\end{align*}
	which shows $S_1 \supseteq S_2$, as desired. 
	
	It remains to show that this set defines a measurable field of Hilbert-Hadamard space.

	To verify \Cref{def:meas-field-HHS}\eqref{def:meas-field-HHS::convex}, we show that $S_2$ is convex in the sense that for any $s_{-1}, s_1 \in S_2$, the midpoint section $s_0$ is also in $S_2$. 
	Note that with notations as in \Cref{lem:cont-field-HHS-total-space}\eqref{lem:cont-field-HHS-total-space:midpoint}  the map $\baseSpace{\aContField} \to |\aContField|$, $z \mapsto (z, s_0 (z))$ is equal to the composition 
	\[
	\baseSpace{\aContField} \xrightarrow{\left((-, s_{-1} (-)), (-, s_{1} (-)) \right)} |\aContField| \underset{\baseSpace{\aContField}}{\times} |\aContField|  \xrightarrow{(-, 1/2)} \left( |\aContField| \underset{\baseSpace{\aContField}}{\times} |\aContField| \right) \times [0,1] \xrightarrow{\kappa} |\aContField| \, ,
	\]
	where the last two maps are continuous. 
	Since $s_{-1}, s_1 \in S_2$, the first map is Borel measurable, which implies the composition is so, too, and thus $s_0 \in S_2$. 
	
	\Cref{def:meas-field-HHS}\eqref{def:meas-field-HHS::dense} follows immediately from  \Cref{def:cont-field-HHS}\eqref{def:cont-field-HHS::dense} since $S_1$ contains $\spaceOfSections_{\operatorname{cont}} (\aContField)$ by \Cref{def:cont-field-HHS}\eqref{def:cont-field-HHS::continuous}.

	To verify \Cref{def:meas-field-HHS}\eqref{def:meas-field-HHS::measurable}, 
	it suffices to fix arbitrary $s, s' \in S_2$ and show the map $\baseSpace{\aContField} \to \mathbb{R}$
	given by $z \mapsto d_{\aContField_z}(s(z),s'(z))$ is measurable. 
	With notations as in \Cref{lem:cont-field-HHS-total-space}\eqref{lem:cont-field-HHS-total-space:metric}  this map is equal to the composition 
	\[
	\baseSpace{\aContField} \xrightarrow{\left((-, s (-)), (-, s' (-)) \right)} |\aContField| \underset{\baseSpace{\aContField}}{\times} |\aContField|  \xrightarrow{d} \mathbb{R} \, ,
	\]
	where the second map is continuous. 
	Since $s, s' \in S_2$, the first map is Borel measurable, which implies the composition is so, too, as desired.

	Finally, to verify \Cref{def:meas-field-HHS}\eqref{def:meas-field-HHS::complete}, 
	notice that if a section $s$ is co-measurable with everything in $S_1$, then it is, in particular, co-measurable with everything in $\Sigma$, and is therefore in $S_1$, as desired.
\end{proof}

We remark that in the above proof, it is important to first establish the set equality in \Cref{lem:cont-field-HHS-meas} before proving the set defines a measurable field of {\hhs}s, as the various conditions in \Cref{def:meas-field-HHS} call for different characterizations of that same set. 
Something similar happens in the proof of 
\Cref{lem:meas-field-HHS-cont-maps}, where both characterizations need to be used. 

\begin{defn} \label{def:cont-field-HHS-meas}
	Let $\aContField$ be a continuous field of {\hhs}s. 
	Let $\calB$ be a $\sigma$-algebra containing all open subsets in $\baseSpace{\aContField}$. 
	Then we write $\spaceOfSections_{\operatorname{meas}}(\baseSpace{\aContField}, \calB, \aContField)$ for the set in \Cref{lem:cont-field-HHS-meas} and $\aContField_{\operatorname{meas, \calB}}$ for the resulting measurable field of {\hhs}s, which we call the measurable field of {\hhs}s \emph{associated} to $\aContField$ and $\calB$. 
	
	When $\calB$ is the $\sigma$-algebra of the Borel sets in $\baseSpace{\aContField}$, we may write $\spaceOfSections_{\operatorname{meas}}(\aContField)$ for $\spaceOfSections_{\operatorname{meas}}(\baseSpace{\aContField}, \calB, \aContField)$ and $\aContField_{\operatorname{meas}}$ for $\aContField_{\operatorname{meas, \calB}}$. 
\end{defn}

We also have measure-theoretic analogs of \Cref{defn:cont-fields-morphism}. 

\begin{defn}\label{defn:meas-fields-morphism}
	
	Let $(\aMeasField, \mu)$ and $(\anotMeasField, \nu)$ be two measured fields of {\hhs}s. 
	\begin{enumerate}
		\item An \emph{isometric measure-preserving morphism} 
		$\varphi \colon (\aMeasField, \mu) \to (\anotMeasField, \nu)$ is a tuple $\left(  \underline{\varphi},  \left( \varphi_y \right)_{y \in \operatorname{dom}\left(\baseSpace{\varphi}\right)} \right)$, where $\underline{\varphi} \colon \left(\operatorname{dom}\left(\baseSpace{\varphi}\right), \mu \right) \to (\baseSpace{\aMeasField}, \nu)$ is a measure-preserving map from a measurable subset $\operatorname{dom}\left(\baseSpace{\varphi}\right) \subseteq \baseSpace{\anotMeasField}$ with $\nu \left(\baseSpace{\anotMeasField} \setminus \operatorname{dom}\left(\baseSpace{\varphi}\right) \right) = 0$, and for any $y \in \operatorname{dom}\left(\baseSpace{\varphi}\right)$, $\varphi_y \colon \aMeasField_{\underline{\varphi}(y)} \to \anotMeasField_y$ is an isometric embedding, such that 
		for any $s \in \spaceOfSections_{\operatorname{meas}} (\aMeasField)$, the section $\left( \varphi_y \left( s \left( \underline{\varphi}(y) \right) \right) \right)_{y \in \operatorname{dom}\left(\baseSpace{\varphi}\right)}$ agrees with a section in $\spaceOfSections_{\operatorname{meas}} (\anotMeasField)$ $\nu$-almost everywhere. 
		Note that $\left( \varphi_y \left( s \left( \underline{\varphi}(y) \right) \right) \right)_{y \in \operatorname{dom}\left(\baseSpace{\varphi}\right)}$ determines a unique equivalence class in $\spaceOfSections_{\operatorname{meas}} (\anotMeasField, \nu)$ that depends only on the class $[s] \in \spaceOfSections_{\operatorname{meas}} (\aMeasField, \mu)$; thus we denote this unique equivalence class by $\varphi([s])$. 
		
		\item Two isometric measure-preserving morphisms $\varphi, \varphi' \colon (\aMeasField, \mu) \to (\anotMeasField, \nu)$ are \emph{equivalent} if $\baseSpace{\varphi} = \baseSpace{\varphi'}$ $\nu$-almost everywhere and $\varphi_y = \varphi'_y$ for $\nu$-almost every $y \in \baseSpace{\anotMeasField}$. 
		
		\item Let ${\anotMeasField}'$ be another measurable field of {\hhs}s. Given two isometric measure-preserving morphisms $\varphi \colon \aMeasField \to \anotMeasField$ and $\psi \colon \anotMeasField \to {\anotMeasField}'$, their composition $\psi \circ \varphi \colon \aMeasField \to \anotMeasField'$ is given by the tuple 
		\[
		\left( \left( \, \underline{\varphi} \circ \underline{\psi} \colon \psi^{-1} \left(\operatorname{dom}\left(\baseSpace{\varphi}\right)\right) \to \baseSpace{\aMeasField} \right),  \left( \psi_{y'} \circ \varphi_{\underline{\psi}(y')} \colon \aMeasField_{\underline{\varphi} \circ \underline{\psi}(y')} \to {\anotMeasField}'_{y'} \right)_{y' \in \psi^{-1} \left(\operatorname{dom}\left(\baseSpace{\varphi}\right)\right)} \right) \; .
		\]
		
		\item The category $\MFHHS$ has all measured fields of {\hhs}s as its objects and all isometric measure-preserving morphisms as its morphisms. It is clear that, the associations $(\aMeasField, \mu) \mapsto \left(\baseSpace{\aMeasField}, \mu\right)$ and $\varphi \mapsto \underline{\varphi}$ yields a contravariant functor from $\MFHHS$ to the category of measure spaces and measure-preserving maps. 
		
		\item An isometric measure-preserving morphism $\varphi \colon \aMeasField \to \anotMeasField$ is an \emph{isometric measure-preserving isomorphism} if there is an isometric measure-preserving morphism $\psi \colon \anotMeasField \to \aMeasField$ such that $\psi \circ \varphi$ and $\varphi \circ \psi$ are equivalent to the identity morphisms on $\aMeasField$ and $\anotMeasField$, respectively. 
	\end{enumerate}
\end{defn}

\begin{lem}\label{lem:meas-fields-isomorphism}
	An isometric measure-preserving morphism $\varphi \colon (\aMeasField, \mu) \to (\anotMeasField, \nu)$ is an isometric measure-preserving isomorphism if and only if $\baseSpace{\varphi}$ is a measure space isomorphism and for $\nu$-almost every $y \in \baseSpace{\anotMeasField}$, $\varphi_y \colon \aContField_{\baseSpace{\varphi}(y)} \to \anotContField_y$ is an isometric bijection. 
\end{lem}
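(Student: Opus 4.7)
The plan is to mirror the strategy of \Cref{lem:cont-fields-isomorphism} by constructing an explicit inverse morphism, with the caveat that now all equalities of maps hold only up to null sets. The ``only if'' direction should be straightforward: given an inverse $\psi \colon \anotMeasField \to \aMeasField$, we have $\baseSpace{\varphi} \circ \baseSpace{\psi}$ agrees $\nu$-a.e.\ with $\operatorname{id}_{\baseSpace{\anotMeasField}}$ and $\baseSpace{\psi} \circ \baseSpace{\varphi}$ agrees $\mu$-a.e.\ with $\operatorname{id}_{\baseSpace{\aMeasField}}$, so $\baseSpace{\varphi}$ is a measure space isomorphism; and the fiberwise components of the two compositions (which agree with the respective identity morphisms in the sense of \Cref{defn:meas-fields-morphism}) force $\varphi_y$ to be an isometric bijection for $\nu$-almost every $y$.

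For the ``if'' direction, I would define $\psi$ with $\baseSpace{\psi}$ any measure-theoretic inverse of $\baseSpace{\varphi}$ (defined on a conull subset of $\baseSpace{\aMeasField}$), and $\psi_z := (\varphi_{\baseSpace{\psi}(z)})^{-1}$ on the conull subset of $\baseSpace{\aMeasField}$ where both $\baseSpace{\psi}$ is defined and $\varphi_{\baseSpace{\psi}(z)}$ is an isometric bijection. The key nontrivial step is verifying that this tuple really determines an isometric measure-preserving morphism in the sense of \Cref{defn:meas-fields-morphism}, i.e.\ that for every $s \in \spaceOfSections_{\operatorname{meas}}(\anotMeasField)$, the candidate section $\psi(s) \colon z \mapsto \psi_z(s(\baseSpace{\psi}(z)))$ agrees $\mu$-a.e.\ with an element of $\spaceOfSections_{\operatorname{meas}}(\aMeasField)$.

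The strategy for this verification is to invoke the co-measurability-closedness condition, \Cref{def:meas-field-HHS}\eqref{def:meas-field-HHS::complete}: it suffices to show that for every $s' \in \spaceOfSections_{\operatorname{meas}}(\aMeasField)$, the function $z \mapsto d_{\aMeasField_z}(\psi_z(s(\baseSpace{\psi}(z))), s'(z))$ is measurable on a conull subset. Since each $\varphi_{\baseSpace{\psi}(z)}$ is an isometric bijection almost everywhere, this function equals $z \mapsto d_{\anotMeasField_{\baseSpace{\psi}(z)}}(s(\baseSpace{\psi}(z)), \varphi(s')(\baseSpace{\psi}(z)))$, which is the composition of the measurable function $y \mapsto d_{\anotMeasField_y}(s(y), \varphi(s')(y))$ (measurable by co-measurability of $s$ and $\varphi(s')$ in $\spaceOfSections_{\operatorname{meas}}(\anotMeasField)$) with the measurable map $\baseSpace{\psi}$. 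Once $\psi$ is verified to be a morphism, the identities $\psi \circ \varphi \sim \operatorname{id}_{\aMeasField}$ and $\varphi \circ \psi \sim \operatorname{id}_{\anotMeasField}$ follow pointwise on the conull set where everything is defined and bijective.

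The main obstacle I anticipate is bookkeeping of the almost-everywhere indeterminacy: definitions, compositions, and inversions of isometric measure-preserving morphisms all hold only modulo null sets, so one must simultaneously choose representatives of $\baseSpace{\varphi}^{-1}$, of the fiberwise inverses, and of the image sections so that their combination makes sense on a common conull set. This is routine but requires care; once the conull set of ``nice points'' has been fixed, everything reduces to pointwise verifications on that set, and the argument essentially transcribes the proof of \Cref{lem:cont-fields-isomorphism} with ``co-continuous'' replaced by ``co-measurable''.
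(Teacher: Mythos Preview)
Your proposal is correct and follows the same overall route as the paper: prove the ``only if'' direction by reading off consequences of having an inverse, and prove the ``if'' direction by constructing $\psi$ with $\baseSpace{\psi}$ a measure-theoretic inverse of $\baseSpace{\varphi}$ and $\psi_z := (\varphi_{\baseSpace{\psi}(z)})^{-1}$ on a suitable conull set.

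The two writeups emphasize complementary details. The paper is explicit about the null-set bookkeeping you flagged as ``routine but requires care'': it constructs the common conull domains by taking countable intersections of iterates $(\baseSpace{\varphi}\circ\baseSpace{\psi})^n X_{\aMeasField,1}$ and $(\baseSpace{\psi}\circ\baseSpace{\varphi})^n X_{\anotMeasField,1}$ (together with pushforwards of each other), thereby obtaining conull sets $X_{\aMeasField}$, $X_{\anotMeasField}$ on which $\baseSpace{\varphi}$ and $\baseSpace{\psi}$ are honest mutual inverses and every $\varphi_y$ is bijective. Conversely, your verification that $\psi$ really is a morphism --- checking, via co-measurability-closedness \Cref{def:meas-field-HHS}\eqref{def:meas-field-HHS::complete}, that $z \mapsto d_{\aMeasField_z}(\psi_z(s(\baseSpace{\psi}(z))), s'(z)) = d_{\anotMeasField_{\baseSpace{\psi}(z)}}(s(\baseSpace{\psi}(z)), \varphi(s')(\baseSpace{\psi}(z)))$ is measurable --- is the right argument, and is a step the paper's proof only asserts rather than spells out. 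Together the two accounts give a complete proof.
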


\begin{proof}
	
	If $\varphi:(\aMeasField, \mu) \to (\anotMeasField, \nu)$ is an isometric measure-preserving isomorphism, then there is a measure-preserving morphism $\psi \colon (\anotMeasField, \nu) \to (\aMeasField, \mu)$. Then $\baseSpace{\psi}$ is a measure space morphism, and $\baseSpace{\varphi}$ and $\baseSpace{\psi}$ are inverses almost everywhere, so $\varphi$ is a measure space isomorphism.
	
	Moreover, for almost every $y \in \baseSpace{\anotMeasField}$, $\baseSpace{\psi} \circ \baseSpace{\varphi}(y) = y$ and
	\[\varphi_y \circ \psi_{\baseSpace{\varphi}(y)}:\anotMeasField_y = \anotMeasField_{\baseSpace{\psi} \circ \baseSpace{\varphi}(y)} \to \anotMeasField_y\]
	is the identity map.
	Also, for almost every  ${y'} \in \baseSpace{\aMeasField}$, $\baseSpace{\varphi} \circ\baseSpace{\psi} ({y'}) = {y'}$ and
	\[\psi_{y'} \circ \varphi_{\baseSpace{\psi}({y'})}:\aMeasField_{y'} = \aMeasField_{\baseSpace{\varphi} \circ \baseSpace{\psi}({y'})} \to \aMeasField_{y'},\]
	thus, for almost every $y \in \baseSpace{\anotMeasField}$,
	\[\psi_{\baseSpace{\varphi}(y)} \circ \varphi_{\baseSpace{\psi}({\baseSpace{\varphi}(y)})}:\aMeasField_{\baseSpace{\varphi}(y)}= \aMeasField_{\baseSpace{\varphi} \circ \baseSpace{\psi}({\baseSpace{\varphi}(y)})} \to \aMeasField_{\baseSpace{\varphi}(y)}\]
	is the identity map, so $\varphi_y$ is an isometric bijection, as desired.

	For the other direction, if $\varphi:(\aMeasField, \mu) \to (\anotMeasField, \nu)$ is an isometric measure-preserving morphism, $\baseSpace{\varphi}$ is a measure space isomorphism, and for $\nu$-almost every $y \in \baseSpace{\anotMeasField}$, $\varphi_y \colon \aContField_{\baseSpace{\varphi}(y)} \to \anotContField_y$ is an isometric bijection, then there is a measurable subset $\operatorname{dom}(\baseSpace{\psi}) \subset \baseSpace{\aMeasField}$ and a measure-preserving map $\baseSpace{\psi}:\operatorname{dom}\psi) \to \baseSpace{\anotMeasField}$, so that $\baseSpace{\varphi}\circ\baseSpace{\psi}$ is identity almost everywhere on $\baseSpace{\aMeasField}$ and $\baseSpace{\psi}\circ\baseSpace{\varphi}$ is identity almost everywhere on $\baseSpace{\anotMeasField}$. 
	
	Let $X_{\aMeasField,1} \subset \baseSpace{\aMeasField}$ and $X_{\anotMeasField,1} \subset \baseSpace{\anotMeasField}$ be measurable subsets of $\baseSpace{\aMeasField}$ and $\baseSpace{\anotMeasField}$ respectively with complements of measure $0$, so that $\baseSpace{\varphi}\circ\baseSpace{\psi}$ is defined and identity on $X_{\aMeasField, 1}$ and $\baseSpace{\psi}\circ\baseSpace{\varphi}$ is defined and identity on $X_{\anotMeasField, 1}$.
	
	By further restriction, we may also assume that on $X_{\anotMeasField, 1}$, $\varphi_y$ is an isometric bijection.
	
	Let
	\[X_{\aMeasField} = (\cap_n \left(\baseSpace{\varphi} \circ \baseSpace{\psi})^n X_{\aMeasField, 1}\right) \cap  (\cap_n \left(\baseSpace{\varphi} \circ \baseSpace{\psi})^n \baseSpace{\varphi}X_{\anotMeasField, 1}\right) \]
	and
	
	\[X_{\anotMeasField} = (\cap_n \left(\baseSpace{\psi} \circ \baseSpace{\varphi})^n X_{\anotMeasField, 1}\right) \cap  (\cap_n \left(\baseSpace{\psi} \circ \baseSpace{\varphi})^n \baseSpace{\psi}X_{\aMeasField, 1}\right)\]
	
	It is easy to see by checking containment in both directions that 
	$\baseSpace{\psi}(X_{\aMeasField})= X_{\anotMeasField}$
	and $\baseSpace{\varphi}(X_{\anotMeasField}) = X_{\aMeasField}$.
	
	Moreover $\baseSpace{\psi}:X_{\aMeasField} \to X_{\anotMeasField}$ and 
	$\baseSpace{\varphi}:X_{\anotMeasField} \to X_{\aMeasField}$ are mutually inverse measure-preserving maps on all of $X_{\aMeasField}$ and $X_{\anotMeasField}$.
	
	Restricting our attention to $X_{\aMeasField}$, we may define $\psi_y:\anotMeasField_{\baseSpace{\psi}(y)} \to \anotMeasField_{y}$ to be the inverse of $\varphi_{\baseSpace{\psi}(y)}: \aMeasField_y = \aMeasField_{\baseSpace{\varphi} \circ \baseSpace{\psi}(y)} \to \anotMeasField_{\baseSpace{\psi}(y)}$. 
	
	Then this $\psi$, defined on $X_\aMeasField \subset \baseSpace{\aMeasField}$ defines an inverse isometric measure-preserving morphism to $\varphi$.
\end{proof}

\begin{lem} \label{lem:meas-field-HHS-cont-maps}
	Let $\varphi = \left( \, \underline{\varphi},  \left( \varphi_y \right)_{y \in \baseSpace{\anotContField}} \right) \colon \aContField \to \anotContField$ be an isometric continuous morphism 
	between two continuous fields of {\hhs}s. 
	Let $\nu$ be a Borel measure on $\baseSpace{\anotContField}$ and let $\baseSpace{\varphi}_* \nu$ be the pushforward measure on $\baseSpace{\aContField}$. 
	Then $\varphi$ also constitutes an isometric measure-preserving morphism from $\left(\aContField_{\operatorname{meas}}, \baseSpace{\varphi}_* \nu \right)$ to $\left(\anotContField_{\operatorname{meas}} , \nu\right)$. 
\end{lem}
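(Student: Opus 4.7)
The plan is to verify the defining conditions of an isometric measure-preserving morphism as spelled out in the first item of \Cref{defn:meas-fields-morphism}. The base map $\baseSpace{\varphi}$ is continuous (hence Borel measurable), defined on all of $\baseSpace{\anotContField}$, and measure-preserving from $(\baseSpace{\anotContField}, \nu)$ to $(\baseSpace{\aContField}, \baseSpace{\varphi}_* \nu)$ by the very definition of pushforward; each $\varphi_y$ is an isometric embedding by hypothesis. So the only real task is to show that for every $s \in \spaceOfSections_{\operatorname{meas}}(\aContField)$, the section $\varphi(s) := \left( \varphi_y ( s ( \baseSpace{\varphi}(y) ) ) \right)_{y \in \baseSpace{\anotContField}}$ actually lies in $\spaceOfSections_{\operatorname{meas}}(\anotContField)$.

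For this I would use the total-space characterization from \Cref{lem:cont-field-HHS-meas}: it suffices to show that the map $\breve{\varphi(s)} \colon \baseSpace{\anotContField} \to |\anotContField|$, $y \mapsto (y, \varphi_y(s(\baseSpace{\varphi}(y))))$, is Borel measurable. Using the continuous map $\dot{\varphi}_* \colon |\baseSpace{\varphi}^*\aContField| \hookrightarrow |\anotContField|$ from \Cref{rmk:cont-field-HHS-total-space-maps}, this factors as $\breve{\varphi(s)} = \dot{\varphi}_* \circ \widetilde{s}$, where $\widetilde{s}(y) := (y, s(\baseSpace{\varphi}(y)))$. Since $\dot{\varphi}_*$ is continuous and hence Borel measurable, the task reduces to showing $\widetilde{s}$ is Borel measurable, which (applying \Cref{lem:cont-field-HHS-meas} to the pullback field $\baseSpace{\varphi}^*\aContField$) is equivalent to the pulled-back section $\baseSpace{\varphi}^*(s) \colon y \mapsto s(\baseSpace{\varphi}(y))$ lying in $\spaceOfSections_{\operatorname{meas}}(\baseSpace{\varphi}^*\aContField)$.

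To verify the latter, I would invoke \Cref{def:cont-field-HHS-maps}\eqref{def:cont-field-HHS-maps:induce} to note that the collection $\Sigma := \{\baseSpace{\varphi}^*(s_0) : s_0 \in \spaceOfSections_{\operatorname{cont}}(\aContField)\}$ is a generating set for $\baseSpace{\varphi}^*\aContField$. Then by \Cref{lem:cont-field-HHS-meas} again, it suffices to check that for each $s_0 \in \spaceOfSections_{\operatorname{cont}}(\aContField)$, the function
\[
y \mapsto d_{\aContField_{\baseSpace{\varphi}(y)}}\left( s(\baseSpace{\varphi}(y)), s_0(\baseSpace{\varphi}(y)) \right)
\]
is Borel measurable on $\baseSpace{\anotContField}$. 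This is immediate: it is the composition of the continuous map $\baseSpace{\varphi}$ with the function $z \mapsto d_{\aContField_z}(s(z), s_0(z))$, the latter being Borel measurable by co-measurability of $s, s_0 \in \spaceOfSections_{\operatorname{meas}}(\aContField)$ (noting $s_0 \in \spaceOfSections_{\operatorname{cont}}(\aContField) \subseteq \spaceOfSections_{\operatorname{meas}}(\aContField)$ via \Cref{def:cont-field-HHS-meas}). I do not foresee any substantive obstacle; the only subtlety is routing measurability through the pullback field's total space, which \Cref{rmk:cont-field-HHS-total-space-maps} makes natural.
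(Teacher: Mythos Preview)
Your proposal is correct and takes essentially the same approach as the paper: both factor through the pullback field $\baseSpace{\varphi}^*\aContField$, handle the pullback step via co-measurability against the generating set $\{\baseSpace{\varphi}^*(s_0) : s_0 \in \spaceOfSections_{\operatorname{cont}}(\aContField)\}$ (composing a Borel function with the continuous $\baseSpace{\varphi}$), and handle the remaining step via the total-space characterization in \Cref{lem:cont-field-HHS-meas} together with the continuity of $\dot{\varphi}_*$ from \Cref{rmk:cont-field-HHS-total-space-maps}. The only difference is organizational: the paper first invokes the factorization $\varphi = \dot{\varphi} \circ \baseSpace{\varphi}^*$ at the level of morphisms and then treats each factor separately, whereas you factor the single map $\breve{\varphi(s)}$ directly on total spaces.
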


\begin{proof}
	Applying \Cref{rmk:cont-field-HHS-maps-factorization}, we can decompose $\varphi$ into $\dot{\varphi} \circ \baseSpace{\varphi}$,
	for $\underline{\phi}^*:\aContField \to (\underline{\phi}^* \aContField)_{\underline{\anotContField}}$ and $\dot{\phi}:(\underline{\phi}^* \aContField)_{\underline{\anotContField}} \to \anotContField$. It suffices to show that each of these induces an isometric measure-preserving morphism.

	To show that an isometric continuous morphism induces an isometric measure-preserving morphism, one needs to show that for a section $s$ of $\left(\aContField_{\operatorname{meas}}, \baseSpace{\varphi}_* \nu \right)$, the section $\phi_y(s(\underline{\phi}(y)))$ is a section of $\left(\anotContField_{\operatorname{meas}} , \nu\right)$, that is to say, that it is mutually comeasurable with every section of a generating set $\Sigma$ of $\spaceOfSections_{\operatorname{cont}} (\anotContField)$, as in \Cref{lem:cont-field-HHS-meas}.
	
	We will show this for each of the two kinds of maps in the above decomposition.
	
	In the case of $\underline{\phi}^*:\aContField \to (\underline{\phi}^* \aContField)_{\underline{\anotContField}}$, we want that for any measurable section of $\aContField_{\operatorname{meas}}$, $s \circ \underline{\phi}$ is co-measurable with any section in a generating set $\Sigma$ of $\spaceOfSections_{\operatorname{cont}} (\anotContField)$.

	Recall that $\spaceOfSections_{\operatorname{cont}} (\anotContField)$ is generated by taking sections co-continuous with all $s_1 \circ \underline{\phi}$ for $s_1 \in \spaceOfSections_{\operatorname{cont}} (\aContField)$. Thus, it suffices that for any $s_1 \in \spaceOfSections_{\operatorname{cont}} (\aContField)$, $s \circ \underline{\phi}$ is co-measurable with $s_1 \circ \underline{\phi}$. But $s$ and $s_1$ are co-measurable by definition, and $\underline{\phi}$ is continuous, so $s \circ \underline{\phi}$ and $s_1 \circ \underline{\phi}$ are co-measurable.
	
	In the case of $\dot{\phi}$, let us use the second set in the definition of \Cref{lem:cont-field-HHS-meas}. A measurable section $s$ of $(\underline{\phi}^* \aContField)_{\underline{\anotContField}}$ is a measurable map $\underline{\anotContField} \to |(\underline{\phi}^* \aContField)_{\underline{\anotContField}}|$, but by \Cref{rmk:cont-field-HHS-total-space-maps}, the map $|(\underline{\phi}^* \aContField)_{\underline{\anotContField}}| \to |\anotContField|$ is continuous, so the composed $\dot{\phi}_* \circ s:\anotContField \to |\anotContField|$ is measurable, as desired.
\end{proof}

\section{$L^2$-continuum products and variation of measures}
\label{sec:continuum_product}

We then generalize the construction of $L^2$-continuum products (\cite[Construction~3.8]{GongWuYu2021}) to the case of measurable fields of Hilbert-Hadamard spaces. 

\begin{defn}  \label{def:continuum-product}
	Let $(\aMeasField, \mu)$ be a {measured field of {\hhs}s}.  
	Fix 
	a section $s_0 \in \spaceOfSections_{\operatorname{meas}} (\aMeasField, \mu)$. 
	Then the \emph{$L^2$-continuum product of $(\aMeasField, \mu)$ based at $s_0$} is a metric space $L^2(\baseSpace{\aMeasField}, \mu, \aMeasField; s_0)$ given as follows:
	\begin{itemize}
		\item As a set, $L^2(\baseSpace{\aMeasField}, \mu, \aMeasField; s_0)$ consists of 
		elements $s \in \spaceOfSections_{\operatorname{meas}} (\aMeasField, \mu)$
		that are \emph{$L^2$-integrable with respect to $s_0$} in the sense that  
		\[
		\int_Z d_{\aMeasField_z} \left( s(z),s_0(z) \right)^2 \, d\mu(z) < \infty \; .
		\] 
		\item We equip it with the metric defined by
		\[
		d([s], [s']) = \left( \int_Z d_{\aMeasField_z} \left(s(z),s'(z) \right)^2 \, d\mu(z) \right)^{\frac{1}{2}}  \; ,
		\]
		for $s, s' \in \spaceOfSections_{\operatorname{meas}} (\aMeasField)$, 
		which is a well-defined metric thanks to the Minkowski inequality and the $L^2$-integrability condition above. 
	\end{itemize}
	
	If $\aMeasField = \aHHS_{(Z, \calB)}$, a constant measurable field of {\hhs}s with base space $(Z, \calB)$ and fibers $\aHHS$, then we typically write $L^2(Z, \mu, \aHHS; s_0)$ in place of $L^2(\baseSpace{\aMeasField}, \mu, \aMeasField; s_0)$. 
	
	If $\aContField$ is a continuous field of {\hhs}s and $\mu$ is a regular Borel measure on $\baseSpace{\aContField}$, 
	then we write $L^2(\baseSpace{\aContField}, \mu, \aContField; s_0)$ for $L^2(\baseSpace{\aContField}, \mu, \aContField_{\operatorname{meas}}; s_0)$, where $\aContField_{\operatorname{meas}}$ is the measurable field of {\hhs}s associated to $\aContField$ in the sense of  \Cref{lem:cont-field-HHS-meas} and $s_0$ is a fixed measurable section. 
	
	Observe that when $\mu$ is the zero measure, then $L^2(\baseSpace{\aMeasField}, \mu, \aMeasField; s_0)$ is a singleton. 
\end{defn}

\begin{lem} \label{lem:continuum-product-HHS}
	With $(\aMeasField, \mu)$ and $s_0$ as in \Cref{def:continuum-product}, the $L^2$-continuum product $L^2(\baseSpace{\aMeasField}, \mu, \aMeasField; s_0)$ is a {\hhs}.
\end{lem}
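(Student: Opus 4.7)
The approach is to parallel the proof of \Cref{prop:continuum-product-hhs-summary} for a single {\hhs}, with the additional task of verifying that all pointwise geometric constructions yield \emph{measurable} sections. There are four items to establish: completeness, geodesicity, the CN inequality, and the embeddability of tangent cones into Hilbert spaces.

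For geodesicity, given $s, s' \in L^2(\baseSpace{\aMeasField}, \mu, \aMeasField; s_0)$, I form the midpoint section $s_{1/2}(z) := [s(z), s'(z)](1/2)$. Its measurability follows from \Cref{def:meas-field-HHS}\eqref{def:meas-field-HHS::convex}, and $L^2$-integrability from the triangle inequality in $\aMeasField_z$. Iterating this construction yields measurable sections $s_t$ for all dyadic $t \in [0,1]$; for arbitrary $t$, I approximate by dyadic $t_n \to t$, use the Lipschitz estimate \Cref{rmk:CAT0-facts}\eqref{rmk:CAT0-facts-Lipschitz} to conclude that $(s_{t_n}(z))_n$ is Cauchy in each fiber, and obtain a pointwise limit section; this limit is measurable by \Cref{def:meas-field-HHS}\eqref{def:meas-field-HHS::complete}, because its distances to any other measurable section are pointwise limits of measurable functions. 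Completeness is then a standard $L^2$ argument: extract from a Cauchy sequence a rapidly convergent subsequence, obtain $\mu$-a.e.\ pointwise convergence in each (complete) fiber by a Borel--Cantelli-style argument, and identify the measurable $L^2$ limit via Fatou together with the co-measurability closure axiom. The CN inequality is then verified by applying it pointwise in each CAT(0) fiber $\aMeasField_z$ to the four points $p(z)$, $q(z)$, $r(z)$ and the fiberwise midpoint, and integrating against $\mu$.

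The main obstacle I foresee is the tangent-cone condition. At a base point $s$, the plan is to assemble the fiberwise tangent Hilbert spaces $\hhil_{s(z)} \aMeasField_z$ (which exist by \Cref{defn:hhs}) into a Hilbert direct integral, and to show that the assignment sending the class of a geodesic segment from $s$ to $s'$ to the fiberwise logarithm section $z \mapsto \log_{s(z)}(s'(z))$ extends to an isometric embedding of $T_s L^2(\baseSpace{\aMeasField}, \mu, \aMeasField; s_0)$ into that direct integral. Two delicate points have to be addressed: first, one must equip the field $z \mapsto \hhil_{s(z)} \aMeasField_z$ with a measurable Hilbert-bundle structure, leveraging the canonicity of \Cref{constr:Hilbert-space-span} applied to the measurable sections of $\aMeasField$; second, one must check that pointwise logarithms preserve angular equivalence and the tangent-cone metric, which reduces via the identity $d_{T_p}(x,y)^2 = \sigma^2 + \tau^2 - 2 \sigma \tau \cos(\angle)$ to integrating the corresponding pointwise identity in each fiber and passing to the completion of the tangent cone by dominated convergence. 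These measurability and integral-geometry bookkeeping tasks, while conceptually parallel to the single-{\hhs} case, account for essentially all the work.
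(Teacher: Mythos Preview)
Your proposal is correct and follows essentially the same approach as the paper. The paper's own proof consists of a single sentence deferring to \cite[Proposition~9.3]{GongWuYu2021}, noting that the argument there carries over almost verbatim; what you have written is precisely a sketch of that argument adapted to the measurable-field setting, with the expected additional bookkeeping (use of the convexity and co-measurability-closure axioms of \Cref{def:meas-field-HHS} to guarantee measurability of midpoint, limit, and logarithm sections).
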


\begin{proof}
	The elementary but somewhat technical proof is almost identical to that of \cite[Proposition~9.3]{GongWuYu2021}. 
\end{proof}

\begin{lem} \label{lem:continuum-product-maps}
	Let $\varphi \colon (\aMeasField, \mu) \to (\anotMeasField, \nu)$ be an isometric measure-preserving morphism (respectively, isomorphism) between measured fields of {\hhs}s as in \Cref{defn:meas-fields-morphism}. 
	Let 
	$s_0 \in \spaceOfSections_{\operatorname{meas}} (\aMeasField)$. 
	Then the formula $[s] \mapsto [\varphi(s)]$ defines an isometric embedding (respectively, bijection) $\varphi_* \colon L^2 \left(\baseSpace{\aMeasField}, \mu, \aMeasField; s_0 \right) \to L^2 \left(\baseSpace{\anotMeasField}, \nu, \anotMeasField; \varphi (s_0) \right)$. 
\end{lem}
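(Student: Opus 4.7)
The plan is to prove both the embedding and the bijectivity assertions by a direct change-of-variables computation, with bijectivity following functorially from the invertibility of $\varphi$ via \Cref{lem:meas-fields-isomorphism}.

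First I would verify well-definedness on the level of equivalence classes. By \Cref{defn:meas-fields-morphism}, for any $s \in \spaceOfSections_{\operatorname{meas}}(\aMeasField)$ the tuple $\bigl(\varphi_y(s(\underline{\varphi}(y)))\bigr)_{y \in \operatorname{dom}(\underline{\varphi})}$ already determines an equivalence class $\varphi([s]) \in \spaceOfSections_{\operatorname{meas}}(\anotMeasField, \nu)$ depending only on $[s]$. So the only real content is to check that $\varphi([s])$ actually lies in the $L^2$-continuum product with basepoint $\varphi(s_0)$ and that distances are preserved.

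Both facts follow from one change-of-variables calculation. Since each $\varphi_y$ is an isometric embedding and $\underline{\varphi}$ pushes $\nu$ (restricted to its domain of full $\nu$-measure) forward to $\mu$, for any $s, s' \in \spaceOfSections_{\operatorname{meas}}(\aMeasField)$ one has
\begin{align*}
\int_{\baseSpace{\anotMeasField}} d_{\anotMeasField_y}\bigl(\varphi(s)(y), \varphi(s')(y)\bigr)^2 \, d\nu(y)
&= \int_{\operatorname{dom}(\underline{\varphi})} d_{\aMeasField_{\underline{\varphi}(y)}}\bigl(s(\underline{\varphi}(y)), s'(\underline{\varphi}(y))\bigr)^2 \, d\nu(y) \\
&= \int_{\baseSpace{\aMeasField}} d_{\aMeasField_z}\bigl(s(z), s'(z)\bigr)^2 \, d\mu(z),
\end{align*}
where the first equality uses the isometry property of each $\varphi_y$ together with the fact that $\baseSpace{\anotMeasField} \setminus \operatorname{dom}(\underline{\varphi})$ is $\nu$-null, and the second is a change of variables along $\underline{\varphi}$. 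Taking $s' = s_0$ yields the $L^2$-integrability of $\varphi(s)$ with respect to $\varphi(s_0)$, while the general form yields $d(\varphi_*[s], \varphi_*[s']) = d([s], [s'])$, so $\varphi_*$ is an isometric embedding.

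For the isomorphism statement, if $\psi$ is an inverse to $\varphi$ in the sense of \Cref{defn:meas-fields-morphism}, then by \Cref{lem:meas-fields-isomorphism} the compositions $\psi \circ \varphi$ and $\varphi \circ \psi$ are equivalent to the respective identity morphisms. The assignment $\varphi \mapsto \varphi_*$ is manifestly functorial: the identities $(\varphi \circ \psi)_* = \varphi_* \circ \psi_*$ and $(\operatorname{id})_* = \operatorname{id}$ follow by direct inspection, and equivalent morphisms induce the same map on $L^2$-continuum products since they agree outside a null set. Consequently $\psi_*$ is a two-sided inverse of $\varphi_*$, making the latter an isometric bijection. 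I do not anticipate any serious obstacle here; the only mildly finicky point is keeping track of the various $\nu$- or $\mu$-null sets on which the identifications hold only almost everywhere, but these collate into a single set of full measure by a countable intersection argument.
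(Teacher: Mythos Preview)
Your proposal is correct and follows essentially the same route as the paper: a change-of-variables computation using that each $\varphi_y$ is an isometry and $\underline{\varphi}$ is measure-preserving, with bijectivity in the isomorphism case deduced from functoriality of $\varphi \mapsto \varphi_*$. One minor point: the fact that $\psi \circ \varphi$ and $\varphi \circ \psi$ are equivalent to the identity morphisms is already the \emph{definition} of isomorphism in \Cref{defn:meas-fields-morphism}, so the appeal to \Cref{lem:meas-fields-isomorphism} is unnecessary (though harmless).
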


\begin{proof}
	To show that $\varphi_*$ is an isometric embedding, we first must see that if $s \in L^2 \left(\baseSpace{\aMeasField}, \mu, \aMeasField; s_0 \right)$ then $[\varphi(s)] \in L^2 \left(\baseSpace{\anotMeasField}, \nu, \anotMeasField; \varphi (s_0) \right)$.
	
	That is, we wish to see that if 
	\[\int_{\baseSpace{\aMeasField}}(d_{\aMeasField_z}(s(z), s_0(z))^2 d\mu(z) <\infty, \]
	then
	\[\int_{\baseSpace{\anotMeasField}}(d_{\anotMeasField_z}(\varphi(s)(z), \varphi(s_0)(z))^2 d\nu(z) <\infty. \]
	But the latter is by definition equal to
	\[ \int_{\baseSpace{\anotMeasField}}(d_{\anotMeasField_z}(\varphi_z(s(\baseSpace{\varphi}(z))), \varphi_z(s_0(\baseSpace{\varphi}(z))))^2 d\nu(z), \]
	and since $\varphi_z$ is an isometric embedding, it is equal to
	\[\int_{\baseSpace{\anotMeasField}}(d_{\anotMeasField_z}(s(\baseSpace{\varphi}(z)), s_0(\baseSpace{\varphi}(z)))^2 d\nu(z)\]
	which, because $\underline{\varphi}$ is measure preserving,  agrees with 
	\[\int_{\baseSpace{\aMeasField}}(d_{\aMeasField_z}(s(z), s_0(z))^2 d\mu(z).\]
	The same argument, replacing $s_0$ with $s'$,  shows that $\varphi_*$ is an isometric embedding.
	Because the construction of $\varphi_*$ respects composition it is easy to see that if $\varphi$ is an isometric measure-preserving isomorphism, then $\varphi_*$ is a bijection.
\end{proof}

Our goal is to apply this construction to measured fields of {\hhs}s that arise from continuous fields together with Borel measures on their base spaces (following \Cref{def:cont-field-HHS-meas}). 
It will suffice for our purposes to restrict to compact base spaces and finite measures. 
This simplifies the construction thanks to the following simple observation. 

\begin{lem} \label{lem:continuous-field-HHS-L2-integrability}
	Let $\aContField$ be a continuous field of {\hhs}s. Let $\mu$ be a compactly-supported regular Borel measure on $\baseSpace{\aContField}$. 
	Then for any two continuous sections $s_0, s_1 \in \spaceOfSections_{\operatorname{cont}} (\aContField)$, we have 
	\[
	\int_Z d_{\aContField_z} \left(s_0(z),s_1(z) \right)^2 \, d\mu(z) < \infty \; .
	\]
	As a result, we have $L^2(\baseSpace{\aContField}, \mu, \aContField; s_0) = L^2(\baseSpace{\aContField}, \mu, \aContField ; s_1)$.
\end{lem}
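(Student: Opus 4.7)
The plan is to reduce everything to the continuity of the pairwise distance function. By \Cref{def:cont-field-HHS}\eqref{def:cont-field-HHS::continuous}, the function $f \colon \baseSpace{\aContField} \to [0,\infty)$ given by $f(z) := d_{\aContField_z}(s_0(z), s_1(z))$ is continuous, because $s_0$ and $s_1$ are mutually co-continuous. Let $K \subseteq \baseSpace{\aContField}$ denote the compact support of $\mu$. As $f|_K$ is continuous on a compact set, it attains a finite maximum $M$; and since $\mu$ is a regular Borel measure, it is finite on compact sets, so $\mu(K) < \infty$. Therefore
\[
\int_{\baseSpace{\aContField}} d_{\aContField_z}(s_0(z),s_1(z))^2 \, d\mu(z) \;=\; \int_K f(z)^2 \, d\mu(z) \;\leq\; M^2 \, \mu(K) \;<\; \infty.
\]

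For the second assertion, I would compare the two $L^2$-continuum products as subsets of $\spaceOfSections_{\operatorname{meas}}(\aContField, \mu)$ (noting that the metric defined in \Cref{def:continuum-product} does not depend on the choice of basepoint, only the underlying set does). Suppose $[s] \in L^2(\baseSpace{\aContField}, \mu, \aContField; s_0)$. By the triangle inequality applied pointwise,
\[
d_{\aContField_z}(s(z), s_1(z)) \;\leq\; d_{\aContField_z}(s(z), s_0(z)) + d_{\aContField_z}(s_0(z), s_1(z)),
\]
and the elementary estimate $(a+b)^2 \leq 2a^2 + 2b^2$ gives
\[
\int_{\baseSpace{\aContField}} d_{\aContField_z}(s(z), s_1(z))^2 \, d\mu(z) \;\leq\; 2 \int_{\baseSpace{\aContField}} d_{\aContField_z}(s(z), s_0(z))^2 \, d\mu(z) + 2 \int_{\baseSpace{\aContField}} d_{\aContField_z}(s_0(z), s_1(z))^2 \, d\mu(z),
\]
both of which are finite, the first by hypothesis and the second by the first part of the lemma. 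Hence $[s] \in L^2(\baseSpace{\aContField}, \mu, \aContField; s_1)$, and by symmetry the reverse inclusion holds as well, giving the claimed equality.

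There is no real obstacle here; the proof is essentially a one-line appeal to the mutual co-continuity condition combined with compactness of $\operatorname{supp}(\mu)$ and the triangle inequality. The only point worth noting is the implicit convention that a regular Borel measure is locally finite (finite on compact sets), which is what turns compact support into finite total mass and makes the bound $M^2 \mu(K)$ meaningful.
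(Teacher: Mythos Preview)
Your proof is correct and follows essentially the same approach as the paper: the first claim is exactly the paper's argument (co-continuity plus boundedness of continuous functions on compacta), and for the second claim the paper invokes the Minkowski inequality while you use the pointwise triangle inequality together with $(a+b)^2 \le 2a^2 + 2b^2$, which is a harmless variant yielding the same conclusion.
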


\begin{proof}
	The first claim follows from \Cref{def:cont-field-HHS}\eqref{def:cont-field-HHS::continuous} and the fact that continuous functions on a compact space are bounded. The second claim follows from the Minkowski inequality. 
\end{proof}

\begin{defn} \label{defn:continuous-field-HHS-L2-notations}
	Under the conditions of \Cref{lem:continuous-field-HHS-L2-integrability}, we write $L^2(\baseSpace{\aContField}, \mu, \aContField)$ or $\aContField_{\mu}$ for $L^2(\baseSpace{\aContField}, \mu, \aContField; s_0)$ where $s_0$ is any continuous section. 
	
	Furthermore, in the special case where $\aContField = (X)_Z$ is a constant continuous field of {\hhs}s and $\mu$ is a compactly-supported regular Borel measure on $Z$, we write $L^2(Z, \mu, X)$ for $L^2(\baseSpace{\aContField}, \mu, \aContField)$. 
\end{defn}

\begin{lem} \label{lem:continuous-field-HHS-L2-dense}
	Under the conditions of \Cref{lem:continuous-field-HHS-L2-integrability}, if $\aContField_z$ is separable for $\mu$-almost every $z \in \baseSpace{\aContField}$ and $\baseSpace{\aContField}$ is 
	second-countable, 
	then the canonical map $\spaceOfSections_{\operatorname{cont}} (\aContField) \to L^2(\baseSpace{\aContField}, \mu, \aContField)$ 
	has a dense image. 
\end{lem}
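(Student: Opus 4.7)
The plan is a three-step reduction: first truncate to bounded sections via geodesic retraction, then approximate by piecewise-continuous sections drawn from a countable pointwise-dense family of continuous sections, and finally glue via CAT(0) barycenters (equivalently, iterated binary geodesics) to obtain a continuous $L^2$-approximation.

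First I would fix a reference section $s_0 \in \spaceOfSections_{\operatorname{cont}}(\aContField)$ and, for each $R > 0$, define the truncation $s^R(z)$ to be the point on the geodesic $[s_0(z), s(z)]$ at distance $\min(R, d_{\aContField_z}(s_0(z), s(z)))$ from $s_0(z)$. Since the fiberwise distance and the geodesic bicombing assemble into continuous maps on the fiber-product total space by \Cref{lem:cont-field-HHS-total-space}\eqref{lem:cont-field-HHS-total-space:metric} and \eqref{lem:cont-field-HHS-total-space:midpoint}, and since $s$ corresponds via \Cref{lem:cont-field-HHS-meas} to a Borel map to $|\aContField|$, the truncation $s^R$ is again measurable; dominated convergence gives $s^R \to s$ in $L^2$. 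So we may assume $d_{\aContField_z}(s(z), s_0(z)) \leq R$ uniformly on $\operatorname{supp}(\mu)$.

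Second, using the second-countability of $\baseSpace{\aContField}$ together with the a.e.\ separability of fibers, I would construct a countable family $\{\sigma_n\}_{n \in \mathbb{N}} \subseteq \spaceOfSections_{\operatorname{cont}}(\aContField)$ that is pointwise dense in $\aContField_z$ for $\mu$-almost every $z$. Granted such a family, for any $\eta > 0$ the Borel sets
\[
A_n := \bigl\{ z : n = \min\{ k : d_{\aContField_z}(\sigma_k(z), s(z)) < \eta \} \bigr\}
\]
partition a conull subset of $\operatorname{supp}(\mu)$, and the piecewise-continuous section $s_\eta(z) := \sigma_{n(z)}(z)$ is measurable with $d_{\aContField_z}(s(z), s_\eta(z)) < \eta$ pointwise; hence $\|s - s_\eta\|_{L^2} \leq \eta \, \mu(\baseSpace{\aContField})^{1/2}$.

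Third, I would promote $s_\eta$ to a genuine continuous section. By inner regularity of $\mu$, pick compact $K_n \subseteq A_n$ so that $\sum_n \mu(A_n \setminus K_n)$ is arbitrarily small, and retain only finitely many $K_1, \dots, K_N$ capturing all but a small fraction of the $L^2$ mass (using the uniform bound $R$). On the locally compact, second-countable, hence metrizable space $\baseSpace{\aContField}$, Urysohn's lemma provides a continuous partition of unity $\chi_0, \dots, \chi_N$ with $\chi_i|_{K_i} = 1$ for $i \geq 1$; setting $\sigma_0 := s_0$ I would define
\[
s'(z) := \operatorname{bar}\bigl( \sigma_0(z), \dots, \sigma_N(z);\, \chi_0(z), \dots, \chi_N(z) \bigr),
\]
where $\operatorname{bar}$ is the CAT(0) barycenter, assembled from iterated binary geodesics so as to be jointly continuous in points and weights (a barycenter-analog of \Cref{cor:cont-field-HHS-geodesic-approximate} then shows $s'$ is a continuous section). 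The pointwise distance $d_{\aContField_z}(s(z), s'(z))$ is then controlled by $\eta$ on each $K_i$ and by $2R$ on the complement, yielding $\|s - s'\|_{L^2} < \varepsilon$.

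The main obstacle is the second step. The pointwise-density axiom \Cref{def:cont-field-HHS}\eqref{def:cont-field-HHS::dense} asserts density of the full (a priori uncountable) set $\spaceOfSections_{\operatorname{cont}}(\aContField)$ in each fiber, and extracting a countable subfamily dense in $\mu$-a.e.\ fiber requires combining a countable base for $\baseSpace{\aContField}$ with countable dense subsets of the fibers over a suitable countable scaffold of base points, then propagating the approximations locally via the co-continuity axiom \Cref{def:cont-field-HHS}\eqref{def:cont-field-HHS::continuous} and invoking a Lindel\"{o}f-type selection. Once this countable family is in hand the remainder is routine measure theory combined with the CAT(0)/geodesic machinery of the preceding sections.
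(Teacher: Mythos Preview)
Your three-step outline is correct and would go through, but the paper takes a different and more economical route that sidesteps your ``main obstacle'' entirely.  Rather than building a countable pointwise-dense family of continuous sections, the paper applies Luzin's theorem directly to the truncated section $s_M$, viewed (via \Cref{lem:cont-field-HHS-meas}) as a Borel map $\baseSpace{\aContField}\to|\aContField|$: this yields a closed set $E$ of nearly full $\mu$-measure on which $s_M|_E$ is continuous.  Compactness of $E$ (inside the compact support of $\mu$) together with pointwise density of continuous sections then gives, for each $z_0\in E$, a continuous section $s_{z_0}$ that stays $\varepsilon_1$-close to $s_M$ on a neighborhood of $z_0$; a finite subcover and a partition-of-unity gluing by iterated geodesics (exactly your Step~3, via \Cref{cor:cont-field-HHS-geodesic-approximate}) produces the desired continuous approximant.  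The separability hypotheses enter only to justify Luzin's theorem for maps into $|\aContField|$, which is considerably lighter than your countable-family construction.

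Two minor remarks on your version.  First, the paper's truncation is the crude cutoff $s_M(z)=s(z)$ or $s_0(z)$ rather than your geodesic retraction; either works.  Second, your Step-3 estimate ``bounded by $2R$ on the complement'' is not correct as written: the $\sigma_i$ are arbitrary continuous sections and need not lie within $R$ of $s_0$.  You actually get the bound $R+\max_i\sup_{\operatorname{supp}\mu} d_{\aContField_z}(\sigma_i(z),s_0(z))$, which is still finite (continuity on a compact set) and suffices once the residual measure is chosen small enough.
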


\begin{proof}
	
	Let $Z = \baseSpace{\aContField}$. 
	
	We wish to show that for any $\epsilon>0$ and any section $s$ in $L^2(\baseSpace{\aContField}, \mu, \aContField)$, there is a section $s' \in \spaceOfSections_{\operatorname{cont}} (\aContField)$ such that 
	\[\int_Z d_{\aContField_z} \left(s(z),s'(z) \right)^2 \, d\mu(z) < \epsilon.\]
	
	Let $s_0$ be an arbitrary continuous section, so that $L^2(\baseSpace{\aContField}, \mu, \aContField) = L^2(\baseSpace{\aContField}, \mu, \aContField; s_0)$.

	For any positive real number $M >100$, let $s_M \in L^2(\baseSpace{\aContField}, \mu, \aContField)$ be the section defined so that 
	\[s_M(z) = \begin{cases} s(z) & d_{\aContField_z}(s_0(z),s(z))<M \\
	s_0(z) & \text{else} \end{cases}	\]
	and 
	\[\lim_{M \to \infty} \int_{z \text{ with } d_{\aContField_z}(s(z),s_0(z))<M} d_{\aContField_z}(s(z),s_0(z))=0.\]
	
	Thus, we may choose $M>100$ and $M > \mu(Z)$ such that 
	\[\int_Z d_{\aContField_z} \left(s(z),s_M(z) \right)^2 \, d\mu(z)<\frac{\epsilon}{4}.\]
	
	It now suffices to show that for any $\epsilon>0$,  there is continuous section $s'$ such that 
	\[\int_Z d_{\aContField_z} \left(s_M(z),s'(z) \right)^2 \, d\mu(z)<\frac{\epsilon}{4},\]
	where $s_M$ is a section in $L^2(\baseSpace{\aContField}, \mu, \aContField)$ such that  $d_{\aContField_z}(s_M(z), s_0(z))<M$ for all $z\in Z$.

	Note that if we regard the section $s_M$ as a measurable map $s_M\colon \baseSpace{\aContField} \to |\aContField|$, by the general form of Luzin's theorem, for any $\epsilon_1>0$ there is a measurable closed set $E$ with $\mu(Z \backslash E)<\epsilon_1$ such that $s_M|_{E}$ is continuous.
	
	Note that by the pointwise density of continuous sections, for any $\epsilon_1>0$, for any $z_0 \in E$, there is a section $s_{z_0}$ such that $d_{\aContField_z}(s_{z_0}(z_0), s_M(z_0))<\frac{\epsilon_1}{4}$.
	
	Moreover, since $s_M|_E$ and $s_{z_0}$ are both continuous as maps $E \to |\aContField|$, the map
	\[z \mapsto d_{\aContField_z}(s_{z_0}(z), s_M(z))\]
	is a continuous map from a compact metric space to $\mathbb{R}$, it is uniformly continuous, so there is $\delta_1>0$ such that if $z \in E$ and $d_Z(z, z_0)<\delta_1$, then 
	\[|d_{\aContField_z}(s_{z_0}(z), s_M(z))-d_{\aContField_{z_0}}(s_{z_0}(z_0), s_M(z_0))|<\frac{\epsilon_1}{4}.\]

	Thus, for any $\epsilon_1>0$, for any $z_0 \in E$, there is an open set $V_{z_0}$ containing $z_0$ and a continuous section $s_{z_0}$ such that for any $z \in V_{z_0} \cap E$, we have
	\[d_{\aContField_z}(s_{z_0}(z), s_M(z)) < \epsilon_1.\]
	
	Then for any $\epsilon_1>0$, consider the above $V_{z_0}$ for all $z_0 \in E$, and $U_0 = Z \backslash E$. These open sets form an open cover of $Z$. Because $Z$ is a compact metric space, it has a finite subcover, denoted by $\{U_0, U_1, \ldots U_n\}$. Let us denote the corresponding continuous sections to $U_1, \ldots U_n$ as above  by $s_1, s_2, \ldots s_n$. Note that for $i \geq 1$ and any $z \in U_i$, we have  $d_{\aContField_z}(s_i(z), s_M(z))< \epsilon_1$.
	
	Let $\tilde s$ be the continuous section defined from the above $s_i$ for $i =1,\ldots n$, and our chosen arbitrary section $s_0$ earlier using a partition of unity subordinate to the cover $U_0,U_1,\ldots U_n$ of $Z$. (Partitions of unity can define a section by an iterated application of \Cref{cor:cont-field-HHS-geodesic-approximate}.)
	
	For any $z \in Z$,  from the fact that for any $U_i \ni z$, $d_{\aContField_z}(s_i(z), s_M(z))< \epsilon_1$, and the fact that $\aContField_z$ is a Hilbert Hadamard space, and in particular a CAT(0) space, it is straightforward to see that 
	\[d_{\aContField_z}(\tilde s(z), s_M(z))< \epsilon_1\]
	since $\tilde s(z)$ is a geodesic combination of $\{s_i(z) \mid U_i \ni z\}$. 
	
	Thus, we have that for any $z \in E$, 
	\[d_{\aContField_z}(\tilde s(z), s_M(z))< \epsilon_1.\]
	Then 
	\begin{align*}
	& \int_Z d_{\aContField_z} \left(s_M(z),\tilde s(z) \right)^2 \, d\mu(z) \\ &= \int_E d_{\aContField_z} \left(s_M(z),\tilde s(z) \right)^2 \, d\mu(z) +\int_{U_0} d_{\aContField_z} \left(s_M(z),\tilde s(z) \right)^2 \, d\mu(z)\\
	& \leq \epsilon_1^2 \mu(E) + M^2\mu(Z \backslash E) \\
	& \leq \epsilon_1^2 M + M^2\epsilon_1
	\end{align*}
	
	Now, by choosing $\epsilon_1 < \epsilon \cdot \min{\{\epsilon,1\}} (10M^4)^{-1}$, we have 
	\[\int_Z d_{\aContField_z} \left(s_M(z),\tilde s(z) \right)^2 \, d\mu(z) < \frac{\epsilon}{4},\]
	which gives the desired $s'$. 
\end{proof}

\begin{defn}  \label{def:2nd-countable}
	We say a continuous field $\aContField$ of {\hhs}s is \emph{second-countable} if the base space $\baseSpace{\aContField}$ is second-countable and for any $z \in \baseSpace{\aContField}$, the fiber $\aContField_z$ is second-countable (or equivalently for metric spaces, separable). 
\end{defn}

We are now almost ready to introduce another crucial construction for this paper, in which we start from a continuous field $\aContField$ of {\hhs}s and form another continuous field of {\hhs}s by taking
a family of $L^2$-continuum products over $\aContField$ using a continuously-varying family of finite measures on $\baseSpace{\aContField}$ (see \Cref{def:continuum-product-field}). 
In the following, 
let us denote by $\spaceMeas(Z)$ the set of all compactly-supported regular (and thus finite) Borel measures 
on a compact Hausdorff space $Z$, equipped with the topology of weak convergence, that is, the coarsest topology such that for any continuous function $f \colon Z \to \mathbb{R}$, the map $\spaceMeas(Z) \to \mathbb{R}$ defined by $\mu \mapsto \int_Z f \, \mathrm{d}\mu$ is continuous. 

\begin{lem}  \label{lem:continuum-product-field}
	Let $Z$ be a locally compact paracompact Hausdorff space, let $\aContField$ be a second-countable {continuous field of {\hhs}s}, 
	and let $f \colon Z \to \spaceMeas(\baseSpace{\aContField})$ be a continuous map. Consider the tuple 
	\[
	\left( L^2(\baseSpace{\aContField}, f(z), \aContField) \right)_{z \in Z}
	\] 
	of {\hhs}s and the diagonal map 
	\[
	\spaceOfSections_{\operatorname{cont}} (\aContField) \hookrightarrow \prod_{z \in Z} L^2(\baseSpace{\aContField}, f(z), \aContField) \, , \quad s \mapsto \left( s \right)_{z \in Z} \; .
	\]
	Then the image of this map satisfies conditions~\eqref{def:cont-field-HHS::convex}-\eqref{def:cont-field-HHS::continuous} in \Cref{def:cont-field-HHS}. 
\end{lem}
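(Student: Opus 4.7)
Writing $\Sigma$ for the image of $\spaceOfSections_{\operatorname{cont}}(\aContField)$ under the diagonal map, I will verify conditions \eqref{def:cont-field-HHS::convex}--\eqref{def:cont-field-HHS::continuous} of \Cref{def:cont-field-HHS} in turn.

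\emph{Convexity.} Suppose $s_{-1}, s_1 \in \spaceOfSections_{\operatorname{cont}}(\aContField)$ and $\sigma \in \prod_{z \in Z} L^2(\baseSpace{\aContField}, f(z), \aContField)$ has $\sigma(z)$ equal to the midpoint of $[s_{-1}]$ and $[s_1]$ in $L^2(\baseSpace{\aContField}, f(z), \aContField)$ for every $z \in Z$. Let $s_{1/2} \colon z' \mapsto [s_{-1}(z'), s_1(z')](1/2)$ be the fiberwise midpoint; by \Cref{cor:cont-field-HHS-geodesic-approximate} applied with the constant weight $1/2$, $s_{1/2} \in \spaceOfSections_{\operatorname{cont}}(\aContField)$. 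Using the fiberwise identity $d_{\aContField_{z'}}(s_{1/2}(z'), s_{\pm 1}(z')) = \tfrac{1}{2} d_{\aContField_{z'}}(s_{-1}(z'), s_1(z'))$ and integrating against $f(z)$ shows that $[s_{1/2}]$ realizes the midpoint distance in $L^2(\baseSpace{\aContField}, f(z), \aContField)$. Since this space is CAT(0) by \Cref{lem:continuum-product-HHS}, midpoints are unique (\Cref{defn:CAT0-equiv-defn-Bruhat-Tits}), so $\sigma(z) = [s_{1/2}]$ for each $z$, and therefore $\sigma = (s_{1/2})_{z \in Z} \in \Sigma$.

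\emph{Pointwise density and mutual co-continuity.} Pointwise density is immediate from \Cref{lem:continuous-field-HHS-L2-dense} applied fiberwise to each compactly-supported regular Borel measure $f(z)$: the second-countability assumption on $\aContField$ provides both separable fibers and a second-countable base. For mutual co-continuity, given $s, s' \in \spaceOfSections_{\operatorname{cont}}(\aContField)$, the function $h \colon z' \mapsto d_{\aContField_{z'}}(s(z'), s'(z'))^2$ is continuous on $\baseSpace{\aContField}$ by \Cref{def:cont-field-HHS}\eqref{def:cont-field-HHS::continuous} applied to $\aContField$. Since
\[
d_{L^2(\baseSpace{\aContField}, f(z), \aContField)}([s], [s'])^2 = \int_{\baseSpace{\aContField}} h \, df(z),
\]
and the topology on $\spaceMeas(\baseSpace{\aContField})$ is by definition the coarsest one making integration against each continuous function continuous, the continuity of $f$ delivers continuity of $z \mapsto d([s], [s'])^2$, hence of $z \mapsto d([s], [s'])$.

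\emph{Main obstacle.} The principal technical point is the convexity step, which hinges on identifying midpoints in an $L^2$-continuum product with pointwise midpoints; once this identification is made via \Cref{cor:cont-field-HHS-geodesic-approximate} and the CAT(0) uniqueness of midpoints, the remaining conditions reduce to invocations of \Cref{lem:continuous-field-HHS-L2-dense} and the definition of the weak topology. A small caveat in the co-continuity argument is that $h$ may be unbounded on $\baseSpace{\aContField}$; this is harmless because the weak topology on $\spaceMeas(\baseSpace{\aContField})$ was defined using \emph{all} continuous functions rather than only bounded ones, and each $f(z)$ is compactly supported so that integrals against $h$ remain finite.
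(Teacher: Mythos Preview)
Your proof is correct and follows essentially the same approach as the paper's: both verify the three conditions in turn, invoking \Cref{lem:continuous-field-HHS-L2-dense} for pointwise density and the definition of the weak topology on $\spaceMeas(\baseSpace{\aContField})$ for mutual co-continuity. Your treatment of convexity is in fact more careful than the paper's one-line assertion, since you explicitly verify that the fiberwise midpoint $s_{1/2}$ represents the midpoint in each $L^2(\baseSpace{\aContField}, f(z), \aContField)$ and then invoke CAT(0) uniqueness of midpoints to conclude that any section $\sigma$ satisfying the midpoint condition must agree with $(s_{1/2})_{z\in Z}$.
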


\begin{proof}

	We check each of the conditions in order. 
	
	For condition \eqref{def:cont-field-HHS::convex}, convexity: it is immediate that the images of the midpoint section of $s_1$ and $s_2$ is the midpoint section of the images of $s_1$ and $s_2$, which gives us convexity.
	
	For condition \eqref{def:cont-field-HHS::dense}, density: this is the statement that for each $z$, the image of $\spaceOfSections_{\operatorname{cont}}$ is dense in $L^2(\baseSpace{\aContField}, f(z), \aContField)$. This is exactly the statement of \Cref{lem:continuous-field-HHS-L2-dense}
	
	It remains to show condition \eqref{def:cont-field-HHS::continuous}, mutual co-continuity. For this, we wish to show that for continuous sections $s_1, s_2 \in \spaceOfSections_{\operatorname{cont}}$, the map
	\[z \mapsto \left(\int_{\baseSpace{\aContField}} d_{\aContField_x}(s_1(x),s_2(x))^2 \, \mathrm{d}(f(z))(x)\right)^{1/2}\]
	is continuous, which is to say that for a net $z_\alpha$ that converges to $z$ we need that $\int_{\baseSpace{\aContField}} d_{\aContField_x}(s_1(x),s_2(x))^2 \, \mathrm{d} (f(z_\alpha))(x)$ converges to $\int_{\baseSpace{\aContField}} d_{\aContField_x}(s_1(x),s_2(x))^2 \, \mathrm{d}(f(z))(x)$. 
	
	Because $f$ is continuous, we know that $f(z_\alpha)$ converge to $f(z)$ in the topology of weak convergence. Then, because $x \mapsto d_{\aContField_x}(s_1(x),s_2(x))$ is continuous, by the definition of the topology of weak convergence, we get that 
	\[z \mapsto \left(\int_{\baseSpace{\aContField}} d_{\aContField_x}(s_1(x),s_2(x))^2 d(f(z))(x)\right)^{1/2}\]
	is continuous, as desired.
\end{proof}

In view of \Cref{lem:cont-field-HHS-generate}, this allows us to make the following definition. 

\begin{defn}  \label{def:continuum-product-field}
	Let $Z$, $\aContField$, and $f$ be as in \Cref{lem:continuum-product-field}. 
	Then the \emph{continuous field $f^* \aContField$ of {\hhs}s over $Z$ induced from $\aContField$ by $f$} 
	is a continuous field of {\hhs}s generated (as in \Cref{def:cont-field-HHS-generate}) by the image of the diagonal map 
	\[
	\spaceOfSections_{\operatorname{cont}} (\aContField) \to \prod_{z \in Z} L^2(\baseSpace{\aContField}, f(z), \aContField) \, , \quad s \mapsto \left( s \right)_{z \in Z} \; .
	\]
	We also write $f^* \colon \spaceOfSections_{\operatorname{cont}} (\aContField) \to \spaceOfSections_{\operatorname{cont}} (f^* \aContField)$ for the resulting map. 
	
	If $Z$ is a subspace of $\spaceMeas(\baseSpace{\aContField})$, then the \emph{continuous field of $L^2$-continuum products of $\aContField$ over $Z$}, denoted by $\aContField|_Z$, is 
	the continuous field of {\hhs}s over $Z$ induced from $\aContField$ by the 
	inclusion map $Z \hookrightarrow \spaceMeas(\baseSpace{\aContField})$. 
\end{defn}

Observe that the notation $f^* \aContField$ above is compatible with the one in \Cref{def:cont-field-HHS-maps}\eqref{def:cont-field-HHS-maps:induce} when we view $\baseSpace{\aContField}$ as a closed subspace of $\spaceMeas(\baseSpace{\aContField})$ by identifying each $y \in \baseSpace{\aContField}$ with the point mass $\delta_y$ at $y$, 
since $L^2(\baseSpace{\aContField}, \delta_y, \aContField)$ is naturally identified with $\aContField_y$. 
In particular, the notation $\aContField|_Z$ is compatible with the one in \Cref{def:cont-field-HHS-maps}\eqref{def:cont-field-HHS-maps:restrict}.  


We now discuss the functoriality of this construction, starting with a simple observation.

\begin{rmk}  \label{rmk:continuum-product-field-compose}
	Let $Z$, $\aContField$, and $f$ be as in \Cref{lem:continuum-product-field}. 
	Let $Y$ be another compact Hausdorff space and let $h \colon Y \to Z$ be a continuous map. 
	Then there is a canonical isometric continuous isomorphism between $h^* \left( f^* \aContField \right)$ and $(f \circ h)^* \aContField$ that intertwines the maps $h^* \circ f^* \colon \spaceOfSections_{\operatorname{cont}} (\aContField) \to \spaceOfSections_{\operatorname{cont}} \left( h^* \left( f^* \aContField \right) \right)$ and $(f \circ h)^*  \colon \spaceOfSections_{\operatorname{cont}} (\aContField) \to \spaceOfSections_{\operatorname{cont}} \left( (f \circ h)^* \aContField \right)$, 
	since the following diagram 
	\[
	\xymatrix{
		&& \left( s \right)_{z \in Z} \mathrlap{\quad \in} & \displaystyle \prod_{z \in Z} L^2(\baseSpace{\aContField}, f(z), \aContField) \ar[dd] & \mathllap{\ni \quad } s' \ar@{|->}[dd] \\
		s \quad  \ar@{|->}[rru] \ar@{|->}[rrd] & \mathllap{\in \quad }  \quad 	\spaceOfSections_{\operatorname{cont}} (\aContField)  \ar[rru] \ar[rrd] \\
		&& \left( s \right)_{y \in Y} \mathrlap{\quad \in} & \displaystyle \prod_{y \in Y} L^2(\baseSpace{\aContField}, f(h(y)), \aContField) & \mathllap{\ni \quad } s' \circ h
	}
	\]
	is obviously commutative. 
\end{rmk}

On the other hand, we can also obtain functoriality results by changing the continuous field $\aContField$ in \Cref{def:continuum-product-field}. There is quite a bit of flexibility in doing this, which we will exploit in the proof of \Cref{prop:quasitrivial-locally-trivial}.

\begin{lem} \label{lem:continuum-product-field-maps}
	Let $\aContField$ and $\anotContField$ be second-countable continuous fields of {\hhs}s and write $\aContField_{\operatorname{meas}}$ and $\anotContField_{\operatorname{meas}}$ for the associated Borel measurable fields of {\hhs}s. 
	Let $Z$ and $Y$ be locally compact paracompact Hausdorff spaces. Let $f \colon Z \to \spaceMeas(\baseSpace{\aContField})$ and $g \colon Y \to \spaceMeas(\baseSpace{\anotContField})$ be continuous maps. 
	
	Let $h \colon Y \to Z$ be a continuous map. For any $y \in Y$, let 
	\[
	\varphi^{y} \colon \left( \aContField_{\operatorname{meas}}, f(h(y)) \right) \to \left( \anotContField_{\operatorname{meas}}, g(y) \right)
	\]
	be an isometric measure-preserving morphism,  
	and let 
	\[
	\varphi^{y}_* \colon  L^2(\baseSpace{\aContField}, f(h(y)), \aContField) \to L^2(\baseSpace{\anotContField}, g(y), \anotContField)
	\]
	be the induced isometric embedding as in \Cref{lem:continuum-product-maps}. 
	
	Suppose, for any $s \in \spaceOfSections_{\operatorname{cont}} (\aContField)$, 
	the section $\left( \varphi^y_* \left( s  \right) \right)_{y \in Y} \allowbreak \in \prod_{y \in Y} L^2(\baseSpace{\anotContField}, g(y), \anotContField)$ 
	is co-continuous with any $s' \in \spaceOfSections_{\operatorname{cont}} (\anotContField) \hookrightarrow \prod_{y \in Y} L^2(\baseSpace{\anotContField}, g(y), \anotContField)$. 	
	Then the tuple $\varphi := \left( h, \left( \varphi^{y}_* \right)_{y \in Y} \right)$ forms an isometric continuous morphism from $f^* \aContField$ to $g^* \anotContField$ in the sense of \Cref{defn:cont-fields-morphism}
\end{lem}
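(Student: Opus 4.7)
The plan is to verify the defining conditions of Definition \ref{defn:cont-fields-morphism}\eqref{defn:cont-fields-morphism:morhpism} by exploiting Lemma \ref{lem:cont-fields-HHS-map-generators}, which reduces the problem to a co-continuity check against explicit generating sets of sections.

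First, I would confirm that the data is well-typed. By Definition \ref{def:continuum-product-field}, the fiber of $f^*\aContField$ over $z \in Z$ is $L^2(\baseSpace{\aContField}, f(z), \aContField)$, so the fiber over $h(y)$ is $L^2(\baseSpace{\aContField}, f(h(y)), \aContField)$; similarly the fiber of $g^*\anotContField$ over $y$ is $L^2(\baseSpace{\anotContField}, g(y), \anotContField)$. Thus the map $\varphi^y_*$ furnished by Lemma \ref{lem:continuum-product-maps} has the correct source and target, and is an isometric embedding. Since $h \colon Y \to Z$ is continuous by assumption, the only remaining condition to verify is that for every $s \in \spaceOfSections_{\operatorname{cont}}(f^*\aContField)$, the section $\left( \varphi^y_*(s(h(y))) \right)_{y \in Y}$ lies in $\spaceOfSections_{\operatorname{cont}}(g^*\anotContField)$.

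By Definition \ref{def:continuum-product-field} together with Lemma \ref{lem:cont-field-HHS-generate}, the set $\Sigma := \bigl\{ (s)_{z \in Z} : s \in \spaceOfSections_{\operatorname{cont}}(\aContField) \bigr\}$ is a generating set for $\spaceOfSections_{\operatorname{cont}}(f^*\aContField)$, and analogously $\Xi := \bigl\{ (s')_{y \in Y} : s' \in \spaceOfSections_{\operatorname{cont}}(\anotContField) \bigr\}$ generates $\spaceOfSections_{\operatorname{cont}}(g^*\anotContField)$. Applying Lemma \ref{lem:cont-fields-HHS-map-generators}, it then suffices to check that for every $s \in \spaceOfSections_{\operatorname{cont}}(\aContField)$, the section $\left( \varphi^y_*(s) \right)_{y \in Y}$ obtained by evaluating the candidate morphism on the generator $(s)_{z \in Z} \in \Sigma$ is co-continuous with every section $(s')_{y \in Y} \in \Xi$. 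This is precisely the standing hypothesis of the lemma.

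I do not expect any genuine obstacle here: the statement is essentially a packaging result asserting that a fiberwise family of isometric measure-preserving embeddings $\varphi^y$ descends to a morphism of $L^2$-continuum product fields as soon as continuity is verified on the canonical generating set of diagonal sections. The only conceptual content is the identification of that generating set via Definition \ref{def:continuum-product-field}; everything else amounts to matching up the data and invoking Lemmas \ref{lem:cont-fields-HHS-map-generators} and \ref{lem:continuum-product-maps}.
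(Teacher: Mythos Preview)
Your proposal is correct and follows essentially the same approach as the paper: both invoke \Cref{lem:cont-fields-HHS-map-generators} with the generating sets supplied by \Cref{def:continuum-product-field}, reducing the claim to exactly the co-continuity hypothesis. The paper's proof is more terse but the logical structure is identical.
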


\begin{proof}
	For the ease of understanding, we organize the various elements in the lemma into the following diagram, where wavy arrows stands for "induces":
	\[
	\begin{tikzcd}[scale=1em]
	&& {s \in \spaceOfSections_{\operatorname{cont}} (\aContField)} & {\spaceOfSections_{\operatorname{cont}} (f^*\aContField)} & {\spaceOfSections_{\operatorname{cont}} (h^*(f^*\aContField))} \\
	Z & {M(\baseSpace{\aContField})} & \aContField & {\left( \aContField_{\operatorname{meas}} , f(h(y)) \right)} & {\displaystyle \prod_{y \in Y} L^2(\baseSpace{\aContField}, f(h(y)), \aContField)} \\
	&&& {} & {} \\
	Y & {M(\baseSpace{\anotContField})} & \anotContField & {\left( \anotContField_{\operatorname{meas}} , g(y) \right)} & {\displaystyle \prod_{y \in Y} L^2(\baseSpace{\anotContField}, g(y), \anotContField)} \\
	&& {s' \in \spaceOfSections_{\operatorname{cont}} (\anotContField)} & {\spaceOfSections_{\operatorname{cont}} (g^* \anotContField)}
	\arrow["f", from=2-1, to=2-2]
	\arrow["h", from=4-1, to=2-1]
	\arrow["g"', from=4-1, to=4-2]
	\arrow["{\varphi^y}"', from=2-4, to=4-4]
	\arrow[squiggly, from=3-4, to=3-5]
	\arrow["{\displaystyle \prod_{y \in Y} \varphi^{y}_*}", from=2-5, to=4-5]
	\arrow[squiggly, from=2-3, to=2-4]
	\arrow[squiggly, from=2-3, to=2-2]
	\arrow[squiggly, from=4-3, to=4-4]
	\arrow[squiggly, from=4-3, to=4-2]
	\arrow[squiggly, from=2-3, to=1-3]
	\arrow[squiggly, from=4-3, to=5-3]
	\arrow["{f^*}", from=1-3, to=1-4]
	\arrow["{g^*}"', from=5-3, to=5-4]
	\arrow[hook, from=5-4, to=4-5]
	\arrow["{h^*}", from=1-4, to=1-5]
	\arrow[hook, from=1-5, to=2-5]
	\end{tikzcd}\]
	By \Cref{defn:cont-fields-morphism}, \Cref{lem:cont-fields-HHS-map-generators} and \Cref{def:continuum-product-field}, it suffices to verify that 
	for any $s \in \spaceOfSections_{\operatorname{cont}} (\aContField)$ and any $s' \in \spaceOfSections_{\operatorname{cont}} (\anotContField)$, we have $\left( \varphi_y \left( \left( f^* ( s)\right) \left( \baseSpace{\varphi}(y) \right) \right) \right)_{y \in \baseSpace{\anotContField}}$ and $g^*(s')$ are co-continuous in $\prod_{y \in Y} L^2(\baseSpace{\anotContField}, g(y), \anotContField)$, but this is exactly what our assumption guarantees. 
\end{proof}

Here is the special case of isomorphisms. 

\begin{cor} \label{cor:continuum-product-field-isomorphism}
	Let $\aContField$, $\anotContField$, $Z$, $Y$, $f$, $g$, $h$, and $\left( \varphi^{y} \right)_{y \in Y}$ be as in \Cref{lem:continuum-product-field-maps}. 
	Suppose, in addition, we have 
	\begin{itemize}
		\item $h$ is homeomorphic, and 
		\item for any $y \in Y$, $\baseSpace{\varphi^{y}}$ yields a measure space isomorphism from $\left( \baseSpace{\anotContField }, g(y) \right)$ to $\left( \baseSpace{\aContField }, f(h(y)) \right)$, 
		and for $g(y)$-almost every $w \in \baseSpace{\anotContField}$, the map 
		\[
		\left( \varphi^{y} \right)_w \colon \aContField_{\baseSpace{\varphi^{y}} (w)} \to \anotContField_{w}
		\]
		is surjective.
	\end{itemize}
	Then the tuple $\left( h, \left( \varphi^{y}_* \right)_{y \in Y} \right)$ in \Cref{lem:continuum-product-field-maps} forms an isometric continuous isomorphism. 
\end{cor}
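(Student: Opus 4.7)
The plan is to invoke \Cref{lem:cont-fields-isomorphism}, which says that an isometric continuous morphism between continuous fields of {\hhs}s is an isomorphism exactly when its base map is a homeomorphism and each fiberwise map is an isometric bijection. \Cref{lem:continuum-product-field-maps} already delivers the tuple $\varphi := \left( h, \left( \varphi^{y}_* \right)_{y \in Y} \right)$ as an isometric continuous morphism $f^* \aContField \to g^* \anotContField$, and by assumption $\baseSpace{\varphi} = h$ is a homeomorphism, so the only outstanding task is to show that each fiberwise map
\[
\varphi^{y}_* \colon L^2(\baseSpace{\aContField}, f(h(y)), \aContField) \to L^2(\baseSpace{\anotContField}, g(y), \anotContField)
\]
is a bijection.

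By \Cref{lem:continuum-product-maps} each $\varphi^{y}_*$ is already an isometric embedding, and the same lemma promotes ``isometric embedding'' to ``isometric bijection'' as soon as one knows that the underlying isometric measure-preserving morphism $\varphi^{y} \colon (\aContField_{\operatorname{meas}}, f(h(y))) \to (\anotContField_{\operatorname{meas}}, g(y))$ is in fact an isomorphism of measured fields. So I would reduce the surjectivity of $\varphi^{y}_*$ to the statement that $\varphi^{y}$ is an isometric measure-preserving isomorphism.

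This last reduction is exactly what \Cref{lem:meas-fields-isomorphism} is designed for: its hypotheses are that the base map $\underline{\varphi^{y}}$ be a measure space isomorphism and that the fiberwise maps $(\varphi^{y})_w$ be isometric bijections almost surely. Both of these are precisely what the corollary assumes (the isometric part of $(\varphi^{y})_w$ was already built into the datum of $\varphi^{y}$ being an isometric measure-preserving morphism, so ``surjectivity a.e.'' upgrades this to ``isometric bijection a.e.''). Chaining \Cref{lem:meas-fields-isomorphism} into \Cref{lem:continuum-product-maps} into \Cref{lem:cont-fields-isomorphism} then yields the conclusion.

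I do not foresee a genuine obstacle here: the potentially delicate point would be assembling the fiberwise inverses of the $\varphi^{y}_*$ into a morphism of continuous fields going the other direction, but this is handled abstractly and uniformly by \Cref{lem:cont-fields-isomorphism}, which produces the inverse morphism from the pointwise data, so no continuity argument needs to be made by hand after the fiberwise bijectivity has been established.
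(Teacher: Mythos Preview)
Your proposal is correct and follows exactly the same approach as the paper, which simply states that the result follows directly from \Cref{lem:cont-fields-isomorphism}, \Cref{lem:meas-fields-isomorphism} and \Cref{lem:continuum-product-maps}; you have accurately unpacked how these three lemmas chain together.
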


\begin{proof}
	This follows directly from \Cref{lem:cont-fields-isomorphism}, \Cref{lem:meas-fields-isomorphism} and \Cref{lem:continuum-product-maps}. 
\end{proof}

We also detail a special case of \Cref{lem:continuum-product-field-maps}, where the family $\left( \varphi^{y}_* \right)_{y \in Y}$ comes from a single isometric continuous morphism. 

\begin{cor} \label{cor:continuum-product-field-isomorphism-easy}
	Let $\aContField$, $\anotContField$, $Z$, $Y$, $f$, $g$, and $h$ be as in \Cref{lem:continuum-product-field-maps}. 
	Let $\varphi \colon \aContField \to \anotContField$ be an isometric continuous morphism such that the diagram
	\[\begin{tikzcd}
	Z & {M(\baseSpace{\aContField})} \\
	Y & {M(\baseSpace{\anotContField})}
	\arrow["h", from=2-1, to=1-1]
	\arrow["f", from=1-1, to=1-2]
	\arrow["g"', from=2-1, to=2-2]
	\arrow["{\baseSpace{\varphi}_*}"', from=2-2, to=1-2]
	\end{tikzcd}\]
	commutes, i.e., $f(h(y)) = \baseSpace{\varphi}_* (g(y))$ for any $y \in Y$. 
	In view of \Cref{lem:meas-field-HHS-cont-maps}, let 
	\[
	\varphi_* \colon  L^2(\baseSpace{\aContField}, f(h(y)), \aContField) \to L^2(\baseSpace{\anotContField}, g(y), \anotContField)
	\]
	be the induced isometric embedding as in \Cref{lem:continuum-product-maps}. 
	Then the tuple $\varphi := \left( h, \left( \varphi_* \right)_{y \in Y} \right)$ forms an isometric continuous morphism from $f^* \aContField$ to $g^* \anotContField$. 
	
	Moreover, if $h$ is a homeomorphism and $\varphi$ is an isometric continuous isomorphism, then $\varphi := \left( h, \left( \varphi_* \right)_{y \in Y} \right)$ is also an isometric continuous isomorphism. 
\end{cor}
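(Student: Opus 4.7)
The plan is to reduce this corollary directly to the preceding \Cref{lem:continuum-product-field-maps} by choosing $\varphi^y := \varphi$ for every $y \in Y$. First I would observe that this choice is legitimate: by \Cref{lem:meas-field-HHS-cont-maps}, the isometric continuous morphism $\varphi$ does induce an isometric measure-preserving morphism from $(\aContField_{\operatorname{meas}}, f(h(y)))$ to $(\anotContField_{\operatorname{meas}}, g(y))$ for each $y$, precisely because the commuting-square assumption $f(h(y)) = \baseSpace{\varphi}_\ast(g(y))$ supplies the needed pushforward compatibility.

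The main substantive step is verifying the co-continuity hypothesis of \Cref{lem:continuum-product-field-maps}: for any $s \in \spaceOfSections_{\operatorname{cont}}(\aContField)$, the section $(\varphi_\ast(s))_{y \in Y}$ must be co-continuous with every continuous section $s' \in \spaceOfSections_{\operatorname{cont}}(\anotContField)$ sitting diagonally inside $\prod_{y \in Y} L^2(\baseSpace{\anotContField}, g(y), \anotContField)$. The key observation I would exploit is that the class $\varphi_\ast([s])$ is represented by the single continuous section $\varphi(s) \in \spaceOfSections_{\operatorname{cont}}(\anotContField)$, and crucially this representative does not depend on $y$. Hence the tuple $(\varphi_\ast(s))_{y \in Y}$ coincides with $g^\ast(\varphi(s)) \in \spaceOfSections_{\operatorname{cont}}(g^\ast\anotContField)$ by \Cref{def:continuum-product-field}, from which co-continuity with any other continuous section of $g^\ast\anotContField$ is immediate by \Cref{def:cont-field-HHS}\eqref{def:cont-field-HHS::continuous}.

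For the second assertion, I would appeal to \Cref{cor:continuum-product-field-isomorphism}. Its hypotheses specialize in the expected way: by \Cref{lem:cont-fields-isomorphism}, if $\varphi$ is an isometric continuous isomorphism then $\baseSpace{\varphi}$ is a homeomorphism and each $\varphi_w$ is an isometric bijection; combining the first fact with $f(h(y)) = \baseSpace{\varphi}_\ast(g(y))$ yields the required measure-space isomorphism $(\baseSpace{\anotContField}, g(y)) \to (\baseSpace{\aContField}, f(h(y)))$, while the surjectivity of each $\varphi_w$ supplies the remaining hypothesis.

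I expect no serious obstacle; the proof is essentially bookkeeping that specializes the preceding lemma. The only subtle point, and the one I would be most careful about, is tracking that $\varphi_\ast([s])$ admits a $y$-independent continuous representative—without this observation, the co-continuity condition would not be transparent and one could be forced into unnecessary measurability chases through the definitions.
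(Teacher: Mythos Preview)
Your proposal is correct and follows exactly the approach the paper takes: the paper's proof simply says ``These statements follow direct from \Cref{lem:continuum-product-field-maps} and \Cref{cor:continuum-product-field-isomorphism},'' and you have supplied the details of that reduction. Your observation that $\varphi_\ast([s])$ is represented by the $y$-independent continuous section $\varphi(s)$, so that the co-continuity hypothesis of \Cref{lem:continuum-product-field-maps} is automatic, is precisely the point that makes the reduction immediate.
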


\begin{proof}
	These statements follow direct from \Cref{lem:continuum-product-field-maps} and \Cref{cor:continuum-product-field-isomorphism}. 
\end{proof}

This allows us to construct group actions on induced continuous fields of {\hhs}s. 

\begin{cor} \label{cor:continuum-product-field-actions}
	Let $Z$, $\aContField$, and $f$ be as in \Cref{lem:continuum-product-field}. Let $\alpha \colon \Gamma \curvearrowright \aContField$ be an isometric (left) action by a discrete group and 
	recall from \Cref{defn:isometric-action} that $\baseSpace{\alpha}$ is the induced right action of $\Gamma$ on $\baseSpace{\aContField}$. We also let $\baseSpace{\alpha}$ denote the induced right action of $\Gamma$ on $\spaceMeas \left( \baseSpace{\aContField} \right)$. 
	Let $\beta \colon \Gamma \curvearrowright Z$ be a right action by homeomorphisms. Suppose $f \colon Z \to \spaceMeas \left( \baseSpace{\aContField} \right)$ is $\Gamma$-equivariant. 
	Then we have an isometric action $f^*_\beta \alpha \colon \Gamma \curvearrowright f^* \aContField$ such that for any $\gamma \in \Gamma$, we have 
	$\left( f^*_\beta \alpha  \right)_\gamma = \left( \beta_\gamma, \left( \left( \alpha_\gamma \right)_* \right)_{y \in Y} \right)$. 
\end{cor}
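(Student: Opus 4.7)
The plan is to produce each $(f^*_\beta\alpha)_\gamma$ by applying \Cref{cor:continuum-product-field-isomorphism-easy} for each $\gamma \in \Gamma$ separately, and then to verify that the resulting assignment $\gamma \mapsto (f^*_\beta\alpha)_\gamma$ is a group homomorphism. In the application of \Cref{cor:continuum-product-field-isomorphism-easy}, I will take $Y = Z$, $g = f$, $h = \beta_\gamma \colon Z \to Z$, and $\varphi = \alpha_\gamma \colon \aContField \to \aContField$. The nontrivial hypothesis to check is the commutativity condition $f(h(z)) = \baseSpace{\varphi}_*(g(z))$, i.e., $f(\beta_\gamma(z)) = (\baseSpace{\alpha_\gamma})_*(f(z))$ for all $z \in Z$, and this is exactly the $\Gamma$-equivariance of $f$ (reading off $\baseSpace{\alpha}$ as a right action of $\Gamma$ on $\spaceMeas(\baseSpace{\aContField})$ via pushforward, as specified by \Cref{defn:isometric-action}). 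Since $\beta_\gamma$ is a homeomorphism and $\alpha_\gamma$ is an isometric continuous isomorphism, \Cref{cor:continuum-product-field-isomorphism-easy} yields an isometric continuous isomorphism $(f^*_\beta\alpha)_\gamma \colon f^*\aContField \to f^*\aContField$ with base map $\beta_\gamma$ and fiber maps $((\alpha_\gamma)_*)_{z \in Z}$, as claimed.

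To verify the homomorphism property $(f^*_\beta\alpha)_\gamma \circ (f^*_\beta\alpha)_{\gamma'} = (f^*_\beta\alpha)_{\gamma \gamma'}$, I will separately check the induced maps on the base space and the fiberwise maps, in accordance with \Cref{defn:cont-fields-morphism}\eqref{defn:cont-fields-morphism:composition}. On the base space: since $\beta$ is a right action, $\beta_{\gamma'} \circ \beta_\gamma = \beta_{\gamma\gamma'}$, which is precisely the formula required for the base map of the composition (recall that composition of morphisms reverses the order of the underlying base maps). On the fibers: for $z \in Z$, the $z$-th fiber map of the composition is
\[
\left((\alpha_\gamma)_*\right)_z \circ \left((\alpha_{\gamma'})_*\right)_{\beta_\gamma(z)} = \left(\alpha_\gamma \circ \alpha_{\gamma'}\right)_* = \left(\alpha_{\gamma\gamma'}\right)_*,
\]
where the first equality uses the functoriality of $(-)_*$ from \Cref{lem:continuum-product-maps} applied to the composition of isometric measure-preserving morphisms (where, tracking measures, the middle term naturally arises as $L^2$ based at the probability measure $f(\beta_\gamma(z)) = (\baseSpace{\alpha_\gamma})_*(f(z))$), and the second equality uses that $\alpha$ is a left action. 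The identity element clearly maps to the identity morphism, so $\gamma \mapsto (f^*_\beta\alpha)_\gamma$ is indeed a homomorphism from $\Gamma$ to $\operatorname{Isom}(f^*\aContField)$, and hence an isometric action in the sense of \Cref{defn:isometric-action}.

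The main subtlety, and essentially the only point that requires care, lies in tracking the left/right handedness of the various actions and the resulting order of composition of base maps versus fiber maps. There is no substantive analytic obstacle: all the technical work (continuity of fiber maps in the base variable, preservation of the generating set of continuous sections, isometry, bijectivity) has already been carried out in \Cref{lem:continuum-product-field-maps} and \Cref{cor:continuum-product-field-isomorphism-easy}, which this corollary is designed to invoke. Thus the proof reduces essentially to the observation that the equivariance hypothesis on $f$ is exactly what is needed to feed a whole family (parametrized by $\gamma$) of instances of \Cref{cor:continuum-product-field-isomorphism-easy} into a coherent group action.
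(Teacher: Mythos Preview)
Your proposal is correct and follows essentially the same approach as the paper: apply \Cref{cor:continuum-product-field-isomorphism-easy} for each $\gamma$ (using the $\Gamma$-equivariance of $f$ to verify the commutativity hypothesis), and then check the group law directly on base and fiber maps. The paper's proof is simply a terser version of what you wrote, calling the verification of $(f^*_\beta\alpha)_\gamma \circ (f^*_\beta\alpha)_{\gamma'} = (f^*_\beta\alpha)_{\gamma\gamma'}$ and the identity condition ``routine''.
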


\begin{proof}
	The fact that $\left( \beta_\gamma, \left( \left( \alpha_\gamma \right)_* \right)_{y \in Y} \right)$ defines an isometric continuous isomorphism follows directly from \Cref{cor:continuum-product-field-isomorphism-easy}. It is then routine to verify that $\left( f^*_\beta \alpha  \right)_\gamma \circ \left( f^*_\beta \alpha  \right)_{\gamma'} =  \left( f^*_\beta \alpha  \right)_{\gamma \gamma'} $ for any $\gamma, \gamma' \in \Gamma$ and that $\left( f^*_\beta \alpha  \right)_1$ is the identity  isometric continuous isomorphism on $f^*\aContField$. 
\end{proof}

Let us specialize to constant fields.

\begin{lem} \label{lem:continuum-product-field-trivial-maps}
	Let $\aHHS$ be a separable {\hhs}. 
	Let $Z$, $Y$, $Z'$ and $Y'$ be 
	compact 
	Hausdorff spaces with $Z'$ and $Y'$ also second-countable. Let $f \colon Z \to \spaceMeas(Z')$ and $g \colon Y \to \spaceMeas(Y')$ be continuous maps. 
	
	Let $h \colon Y \to Z$ be a continuous map. For any $y \in Y$, 
	$k^{y} \colon (Y', g(y)) \to (Z', f(h(y)))$ be a measure-preserving map, 
	and let 
	\[
	\left(k^{y}\right)^* \colon  L^2(Z', f(h(y)), \aHHS) \to L^2(Y', g(y), \aHHS)
	\]
	be the induced isometric embedding. 
	
	If, 
	for any functions $\xi \in C(Z')$ and $\eta \in C(Y')$, the map  
	\[
	Y \ni y \mapsto \int_{Y'} 
	\left( \xi \circ k^{y} \right) \cdot \eta
	\, \operatorname{d} g(y) \in [0, \infty)
	\]
	is continuous, 	
	then the tuple $\left( h, \left( \left(k^{y}\right)^* \right)_{y \in Y} \right)$ forms an isometric continuous morphism from $f^* \left( \aHHS_{Z'} \right)$ to $g^* \left( \aHHS_{Y'} \right)$ in the sense of \Cref{def:cont-field-HHS-maps}. 
\end{lem}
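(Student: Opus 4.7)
The plan is to deduce this from the more general \Cref{lem:continuum-product-field-maps} applied with $\aContField := \aHHS_{Z'}$ and $\anotContField := \aHHS_{Y'}$. For each $y \in Y$, the measure-preserving map $k^y \colon (Y', g(y)) \to (Z', f(h(y)))$, together with identity fiber maps $\mathrm{id}_\aHHS$, defines an isometric measure-preserving morphism $\varphi^y \colon ((\aHHS_{Z'})_{\operatorname{meas}}, f(h(y))) \to ((\aHHS_{Y'})_{\operatorname{meas}}, g(y))$ in the sense of \Cref{defn:meas-fields-morphism}, whose induced map on $L^2$-continuum products via \Cref{lem:continuum-product-maps} is precisely $(k^y)^*$, sending a representative $s$ to $s \circ k^y$. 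Thus all that remains is to verify the hypothesis of \Cref{lem:continuum-product-field-maps}: for every $s \in C(Z', \aHHS)$ and $s' \in C(Y', \aHHS)$, the function
\[
F(y) := \int_{Y'} d_\aHHS\bigl(s(k^y(w)), s'(w)\bigr)^2 \, \mathrm{d}g(y)(w)
\]
is continuous on $Y$, since the actual distance function $\sqrt{F}$ is then also continuous.

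The idea is to approximate the integrand by elementary tensors. Define $\phi \colon Z' \times Y' \to [0, \infty)$ by $\phi(z, w) := d_\aHHS(s(z), s'(w))^2$; this is continuous since $s$, $s'$ and $d_\aHHS$ are. Because $Z'$ and $Y'$ are compact Hausdorff, the Stone--Weierstrass theorem implies that $C(Z') \otimes C(Y')$ is uniformly dense in $C(Z' \times Y')$, so for any $\varepsilon > 0$ there exist $\xi_1, \ldots, \xi_n \in C(Z')$ and $\eta_1, \ldots, \eta_n \in C(Y')$ with
\[
\sup_{(z,w) \in Z' \times Y'} \Bigl| \phi(z,w) - \sum_{i=1}^n \xi_i(z)\, \eta_i(w) \Bigr| < \varepsilon.
\]
Set $G_\varepsilon(y) := \sum_{i=1}^n \int_{Y'} (\xi_i \circ k^y) \cdot \eta_i \, \mathrm{d}g(y)$; by the standing hypothesis each summand is continuous in $y$, hence so is $G_\varepsilon$, and applying the approximation pointwise in $w$ and integrating yields $|F(y) - G_\varepsilon(y)| \leq \varepsilon \cdot g(y)(Y')$.

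To conclude, apply the standing hypothesis once more with $\xi \equiv 1$ and $\eta \equiv 1$: this shows $y \mapsto g(y)(Y')$ is continuous on $Y$, hence bounded by some $M < \infty$ since $Y$ is compact. Consequently $G_\varepsilon \to F$ uniformly on $Y$ as $\varepsilon \to 0$, so $F$ is a uniform limit of continuous functions and therefore continuous; composing with the square root (continuous on $[0, \infty)$) gives continuity of $\sqrt{F}$ and finishes the verification. I do not expect any serious obstacle here; the only mild subtlety is noticing that boundedness of $y \mapsto g(y)(Y')$ falls out of the stated hypothesis by plugging in constant functions.
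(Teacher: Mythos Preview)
Your proof is correct and follows essentially the same route as the paper: both reduce to \Cref{lem:continuum-product-field-maps} and then show the required co-continuity by approximating the integrand $d_\aHHS(s(k^y(w)),s'(w))$ (or its square) uniformly by finite sums of tensor products $\xi_i\otimes\eta_i$, so that the hypothesis on $\int_{Y'}(\xi\circ k^y)\cdot\eta\,\mathrm{d}g(y)$ applies term by term. The only cosmetic difference is that you invoke Stone--Weierstrass directly, whereas the paper builds the approximating tensors by hand from partitions of unity subordinate to small-diameter covers of $s(Z')$ and $s'(Y')$; your version is a bit cleaner, and your observation that boundedness of $y\mapsto g(y)(Y')$ follows from the hypothesis with $\xi=\eta=1$ is a nice touch (the paper instead uses compactness of $g(Y)\subseteq\spaceMeas(Y')$).
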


\begin{proof}
	Applying \Cref{lem:continuum-product-field-maps} to the constant continuous fields $\aHHS_{Z'}$ and $\aHHS_{Y'}$, we just need to fix arbitrary continuous maps $s \in C(Z', \aHHS)$ and $s' \in C(Y', \aHHS)$, and show the section $\left(  s \circ k^{y} \right)_{y \in Y} \in \prod_{y \in Y} L^2(Y', g(y), \aHHS)$ is co-continuous with $s'$, 
	i.e., the function 
	\[
	\delta \colon Y \to [0, \infty) \, , \quad y \mapsto \int_{y' \in Y'} d_{\aHHS} \left( s \circ k^{y} (y') , s'(y') \right)\, \operatorname{d} g(y) (y') 
	\]
	is continuous. To prove this, it suffices to show $\delta$ can be approximated by continuous real-valued functions on $Y$ in the uniform norm. To this end, we fix $\varepsilon > 0$. 
	For any $y \in Y$, we write $g(y) (Y') = \int_{Y'} 1 \, \operatorname{d} g(y)$, the total volume of the measure $g(y)$. 
	Define 
	\[
	\lambda := \max_{y \in Y} g(y) (Y') \; ,
	\]
	which exists since $g(Y)$ is a compact subset of $\spaceMeas(Y')$. 
	By the compactness of $Z'$ (respectively, $Y'$), there is a finite open cover of the compact set $s(Z')$ (respectively, $s'(Y')$) in $\aHHS$ consisting of open balls $B \left( x_i , \frac{\varepsilon}{2 \lambda} \right)$ for $i = 1, \ldots, m$ (respectively, $B \left( x'_j , \frac{\varepsilon}{2 \lambda} \right)$ for $j = 1, \ldots, n$). Since the collection 
	\begin{align*}
	& \left\{ s^{-1} \left( B \left( x_i , \frac{\varepsilon}{2 \lambda} \right) \right) \colon i = 1, \ldots, m \right\} \\
	\text{(respectively, } & \left\{ (s')^{-1} \left( B \left( x'_j , \frac{\varepsilon}{2 \lambda} \right) \right) \colon j = 1, \ldots, n \right\} \text{)}
	\end{align*}
	is a finite open cover of $Z'$ (respectively, $Y'$), it thus begets a partition of unity $\left\{ \xi_i \colon i = 1, \ldots, m \right\}$ (respectively, $\left\{ \eta_j \colon j = 1, \ldots, n \right\}$) subordinate to it. 
	For $i = 1, \ldots, m$ and $j = 1, \ldots, n$, 
	we write $d_{i j} := d_{\aHHS} \left( x_i , x'_j \right)$ and define functions 
	\[
	\zeta_{i j} \colon Y \to [0, \infty) \, , \quad  y \mapsto \int_{Y'} 
	\left( \xi_i \circ k^{y} \right) \cdot \eta_j
	\, \operatorname{d} g(y) \; ,
	\]
	which are continuous by assumption. 
	Observe that for any $y' \in Y'$, we have, for any $i \in \{1, \ldots, m\}$ and $j \in \{1, \ldots, n\}$,  
	\[
	\left| d_{\aHHS} \left( s \circ k^{y} (y') , s'(y') \right) -  d_{\aHHS} \left( x_i , x'_j \right) \right| 
	\leq d_{\aHHS} \left( s \circ k^{y} (y') , x_i  \right) +  d_{\aHHS} \left( s'(y'), x'_j \right) \; ,
	\]
	which is less than $\frac{\varepsilon}{\lambda}$ whenever $k^{y}(y') \in \operatorname{supp} \left( \xi_i \right)$ and $y' \in \operatorname{supp} \left( \eta_j \right)$, 
	whence 
	\begin{align*}
	& \Big| d_{\aHHS} \left( s \circ k^{y} (y') , s'(y') \right) - \sum_{i=1}^{m} \sum_{j=1}^{n} \left( \xi_i \circ k^{y} \right) (y') \cdot  \eta_j (y') \cdot  d_{i j} \Big| \\ 
	=&\ \Big| \sum_{i=1}^{m} \sum_{j=1}^{n} \xi_i \left( k^{y} (y') \right)  \cdot  \eta_j (y')   \cdot  \left( d_{\aHHS} \left( s \circ k^{y} (y') , s'(y') \right) -  d_{\aHHS} \left( x_i , x'_j \right) \right) \Big| \\
	<&\ \frac{\varepsilon}{\lambda} \; .
	\end{align*}
	It follows that for any $y \in Y$, we have 
	\[
	\Big| \delta (y) - \sum_{i=1}^{m} \sum_{j=1}^{n} d_{i j} \cdot \zeta_{i j} (y) \Big| \leq \frac{\varepsilon}{\lambda}  \cdot g(y) (Y') \leq \varepsilon \; . 
	\]
	Therefore $\delta$ is within $\varepsilon$-distance in the uniform norm from the continuous function $\sum_{i=1}^{m} \sum_{j=1}^{n} d_{i j} \cdot \zeta_{i j}$, as desired. 
\end{proof}

Parallel to the above, we restrict \Cref{lem:continuum-product-field-trivial-maps} to the case of isomorphisms. 

\begin{cor} \label{cor:continuum-product-field-trivial-isomorphism}
	Let $\aHHS$, $Z$, $Y$, $Z'$, $Y'$, $f \colon Z \to \spaceMeas(Z')$, $g \colon Y \to \spaceMeas(Y')$, $h \colon Y \to Z$, and $\left( k^{y} \right)_{y \in Y}$ be as in \Cref{lem:continuum-product-field-trivial-maps}. 
	Suppose, in addition, we have 
	\begin{itemize}
		\item $h$ is homeomorphic, and 
		\item for any $y \in Y$, $k^{y}$ yields a measure space isomorphism from $\left( Y' , g(y) \right)$ to $\left( Z', f(h(y)) \right)$. 
	\end{itemize}
	Then the tuple $\left( h, \left( \left(k^{y}\right)^* \right)_{y \in Y} \right)$ in \Cref{lem:continuum-product-field-trivial-maps} forms an isometric continuous isomorphism. 
\end{cor}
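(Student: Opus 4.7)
The plan is to combine \Cref{lem:continuum-product-field-trivial-maps} with \Cref{lem:cont-fields-isomorphism}, which characterizes isometric continuous isomorphisms in terms of their base map and fiber maps. First, since $k^y$ is a measure space isomorphism, the symmetric hypothesis needed to apply \Cref{lem:continuum-product-field-trivial-maps} also in the reverse direction (with $h$ replaced by $h^{-1}$ and $k^y$ replaced by $(k^y)^{-1}$) holds: indeed, the continuity of $y \mapsto \int_{Y'} (\xi \circ k^y) \cdot \eta \, \mathrm{d} g(y)$ for $\xi \in C(Z')$, $\eta \in C(Y')$ implies, by a change of variables along the measure-preserving isomorphism $k^y$, the corresponding continuity statement with the roles of $Z'$ and $Y'$ swapped.

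Next, I would invoke \Cref{lem:continuum-product-field-trivial-maps} to deduce that $\left( h, \left( (k^y)^* \right)_{y \in Y} \right)$ is an isometric continuous morphism from $f^* \left( \aHHS_{Z'} \right)$ to $g^* \left( \aHHS_{Y'} \right)$. By \Cref{lem:cont-fields-isomorphism}, to upgrade this to an isomorphism it suffices to verify that $h$ is a homeomorphism (which is given) and that each fiber map $(k^y)^* \colon L^2(Z', f(h(y)), \aHHS) \to L^2(Y', g(y), \aHHS)$ is an isometric bijection.

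For the fiberwise statement, I view $k^y$ as underlying an isometric measure-preserving morphism from the constant measured field $(\aHHS_{(Z', f(h(y)))}, f(h(y)))$ to the constant measured field $(\aHHS_{(Y', g(y))}, g(y))$, with fiber maps equal to the identity on $\aHHS$. Since $k^y$ is a measure space isomorphism by hypothesis and the fiber maps are bijective, \Cref{lem:meas-fields-isomorphism} tells us this morphism is an isometric measure-preserving isomorphism. Then \Cref{lem:continuum-product-maps} implies the induced map $(k^y)^*$ on $L^2$-continuum products is an isometric bijection, as required.

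There is no real obstacle here: the work was already done in \Cref{lem:continuum-product-field-trivial-maps}, and the present corollary is essentially a bookkeeping exercise assembling \Cref{lem:cont-fields-isomorphism,lem:meas-fields-isomorphism,lem:continuum-product-maps}. The one subtlety worth flagging is that the hypothesis of \Cref{lem:continuum-product-field-trivial-maps} is stated in one direction (maps $\xi$ on the target $Z'$ pulled back via $k^y$, multiplied by $\eta$ on the source $Y'$), but the fact that $k^y$ is a genuine measure space isomorphism makes the hypothesis automatically symmetric, so the inverse tuple $\left( h^{-1}, \left( ((k^y)^{-1})^* \right)_{y \in Y} \right)$ is likewise a morphism and serves as an explicit two-sided inverse.
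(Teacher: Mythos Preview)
Your argument is correct and essentially coincides with the paper's: the paper simply cites \Cref{lem:continuum-product-field-trivial-maps} together with \Cref{cor:continuum-product-field-isomorphism}, and the latter is exactly the package of \Cref{lem:cont-fields-isomorphism}, \Cref{lem:meas-fields-isomorphism}, and \Cref{lem:continuum-product-maps} that you unpack by hand. Your detour through the ``reverse direction'' hypothesis is unnecessary (once \Cref{lem:cont-fields-isomorphism} is invoked you never need to build the inverse morphism explicitly), but the symmetry observation you make is correct and harmless.
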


\begin{proof}
	This follows directly from \Cref{cor:continuum-product-field-isomorphism} and \Cref{lem:continuum-product-field-trivial-maps}. 
\end{proof}


\section{$L^2$-continuum powers and randomizations}\label{sec:continuum-power}

In this section, 
we highlight one special case of the construction introduced in \Cref{def:continuum-product-field}. 
This particular construction will be used to enlarge a continuous field of {\hhs}s in a canonical way, which will often turn out to reduce the complexity of the continuous field, as we shall see in \Cref{sec:trivialization}. 

\begin{defn}  \label{def:continuum-product-field-augmentation}
	Let $\aContField$ be a second-countable continuous field of {\hhs}s, let $Y$ be a compact Hausdorff space and let $\mu$ be a regular Borel measure on $Y$. 
	Recall the construction in \Cref{def:cont-field-HHS-maps} of the extension $\aContField|_{\baseSpace{\aContField} \times Y}$ of $\aContField$ over the Cartesian product $\baseSpace{\aContField} \times Y$. 
	Define a continuous map $\tau_\mu \colon \baseSpace{\aContField} \to \spaceMeas(\baseSpace{\aContField} \times Y)$, $z \mapsto \delta_z \times \mu$, 
	where $\delta_z$ is the point mass probability measure at $z$. 
	Define 
	the \emph{$L^2$-continuum power} of $\aContField$ by $(Y,\mu)$ as 
	\[
	\aContField^{(Y, \mu)} := \tau_\mu^* \left( \aContField|_{\baseSpace{\aContField} \times Y} \right)
	\]
	which is a continuous field of {\hhs}s with base space equal to $\baseSpace{\aContField}$. 
\end{defn}

As the terminology suggests, what taking $L^2$-continuum powers of a continuous field of {\hhs}s is to taking $L^2$-continuum products is akin to what taking powers of a number to taking products.  
Let us also give a more concrete description of the fibers of $\aContField^{(Y,\mu)}$. 

\begin{rmk}  \label{rmk:continuum-product-field-augmentation}
	In \Cref{def:continuum-product-field-augmentation}, we have canonical isometric isomorphisms $\left( \aContField^{(Y, \mu)} \right)_z \cong L^2(Y, \mu, \aContField_z) \cong \left( \aContField_z \right)^{(Y, \mu)}$ for any $z \in \baseSpace{\aContField}$, where we use the notation in \Cref{defn:continuous-field-HHS-L2-notations}. 
	
	Indeed, by \Cref{def:continuum-product-field}, we have 
	\[
	\left( \aContField^{(Y, \mu)} \right)_z = \left( \tau_\mu^* \left( \aContField|_{\baseSpace{\aContField} \times Y} \right) \right)_z = L^2 \left( \baseSpace{\aContField} \times Y , \delta_z \times \mu , \aContField|_{\baseSpace{\aContField} \times Y}  \right) \; .
	\]
	If we write $i_z \colon Y \hookrightarrow \baseSpace{\aContField} \times Y$ for the embedding $y \mapsto (z , y)$, then we see that $i_z \colon (Y , \mu) \hookrightarrow \left(\baseSpace{\aContField} \times Y , \delta_z \times \mu \right)$ is a measure space isomorphism and, by \Cref{rmk:cont-field-HHS-maps-functorial}, \Cref{eg:cont-field-HHS-maps-singleton} and \Cref{rmk:cont-field-HHS-maps-trivial}, $\left( i_z \right)^* \left( \aContField|_{\baseSpace{\aContField} \times Y} \right)$ is canonically isomorphic to the constant field $\left( \aContField_z \right)_Y$. Hence by \Cref{lem:continuum-product-maps}, there is a canonical isometric bijection $\left( \aContField^{(Y, \mu)} \right)_z \allowbreak = L^2 \left( \baseSpace{\aContField} \times Y , \delta_z \times \mu , \aContField|_{\baseSpace{\aContField} \times Y}  \right) \allowbreak {\cong} L^2 \left( Y , \mu , \aContField_z  \right)$. 
	
	The canonical isometric bijection $L^2(Y, \mu, \aContField_z) \cong \left( \aContField_z \right)^{(Y, \mu)}$ is established similarly. 
\end{rmk}

We now establish a few basic properties of $L^2$-continuum powers.

\begin{lem}  \label{lem:continuum-product-field-augmentation-maps}
	Let $\aContField$, $Y$, and $\mu$ be as in \Cref{def:continuum-product-field-augmentation}. 
	Let us also freely employ the canonical identifications in \Cref{rmk:continuum-product-field-augmentation}. 
	Then the following hold: 
	\begin{enumerate}

		\item \label{lem:continuum-product-field-augmentation-maps:embedding} 
		There is a unique isometric continuous morphism $\iota \colon \aContField \to \aContField^{(Y, \mu)}$ such that 
		$\baseSpace{\iota}$ is the identity map on $\baseSpace{\aContField}$ and for any $z \in \baseSpace{\aContField}$, the isometric embedding $\iota_z \colon \aContField_z \to \left( \aContField^{(Y, \mu)} \right)_z \cong L^2(Y, \mu, \aContField_z)$ takes any $x \in \aContField_z$ to the constant function $x$ in $L^2(Y, \mu, \aContField_z)$.

		\item \label{lem:continuum-product-field-augmentation-maps:morphism} 
		For any continuous field $\anotContField$ of {\hhs}s and any isometric continuous morphism $\varphi \colon \aContField \to \anotContField$, there is a unique isometric continuous morphism $\varphi^{(Y, \mu)} \colon \aContField^{(Y, \mu)} \to \anotContField^{(Y, \mu)}$ such that $\baseSpace{\varphi^{(Y,\mu)}} = \baseSpace{\varphi}$ and for any $z \in \baseSpace{\anotContField}$, the isometric embedding 
		\[
		\left( \varphi^{(Y,\mu)} \right)_z \colon L^2 \left(Y, \mu, \aContField_{\baseSpace{\varphi(z)}} \right) \cong \left( \aContField^{(Y, \mu)} \right)_{\baseSpace{\varphi(z)}} \to \left( \anotContField^{(Y, \mu)} \right)_z \cong L^2(Y, \mu, \anotContField_z)
		\]
		takes any $\xi \in L^2 \left(Y, \mu, \aContField_{\baseSpace{\varphi(z)}} \right)$ to $\varphi_z \circ \xi \in L^2(Y, \mu, \anotContField_z)$.

		\item \label{lem:continuum-product-field-augmentation-maps:isom} 
		There is a group homomorphism $\operatorname{Isom} (\aContField) \to \operatorname{Isom} \left( \aContField^{(Y, \mu)} \right)$, $\varphi \mapsto \varphi^{(Y,\mu)}$.

		\item \label{lem:continuum-product-field-augmentation-maps:action} 
		For any isometric action $\alpha \colon G \curvearrowright \aContField$, there is an isometric action $\alpha^{(Y, \mu)} \colon G \curvearrowright \aContField^{(Y, \mu)}$ such that $\left( \alpha^{(Y, \mu)} \right)_g = \left( \alpha_g \right)^{(Y, \mu)}$. 
		
		\item \label{lem:continuum-product-field-augmentation-maps:induced} 
		For any compact Hausdorff space $Z$ and any continuous map $f \colon Z \to \baseSpace{\aContField}$, there is an isometric continuous isomorphism from $f^* \left( \aContField^{(Y, \mu)} \right)$ to $ \left( f^* \aContField \right)^{(Y, \mu)}$ 
		that fits into the following commutative diagram
		\[
		\xymatrix{
			\aContField \ar[rr]^{f^*} \ar[d]_{\iota} && f^* \aContField \ar[d]^{\iota} \\
			\aContField^{(Y, \mu)} \ar[r]^{f^*} & f^* \left( \aContField^{(Y, \mu)} \right) \ar[r]^{\cong} & \left( f^* \aContField \right)^{(Y, \mu)} \ar[l]
		}
		\]
		of continuous fields of {\hhs}s and isometric continuous morphisms between them. 
		
		\item \label{lem:continuum-product-field-augmentation-maps:variation} 
		For any compact Hausdorff space $Z$ and any continuous map $f \colon Z \to \spaceMeas \left( \baseSpace{\aContField}\right)$, there is an isometric continuous isomorphism from $f^* \left( \aContField^{(Y, \mu)} \right)$ to $ \left( f^* \aContField \right)^{(Y, \mu)}$ 
		that yields the following commutative diagram
		\[
		\xymatrix{
			\spaceOfSections_{\operatorname{cont}} \left(\aContField \right) \ar[rr]^{f^*} \ar[d]_{\iota} && \spaceOfSections_{\operatorname{cont}} \left( f^* \aContField \right) \ar[d]^{\iota} \\
			\spaceOfSections_{\operatorname{cont}} \left(\aContField^{(Y, \mu)} \right) \ar[r]^{f^*} & \spaceOfSections_{\operatorname{cont}} \left(f^* \left( \aContField^{(Y, \mu)} \right) \right) \ar[r]^{\cong} & \spaceOfSections_{\operatorname{cont}} \left(\left( f^* \aContField \right)^{(Y, \mu)} \right) \ar[l]
		}
		\]
		of maps among sets of continuous sections.

		\item \label{lem:continuum-product-field-augmentation-maps:product} 
		For any compact Hausdorff space $Y'$ and any finite regular Borel measure $\mu'$ on $Y'$, there is an isometric continuous isomorphism from $\left( \aContField^{(Y, \mu)} \right)^{(Y', \mu')}$ to $\aContField^{(Y \times Y', \mu \times \mu')}$ 
		that fits into the following commutative diagram
		\[
		\xymatrix{
			\aContField \ar[r]^{\iota} \ar[d]_{\iota} & \aContField^{(Y, \mu)} \ar[d]^{\iota} \\
			\aContField^{(Y \times Y', \mu \times \mu')} \ar[r]^{\cong}  & \left( \aContField^{(Y, \mu)} \right)^{(Y', \mu')} \ar[l]
		}
		\]
		of continuous fields of {\hhs}s and isometric continuous morphisms between them.

		\item \label{lem:continuum-product-field-augmentation-maps:measure-isomorphism} 
		For any compact Hausdorff space $Y'$ and any finite regular Borel measure $\mu'$ on $Y'$, if there is a measure space isomorphism between $(Y, \mu)$ and $(Y', \mu')$, then there is an isometric continuous isomorphism from $\aContField^{(Y, \mu)}$ to $\aContField^{(Y', \mu')}$ 
		that fits into the following commutative diagram
		\[
		\xymatrix{
			&\aContField \ar[rd]^{\iota} \ar[ld]_{\iota} &  \\
			\aContField^{(Y, \mu)} \ar[rr]^{\cong}  && \aContField^{(Y', \mu')} \ar[ll]
		}
		\]
		of continuous fields of {\hhs}s and isometric continuous morphisms between them. 
	\end{enumerate}
\end{lem}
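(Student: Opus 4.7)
The lemma is a compendium of eight functoriality properties of the construction $\aContField^{(Y,\mu)}=\tau_\mu^\ast(\aContField|_{\baseSpace{\aContField}\times Y})$ of \Cref{def:continuum-product-field-augmentation}. My plan is to reduce each part to a single application of a result already in \Cref{sec:continuum_product}, leaning on the fiberwise identification $(\aContField^{(Y,\mu)})_z\cong L^2(Y,\mu,\aContField_z)$ of \Cref{rmk:continuum-product-field-augmentation}.

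For part~(1), I define $\iota$ fiberwise by the constant-function inclusion, which is isometric because $\mu$ is finite. To check it is a morphism of fields I invoke \Cref{lem:cont-fields-HHS-map-generators} with the generating set of $\aContField^{(Y,\mu)}$ furnished by \Cref{def:continuum-product-field}, reducing the task to continuity of
\[
z\longmapsto\Bigl(\int_Y d_{\aContField_z}(s(z),s'(z,y))^2\,\mathrm{d}\mu(y)\Bigr)^{1/2}
\]
for $s\in\spaceOfSections_{\operatorname{cont}}(\aContField)$ and $s'\in\spaceOfSections_{\operatorname{cont}}(\aContField|_{\baseSpace{\aContField}\times Y})$; this goes through as in the proof of \Cref{lem:continuum-product-field}. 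For part~(2), $\varphi$ extends canonically to $\varphi\times\mathrm{id}\colon\aContField|_{\baseSpace{\aContField}\times Y}\to\anotContField|_{\baseSpace{\anotContField}\times Y}$, and the elementary identity $(\baseSpace{\varphi}\times\mathrm{id})_\ast(\delta_w\times\mu)=\delta_{\baseSpace{\varphi}(w)}\times\mu$ is exactly the compatibility required to apply \Cref{cor:continuum-product-field-isomorphism-easy} with $f=g=\tau_\mu$ and $h=\baseSpace{\varphi}$, yielding $\varphi^{(Y,\mu)}$ with the prescribed fiberwise formula. Pointwise density forces uniqueness in both (1) and (2). Part~(3) is then automatic from uniqueness in (2), which upgrades $\varphi\mapsto\varphi^{(Y,\mu)}$ to a strict functor; part~(4) is its composition with the underlying homomorphism $\alpha\colon G\to\operatorname{Isom}(\aContField)$, or equivalently \Cref{cor:continuum-product-field-actions} applied to $\alpha\times\mathrm{id}_Y$ on $\aContField|_{\baseSpace{\aContField}\times Y}$ and the $G$-equivariant map $\tau_\mu$.

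Parts~(5)--(8) I obtain by pure functoriality. For~(5), combining \Cref{rmk:cont-field-HHS-maps-functorial} with \Cref{rmk:continuum-product-field-compose} identifies both $f^\ast(\aContField^{(Y,\mu)})$ and $(f^\ast\aContField)^{(Y,\mu)}$ with $(\tau_\mu\circ f)^\ast(\aContField|_{\baseSpace{\aContField}\times Y})$, and the square with $\iota$ commutes by construction. Part~(7) is the $L^2$-Fubini identification: fiberwise one has the canonical isometric bijection $L^2(Y',\mu',L^2(Y,\mu,\aContField_z))\cong L^2(Y\times Y',\mu\times\mu',\aContField_z)$ (the Hilbert--Hadamard analogue of the classical theorem, handled by the CN-inequality computation already used for \Cref{lem:continuum-product-HHS}), which I globalize via \Cref{lem:continuum-product-field-maps}, whose co-continuity hypothesis is trivial on the generating sections pulled back from $\aContField|_{\baseSpace{\aContField}\times Y\times Y'}$. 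Part~(6) is obtained by combining (5) with (7): both fibers reduce, after Fubini, to $L^2(\baseSpace{\aContField}\times Y,f(z)\times\mu,\aContField|_{\baseSpace{\aContField}\times Y})$. For part~(8), the measure space isomorphism $\theta$ provides, for each $z$, an isometric measure-preserving isomorphism of the fibers of the associated Borel measurable field (\Cref{def:cont-field-HHS-meas}, \Cref{lem:meas-fields-isomorphism}); applying \Cref{lem:continuum-product-field-maps} and verifying its co-continuity hypothesis on the same distinguished generating sections—where $\theta$-pullback reduces to the identity—delivers the isomorphism, and $\iota$-compatibility holds because constants are sent to constants.

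The main obstacles lie in parts~(7) and~(8): the $L^2$-Fubini and $\theta$-pullback identifications are not \emph{a priori} continuous in the base-space parameter. In each case the crucial observation is that on the distinguished generating sections supplied by \Cref{def:continuum-product-field}, the identification acts trivially, so the required co-continuity reduces to continuity of an integral of a continuous function against a weakly continuous family of measures, exactly the estimate carried out in the proof of \Cref{lem:continuum-product-field}.
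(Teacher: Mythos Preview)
Your proposal is correct and follows essentially the same approach as the paper: parts~(1)--(4) are handled identically via \Cref{cor:continuum-product-field-isomorphism-easy} applied to $\varphi\times\mathrm{id}_Y$ and the pushforward identity for $\tau_\mu$, and parts~(5)--(8) are handled by identifying fibers (via Fubini where needed) and then checking that the distinguished generating sets of continuous sections coincide. The only stylistic difference is that the paper carries out the generating-set comparisons for (5)--(8) by explicitly writing out the chains of maps $\spaceOfSections_{\operatorname{cont}}(\aContField)\to\prod_z(\cdots)$ and verifying they land on the same element, whereas you package the same computation as an invocation of \Cref{lem:continuum-product-field-maps} or \Cref{rmk:continuum-product-field-compose}; two minor points to tighten are that in~(1) the embedding is isometric only when $\mu(Y)=1$ (finiteness alone gives a scaling by $\sqrt{\mu(Y)}$, as the paper itself notes), and in~(5) your one-line identification via the two remarks tacitly also uses $(f^*\aContField)|_{Z\times Y}\cong(f\times\mathrm{id}_Y)^*(\aContField|_{\baseSpace{\aContField}\times Y})$ together with the compatibility of the two $\tau_\mu$ maps.
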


\begin{proof}
	Let $\pi \colon \baseSpace{\aContField} \times Y \to \baseSpace{\aContField}$ be the canonical projection onto the first factor.

	For \Cref{lem:continuum-product-field-augmentation-maps:embedding}, it is easy to see that $\iota_z$ is an isometric embedding $\aContField \to \aContField^{(Y, \mu)}$ if the total measure $\mu(Y) = 1$. Note that under $\iota$, a continuous section $s$ in $\aContField$ goes to the section of $\aContField^{(Y, \mu)}$ given by taking $z \in \baseSpace{\aContField}$ to the constant function $s(z)$ in $L^2(Y, \mu, \aContField_z)$. It remains to show that this is a continuous section. 
	
	Recall that by definition $\aContField|_{\baseSpace{\aContField} \times Y}$, section $s \circ \pi$ are continuous for continuous sections $s$ of $\aContField$, and by definition of $\aContField^{(Y, \mu)}$, the image of these sections in $\prod_z L^2(Y, \mu, \aContField_z)$ are continuous. But these are exactly the sections that take $z$ to the constant function $s(z)$ in $L^2(Y, \mu, \aContField_z)$, so these are continuous, as desired.

	For \Cref{lem:continuum-product-field-augmentation-maps:morphism}, note that for $\varphi \colon \aContField \to \anotContField$, there is an isometric continuous morphism
	\[\varphi^Y:\aContField|_{\baseSpace{\aContField} \times Y} \to \anotContField|_{\baseSpace{\anotContField} \times Y}\]
	where $\baseSpace{\phi^Y}: \baseSpace{\anotContField} \times Y \to \baseSpace{\aContField} \times Y$ is given by $\baseSpace{\phi^Y}(z, y) =( \baseSpace{\phi}(z), y) \in \baseSpace{\aContField} \times Y$ for $(z, y) \in \baseSpace{\anotContField} \times Y$, and the fiber map 
	\[\phi^Y_{(z,y)}:\left(\aContField|_{\baseSpace{\aContField} \times Y}\right)|_{\baseSpace{\phi^Y}(z,y)} \to \left(\anotContField|_{\baseSpace{\anotContField} \times Y}\right)|_{(z,y)}\]
	is given by $\phi_z: \aContField_{\baseSpace{\phi}(z)} \to \anotContField_{z}$ under the identification of
	\[\left(\aContField|_{\baseSpace{\aContField} \times Y}\right)_{\baseSpace{\phi^Y}(z,y)} = \left(\aContField|_{\baseSpace{\aContField} \times Y}\right)_{(\phi(z),y)} = \aContField_{\baseSpace{\phi}(z)}\]
	and 
	\[\left(\anotContField|_{\baseSpace{\anotContField} \times Y}\right)_{(z,y)} =  \anotContField_{z}.\]
	
	Note that $\phi^Y$ fits into the following diagram commutes 
	\[\begin{tikzcd}
	\baseSpace{\aContField} & {M(\baseSpace{\aContField} \times Y)} \\
	\baseSpace{\anotContField} & {M(\baseSpace{\anotContField} \times Y)}
	\arrow["\baseSpace{\phi}", from=2-1, to=1-1]
	\arrow["\tau_\mu", from=1-1, to=1-2]
	\arrow["\tau_\mu"', from=2-1, to=2-2]
	\arrow["{M(\baseSpace{\varphi^Y})}"', from=2-2, to=1-2]
	\end{tikzcd}\]
	
	To see this, for $z \in \baseSpace{\anotContField}$, and $U_1 \times U_2 \subseteq \baseSpace{C} \times Y$, note that we have
	\[\tau_\mu(\baseSpace{\phi}(z)) = \delta_{\baseSpace{\phi}(z) \times \mu}(U_1 \times U_2) = \begin{cases} 0 & \baseSpace{\phi}(z) \not\in U_1 \\ \mu(U_2) & \baseSpace{\phi}(z) \in U_1\end{cases}\]
	and 
	\[
	M(\baseSpace{\anotContField} \times Y)( \tau_\mu(z))(U_1 \times U_2)  = 
	( \delta_z \times \mu) ((\baseSpace{\phi^Y})^{-1}(U_1 \times U_2)) =
	( \delta_z \times \mu) ((\baseSpace{\phi^Y})^{-1}(U_1) \times U_2)\]
	\[ = \begin{cases} 0 & \baseSpace{\phi}(z) \not\in U_1 \\ \mu(U_2) & \baseSpace{\phi}(z) \in U_1\end{cases}\]
	so the above diagram commutes.
	
	Now applying \Cref{cor:continuum-product-field-isomorphism-easy}, we see that for the induced isometric embedding
	
	\[(\phi^Y)_*: L^2(\baseSpace{\aContField} \times Y, \tau_\mu(\baseSpace{\phi}(z)), \aContField|_{\baseSpace{\aContField} \times Y}) \to L^2(\baseSpace{\anotContField} \times Y, \tau_\mu(\baseSpace{\phi}(z)), \anotContField|_{\baseSpace{\aContField} \times Y})\]
	the tuple $(\baseSpace{\phi}, (\phi^Y)_*)$ forms an isometric continuous morphism 
	\[\tau_\mu^*(\aContField|_{\baseSpace{\aContField} \times Y}) \to \tau_\mu^*(\anotContField|_{\baseSpace{\anotContField} \times Y})\]
	where for section $s$ of $\aContField|_{\baseSpace{\aContField} \times Y}$, and $z \in \anotContField$, 
	\[((\phi^Y)_*(s))((z,y)) = \phi_z(s(\baseSpace{\phi}, y)).\]
	Recall the identifications in \Cref{rmk:continuum-product-field-augmentation}
	\[L^2(\baseSpace{\aContField} \times Y, \tau_\mu(\baseSpace{\phi}(z)), \aContField|_{\baseSpace{\aContField} \times Y}) \simeq L^2(Y, \mu, \aContField_{\baseSpace{\phi}(z)})\]
	and 
	\[ L^2(\baseSpace{\anotContField} \times Y, \tau_\mu(\baseSpace{\phi}(z)), \anotContField|_{\baseSpace{\aContField} \times Y}) \simeq  L^2(Y, \mu, \anotContField_{z})\]
	which come from the measure preserving isomorphisms $Y \to \baseSpace{\aContField} \times Y$ and $Y \to \baseSpace{\anotContField} \times Y$ given by $y \mapsto \baseSpace{\phi}(z)(y)$ and $y \mapsto (z,y)$ respectively.
	
	Under these identifications, the above map corresponds to $\xi \mapsto \phi_z \circ \xi$, as desired.
	
	Because the maps $\phi^{(Y, \mu)}$ of \Cref{lem:continuum-product-field-augmentation-maps:morphism} were constructed as an application of \Cref{cor:continuum-product-field-isomorphism-easy}, \Cref{lem:continuum-product-field-augmentation-maps:isom} follows from the last statement of \Cref{cor:continuum-product-field-isomorphism-easy}. \Cref{lem:continuum-product-field-augmentation-maps:action} then follows from observing that the isometric action can be seen as a map $G \to \operatorname{Isom}(\aContField)$, and applying \Cref{lem:continuum-product-field-augmentation-maps:isom}.

	For \Cref{lem:continuum-product-field-augmentation-maps:induced}, it is easy to check that the base spaces and fibres of $f^* \left( \aContField^{(Y, \mu)} \right)$ and $ \left( f^* \aContField \right)^{(Y, \mu)}$ are the same, as both have base space $Z$ and fibre over $z$ being $L^2(Y, \mu, \aContField_{f(z)})$. So to check \Cref{lem:continuum-product-field-augmentation-maps:induced}, we just need to check that the continuous sections on the two continuous fields agree.

	The continuous sections of $\aContField|_{(\baseSpace{\aContField} \times Y)}$ are generated by the image of the map 
	\[\spaceOfSections_{\operatorname{cont}}(\aContField) \to \prod_{(z, y) \in \baseSpace{\aContField} \times Y} \aContField_z\]
	given by $s \mapsto (s(z))_{(z,y)}$.

	The continuous sections of $\aContField^{(Y,\mu)} = \tau_\mu^*(\aContField|_{\baseSpace{\aContField} \times Y})$ are generated by the image of 
	\[\spaceOfSections_{\operatorname{cont}}\left(\aContField|_{\baseSpace{\aContField} \times Y}\right) \to \prod_{z \in \baseSpace{\aContField}} L^2(\baseSpace{\aContField} \times Y, \delta_z \times \mu, \aContField|_{\baseSpace{\aContField} \times Y})\]
	given by $s \mapsto (s)_z$ where the $s$ on the right is viewed in $L^2(\baseSpace{\aContField} \times Y, \delta_z \times \mu, \aContField|_{\baseSpace{\aContField} \times Y})$.
	
	The continuous sections of $f^*(\aContField^{(Y,\mu)})$ are then the compositions of sections of $\aContField^{(Y,\mu)}$ with $f$.
	
	Thus, continuous sections of $f^*(\aContField^{(Y,\mu)})$ are therefore generated by the image of the map 
	\[ \spaceOfSections_{\operatorname{cont}}\left(\aContField\right) 
	\to \prod_{(z, y) \in \baseSpace{\aContField} \times Y} \aContField_z 
	\to  \prod_{z \in \baseSpace{\aContField}} L^2(\baseSpace{\aContField} \times Y, \delta_z \times \mu, \aContField|_{\baseSpace{\aContField} \times Y}) \]
	\[= \prod_{z \in \baseSpace{\aContField}}  L^2(Y, \mu, \aContField_{z})
	\to \prod_{z in Z} L^2(Y, \mu, \aContField_{f(z)})\]

	given by
	\[s \mapsto (s(z))_{(z,y)} \mapsto ([(z,y) \mapsto s(z)]  \in L^2(\baseSpace{\aContField} \times Y, \delta_{z_1} \times \mu, \aContField|_{\baseSpace{\aContField} \times Y}))_{z_1 \in \baseSpace{\aContField}}\]
	\[ = ([y \mapsto s(z)]  \in L^2(Y,  \mu, \aContField_{z}))_{z \in \baseSpace{\aContField}}
	\mapsto ([y \mapsto s(f(y_0))]  \in  L^2(Y,  \mu, \aContField_{f(z_0)}))_{z_0 \in Z}\]
	
	Now let us consider the space of sections of $(f^*\aContField)^{(Y, \mu)}$. By a similar argument as the above, this is generated by the image of the map
	\[\spaceOfSections_{\operatorname{cont}}\left(\aContField\right) 
	\to \prod_{z \in Z} (f^*\aContField)_z 
	\to \prod_{(z,y) \in Z \times Y} (f^*\aContField|_{Z \times Y})_{z,y} \]
	\[\to \prod_{z \in Z} \left((f^*\aContField)^{(Y, \mu)}\right)_z
	= \prod_{z \in Z} L^2(Z \times Y, \delta_z \times \mu, f^*\aContField|_{Z \times Y})
	= \prod_{z \in Z} L^2(Y, \mu, \aContField_{f(z)})\]
	given by 
	\[s \mapsto (s(f(z)))_{z \in Z} \mapsto \left(s(f(z))\right)_{(z,y) \in Z \times Y}\]
	\[ \mapsto ([(z,y) \mapsto s(f(z))] \in L^2(Z \times Y, \delta_{z_0} \times \mu, f^*\aContField|_{Z \times Y}))_{z_0 \in Z}\]
	\[ = ([y \mapsto s(f(z_0))] \in L^2(Y, \mu, \aContField_{f(z_0)}))_{z_0 \in Z}.\]
	Thus, since the spaces of sections of $f^* \left( \aContField^{(Y, \mu)} \right)$ and $ \left( f^* \aContField \right)^{(Y, \mu)}$ have the same generating set, they are the same spaces. The commutativity of the diagram is straightforward. This finishes the proof of \Cref{lem:continuum-product-field-augmentation-maps:induced}.

	The proof of \Cref{lem:continuum-product-field-augmentation-maps:variation}, proceeds similarly to that of \Cref{lem:continuum-product-field-augmentation-maps:induced}, as follows:

	The base spaces of both $(f^*\aContField)^{(Y, \mu)}$ and $f^*(\aContField^{(Y, \mu)})$ is $Z$.  To compare the fibers, note that for $\theta \in Z$, unwrapping the definitions, the fibre of $f^*(\aContField^{(Y, \mu)})$ is 
	\[L^2(\baseSpace{\aContField^{(Y, \mu)}}, f(\theta), \aContField^{(Y, \mu)}) = 
	L^2(\baseSpace{\aContField}, f(\theta), \aContField^{(Y, \mu)}),\]
	where the fibres of $\aContField^{(Y, \mu)}$ are given by $L^2(Y, \mu, \aContField_z)$ over $z \in \baseSpace{C}$.
	
	By Fubini's theorem (which can be applied in this case, even though $f(\theta)$ and $\mu$ may not be $\sigma$-finite, because in deciding whether functions are $L^2$, the integrands are non-negative), we have that this fibre is the same as
	\[L^2 (\baseSpace{\aContField} \times Y, f(z) \times \mu, \aContField|_{\baseSpace{\aContField} \times Y}).\]

	Similarly to the computation of \Cref{lem:continuum-product-field-augmentation-maps:induced}, the continuous sections are generated by the image of 
	\[\spaceOfSections_{\operatorname{cont}}(\aContField^{(Y, \mu)}) \to \prod_{\theta \in Z} L^2(\baseSpace{\aContField}, f(\theta),  \aContField^{(Y, \mu)}),\]
	which takes the continuous section $s$ to $s$ viewed in each $L^2(\baseSpace{\aContField}, f(\theta),  \aContField^{(Y, \mu)})$. 
	
	Unwrapping what the continuous sections of $ \aContField^{(Y, \mu)}$ are, we can see that the continuous sections of $f^*( \aContField^{(Y, \mu)})$ are generated by the image of the map 
	\[\spaceOfSections_{\operatorname{cont}}(\aContField) \to \spaceOfSections_{\operatorname{cont}}( \aContField^{(Y, \mu)}) \to \prod_{\theta \in Z} L^2(\baseSpace{\aContField}, f(\theta),  \aContField^{(Y, \mu)}) = \prod_{\theta \in Z} L^2 (\baseSpace{\aContField} \times Y, f(z) \times \mu, \aContField|_{\baseSpace{\aContField} \times Y})\]
	given by mapping $ s\in \spaceOfSections_{\operatorname{cont}}(\aContField) $ to 
	\[((z,y) \mapsto s(z))_{\theta \in Z},\]
	where $((z,y) \mapsto s(z)) \in L^2 (\baseSpace{\aContField} \times Y, f(z) \times \mu, \aContField|_{\baseSpace{\aContField} \times Y})$.

	Now we turn our attention to $(f^*\aContField)^{(Y, \mu)}$. The base space of this is also $Z$.
	
	Note that $f^*(\aContField)$ has fibres $L^2(\baseSpace{\aContField}, f(\theta), \aContField)$ for $\theta \in Z$, so the fibres of $(f^*\aContField)^{(Y, \mu)}$ are 
	\[L^2(Y, \mu, L^2(\baseSpace{\aContField}, f(\theta), \aContField)).\]

	We again may apply Fubini's theorem to see that this can be rewritten as 
	\[L^2 (\baseSpace{\aContField} \times Y, f(\theta) \times \mu, \aContField|_{\baseSpace{\aContField} \times Y}),\]
	so we see that fibres agree with those of $f^*(\aContField^{(Y, \mu)})$. 
	
	The continuous section of $f^*(\aContField)$ are generated by the image of the map 
	\[\spaceOfSections_{\operatorname{cont}}(\aContField) \to \prod_{\theta \in Z}L^2(\baseSpace{\aContField}, f(\theta), \aContField))\]
	given by taking $s$ to $(s)_z$

	The continuous sections of $f^*(\aContField)^{(Y, \mu)}$ are generated by the image of 
	\[\spaceOfSections_{\operatorname{cont}}(f^*(\aContField)) \to \prod_{\theta \in Z} L^2(Y, \mu, L^2(\baseSpace{\aContField}, f(\theta), \aContField))\]
	given by taking $s$ to $(y \mapsto s(\theta))$ where $s$ is a continuous section of $f^*(\aContField)$ so for $z \in Z$, $s(\theta) \in L^2(\baseSpace{\aContField}, f(\theta), \aContField))$. 
	
	Thus, overall the continuous sections are generated by the image of 
	\[\spaceOfSections_{\operatorname{cont}}(\aContField) \to \prod_{\theta \in Z} L^2(Y, \mu, L^2(\baseSpace{\aContField}, f(\theta), \aContField)) = L^2 (\baseSpace{\aContField} \times Y, f(\theta) \times \mu, \aContField|_{\baseSpace{\aContField} \times Y})\]
	given by taking the continuous section $s$ to 
	\[((z, y) \mapsto s(z)) \in L^2 (\baseSpace{\aContField} \times Y, f(\theta) \times \mu, \aContField|_{\baseSpace{\aContField} \times Y}).\]
	
	Thus the space of continuous sections agrees with that of $f^*(\aContField^{(Y, \mu)})$, as desired. Again the commutativity of the diagram is straightforward. This finishes the proof of \Cref{lem:continuum-product-field-augmentation-maps:variation}.

	For \Cref{lem:continuum-product-field-augmentation-maps:product}, by the same reasoning as in \Cref{rmk:continuum-product-field-augmentation}, we can see that the fibres of both $\left( \aContField^{(Y, \mu)} \right)^{(Y', \mu')}$ is
	\[L^2(Y', \mu', L^2(Y, \mu, \aContField_z))\]
	and the fibres of $\aContField^{(Y \times Y', \mu \times \mu')}$ are 
	\[L^2(Y \times Y', \mu \times \mu', \aContField_z)\]
	and it is easy to see that theses are isomorphic.

	Also note that in the notation of \Cref{rmk:continuum-product-field-augmentation}, the continuous sections of $ \aContField^{(Y, \mu)} $ are generated by the image of the map
	\[\spaceOfSections_{\operatorname{cont}}(\aContField)  \to \spaceOfSections_{\operatorname{cont}}(\aContField|_{\baseSpace{C} \times Y}) \to \prod_{z \in \baseSpace{\aContField}} L^2(Y, \mu, \aContField_z)\]
	that takes $s$ to $(s(z))_{z \in \baseSpace{\aContField}}$ where $s(z)$ denotes the constant function in $L^2(Y, \mu, \aContField_z)$. 
	
	Similarly, the continuous sections of $\left( \aContField^{(Y, \mu)} \right)^{(Y', \mu')}$ are generated by the image of the map
	\[\spaceOfSections_{\operatorname{cont}}(\aContField) \to \prod_{z \in \baseSpace{\aContField}} L^2(Y', \mu', L^2(Y, \mu, \aContField_z))\]
	that takes $s$ to the constant map $y' \mapsto [y \mapsto s(z)]$ in the fibre over $z \in \baseSpace{\aContField}$. 
	
	It is easy to see that the generating sets of the continuous sections are the same, so the two continuous fields are the same.
	
	The commutation of the diagram follows from the fact that both going directly downwards from $\aContField$ and going to the right and then downwards, the map is the identity on the base spaces $\baseSpace{\aContField}$ and the fibre maps are the inclusions of constant functions, taking $x \in \aContField_z$ to the corresponding constant function $[(y', y) \mapsto x]$ in the case of directly mapping downwards and $[y' \mapsto [y \mapsto x]]$ in the case of mapping to the right and then down. These clearly correspond to each other.
	
	For \Cref{lem:continuum-product-field-augmentation-maps:measure-isomorphism}, if there is a measure space isomorphism between $(Y, \mu)$ and $(Y', \mu')$, then, using the view of  \Cref{rmk:continuum-product-field-augmentation}, the fibres of $\aContField^{(Y,\mu)}$ and $\aContField^{(Y', \mu')}$ are the same, as
	\[L^2(Y, \mu, \aContField_z) \simeq L^2(Y', \mu', \aContField_z)\]
	by maps in both directions being pre-composition with the isomorphism between $(Y, \mu)$ and $(Y', \mu')$. 
	Moreover, the continuous sections of $\aContField^{(Y,\mu)}$ are generated by the sections in
	\[\prod_{z \in \baseSpace{\aContField}} L^2(Y, \mu, \aContField_z)\]
	given by $([y \mapsto s(z)])_{z \in \baseSpace{\aContField}}$, and the continuous sections of $\aContField^{(Y',\mu')}$ are generated by the sections in
	\[\prod_{z \in \baseSpace{\aContField}} L^2(Y', \mu', \aContField_z)\]
	given by $([y' \mapsto s(z)])_{z \in \baseSpace{\aContField}}$.
	
	The isomorphisms $L^2(Y, \mu, \aContField_z) \simeq L^2(Y', \mu', \aContField_z)$ clearly takes these constant functions to each other. 
	
	This completes the proof of the lemma. 
\end{proof}

\Cref{rmk:continuum-product-field-augmentation} suggests that when $(Y, \mu)$ is a standard probability space, $\spaceOfSections_{\operatorname{cont}} \left(\aContField^{(Y, \mu)}\right)$ may be seen as consisting of ``$L^2$-integrable'' random variables in $\spaceOfSections_{\operatorname{cont}} \left(\aContField\right)$. 
This inspires the following definition. 

\begin{defn} \label{def:randomization}
	Let $\aContField$ be a second-countable continuous field of {\hhs}s. Let $(\Omega, \mu)$ be a standard probability space. The \emph{randomization} of $\aContField$ is $\aContField^{(\Omega, \mu)}$ as in \Cref{def:continuum-product-field-augmentation}. 
	
	We say a continuous field $\aContField$ of {\hhs}s is \emph{randomization-stable} if it is second-countable and isometrically continuously isomorphic to $\aContField^{(\Omega, \mu)}$. 
\end{defn}

\begin{rmk} \label{rmk:randomization-stable}
	Since all standard probability spaces are isomorphic as measure spaces, it follows from \Cref{lem:continuum-product-field-augmentation-maps}\eqref{lem:continuum-product-field-augmentation-maps:measure-isomorphism}
	that the notion of randomization-stability does not depend on the choice of $(\Omega, \mu)$. 
	For example, we may pick $(\Omega, \mu) := ([0,1], m)$, where $m$ is the Lebesgue measure.  
	
	To simplify notations, we write $\aContField^{[0,1]}$ in place of $\aContField^{([0,1], m)}$ when there is no risk of confusion. 
	
	Moreover, since $(\Omega, \mu)$ and $(\Omega \times \Omega, \mu \times \mu)$ are isomorphic as measure spaces, it follows from \Cref{lem:continuum-product-field-augmentation-maps}\eqref{lem:continuum-product-field-augmentation-maps:product} and~\eqref{lem:continuum-product-field-augmentation-maps:measure-isomorphism} that for any continuous field $\aContField$ of {\hhs}s, its randomization $\aContField^{(\Omega, \mu)}$ is randomization-stable, as 
	\[
	\left( \aContField^{(\Omega, \mu)} \right)^{(\Omega, \mu)} \cong \aContField^{(\Omega \times \Omega, \mu \times \mu)} 
	\cong \aContField^{(\Omega, \mu)} \; .
	\]
	This also provides further justification to the terminology ``randomization-stable''. 
\end{rmk}

The next result (\Cref{prop:continuum-product-field-stitch-morphisms}) is the source of various deformation and trivialization techniques in \Cref{sec:trivialization}. It allow us to stitch several isometric continuous morphisms into one in a very flexible manner.  

To formulate it, let us make the following definition. 

\begin{defn}\label{def:measurable-partition}
	Let $(Y, \mu)$ be a finite regular Borel measure space and let $n$ be a positive integer. Then a \emph{measurable $n$-partition} of $(Y, \mu)$ is a measurable map $P \colon Y \to \{0, 1, \ldots, n-1\}$, where we identify two such maps if they agree $\mu$-almost everywhere. The collection of all measurable $n$-partitions of $(Y, \mu)$ is denoted by $\mathcal{P}_n(Y, \mu)$. 
	
	Let $\mathcal{B}(Y, \mu)$ be the measure algebra of all measurable subsets of $(Y, \mu)$ modulo null sets, equipped with the metric 
	\[
	(Y_1, Y_2) \mapsto \mu \left( Y_1 \Delta Y_2 \right) = \left\| \chi_{Y_1} - \chi_{Y_2} \right\|_1 \; .
	\]
	Note that $\mathcal{P}_n(Y, \mu)$ embeds into $\mathcal{B}(Y, \mu)^n$ via $P \mapsto \left( P^{-1}(k) \right)_{k \in \{0, 1, \ldots, n-1\}}$. 	
	We equip $\mathcal{P}_n(Y, \mu)$ with the subspace topology inherited from $\mathcal{B}(Y, \mu)^n$ (and thus also from $L^1(Y,\mu)^n$). 
	
	Let $\mathcal{P}_\omega (Y,\mu)$ be the union $\bigcup_{n=1}^{\infty} \mathcal{P}_n (Y,\mu)$, where $\mathcal{P}_n(Y,\mu)$ embeds into $\mathcal{P}_{n+1}(Y,\mu)$ via the inclusion $\{0, 1, \ldots, n-1\} \subseteq \{0, 1, \ldots, n\}$. We equip $\mathcal{P}_\omega (Y, \mu)$ with the weak topology, that is, a set in $\mathcal{P}_\omega (Y,\mu)$ is open if and only if its intersection with every $\mathcal{P}_n(Y,\mu)$ is open in $\mathcal{P}_n(Y,\mu)$. 
	
	For any positive integer $n$ and any natural number $k$, we write 
	\[
	\mathcal{P}_n^{(k)} (Y,\mu) := \left\{ P \in \mathcal{P}_n (Y,\mu) \colon \mu \left( P^{-1} (k) \right) > 0 \right\} \; ,
	\]
	which is an open subset of $\mathcal{P}_n (Y,\mu)$ (and empty if $k \geq n$). Define 
	\[
	\mathcal{P}_\omega^{(k)} (Y,\mu) := \bigcup_{n=1}^{\infty} \mathcal{P}_n^{(k)} (Y,\mu) \subseteq \mathcal{P}_\omega (Y,\mu) \; ,
	\]
	which is open in $\mathcal{P}_\omega (Y,\mu)$ since clearly $\mathcal{P}_\omega^{(k)} (Y,\mu) \cap \mathcal{P}_n (Y,\mu) = \mathcal{P}_n^{(k)} (Y,\mu)$ for any positive integer $n$. 
	Observe that as long as $\mu$ is nonzero, $\left( \mathcal{P}_\omega^{(k)} (Y,\mu) \right)_{k \in \mathbb{N}}$ forms an open cover of $\mathcal{P}_\omega (Y,\mu)$. 
\end{defn}

With these notions at our disposal, we can formulate our stitching result (\Cref{prop:continuum-product-field-stitch-morphisms}). Since the statement of this result is a bit lengthy, let us start with an informal discussion to put things into perspective. We will freely use notations from \Cref{prop:continuum-product-field-stitch-morphisms}. To simplify our exposition, let us assume that $\aContField = \anotContField$, $h$ is the identity map on the base space $\baseSpace{\aContField}$, and $(Y,\mu)$ is the unit interval $[0,1]$ with the Lebesgue (probability) measure $m$. The idea is that we have two (or, in general, at most countably many) partially defined isometric continuous morphisms $\varphi^{(k)}$, $k=0,1$, from $\aContField$ to itself (or, more generally, $\anotContField$) that are compatible with $\idmap_{\baseSpace{\aContField}}$ (or, more generally, $h$), that is, each $\varphi^{(k)}$ is only defined on a certain open subset $U^{(k)}$ of $\baseSpace{\aContField}$, such that these two open subsets cover $\baseSpace{\aContField}$. 
To combine these two isometric continuous morphisms at every $z \in \baseSpace{\aContField}$, we take a partition of unity $(f^{(k)})_{k = 0,1}$ subordinate to said open cover. Now the continuum power construction with regard to $(Y,\mu)$ allows us to ``mix $\varphi^{(0)}_z$ and $\varphi^{(1)}_z$ at the ratio $f^{(0)}(z)$-to-$f^{(1)}(z)$''. 
More precisely, since the fiber $\aContField^{(Y,\mu)}_z$ is canonically identified with $L^2([0,1], m, \aContField_z)$, which can be decomposed as the Cartesian product $L^2([0,f^{(0)}(z)], m, \aContField_z) \times L^2([f^{(0)}(z),1], m, \aContField_z)$ (equipped with the $\ell^2$-metric), we can apply $\varphi^{(0)}_z$ (or more precisely, $(\varphi^{(0)}_z)^{[0,f^{(0)}(z)]}$) to the former Cartesian factor and $\varphi^{(1)}_z$ to the latter. Since the ``ratio'' varies continuously with regard to $z \in \baseSpace{\aContField}$, it can be shown that the ``mixing'' yields an isometric continuous morphism on $\aContField$. 
The intuitive idea can be compared to a wipe transition in filmmaking (where $\baseSpace{\aContField}$ corresponds to the time coordinate, and $[0,1]$ corresponds to the width of the screen). 

In the more general setting, the role of the partition of unity is played by a continuous map $\Xi \colon \baseSpace{\anotContField} \to \mathcal{P}_\omega(Y, \mu)$. We also remark that in the actual statement below, in order to reduce the number of independent quantifiers, we take the open cover to be the ``tightest'' one determined by $\Xi$, written as $\left( U_\Xi^{(k)} \right)_{k \in \mathbb{N}}$. 

\begin{prop}\label{prop:continuum-product-field-stitch-morphisms}
	Let $\aContField$ and $\anotContField$ be second-countable continuous fields of {\hhs}s. 
	Let $Y$ and $\mu$ be as in \Cref{def:continuum-product-field-augmentation} with $\mu$ nonzero. 
	Let $\Xi \colon \baseSpace{\anotContField} \to \mathcal{P}_\omega(Y, \mu)$ 
	and $h \colon \baseSpace{\anotContField} \to \baseSpace{\aContField}$ be continuous maps. 
	For each $k \in \{0, 1,2,\ldots\}$, 
	write $U_\Xi^{(k)}$ for the open subset of $\baseSpace{\anotContField}$ defined as $\Xi^{-1} \left( \mathcal{P}_\omega^{(k)} (Y,\mu) \right)$, 
	and let $\CFHHS_{h|_{U_\Xi^{(k)}}}(\aContField|_{h \left( U_\Xi^{(k)} \right)} , \anotContField|_{U_\Xi^{(k)}})$ be as in \Cref{defn:cont-fields-morphism}\eqref{defn:cont-fields-morphism:category-fixed-base}.  
	Then the following hold: 
	\begin{enumerate}
		\item \label{prop:continuum-product-field-stitch-morphisms:basic}
		There is a map 
		\[
		\Sigma_{h, \Xi} \colon 
		\prod_{k=0}^\infty	\CFHHS_{h|_{U_\Xi^{(k)}}} \left(\aContField|_{h \left( U_\Xi^{(k)} \right)} , \anotContField|_{U_\Xi^{(k)}} \right) 
		\to \CFHHS_{h} \left(\aContField ^{(Y,\mu)}, \anotContField ^{(Y,\mu)} \right) 
		\]
		that takes a tuple $\left( \varphi^{(k)} \colon \aContField|_{h \left( U_\Xi^{(k)} \right)} \to \anotContField|_{U_\Xi^{(k)}} \right)_{k \in \mathbb{N}}$ with $\baseSpace{\varphi^{(k)}} = h|_{U_\Xi^{(k)}}$ to 
		an isometric continuous morphism $\psi \colon \aContField \to \anotContField$ with $\baseSpace{\psi} = h$ such that 
		for any $z \in \baseSpace{\anotContField}$, the isometric embedding $\psi_z \colon \left( \aContField^{(Y, \mu)} \right)_{h(z)} \to \left( \anotContField^{(Y, \mu)} \right)_z$ satisfies: if we identify $\left( \aContField^{(Y, \mu)} \right)_{h(z)}$ with $L^2(Y, \mu, \aContField_{h(z)}) $ and $\left( \anotContField^{(Y, \mu)} \right)_z$ with $L^2(Y, \mu, \anotContField_z) $ as in \Cref{def:continuum-product-field-augmentation}, then for any $\xi \in L^2(Y, \mu, \aContField_{h(z)}) $, we have 
		\[
		\hspace{1.5cm} \psi_z (\xi) (y) = \varphi^{( \Xi (z) (y) )}_z \left( \xi(y) \right) \quad \text{ for $\mu$-almost every } y \in Y \; ,
		\]
		where we note that for $\mu$-almost every $y \in Y$, we have $z \in U_{\Xi (z) (y)}$ and thus $\varphi^{( \Xi (z) (y) )}_z$ is well-defined.

		\item \label{prop:continuum-product-field-stitch-morphisms:functorial}
		The above construction is functorial in the sense that for any continuous field $\anotContField'$ of {\hhs}s and any continuous map $h' \colon \baseSpace{\anotContField'} \to \baseSpace{\anotContField}$, 
		observing that $h' \left( U_{\Xi \circ h'}^{(k)} \right) = U_{\Xi}^{(k)}$, 
		we have a commutative diagram 
		\[
		\xymatrix{
			\overunderset{\displaystyle \left( \prod_{k=0}^\infty	\CFHHS_{h|_{U_\Xi^{(k)}}} \left(\aContField|_{h \left( U_\Xi^{(k)} \right)} , \anotContField|_{U_\Xi^{(k)}} \right) \right)}{\displaystyle \left( \prod_{k=0}^\infty	\CFHHS_{h'|_{U_{\Xi \circ h'}^{(k)}}} \left(\anotContField|_{U_\Xi^{(k)}} , \anotContField'|_{U_{\Xi \circ h'}^{(k)}} \right) \right)}{\times}
			\ar[rr]^-{\Sigma_{h, \Xi} \times \Sigma_{h', {\Xi \circ h'}}} \ar[d] &&  \overunderset{\displaystyle \CFHHS_{h} \left(\aContField ^{(Y,\mu)}, \anotContField ^{(Y,\mu)} \right)}{\displaystyle \CFHHS_{h'}\left(\anotContField ^{(Y,\mu)} , {\anotContField' }^{(Y,\mu)}\right)}{\times} \ar[d]  \\
			\displaystyle \prod_{k=0}^\infty	\CFHHS_{(h \circ h')|_{U_{\Xi \circ h'}^{(k)}}} \left(\aContField|_{h \left( U_{\Xi}^{(k)} \right)} , \anotContField|_{U_{\Xi \circ h'}^{(k)}} \right) 
			\ar[rr]^-{\Sigma_{h \circ h', \Xi \circ h'}} && \CFHHS_{h \circ h'} \left(\aContField ^{(Y,\mu)}, {\anotContField'} ^{(Y,\mu)} \right)
		}
		\]
		where the vertical maps are given by compositions (for each $k \in \mathbb{N}$ on the left). 
		
		\item \label{prop:continuum-product-field-stitch-morphisms:isomorphism}
		If, in addition, $h$ is homeomorphic and each $\varphi^{(k)}$ in \eqref{prop:continuum-product-field-stitch-morphisms:basic} is an isometric continuous isomorphism, then $\psi$ in \eqref{prop:continuum-product-field-stitch-morphisms:basic} is also an isometric continuous isomorphism. 
	\end{enumerate}
\end{prop}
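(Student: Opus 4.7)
\textbf{Proof plan for Proposition~\ref{prop:continuum-product-field-stitch-morphisms}.} All three parts hinge on a single explicit construction. For part~(1) the prescription $\psi_z(\xi)(y) = \varphi^{(\Xi(z)(y))}_z(\xi(y))$ is dictated by the statement, under the identifications $\aContField^{(Y,\mu)}_{h(z)} \cong L^2(Y,\mu,\aContField_{h(z)})$ and $\anotContField^{(Y,\mu)}_z \cong L^2(Y,\mu,\anotContField_z)$ of \Cref{rmk:continuum-product-field-augmentation}. Well-definedness is automatic: since $\Xi(z) \in \mathcal{P}_\omega(Y,\mu)$, for $\mu$-a.e.\ $y$ the value $k = \Xi(z)(y)$ satisfies $\mu(\Xi(z)^{-1}(k)) > 0$, so $z \in U_\Xi^{(k)}$ and $\varphi^{(k)}_z$ is available. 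That each $\psi_z$ is an isometric embedding follows by decomposing the $L^2$ integral over the level sets of $\Xi(z)$ and invoking the isometry of each $\varphi^{(k)}_z$; one obtains $\int_Y d_{\anotContField_z}(\psi_z(\xi)(y),\psi_z(\eta)(y))^2\,d\mu(y) = \int_Y d_{\aContField_{h(z)}}(\xi(y),\eta(y))^2\,d\mu(y)$.

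The bulk of the argument is that $(h,(\psi_z)_z)$ assembles into an isometric \emph{continuous} morphism. Invoking \Cref{lem:cont-fields-HHS-map-generators} against the generating sets of $\aContField^{(Y,\mu)}$ and $\anotContField^{(Y,\mu)}$ coming respectively from $\spaceOfSections_{\operatorname{cont}}(\aContField)$ and $\spaceOfSections_{\operatorname{cont}}(\anotContField)$ (see \Cref{def:continuum-product-field-augmentation} and \Cref{lem:continuum-product-field-augmentation-maps}\eqref{lem:continuum-product-field-augmentation-maps:embedding}), this reduces to showing that for each $s \in \spaceOfSections_{\operatorname{cont}}(\aContField)$ and $s_0 \in \spaceOfSections_{\operatorname{cont}}(\anotContField)$, the function
\[
F(z) \;=\; \sum_{k=0}^{\infty} \mu \! \left( \Xi(z)^{-1}(k) \right) \, d_{\anotContField_z} \! \left( \varphi^{(k)}_z(s(h(z))) , s_0(z) \right)^{2}
\]
is continuous on $\baseSpace{\anotContField}$, with the convention that a term for which $z \notin U_\Xi^{(k)}$ is zero (the coefficient already vanishes there). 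Continuity of $\Xi$ into the inductive-limit space $\mathcal{P}_\omega(Y,\mu)$ yields two things: around any $z_0$, $\Xi$ factors through a fixed $\mathcal{P}_n(Y,\mu)$ on a neighborhood, so the sum is locally finite; and each coefficient $z \mapsto \mu(\Xi(z)^{-1}(k))$ is continuous via the embedding $\mathcal{P}_n \hookrightarrow \mathcal{B}(Y,\mu)^n$. On each open set $U_\Xi^{(k)}$ the distance factor is continuous, because $\varphi^{(k)}$ is itself a morphism.

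\emph{The main obstacle} is to control the boundary behavior at $z_0 \in \partial U_\Xi^{(k)}$: the factor $d_{\anotContField_z}(\varphi^{(k)}_z(s(h(z))), s_0(z))^2$ need not remain bounded as $z \to z_0$ from within $U_\Xi^{(k)}$, but is multiplied by a coefficient tending to $0$. My plan here is to approximate $s$ near $h(z_0)$ by a section adapted to the remaining (finitely many) morphisms $\varphi^{(j)}$ indexed by $j$ with $z_0 \in U_\Xi^{(j)}$, and exploit the Pythagorean decomposition of $\|\psi_z(s(h(z))) - s_0(z)\|^2_{L^2}$ together with the short-map estimate of \Cref{rmk:CAT0-facts}\eqref{rmk:CAT0-facts-Lipschitz} to obtain a uniform bound on each product $\mu(\Xi(z)^{-1}(k)) \, d_{\anotContField_z}(\varphi^{(k)}_z(s(h(z))), s_0(z))^2$ by a quantity going to zero at $z_0$. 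Because at least one index $j$ has $z_0$ in its open domain and thus provides a continuous bound nearby, the isometric comparison allows the dangerous terms at indices $k \notin \{j : z_0 \in U_\Xi^{(j)}\}$ to be dominated.

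For part~(2), functoriality is a fiberwise check: evaluating both $\Sigma_{h \circ h',\Xi \circ h'}\bigl((\psi^{(k)} \circ \varphi^{(k)})_k\bigr)$ and $\Sigma_{h',\Xi \circ h'}\bigl((\psi^{(k)})_k\bigr) \circ \Sigma_{h,\Xi}\bigl((\varphi^{(k)})_k\bigr)$ at $z' \in \baseSpace{\anotContField'}$ and $\xi$ sends $y \in Y$ to $\psi^{(k(y))}_{z'} \circ \varphi^{(k(y))}_{h'(z')}(\xi(y))$ with $k(y) = \Xi(h'(z'))(y)$, hence both agree by the defining formula of $\Sigma$. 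For part~(3), apply (1) to the tuple of inverses $\bigl((\varphi^{(k)})^{-1}\bigr)_k$ together with the homeomorphism $h^{-1}$ and the continuous map $\Xi \circ h^{-1} \colon \baseSpace{\aContField} \to \mathcal{P}_\omega(Y,\mu)$, noting that $U_{\Xi \circ h^{-1}}^{(k)} = h(U_\Xi^{(k)})$; call the result $\widetilde{\psi}$. The functoriality from (2) applied to the two composites $\widetilde{\psi} \circ \psi$ and $\psi \circ \widetilde{\psi}$ identifies them with the stitched morphisms of the respective identity tuples, which by the fiber formula are the identities on $\aContField^{(Y,\mu)}$ and $\anotContField^{(Y,\mu)}$. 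Hence $\psi$ is an isometric continuous isomorphism by \Cref{lem:cont-fields-isomorphism}.
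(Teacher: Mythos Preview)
Your overall architecture matches the paper's: both reduce part~(1) to a co-continuity check for the stitched morphism against generating sections, both note that parts~(2) and~(3) are essentially formal once (1) is in hand (the paper handles (3) via \Cref{cor:continuum-product-field-isomorphism} rather than via functoriality, but your route is fine). You are also right to single out the boundary behavior at $z_0\in\partial U_\Xi^{(k)}$ as the crux.

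However, your proposed fix for that crux cannot work, and in fact no fix exists at the stated level of generality. Consider $\aContField=\anotContField=\mathbb{R}_{[0,1]}$, $(Y,\mu)=([0,1],m)$, $h=\operatorname{id}$, and $\Xi(z)$ the $2$-partition sending $[0,z]$ to $0$ and $(z,1]$ to $1$; then $U_\Xi^{(0)}=(0,1]$, $U_\Xi^{(1)}=[0,1)$. Take $\varphi^{(1)}=\operatorname{id}$ and $\varphi^{(0)}_z(x)=x+1/z$; the latter is a legitimate element of $\CFHHS_{\operatorname{id}}\big(\mathbb{R}_{(0,1]},\mathbb{R}_{(0,1]}\big)$ since $z\mapsto s(z)+1/z$ is continuous on $(0,1]$. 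For the constant section $\iota(0)$ one computes $d_{L^2}\big(\psi_z(\iota(0)),\iota(0)\big)^2 = z\cdot(1/z)^2=1/z$ for $z>0$, while the distance is $0$ at $z=0$. So $\psi$ fails to be a continuous morphism. The point is that nothing in the hypotheses relates $\varphi^{(k)}_z$ to $\varphi^{(j)}_z$ for $k\neq j$, so no amount of ``approximating $s$ by a section adapted to the remaining morphisms'' or invoking the CAT(0) short-map estimate can bound the bad term $\mu(\Xi(z)^{-1}(k))\,d_{\anotContField_z}(\varphi^{(k)}_z(s(h(z))),s_0(z))^2$: the second factor can blow up faster than the first vanishes.

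The paper's proof glosses over exactly this point: after showing each $k$-summand is continuous on $U_\Xi^{(k)}$ it concludes global co-continuity without addressing the boundary. In the paper's two applications (\Cref{lem:deformation} and \Cref{prop:quasitrivial-locally-trivial}) the issue does not arise, because there each $\varphi^{(k)}$ is the restriction of a morphism defined on an open set strictly containing $\overline{U_\Xi^{(k)}}$ (globally in \Cref{lem:deformation}; on $V_k\supseteq\operatorname{supp}(f_k)\supseteq\overline{U_\Xi^{(k)}}$ in \Cref{prop:quasitrivial-locally-trivial}), so the distance factors are locally bounded near $\partial U_\Xi^{(k)}$ and the product with the vanishing coefficient does go to zero. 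If you want a correct standalone statement, add that hypothesis; then your continuity argument (local finiteness plus continuity of each term, with the boundary contribution controlled by the local bound times the vanishing measure) goes through cleanly.
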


\begin{proof}
	To prove \eqref{prop:continuum-product-field-stitch-morphisms:basic}, we adopt the notations in \Cref{def:continuum-product-field-augmentation} and apply \Cref{lem:continuum-product-field-maps} with the diagram in its proof replaced by the following one: 
	\[
	\adjustbox{scale=0.75,center}{
		\begin{tikzcd}[column sep=tiny,scale=1.7em]
		&& {s \in \spaceOfSections_{\operatorname{cont}} \left( \aContField|_{\baseSpace{\aContField} \times Y} \right)} & {\spaceOfSections_{\operatorname{cont}} \left( \aContField^{(Y,\mu)} \right)} & {\spaceOfSections_{\operatorname{cont}} \left( h^* \left( \aContField^{(Y,\mu)} \right) \right)} \\
		{\baseSpace{\aContField}} & {M \left( \baseSpace{\aContField} \times Y \right)} & {\aContField|_{\baseSpace{\aContField} \times Y}} & {\left( \left( \aContField|_{\baseSpace{\aContField} \times Y} \right)_{\operatorname{meas} }, \tau_\mu (h(z)) \right)} & {\displaystyle \prod_{z \in \baseSpace{\anotContField}} L^2 \left( \baseSpace{\aContField} \times Y , \tau_\mu (h(z)) , \aContField|_{\baseSpace{\aContField} \times Y} \right)} \\
		&&& {} & {} \\
		{\baseSpace{\anotContField}} & {M \left( \baseSpace{\anotContField} \times Y \right)} & {\anotContField|_{\baseSpace{\anotContField} \times Y}} & {\left( \left( \anotContField|_{\baseSpace{\anotContField} \times Y} \right)_{\operatorname{meas} }, \tau_\mu (z) \right)} & {\displaystyle \prod_{z \in \baseSpace{\anotContField}} L^2 \left( \baseSpace{\anotContField} \times Y , \tau_\mu (z) , \anotContField|_{\baseSpace{\anotContField} \times Y} \right)} \\
		&& {s \in \spaceOfSections_{\operatorname{cont}} \left( \anotContField|_{\baseSpace{\anotContField} \times Y} \right)} & {\spaceOfSections_{\operatorname{cont}} \left( \anotContField^{(Y,\mu)} \right)}
		\arrow["h", from=4-1, to=2-1]
		\arrow[squiggly, from=2-3, to=2-2]
		\arrow[squiggly, from=4-3, to=4-2]
		\arrow["{\tau_\mu}", from=2-1, to=2-2]
		\arrow["{\tau_\mu}"', from=4-1, to=4-2]
		\arrow[squiggly, from=2-3, to=1-3]
		\arrow[squiggly, from=4-3, to=5-3]
		\arrow["{\psi^z}"', from=2-4, to=4-4]
		\arrow["{\tau_\mu^*}", from=1-3, to=1-4]
		\arrow["{\tau_\mu^*}"', from=5-3, to=5-4]
		\arrow[squiggly, from=3-4, to=3-5]
		\arrow["{\displaystyle \prod_{z \in \baseSpace{\anotContField}} \psi^z_*}", from=2-5, to=4-5]
		\arrow[hook, from=5-4, to=4-5]
		\arrow["{h^*}", from=1-4, to=1-5]
		\arrow[hook, from=1-5, to=2-5]
		\arrow[squiggly, from=2-3, to=2-4]
		\arrow[squiggly, from=4-3, to=4-4]
		\end{tikzcd}
	}	
	\]
	where for any $z \in \baseSpace{\anotContField}$, $\psi^z$ is the isometric measure-preserving morphism in the sense of \Cref{defn:meas-fields-morphism} such that 
	\begin{itemize}
		\item $\operatorname{dom} \left( \baseSpace{\psi^z} \right)$ consists of all $(z' , y) \in \baseSpace{\anotContField} \times Y$ satisfying $z' = z$ and $z \in U_{\Xi (z) (y)}$, which is co-null with regard to $\tau_\mu(z) = \delta_z \times \mu$ since 
		\begin{align*}
		&\quad \left( \delta_z \times \mu \right) \left( \left(\baseSpace{\anotContField} \times Y \right) \setminus  \operatorname{dom} \left( \baseSpace{\psi^z} \right) \right)\\
		&= \mu \left(  \left\{ y \in Y \colon z \notin U_{\Xi (z) (y)}  \right\}\right) \\
		&\leq \sum_{k = 0}^{\infty} \mu \left(  \left\{ y \in Y \colon \Xi (z) (y) = k \text{ and } \mu \left( \left( \Xi (z) \right)^{-1} \left( k \right) \right) = 0   \right\}\right) \\
		&= 0
		\end{align*}
		by our assumption on $U^{(k)}$ for $k \in \{0,1,2,\ldots\}$. 
		
		\item $\baseSpace{\psi^z} = \left(h \times \operatorname{id}_Y\right)_{\operatorname{dom} \left( \baseSpace{\psi^z} \right)} \colon \operatorname{dom} \left( \baseSpace{\psi^z} \right) \to \baseSpace{\aContField} \times Y$, which thus satisfies 
		\[
		\left( \baseSpace{\psi^z} \right)_* \tau_\mu (z) = \left( h \times \operatorname{id}_Y \right)_* \left( \delta_z \times \mu \right) =  \delta_{h(z)} \times \mu = \tau_\mu (h(z)) \; , \quad \text{ and }
		\]
		
		\item for any $(z , y) \in \operatorname{dom} \left( \baseSpace{\psi^z} \right)$, we define the isometric embedding 
		\begin{align*}
		\left(\psi^z\right)_{(z,y)} \colon \left( \aContField |_{\baseSpace{\aContField} \times Y} \right)_{\baseSpace{\psi^z}(z,y)} = \aContField_{h(z)} &\to \anotContField_{z}  = \left( \anotContField |_{\baseSpace{\anotContField} \times Y} \right)_{(z,y)} \\
		x & \mapsto \varphi^{( \Xi (z) (y) )}_{z} (x) \; .
		\end{align*}
	\end{itemize}
	To check well-definedness,
	we follow \Cref{defn:meas-fields-morphism}, 
	and verify that for any $s \in \spaceOfSections_{\operatorname{meas}} \left( \left( \aContField|_{\baseSpace{\aContField} \times Y} \right)_{\operatorname{meas}} \right)$, 
	the section $\left(\left( \psi^z\right)_{(z,y)} \left( s \left( \underline{\psi^z}(z,y) \right) \right) \right)_{(z,y) \in \operatorname{dom}\left(\baseSpace{\varphi}\right)}$ agrees, $(\delta_z \times \mu)$-almost everywhere, with a section $s_1$ in $\spaceOfSections_{\operatorname{meas}} \left( \left( \anotContField|_{\baseSpace{\anotContField} \times Y} \right)_{\operatorname{meas}} \right)$. 
	Indeed, by our construction, we have  
	\[
	\left( \psi^z\right)_{(z,y)} \left( s \left( \underline{\psi^z}(z,y) \right) \right)  = \varphi^{( \Xi (z) (y) )}_{z} \left( s(h(z), y) \right) \quad \text{ for any }  (z,y) \in \operatorname{dom}\left(\baseSpace{\varphi}\right) \; .
	\]
	Hence if we choose an arbitrary $s_0 \in  \spaceOfSections_{\operatorname{meas}} \left( \left( \anotContField|_{\baseSpace{\anotContField} \times Y} \right)_{\operatorname{meas}} \right)$ and define $s_1 \in \prod_{(z, y) \in \baseSpace{\anotContField} \times Y} \anotContField_{z}$ by 
	\[
	s_1 (z,y) = 
	\begin{cases}
	\varphi^{( k )}_{z} \left( s(h(z), y) \right) \, , & \text{ if } (z,y) \in \operatorname{dom}\left(\baseSpace{\varphi}\right) \text{ and }  \Xi (z) (y) = k \\
	s_0 (z,y) \, , & \text{ otherwise}
	\end{cases}
	\; ,
	\]
	then $s_1$ agrees, $(\delta_z \times \mu)$-almost everywhere, with $s$, and 
	since $s_1$ is measurably piecewise defined via measurable sections, it is easily seen to be co-measurable with $\spaceOfSections_{\operatorname{meas}} \left( \left( \anotContField|_{\baseSpace{\anotContField} \times Y} \right)_{\operatorname{meas}} \right)$ and thus we have $s_1 \in  \spaceOfSections_{\operatorname{meas}} \left( \left( \anotContField|_{\baseSpace{\anotContField} \times Y} \right)_{\operatorname{meas}} \right)$ by \Cref{def:meas-field-HHS}\eqref{def:meas-field-HHS::complete}.

	In order to complete the application of \Cref{lem:continuum-product-field-maps}, it remains to verify that $s \in \spaceOfSections_{\operatorname{cont}} \left(  \aContField|_{\baseSpace{\aContField} \times Y} \right)$, 
	the section $\left( \psi^z_* \left( s  \right) \right)_{z \in \baseSpace{\anotContField}} \allowbreak \in \prod_{z \in \baseSpace{\anotContField}} L^2 \left( \baseSpace{\anotContField} \times Y , \tau_\mu (z) , \anotContField|_{\baseSpace{\anotContField} \times Y} \right)$ 
	is co-continuous with any $s' \allowbreak \in \spaceOfSections_{\operatorname{cont}} \left(  \anotContField|_{\baseSpace{\anotContField} \times Y} \right) \allowbreak \hookrightarrow \prod_{z \in \baseSpace{\anotContField}} L^2 \left( \baseSpace{\anotContField} \times Y , \tau_\mu (z) , \anotContField|_{\baseSpace{\anotContField} \times Y} \right)$. 
	Indeed, for any $z \in \baseSpace{\anotContField}$, it follows from \Cref{def:continuum-product}, the equation $\tau_\mu = \delta_z \times \mu$, our construction of $\psi^z$, the assumption on $U_\Xi^{(k)}$, and the equation $\baseSpace{\varphi^{(k)}} = h|_{U_\Xi^{(k)}}$ that 
	\begin{align*}
	& \ d_{L^2 \left( \baseSpace{\anotContField} \times Y , \tau_\mu (z) , \anotContField|_{\baseSpace{\anotContField} \times Y} \right)} \left(  \psi^z_* \left( s  \right)  , s' \right) \\
	&= \int_{(z', y) \in \baseSpace{\anotContField} \times Y}  d_{\anotContField_{z'}} \left(  \psi^z_* \left( s  \right) (z',y)  , s' (z',y) \right)   \, \mathrm{d} \tau_\mu(z) (z',y) \\
	&= \int_{y \in Y}  d_{\anotContField_{z}} \left(  \psi^z_* \left( s  \right) (z,y)  , s' (z,y) \right)   \, \mathrm{d} \mu (y)	\\
	&= \int_{y \in Y}  d_{\anotContField_{z}} \left( \left( \psi^z \right)_{(z,y)} \left( s \left( \baseSpace{\psi^z} (z,y) \right) \right)  , s' (z,y) \right)   \, \mathrm{d} \mu (y)	\\
	&= \int_{y \in Y}  d_{\anotContField_{z}} \left( \varphi^{( \Xi (z) (y) )}_{z} \left( s(h(z), y) \right)  , s' (z,y) \right)   \, \mathrm{d} \mu (y)	\\
	&= \sum_{k \in \mathbb{N} \colon \mu \left( \left( \Xi (z) \right)^{-1} (k) \right) > 0 } \int_{y \in \left( \Xi (z) \right)^{-1} (k) } d_{\anotContField_{z}} \left( \varphi^{( k )}_{z} \left( s(h(z), y) \right)  , s' (z,y) \right)   \, \mathrm{d} \mu (y)	\\
	&= \sum_{k \in \mathbb{N} \colon \mu \left( \left( \Xi (z) \right)^{-1} (k) \right) > 0 } \int_{y \in \left( \Xi (z) \right)^{-1} (k) }  d_{\anotContField_{z}} \left( \varphi^{( k )} \left( s(-, y)|_{h \left( U_\Xi^{(k)} \right)} \right) (z) , s' (z,y) \right)   \, \mathrm{d} \mu (y) \; ,
	\end{align*}
	where $s(-, y)|_{h \left( U_\Xi^{(k)} \right)} \in \spaceOfSections_{\operatorname{cont}} \left( \aContField|_{h \left( U_\Xi^{(k)} \right)} \right)$ is the image of $s$ under the isometric continuous morphism induced by the continuous map $h \left( U_\Xi^{(k)} \right) \mapsto \baseSpace{\aContField} \times Y$, $z \mapsto (z,y)$ in the sense of \Cref{def:cont-field-HHS-maps}. 
	For any $k \in \mathbb{N}$, 
	it follows from
	\Cref{lem:isometric-action-topologize}\eqref{lem:isometric-action-topologize:sections}
	that the maps $Y \to \spaceOfSections_{\operatorname{cont}} \left( \anotContField|_{ U_\Xi^{(k)} } \right)$, $y \mapsto s'(-, y)|_{U_\Xi^{(k)}}$ and $Y \to \spaceOfSections_{\operatorname{cont}} \left( \aContField|_{h \left( U_\Xi^{(k)} \right)} \right)$, $y \mapsto s(-, y)|_{h \left( U_\Xi^{(k)} \right)}$ (and thus also the map $Y \to \spaceOfSections_{\operatorname{cont}} \left( \anotContField|_{ U_\Xi^{(k)} } \right)$, $y \mapsto \varphi^{( k )} \left( s(-, y)|_{h \left( U_\Xi^{(k)} \right)} \right)$) are continuous, whence the map 
	\[
	Y \to C(U_\Xi^{(k)}, [0,\infty)) \, , \quad y \mapsto \left( z \mapsto d_{\anotContField_{z}} \left( \varphi^{( k )} \left( s(-, y)|_{h \left( U_\Xi^{(k)} \right)} \right) (z) , s' (z,y) \right)  \right)
	\]
	is continuous, where $C(U_\Xi^{(k)}, [0,\infty))$ denotes the collection of all continuous functions from $U_\Xi^{(k)}$ to $[0,\infty)$ and is equipped with the compact-open topology. This last statement is equivalent to saying that the map 
	\[
	U_\Xi^{(k)} \to C(Y , [0,\infty)) \, , \quad z \mapsto  \left( y \mapsto d_{\anotContField_{z}} \left( \varphi^{( k )} \left( s(-, y)|_{h \left( U_\Xi^{(k)} \right)} \right) (z) , s' (z,y) \right)  \right)
	\]
	is continuous. 
	Since $Y$ is compact, the compact-open topology on $C(Y , [0,\infty))$ agrees with the topology of the uniform norm. 
	Since $\Xi$ is continuous, it follows from \Cref{def:measurable-partition} that the map 
	\[
	U_\Xi^{(k)} \to L^1(Y,\mu) \, , \quad z \mapsto \left( y \mapsto \chi_{\left( \Xi (z) \right)^{-1} (k) } (y)    \right)
	\]
	is continuous. 
	Combining the above, we see that the product
	\[
	U_\Xi^{(k)} \to L^1(Y,\mu) \, , \quad z \mapsto \left( y \mapsto \chi_{\left( \Xi (z) \right)^{-1} (k) } (y) \cdot d_{\anotContField_{z}} \left( \varphi^{( k )} \left( s(-, y)|_{h \left( U_\Xi^{(k)} \right)} \right) (z) , s' (z,y) \right)   \right) 
	\]
	is also continuous, 
	whence so is the map
	\[
	U_\Xi^{(k)} \to [0,\infty) \, , \quad z \mapsto \int_{y \in \left( \Xi (z) \right)^{-1} (k) }  d_{\anotContField_{z}} \left( \varphi^{( k )} \left( s(-, y)|_{h \left( U_\Xi^{(k)} \right)} \right) (z) , s' (z,y) \right)   \, \mathrm{d} \mu (y) \; .
	\]
	Combining this with the long formula above, we see that the sections $\left( \psi^z_* \left( s  \right) \right)_{z \in \baseSpace{\anotContField}} $ and $s'$ are indeed co-continuous. 
	
	Hence the existence of the desired isometric continuous morphism $\psi \colon \aContField \to \anotContField$ follows from \Cref{lem:continuum-product-field-maps}.

	The proof of \eqref{prop:continuum-product-field-stitch-morphisms:functorial} is a direct computation using \eqref{prop:continuum-product-field-stitch-morphisms:basic}. 
	
	Finally, to prove \eqref{prop:continuum-product-field-stitch-morphisms:isomorphism}, we simply observes that the additional assumptions allow us to apply \Cref{cor:continuum-product-field-isomorphism}.
\end{proof}



\section{Deformations and trivializations}
\label{sec:trivialization}

In this section, we 
investigate a few deformation techniques that allow us to trivialize continuous fields of {\hhs}s and actions thereupon. 
More precisely, starting from an arbitrary group action $\alpha$ on a certain \emph{locally trivial} continuous field $\aContField$ of {\hhs}s, we would like to achieve two ``trivializations'':
\begin{enumerate}
	\item We would like to strengthen the local triviality of $\aContField$ to triviality. 
	\item Then, we would like to find a homotopy connecting the action $\alpha$ and a ``fiberwise trivial'' action, i.e., one that merely shuffles the base space of $\aContField$ and does not move in the fiberwise direction (this makes sense when $\aContField$ is trivial). 
\end{enumerate}
Evidently, one cannot hope to accomplish even step~(1) without making suitable assumptions, due to the possible presence of global topological obstructions of fiber bundles. 
However, it turns out that after passing to randomizations (as in \Cref{def:randomization}), both goals can be achieved. 

We start by discussing a deformation technique regarding homotopy of actions, which extends \cite[Proposition~3.18]{GongWuYu2021}.

\begin{defn} \label{def:homotopy-actions}
	A \emph{homotopy} $\left( \alpha_t \right)_{t \in [0,1]}$ of isometric actions by a topological group $G$ on $\aContField$ is a family of isometric actions indexed by $[0,1]$ such that the map $G \times [0,1] \to \operatorname{Isom}(\aContField)$ given by $(g , t) \mapsto \alpha_{t, g}$ is continuous, where $\operatorname{Isom}(\aContField)$ is topologized as in \Cref{defn:isometric-action-topologize}.

	In this case, we say $\left( \alpha_t \right)_{t \in [0,1]}$ is a homotopy between $\alpha_0$ and $\alpha_1$. 
\end{defn}

Although the above definition is conceptually simple, the following equivalent characterization using the construction in \Cref{def:cont-field-HHS-maps} is all we need in practice, so the reader may as well take it as the definition to circumvent the details of the topology on $\operatorname{Isom}(\aContField)$ introduced in \Cref{defn:isometric-action-topologize}. 

\begin{prop}\label{lem:homotopy-actions}
	Let $G$ be a topological group and let $\aContField$ be a continuous field of {\hhs}s. 
	Then a family $\left( \alpha_{t,g} \right)_{t \in [0,1], g \in G}$ of isometric continuous morphisms from $\aContField$ to itself forms a homotopy $\left( \alpha_t \right)_{t \in [0,1]}$ of isometric actions if and only if it induces a continuous homomorphism $\widetilde{\alpha} \colon G \to \operatorname{Isom} \left( \aContField |_{\baseSpace{\aContField} \times [0,1]} \right)$ 
	such that 
	for any $g \in G$, any $t \in [0,1]$ and any $z \in \baseSpace{\aContField}$, we have
	\[
	\baseSpace{\widetilde{\alpha}_{g}} (z , t)  = \left( \baseSpace{\alpha_{t, g}} (z) , t \right) \quad \text{ and } \quad \left( \widetilde{\alpha}_g \right)_{(z,t)} = \left( \alpha_{t, g} \right)_{z} 
	\colon \aContField_{\baseSpace{\alpha_{t, g}} (z)} \to \aContField_z
	\; .
	\]
\end{prop}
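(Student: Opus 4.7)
The plan is to deduce the proposition as a direct application of Lemma \ref{lem:isometric-action-topological-group-curry} taken with $Y = [0,1]$, modulo an innocuous swap of factors in the base space. The two continuous fields $\aContField|_{\baseSpace{\aContField} \times [0,1]}$ and $\aContField|_{[0,1] \times \baseSpace{\aContField}}$ are canonically isometrically isomorphic via the swap homeomorphism of base spaces (cf.\,Remark \ref{rmk:cont-field-HHS-maps-functorial}), so $\widetilde\alpha$ may be specified in either format with the obvious matching of formulas.

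First I would unwrap Definition \ref{def:homotopy-actions}: a family $(\alpha_{t,g})_{t\in[0,1],\,g\in G}$ of isometric continuous morphisms forms a homotopy of isometric actions precisely when the single map $\alpha \colon [0,1] \times G \to \CFHHS(\aContField,\aContField)$, $(t,g) \mapsto \alpha_{t,g}$, is continuous, takes values in $\operatorname{Isom}(\aContField)$, and satisfies, for every fixed $t$, that $g \mapsto \alpha_{t,g}$ is a group homomorphism. Note that continuity into $\operatorname{Isom}(\aContField)$ is the same as continuity into $\CFHHS(\aContField,\aContField)$, since the former is topologized as a subspace of the latter in Definition \ref{defn:isometric-action-topologize}. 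These conditions are exactly the hypotheses of item \ref{lem:isometric-action-topological-group-curry:isometries-curry:pointwise} of Lemma \ref{lem:isometric-action-topological-group-curry} with $Y = [0,1]$.

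Next I would apply the equivalence \ref{lem:isometric-action-topological-group-curry:isometries-curry:pointwise} $\Leftrightarrow$ \ref{lem:isometric-action-topological-group-curry:isometries-curry:global} of that lemma to produce, from such a homotopy, a continuous group homomorphism $\widetilde\alpha \colon G \to \operatorname{Isom}\bigl(\aContField|_{[0,1] \times \baseSpace{\aContField}}\bigr)$ with $\baseSpace{\widetilde\alpha_g}(t,z) = (t, \baseSpace{\alpha_{t,g}}(z))$ and $(\widetilde\alpha_g)_{(t,z)} = (\alpha_{t,g})_z$, and conversely to recover the family $(\alpha_{t,g})$ from any such $\widetilde\alpha$. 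Composing with the swap identification rewrites these formulas in the order $(z,t)$ demanded by the statement, yielding the ``if and only if'' as stated.

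No substantial obstacle arises: all the analytic content is already packaged in Lemmas \ref{lem:isometric-action-topologize}--\ref{lem:isometric-action-topological-group-curry}, where the compatibility of compact-open topologies with currying was verified. The present proposition is essentially a naming convention for the $Y = [0,1]$ case, with the mild bookkeeping of swapping the two factors of the base space being the only thing to watch.
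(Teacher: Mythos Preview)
Your proposal is correct and takes essentially the same approach as the paper, which simply states that the proposition is a direct consequence of Lemma~\ref{lem:isometric-action-topological-group-curry}. You have merely spelled out the (harmless) bookkeeping involved in swapping the factors of the base space and matching Definition~\ref{def:homotopy-actions} with condition~\ref{lem:isometric-action-topological-group-curry:isometries-curry:pointwise} of that lemma.
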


\begin{proof}
	This is a direct consequence of \Cref{lem:isometric-action-topological-group-curry}. 
\end{proof}

\begin{rmk}\label{rmk:homotopy-actions}
	Although we do not need this fact, we point out that for a discrete group $\Gamma$, we may use \Cref{lem:isometric-action-topologize} to derive a more concrete characterization: a family $\left( \alpha_t \right)_{t \in [0,1]}$ of isometric actions forms a homotopy if and only if 
	it is \emph{pointwise continuous} in the sense that for any $\gamma \in \Gamma$ and any continuous section $s \in \spaceOfSections_{\operatorname{cont}} (\aContField)$, 
	the map $\baseSpace{\aContField} \times [0,1] \to \baseSpace{\aContField}$, $(z,t) \mapsto \baseSpace{\alpha_{t, \gamma}} (z)$, is continuous, and
	we have 
	\[
	\left( \alpha_{t, \gamma}(s) \right)_{t \in [0,1]} \in \spaceOfSections_{\operatorname{cont}} ( \aContField |_{\baseSpace{\aContField} \times [0,1]})
	\]
	inside $\prod_{(z, t) \in \baseSpace{\aContField} \times [0,1]} \aContField_z$. 
\end{rmk}

\begin{lem}\label{lem:deformation}
	Let $\aContField$ be a second-countable 
	continuous field of {\hhs}s. 
	Let $\alpha_0$ and $\alpha_1$ be two isometric actions of a group $G$ on $\aContField$ such that $\baseSpace{\alpha_0} = \baseSpace{\alpha_1}$, i.e., they induce the same action on the base space $\baseSpace{\aContField}$. 
	Consider the induced actions $\beta_0 := \alpha_0^{(\Omega, \mu)}$ and $\beta_1 := \alpha_1^{(\Omega, \mu)}$ of $\Gamma$ on the randomization $\aContField^{(\Omega, \mu)}$ as given in \Cref{lem:continuum-product-field-augmentation-maps}\eqref{lem:continuum-product-field-augmentation-maps:action}. 
	Then there is a homotopy $\left( \beta_t \right)_{t \in [0,1]}$ of isometric actions by $G$ on $\aContField^{(\Omega, \mu)}$ 
	such that $\baseSpace{\beta_0} = \baseSpace{\beta_t}$ for any $t \in [0,1]$. 
\end{lem}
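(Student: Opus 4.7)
The plan is to use the $L^2$-continuum power structure to mix the two actions in continuously varying proportions. By \Cref{rmk:randomization-stable} we may assume $(\Omega,\mu) = ([0,1], m)$ with $m$ Lebesgue measure. Let $\aContField' := \aContField|_{\baseSpace{\aContField} \times [0,1]}$, and extend each $\alpha_i$ ($i \in \{0,1\}$) trivially along $[0,1]$ to an isometric action $\alpha_i'$ on $\aContField'$, so that $\baseSpace{\alpha_{i,g}'}(z,t) = (\baseSpace{\alpha_{i,g}}(z), t)$ and $(\alpha_{i,g}')_{(z,t)} = (\alpha_{i,g})_z$. The hypothesis $\baseSpace{\alpha_0} = \baseSpace{\alpha_1}$ ensures $\baseSpace{\alpha_{0,g}'} = \baseSpace{\alpha_{1,g}'} =: h_g$. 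Via \Cref{lem:continuum-product-field-augmentation-maps}\eqref{lem:continuum-product-field-augmentation-maps:induced} we identify $(\aContField')^{([0,1],m)}$ with $\aContField^{([0,1],m)}|_{\baseSpace{\aContField} \times [0,1]}$, so that by \Cref{lem:homotopy-actions} the sought-after homotopy is equivalent to producing a continuous homomorphism $\widetilde{\beta}\colon G \to \operatorname{Isom}((\aContField')^{([0,1],m)})$ that restricts appropriately at $t = 0$ and $t = 1$.

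Define $\eta\colon [0,1] \to \mathcal{P}_2([0,1], m)$ by $\eta(t) = \chi_{[1-t,1]}$; this is $1$-Lipschitz into the measure algebra and satisfies $m(\eta(t)^{-1}(1)) = t$. Setting $\Xi(z,t) := \eta(t)$ yields a continuous map $\Xi\colon \baseSpace{\aContField} \times [0,1] \to \mathcal{P}_\omega([0,1], m)$ with $U_\Xi^{(0)} = \baseSpace{\aContField} \times [0,1)$ and $U_\Xi^{(1)} = \baseSpace{\aContField} \times (0,1]$ covering $\baseSpace{\aContField} \times [0,1]$ (and $U_\Xi^{(k)} = \varnothing$ for $k \geq 2$). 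Applying \Cref{prop:continuum-product-field-stitch-morphisms}\eqref{prop:continuum-product-field-stitch-morphisms:basic} and \eqref{prop:continuum-product-field-stitch-morphisms:isomorphism} to the restrictions $\alpha_{0,g}'|_{U_\Xi^{(0)}}$ and $\alpha_{1,g}'|_{U_\Xi^{(1)}}$ produces, for each $g \in G$, an isometric continuous automorphism $\widetilde{\beta}_g$ of $(\aContField')^{([0,1],m)}$ with $\baseSpace{\widetilde{\beta}_g} = h_g$ and the fiberwise description
\[
(\widetilde{\beta}_g)_{(z,t)}(\xi)(y) = (\alpha_{\eta(t)(y),g})_z(\xi(y)) \qquad \text{for $m$-a.e.\ } y \in [0,1]
\]
under the canonical identification $(\aContField')^{([0,1],m)}_{(z,t)} \cong L^2([0,1], m, \aContField_z)$.

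The homomorphism property $\widetilde{\beta}_{g} \circ \widetilde{\beta}_{g'} = \widetilde{\beta}_{gg'}$ is a direct fiberwise computation from the formula above, using that $\baseSpace{\alpha_{0,g}} = \baseSpace{\alpha_{1,g}}$ (so the branch index $\eta(t)(y)$ is preserved through the composition) and the identity $(\alpha_{i,g})_z \circ (\alpha_{i,g'})_{\baseSpace{\alpha_{i,g}}(z)} = (\alpha_{i,gg'})_z$; it may alternatively be read off from the functoriality statement \Cref{prop:continuum-product-field-stitch-morphisms}\eqref{prop:continuum-product-field-stitch-morphisms:functorial}. For continuity of $g \mapsto \widetilde{\beta}_g$, \Cref{rmk:isometric-action-topologize-generating-set} combined with \Cref{lem:continuum-product-field-augmentation-maps}\eqref{lem:continuum-product-field-augmentation-maps:embedding} reduces the verification to sections of the form $\iota(s_0)$ for $s_0 \in \spaceOfSections_{\operatorname{cont}}(\aContField')$. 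For any continuous section $s_0'$ of $\aContField'$, the integrand in the $L^2$-distance splits along $\eta(t)^{-1}(0)$ and $\eta(t)^{-1}(1)$, yielding
\[
d^2\bigl(\widetilde{\beta}_g(\iota(s_0))(z,t), \iota(s_0')(z,t)\bigr) = (1-t)\, d_{\aContField_z}^2\bigl(\alpha_{0,g}(s_0)(z,t), s_0'(z,t)\bigr) + t\, d_{\aContField_z}^2\bigl(\alpha_{1,g}(s_0)(z,t), s_0'(z,t)\bigr),
\]
which depends continuously on $(g,z,t)$ because $\alpha_0$ and $\alpha_1$ are themselves continuous actions.

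Finally, \Cref{lem:homotopy-actions} converts $\widetilde{\beta}$ into a homotopy $(\beta_t)_{t \in [0,1]}$ of isometric actions on $\aContField^{([0,1],m)}$ whose induced base-space action $\baseSpace{\beta_t}$ is independent of $t$. Since $m(\eta(0)^{-1}(1)) = 0$ and $m(\eta(1)^{-1}(0)) = 0$, the fiberwise formula collapses at the endpoints to give $\beta_{0,g} = \alpha_{0,g}^{([0,1],m)}$ and $\beta_{1,g} = \alpha_{1,g}^{([0,1],m)}$, as required. I expect the main obstacle to be the continuity verification in the previous paragraph: although conceptually natural, it requires a careful interplay between the topology on $\operatorname{Isom}$ of \Cref{defn:isometric-action-topologize}, the continuity of the partition map $\eta$ into the measure algebra, and the given continuity of the actions $\alpha_0, \alpha_1$; the above splitting of the $L^2$-integral into two pieces with continuously varying weights $1-t$ and $t$ is what makes the argument work.
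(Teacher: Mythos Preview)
Your overall construction is exactly the paper's: take $(\Omega,\mu)=([0,1],m)$, extend each $\alpha_i$ trivially over $[0,1]$ to an action on $\aContField'=\aContField|_{\baseSpace{\aContField}\times[0,1]}$, define the partition map $\Xi(z,t)=\eta(t)=\chi_{[1-t,1]}$, and apply \Cref{prop:continuum-product-field-stitch-morphisms} to stitch $\alpha_0',\alpha_1'$ into an action on $(\aContField')^{([0,1],m)}\cong\aContField^{([0,1],m)}|_{\baseSpace{\aContField}\times[0,1]}$. Your treatment of the homomorphism property and of the endpoint identifications $\beta_i=\alpha_i^{([0,1],m)}$ is correct.

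There is, however, a genuine gap in your continuity argument. You invoke \Cref{rmk:isometric-action-topologize-generating-set} to reduce to sections of the form $\iota(s_0)$ with $s_0\in\spaceOfSections_{\operatorname{cont}}(\aContField')$, but that remark requires $\Sigma$ to be a \emph{generating set}, which in particular (see \Cref{def:cont-field-HHS-generate} and \Cref{lem:cont-field-HHS-generate}) entails pointwise density. The image of $\iota$ consists of sections that are constant in the $y$-variable; at each $(z,t)$ their values in $(\aContField')^{([0,1],m)}_{(z,t)}\cong L^2([0,1],m,\aContField_z)$ are exactly the constant functions, an isometric copy of $\aContField_z$ that is far from dense in $L^2([0,1],m,\aContField_z)$. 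Thus $\iota(\spaceOfSections_{\operatorname{cont}}(\aContField'))$ is not a generating set and your clean $(1-t)/t$ splitting formula does not by itself establish continuity. The correct generating set is $\tau_m^*\bigl(\spaceOfSections_{\operatorname{cont}}(\aContField'|_{\baseSpace{\aContField'}\times[0,1]})\bigr)$, whose members depend nontrivially on $y$; for these the integrand is no longer constant on each piece, which is precisely why the paper's proof resorts to an approximation argument (locally approximating such sections by ones coming from $\spaceOfSections_{\operatorname{cont}}(\aContField)$ and then using compactness). A direct repair of your approach is also possible: for $s=\tau_m^*(\sigma)$ one can bound $d\bigl(\widetilde{\beta}_g(s)(z,t),\widetilde{\beta}_{g_0}(s)(z,t)\bigr)$ by $\max_{i\in\{0,1\}}\sup_{y\in[0,1]} d_{\aContField_z}\bigl(\alpha''_{i,g}(\sigma)(z,t,y),\alpha''_{i,g_0}(\sigma)(z,t,y)\bigr)$, where $\alpha''_i$ is the further extension of $\alpha'_i$ over the extra $[0,1]$ factor, and then invoke continuity of $\alpha''_i$; but this requires carrying the $y$-dependence through rather than eliminating it.
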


\begin{proof}
	By \Cref{rmk:randomization-stable}, it suffices to take $(\Omega, \mu)$ to be $([0,1],m)$. 
	Note that $\baseSpace{\beta_0} = \baseSpace{\alpha_0} = \baseSpace{\alpha_1} = \baseSpace{\beta_1}$. 
	In view of \Cref{lem:homotopy-actions}, it suffices to construct 
	a continuous homomorphism $\beta \colon G \to \operatorname{Isom} \left( \aContField^{[0,1]} |_{\baseSpace{\aContField} \times [0,1]} \right)$ 
	such that for any $g \in G$, any $t \in [0,1]$ and any $z \in \baseSpace{\aContField}$, we have $\baseSpace{\beta}_g(z,t) = \left( \baseSpace{\beta_{0,g}}(z), t \right)$ and $(\beta_g)_{z,k} = \left( \beta_{k, g} \right)_{z}$ for $k = 0, 1$. 
	Since $\baseSpace{\beta}$ has been determined this way, in the terminologies of \Cref{defn:cont-fields-morphism} and \Cref{def:cont-field-HHS-maps}, it remains to construct isometric bijections $(\beta_g)_{z,t} \colon \aContField^{[0,1]}_{\beta_{0,g}(z)} \to \aContField^{[0,1]}_z$ for each $g \in G$, each $z \in \baseSpace{\aContField}$ and each $t \in [0,1]$. 
	
	To this end, we first view $\alpha_0$ and $\alpha_1$ as constant homotopies, i.e., they correspond to maps $G \times [0,1] \to \operatorname{Isom} (\aContField)$ given by $(g,t) \mapsto \alpha_{0,g}$ and $(g,t) \mapsto \alpha_{1,g}$, respectively. By \Cref{lem:homotopy-actions}, they induce actions $\widetilde{\alpha}_0, \widetilde{\alpha}_1 \colon G \to \operatorname{Isom} \left( \aContField |_{\baseSpace{\aContField} \times [0,1]} \right)$, respectively.  
	Note that for any $t \in [0,1]$ and any $z \in \baseSpace{\aContField}$, we have $\baseSpace{\widetilde{\alpha}_{0,g}}(z,t) = \left( \baseSpace{\alpha_{0,g}}(z), t \right) = \baseSpace{\beta}_g(z,t) = \left( \baseSpace{\alpha_{1,g}}(z), t \right) = \baseSpace{\widetilde{\alpha}_{1,g}}(z,t)$, that is, $\baseSpace{\widetilde{\alpha}_{0}} = \baseSpace{\beta} = \baseSpace{\widetilde{\alpha}_{1}}$.

	With notations as in \Cref{def:measurable-partition}, consider the continuous map $\Xi \colon \baseSpace{\aContField} \times [0,1] \to \mathcal{P}_2([0,1], m) \subset \mathcal{P}_\omega([0,1], m)$ taking $(z,t)$ to the measurable $2$-partition that labels $[0,1-t]$ by $0$ and $(1-t,1]$ by $1$. 
	In the terminology of \Cref{prop:continuum-product-field-stitch-morphisms}, for each $g \in G$, we have, with regard to $\baseSpace{{\beta}_{g}}$, the equations $U_{\Xi}^{(0)} = \baseSpace{\aContField} \times [0,1)$ and $U_{\Xi}^{(1)} = \baseSpace{\aContField} \times (0,1]$, both sets being dense in $\baseSpace{\aContField} \times [0,1]$ and invariant under $\baseSpace{{\beta}_{g}}$, while $U_{\Xi}^{(k)} = \varnothing$ for $k \geq 2$. 
	Hence, for each $g \in G$, we may apply \Cref{prop:continuum-product-field-stitch-morphisms}\eqref{prop:continuum-product-field-stitch-morphisms:basic} to $\Xi$ and $\baseSpace{\beta}_g$ to obtain a map 
	\[
	{\Sigma}_{\baseSpace{{\beta}_{g}}, \Xi} \colon 
	\prod_{k=0}^1	\mathrm{Isom}_{\baseSpace{{\beta}_{g}|_{U_{\Xi}^{(k)}}}} \left( \aContField |_{\baseSpace{\aContField} \times [0,1]}  \right) 
	\to \mathrm{Isom}_{\baseSpace{{\beta}_{g}}} \left( \aContField^{[0,1]} |_{\baseSpace{\aContField} \times [0,1]} \right)  \; .
	\]
	Since for any $z \in \baseSpace{\aContField}$ and any $k \in \{0,1\}$, $\Xi (z,k) $ is the trivial partition that maps the entire interval $[0,1]$ to $k$ (up to a null set), we have 
	\[
	{\Sigma}_{\baseSpace{{\beta}_{g}}, \Xi} \left( \widetilde{\alpha}_{0,g} , \widetilde{\alpha}_{1,g}  \right) = \alpha_{k,g}^{[0,1]} = \beta_{k,g} \colon \aContField^{[0,1]}_{\baseSpace{\beta_{k,g}}(z)} \to \aContField^{[0,1]}_z \; .
	\]
	Now for any $g \in G$, we define 
	\[
	{\beta}_{g} = {\Sigma}_{\baseSpace{{\beta}_{g}}, \Xi} \left( \widetilde{\alpha}_{0,g} , \widetilde{\alpha}_{1,g} \right)_{z,k} \in \mathrm{Isom}_{\baseSpace{{\beta}_{g}}} \left( \aContField^{[0,1]} |_{\baseSpace{\aContField} \times [0,1]} \right) \; .
	\]
	It follows from the functorial properties in \Cref{prop:continuum-product-field-stitch-morphisms}\eqref{prop:continuum-product-field-stitch-morphisms:functorial} that the assignment $g \to \beta_g$ defines an action $\beta \colon G \to \mathrm{Isom} \left( \aContField^{[0,1]} |_{\baseSpace{\aContField} \times [0,1]} \right) $ satisfying the prescribed conditions.

	It remains to prove $\beta$ is continuous.   
	To this end, 
	we observe that since 
	for any $g \in G$, any $t \in [0,1]$ and any $z \in \baseSpace{\aContField}$, we have $\baseSpace{\beta}_g(z,t) = \left( \baseSpace{\beta_{0,g}}(z), t \right)$, which implies that, as a map from $G$ to the group 
	of homeomorphisms on $\baseSpace{\aContField} \times [0,1]$, $\baseSpace{\beta}$ is continuous in the compact open topology. 
	Hence in view of \Cref{defn:isometric-action-topologize}, it suffices to show that for any 
	$g \in G$, 
	any $s \in \spaceOfSections_{\operatorname{cont}} \left( \aContField^{[0,1]} |_{\baseSpace{\aContField} \times [0,1]} \right) $, any compact subset $K \subseteq \baseSpace{\aContField}$ and any $\varepsilon > 0$, the set $U$ given by
	\begin{align*}
	\Bigg\{ g' \in G   \colon 
	d_{\aContField^{[0,1]}_z} \left( {\Sigma}_{\baseSpace{{\beta}_{g}}, \Xi} \left( \widetilde{\alpha}_{0,g} , \widetilde{\alpha}_{1,g}  \right) (s) (z, t) , {\Sigma}_{\baseSpace{{\beta}_{g'}}, \Xi} \left( \widetilde{\alpha}_{0,g'} , \widetilde{\alpha}_{1,g'}  \right) (s) (z, t) \right) < \varepsilon &\ \\
	\text{ for any } z \in K \text{ and } t \in [0,1] &\ \Bigg\}
	\end{align*}
	is a neighborhood of $g$. 
	
	In order to help distinguish the two copies of the unit interval $[0,1]$, let us retain the notation $Y$ for the copy of $[0,1]$ used in the continuum power construction (but still mostly suppress the notation $m$ for the Lebesgue measure for the sake of convenience). 
	Now we observe that 
	by \Cref{def:continuum-product-field-augmentation} and \Cref{def:continuum-product-field}, 
	for any $(z,t) \in \baseSpace{\aContField} \times [0,1]$,    
	there are an open neighborhood $V_{z,t}$ of $(z,t)$ and 
	$s_{z,t} \in \spaceOfSections_{\operatorname{cont}} \left( \aContField|_{\baseSpace{\aContField} \times [0,1] \times Y} \right)$ such that
	for any $(z',t') \in V_z$, 
	if we 
	identify $\left( \aContField^{Y} |_{\baseSpace{\aContField} \times [0,1]} \right)_{z',t'}$ with $L^2(Y, m, \aContField_{z'}) $
	and 
	consider the commutative diagram 
	\[
	\xymatrix{
		&\spaceOfSections_{\operatorname{cont}} \left( \aContField|_{\baseSpace{\aContField} \times [0,1] \times Y } \right) \ar[r] \ar[dd] & \left( \aContField^{Y} |_{\baseSpace{\aContField} \times [0,1]} \right)_{z',t'} \ar@{=}[d]  \\
		&& L^2 \left(\baseSpace{\aContField} \times [0,1] \times Y, \delta_{z'} \times \delta_{t'} \times m, \aContField|_{\baseSpace{\aContField} \times [0,1]} \right) \ar[d]^{\cong} \\
		&\spaceOfSections_{\operatorname{cont}} \left( \aContField|_{ \left\{ (z',t') \right\} \times Y} \right) \ar[r]^{\iota} & L^2(Y, m, \aContField_{z'})
	}
	\]
	where the horizontal maps are the canonical maps as in \Cref{lem:continuous-field-HHS-L2-dense} and the left vertical map is induced by the inclusion between the base spaces, 
	then we have 
	\[
	d_{L^2(Y, m, \aContField_{z'}) } \left( s(z',t') , \iota \left(  s_{z,t}|_{\left\{ (z',t') \right\} \times Y} \right)  \right) < \frac{\varepsilon}{5} \; .
	\]
	By \Cref{def:cont-field-HHS-maps} and \Cref{lem:cont-field-HHS-Diximier-generate}, for any $y \in Y$, there are open neighborhoods $V_{z,t,y} \subseteq V_{z,t}$ of $(z,t)$ in $\baseSpace{\aContField} \times [0,1]$ and $W_{z,t,y}$ of $y$ in $Y$ as well as $s_{z,t,y} \in \spaceOfSections_{\operatorname{cont}} (\aContField)$ such that for any $(z',t') \in V_{z,t,y}$ and $y' \in W_{z,t,y}$, we have 
	\[
	d_{\aContField_{z'}} \left( s_{z,t} (z', t', y') , s_{z,t,y} (z') \right) < \frac{\varepsilon}{5 } \; .
	\] 
	By the compactness of $K \times [0,1]$ and $Y$, there are finite subsets $F' \subseteq \baseSpace{\aContField} \times [0,1]$ and $F'' \subseteq Y$ such that 
	\[
	K \times [0,1] \times Y \subseteq \bigcup_{(z,t) \in F'} \bigcup_{y \in F''} V_{z,t,y} \times W_{z,t,y} \; .
	\]
	By the continuity of the actions $\alpha_0$ and $\alpha_1$, the set $U'$ given by 
	\begin{align*}
	\Bigg\{ g' \in G \colon 
	&\ d_{\aContField_{z'}} \left(   {\alpha}_{k,g} \left( s_{z,t,y}  \right) (z') ,   {\alpha}_{k,g'}  \left( s_{z,t,y}  \right) (z') \right) < \frac{\varepsilon}{5}  \\
	&\ \text{ for any } k \in \{0,1\} , \ (z,t) \in F' , \ y \in F'', \text{ and } z' \in K  \Bigg\}
	\end{align*}
	is an open neighborhood of $g$ in $G$. 
	The desired result thus follows from the containment $U' \subseteq U$, which holds since for any $g' \in U'$ and any $(z,t) \in K \times [0,1]$, if we choose Borel maps $p \colon K \times [0,1] \to F'$ and $q \colon Y \to F''$ satisfying $p^{-1} (z,t) \subseteq V_{z,t,y}$ and $q^{-1} (y) \subseteq W_{z,t,y}$ for any $z \in K$, $t \in [0,1]$ and $y \in Y$, then we have 
	\begin{align*}
	&\ d_{\aContField^{Y}_z} \left( {\Sigma}_{\baseSpace{{\beta}_{g}}, \Xi} \left( \widetilde{\alpha}_{0,g} , \widetilde{\alpha}_{1,g}  \right) (s) (z, t) , {\Sigma}_{\baseSpace{{\beta}_{g'}}, \Xi} \left( \widetilde{\alpha}_{0,g'} , \widetilde{\alpha}_{1,g'}  \right) (s) (z, t) \right) \\
	< &\ d_{\aContField^{Y}_z} \left( {\Sigma}_{\baseSpace{{\beta}_{g}}, \Xi} \left( \widetilde{\alpha}_{0,g} , \widetilde{\alpha}_{1,g}  \right) (s_{p(z,t)}) (z,t)  , {\Sigma}_{\baseSpace{{\beta}_{g'}}, \Xi} \left( \widetilde{\alpha}_{0,g'} , \widetilde{\alpha}_{1,g'}  \right) (s_{p(z,t)}) (z,t) \right)  + \frac{2 \varepsilon}{5} \\
	= &\ \int_{Y} \, d_{\aContField_{z}} \left( \alpha_{\Xi (z) (y), g} \left( s_{p(z,t)} (z, t, y) \right) , \alpha_{\Xi (z) (y), g'}  \left( s_{p(z,t)} (z, t, y) \right)  \right) \mathrm{d} y + \frac{2 \varepsilon}{5} \\
	\leq &\ \int_{Y} \, d_{\aContField_{z}} \left( \alpha_{\Xi (z) (y), g}  \left( s_{p(z,t),q(y)} (z) \right) , \alpha_{\Xi (z) (y), g'} \left( s_{p(z,t),q(y)} (z) \right)  \right) \mathrm{d} y + \frac{4 \varepsilon}{5} \\
	< &\  \varepsilon 
	\; ,
	\end{align*}
	by our constructions above. 
\end{proof}


Next we show that analogous to \cite[Th\'{e}or\`{e}me~1 on page~252]{DixmierDouady1963Champs}, for randomization-stable continuous fields of {\hhs}s, local triviality implies triviality. This suggests that for these objects there is no global topological obstruction to triviality. Let us make this precise. 

\begin{defn} \label{def:locally-trivial}
	A continuous field $\aContField$ of {\hhs}s is said to be \emph{locally trivial} if for any $z \in \baseSpace{\aContField}$, there is an open neighborhood $U$ of $z$ in $\baseSpace{\aContField}$ such that the reduction $\aContField |_{U}$ (see \Cref{def:cont-field-HHS-maps}) is trivial, i.e., it is isometrically continuously isomorphic to a constant continuous field of {\hhs}s over $U$ (see \Cref{eg:cont-fields-trivial}). 
\end{defn}

\begin{rmk} \label{rmk:locally-trivial-randomization}
	It is clear that if a second-countable continuous field $\aContField$ of {\hhs}s is locally trivial, then so is its randomization $\aContField^{(\Omega, \mu)}$. 
\end{rmk}

\begin{prop} \label{prop:quasitrivial-locally-trivial}
	A randomization-stable continuous field $\aContField$ of {\hhs}s is trivial if and only if it is locally trivial and all fibers $\aContField_z$ are isometric to the same {\hhs}. 
\end{prop}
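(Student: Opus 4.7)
The forward implication is immediate from \Cref{eg:cont-fields-trivial} and \Cref{lem:cont-fields-isomorphism}, so I focus on the converse. The idea is to stitch local trivializations into a global one using \Cref{prop:continuum-product-field-stitch-morphisms}, exploiting randomization\-/stability to absorb any transition data. A preliminary observation: if $\aContField$ is randomization\-/stable with common fiber $\aHHS$, then comparing fibers on both sides of an isometric continuous isomorphism $\aContField \xrightarrow{\cong} \aContField^{(\Omega,\mu)}$ shows $\aHHS \cong L^2(\Omega,\mu,\aHHS) = \aHHS^{(\Omega,\mu)}$, i.e.\ $\aHHS$ itself is randomization\-/stable as a {\hhs}. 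Consequently, via the canonical identification in \Cref{rmk:continuum-product-field-augmentation}, the constant field satisfies $\aHHS_{\baseSpace{\aContField}}^{(\Omega,\mu)} \cong \bigl(\aHHS^{(\Omega,\mu)}\bigr)_{\baseSpace{\aContField}} \cong \aHHS_{\baseSpace{\aContField}}$; thus $\aHHS_{\baseSpace{\aContField}}$ is randomization\-/stable as well. Both $\aContField$ and $\aHHS_{\baseSpace{\aContField}}$ are second\-/countable, so \Cref{prop:continuum-product-field-stitch-morphisms} is available with the target field taken to be $\aHHS_{\baseSpace{\aContField}}$.

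\textbf{Construction of the stitching data.} Since $\baseSpace{\aContField}$ is second\-/countable, locally compact, paracompact, and Hausdorff, local triviality gives a countable open cover $\{U_k\}_{k \in \mathbb{N}}$ and isometric continuous isomorphisms $\varphi^{(k)} \colon \aContField|_{U_k} \to \aHHS_{U_k}$. Pick a locally finite partition of unity $\{f_k\}_{k \in \mathbb{N}}$ subordinate to $\{U_k\}$. Take $(Y,\mu) = ([0,1],m)$ with the Lebesgue measure and define $\Xi \colon \baseSpace{\aContField} \to \mathcal{P}_\omega([0,1],m)$ by letting $\Xi(z)$ be the partition that assigns label $k$ to the subinterval of length $f_k(z)$ obtained by stacking the $f_j(z)$ in order. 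Local finiteness ensures that on a neighborhood of any $z$ the map $\Xi$ factors through some $\mathcal{P}_n([0,1],m)$, and continuity of each $f_k$ into $L^1([0,1],m)$ (via characteristic functions of intervals depending continuously on their endpoints) implies $\Xi$ is continuous. By construction $U_{\Xi}^{(k)} = \{z : f_k(z) > 0\} \subseteq U_k$, so each $\varphi^{(k)}$ restricts to an isometric continuous isomorphism $\aContField|_{U_{\Xi}^{(k)}} \to \aHHS_{U_{\Xi}^{(k)}}$ over $\mathrm{id}_{U_{\Xi}^{(k)}}$.

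\textbf{Conclusion.} Apply \Cref{prop:continuum-product-field-stitch-morphisms}\eqref{prop:continuum-product-field-stitch-morphisms:isomorphism} with $h = \mathrm{id}_{\baseSpace{\aContField}}$, target field $\aHHS_{\baseSpace{\aContField}}$, and the tuple $(\varphi^{(k)}|_{U_{\Xi}^{(k)}})_{k \in \mathbb{N}}$. This produces an isometric continuous isomorphism
\[
\psi \colon \aContField^{([0,1],m)} \xrightarrow{\cong} \aHHS_{\baseSpace{\aContField}}^{([0,1],m)} .
\]
Composing with the randomization\-/stability isomorphisms $\aContField \cong \aContField^{([0,1],m)}$ and $\aHHS_{\baseSpace{\aContField}}^{([0,1],m)} \cong \aHHS_{\baseSpace{\aContField}}$ established in the first paragraph yields the desired trivialization $\aContField \cong \aHHS_{\baseSpace{\aContField}}$.

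\textbf{Main obstacle.} The heart of the matter is delegated to \Cref{prop:continuum-product-field-stitch-morphisms}, so the remaining technical task is the careful verification that $\Xi$ is continuous into the weak topology of $\mathcal{P}_\omega([0,1],m)$ and that the $U_{\Xi}^{(k)}$ sit correctly inside the trivializing cover. I expect no substantive difficulty beyond bookkeeping; the one place to be watchful is the identification $\aHHS_{\baseSpace{\aContField}}^{([0,1],m)} \cong \aHHS_{\baseSpace{\aContField}}$, which depends essentially on randomization\-/stability of the fiber $\aHHS$ and is automatic from the hypothesis.
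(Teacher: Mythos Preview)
Your proposal is correct and follows essentially the same route as the paper: both arguments select a countable family of local trivializations, build a continuous map $\Xi$ into $\mathcal{P}_\omega([0,1],m)$ from a subordinate locally finite partition of unity, and invoke \Cref{prop:continuum-product-field-stitch-morphisms}\eqref{prop:continuum-product-field-stitch-morphisms:isomorphism} with $h=\mathrm{id}_{\baseSpace{\aContField}}$ to produce an isometric continuous isomorphism at the level of randomizations. The only cosmetic difference is that the paper obtains its countable family via a $\sigma$-discrete open refinement (available from paracompactness), whereas you use second-countability directly; and you spell out more carefully than the paper does the final step, namely that randomization-stability of $\aContField$ forces the common fiber $\aHHS$ to satisfy $\aHHS\cong\aHHS^{[0,1]}$, so that $\aHHS_{\baseSpace{\aContField}}^{[0,1]}\cong\aHHS_{\baseSpace{\aContField}}$ and the isomorphism $\aContField^{[0,1]}\cong\aHHS_{\baseSpace{\aContField}}^{[0,1]}$ descends to $\aContField\cong\aHHS_{\baseSpace{\aContField}}$.
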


\begin{proof}
	The ``only if'' direction is clear. Let us prove the ``if'' direction. 
	
	By our assumption, there is a {\hhs} $X$ such that each fiber of $\aContField$ is isometric to $X$. 
	Moreover, since $\aContField$ is locally trivial, 
	by \Cref{def:locally-trivial}, there is an open cover $\mathcal{U}$ of $\baseSpace{\aContField}$ such that $\aContField|_U$ is trivial for any $U \in \mathcal{U}$, that is, there is an isometric continuous morphism $\varphi_U \colon \aContField|_U \to (X)_U$. 
	
	Since $\baseSpace{\aContField}$ is paracompact, 
	there is an open refinement $\mathcal{V} = \bigcup_{k=1}^\infty \mathcal{V}^{(k)}$ of $\mathcal{U}$ such that each $\mathcal{V}^{(k)}$ consists of disjoint open sets, and there is a partition of unity $\left( f_{V} \right)_{V \in \mathcal{V}}$ subordinate to $\mathcal{V}$, i.e., for each $V \in \mathcal{V}$, the support of $f_V$ is contained in $V$, and for each $z \in \baseSpace{\aContField}$, there is a neighborhood $W$ of $z$ that intersects only finitely many members of $\mathcal{V}$ and $\sum_{V \in \mathcal{V}} f_V (z) = 1$. 
	
	Note that since $\mathcal{V}$ refines $\mathcal{U}$, there is an isometric continuous morphism $\varphi_V \colon \aContField|_V \to (X)_V$. 
	For any positive integer $k$, since $\mathcal{V}^{(k)}$ consists of disjoint open sets, thus if we write $V_k = \bigcup \left\{ V \in \mathcal{V}^{(k)} \right\}$, then we may combine the $\varphi_V$'s to obtain an isometric continuous morphism $\varphi^{(k)} \colon \aContField|_{V_k} \to (X)_{V_k}$.
	
	Define a sequence $(F_k)_{k = 0}^\infty$ of continuous functions $\baseSpace{\aContField} \to [0,1]$ such that 
	\[
	F_k (z) = \sum_{j=1}^{k} \sum_{V \in \mathcal{V}^{(k)}} f_V(z) \; .
	\]
	Note that $0 = F_0 \leq F_1 \leq F_2 \leq \ldots \leq 1$ and this chain of inequalities stabilizes at any $z \in \baseSpace{\aContField}$. Let $m$ denote the Lebesgue measure on $[0,1]$. We then obtain a continuous map $\Xi \colon \baseSpace{\aContField} \to \mathcal{P}_\omega([0,1], m)$ such that 
	\[
	\Xi(z)(t) = \min \left\{ k \in \{0,1,2,\ldots\} \colon t \leq F_k (z) \right\} \; .
	\]
	Observe that $\mu \left( \left( \Xi (z) \right)^{-1} (k) \right) = f_k (z)$ for any $k \in \{0,1,2,\ldots\}$ and any $z \in \baseSpace{\aContField}$. 
	
	We then apply \Cref{prop:continuum-product-field-stitch-morphisms}, with $h$ 
	replaced by $\operatorname{id}_{\baseSpace{\aContField}}$, 
	to obtain an isometric continuous isomorphism $\psi \colon \aContField \to (X)_{\baseSpace{\aContField}}$. 
\end{proof}

\begin{rmk}
	\Cref{prop:quasitrivial-locally-trivial} may also be viewed as a consequence of \Cref{lem:deformation}, by the following argument (which we sketch): 
	
	If $\aContField$ is a locally trivial continuous field of {\hhs}s with all fibers isometric to the same {\hhs} $\aHHS$, then $\aContField$ is induced from an $\operatorname{Isom}(\aHHS)$-principal bundle $\calB$. It follows that $\aContField^{[0,1]}$ is induced from an $\operatorname{Isom}(\aHHS^{[0,1]})$-principal bundle $\calB'$ which arises through the monomorphism $\operatorname{Isom}(\aHHS) \hookrightarrow \operatorname{Isom}(\aHHS^{[0,1]})$. By \Cref{lem:deformation}, there is a homotopy from this monomorphism to the trivial homomorphism, which implies that $\calB'$ is trivial, whence $\aContField^{[0,1]}$ is trivial. 
\end{rmk}

\begin{cor} \label{cor:quasitrivial-locally-trivial}
	If a continuous field $\aContField$ of {\hhs}s is locally trivial and all fibers $\aContField_z$ are isometric to the same {\hhs}, then its randomization is trivial. 
\end{cor}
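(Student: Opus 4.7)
The plan is to derive this corollary by combining three earlier results: \Cref{prop:quasitrivial-locally-trivial}, which trivializes randomization-stable locally trivial continuous fields with a common fiber type; \Cref{rmk:randomization-stable}, which guarantees that every randomization is itself randomization-stable; and \Cref{rmk:locally-trivial-randomization}, which preserves local triviality under the randomization construction.

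Concretely, I would first apply \Cref{rmk:randomization-stable} to observe that $\aContField^{(\Omega, \mu)}$ is randomization-stable, since the canonical isomorphism $(\aContField^{(\Omega,\mu)})^{(\Omega,\mu)} \cong \aContField^{(\Omega \times \Omega, \mu \times \mu)} \cong \aContField^{(\Omega,\mu)}$ uses \Cref{lem:continuum-product-field-augmentation-maps}\eqref{lem:continuum-product-field-augmentation-maps:product} and~\eqref{lem:continuum-product-field-augmentation-maps:measure-isomorphism}. Next, I would invoke \Cref{rmk:locally-trivial-randomization} to conclude that $\aContField^{(\Omega, \mu)}$ is locally trivial, which follows from the functoriality of the randomization construction in \Cref{lem:continuum-product-field-augmentation-maps}\eqref{lem:continuum-product-field-augmentation-maps:induced}: the local trivializations of $\aContField$ on an open cover $\{U\}$ of $\baseSpace{\aContField}$ produce isometric continuous isomorphisms $(\aContField|_U)^{(\Omega,\mu)} \cong (\aHHS_U)^{(\Omega,\mu)}$, and the right-hand side is a trivial field since constant fields have trivial randomizations (via \Cref{rmk:continuum-product-field-augmentation} and \Cref{rmk:cont-field-HHS-maps-trivial}).

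For the common-fiber hypothesis, I would use \Cref{rmk:continuum-product-field-augmentation} to identify each fiber $(\aContField^{(\Omega,\mu)})_z$ with $L^2(\Omega, \mu, \aContField_z)$; since every $\aContField_z$ is isometric to a fixed separable {\hhs} $\aHHS$, each fiber of $\aContField^{(\Omega,\mu)}$ is isometric to $L^2(\Omega, \mu, \aHHS)$ (the isometries on fibers induce isometries on the $L^2$-continuum products by \Cref{lem:continuum-product-maps}). With randomization-stability, local triviality, and a common fiber type all in hand for $\aContField^{(\Omega,\mu)}$, applying \Cref{prop:quasitrivial-locally-trivial} gives that $\aContField^{(\Omega,\mu)}$ is trivial, as desired.

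This argument is essentially bookkeeping, so I do not anticipate a genuine obstacle; the one place requiring modest care is verifying that local triviality genuinely transfers to the randomization in the sense needed, i.e., writing down the isomorphism $(\aContField|_U)^{(\Omega,\mu)} \cong (\aContField^{(\Omega,\mu)})|_U$ from \Cref{lem:continuum-product-field-augmentation-maps}\eqref{lem:continuum-product-field-augmentation-maps:induced} and composing with $(\varphi_U)^{(\Omega,\mu)}$ from \Cref{lem:continuum-product-field-augmentation-maps}\eqref{lem:continuum-product-field-augmentation-maps:morphism}, but this is entirely routine.
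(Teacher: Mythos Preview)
Your proposal is correct and follows essentially the same route as the paper: invoke \Cref{rmk:randomization-stable} to get randomization-stability of $\aContField^{(\Omega,\mu)}$, \Cref{rmk:locally-trivial-randomization} for its local triviality, and then apply \Cref{prop:quasitrivial-locally-trivial}. You are slightly more explicit than the paper in checking the common-fiber hypothesis via \Cref{rmk:continuum-product-field-augmentation}, which the paper leaves implicit.
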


\begin{proof}
	Note that the randomization of $\aContField$ is randomization-stable by \Cref{rmk:randomization-stable}. 
	The statement thus follows immediately from \Cref{rmk:locally-trivial-randomization} and the ``if'' direction of \Cref{prop:quasitrivial-locally-trivial}. 
\end{proof}

Next we show that for randomization-stable continuous fields of {\hhs}s, triviality is preserved under the probability space construction as in \Cref{def:continuum-product-field}.

\begin{prop}\label{prop:trivialize-prob-field}
	A randomization-stable continuous field $\aContField$ of {\hhs}s with a compact base space is trivial if and only if $\aContField|_{\operatorname{Prob}{\baseSpace{\aContField}}}$ is trivial. 
\end{prop}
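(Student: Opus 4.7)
The easy direction is immediate: restricting $\aContField|_{\spaceProbMeas(\baseSpace{\aContField})}$ along the continuous embedding $\baseSpace{\aContField} \hookrightarrow \spaceProbMeas(\baseSpace{\aContField})$ that sends each $z$ to the point mass $\delta_z$ recovers $\aContField$ itself, by the functoriality of the induced-field construction (\Cref{rmk:cont-field-HHS-maps-functorial}, \Cref{def:continuum-product-field}) together with the canonical identification $L^2(\baseSpace{\aContField}, \delta_z, \aContField) \cong \aContField_z$. Hence triviality of $\aContField|_{\spaceProbMeas(\baseSpace{\aContField})}$ passes to $\aContField$.

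For the converse, fix an isomorphism $\aContField \cong (\aHHS)_Z$ with $Z := \baseSpace{\aContField}$. Restricting both sides to a single point of $Z$ and invoking \Cref{rmk:continuum-product-field-augmentation} shows that randomization-stability $\aContField \cong \aContField^{[0,1]}$ forces $\aHHS \cong L^2([0,1], m, \aHHS)$, so $\aHHS$ is itself a randomization-stable {\hhs}. The plan is to route the desired isomorphism through the randomization of $\aContField$. Applying \Cref{lem:continuum-product-field-augmentation-maps}\eqref{lem:continuum-product-field-augmentation-maps:variation} to the inclusion $\iota \colon \spaceProbMeas(Z) \hookrightarrow \spaceMeas(Z)$ and to $(\Omega,\mu) = ([0,1], m)$ gives a canonical isomorphism $\iota^*(\aContField^{[0,1]}) \cong (\iota^*\aContField)^{[0,1]}$; combining this with the pullback of $\aContField \cong \aContField^{[0,1]}$ along $\iota$ yields
\[
\aContField|_{\spaceProbMeas(Z)} \;\cong\; \aContField^{[0,1]}|_{\spaceProbMeas(Z)}.
\]
Unwinding \Cref{rmk:continuum-product-field-augmentation} and applying Fubini's theorem, the fiber of the right-hand side at $\mu \in \spaceProbMeas(Z)$ is canonically isometric to $L^2(Z \times [0,1], \mu \times m, \aHHS)$. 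Since $Z$ is a compact metric space, each $(Z \times [0,1], \mu \times m)$ is a standard non-atomic probability space, and is therefore measure-theoretically isomorphic to $([0,1], m)$.

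The remaining task, and the principal obstacle of the argument, is to arrange these pointwise measure-space isomorphisms into a continuous family in $\mu$. I will invoke \Cref{lem:trivialization-prob-interval-trick}, the refined form of the fact that all standard non-atomic probability spaces are isomorphic, to produce measure-preserving isomorphisms $k^\mu \colon ([0,1], m) \to (Z \times [0,1], \mu \times m)$ satisfying the continuity hypothesis of \Cref{lem:continuum-product-field-trivial-maps}. Then \Cref{cor:continuum-product-field-trivial-isomorphism}, applied with $Y' = [0,1]$, $Z' = Z \times [0,1]$, $h = \operatorname{id}_{\spaceProbMeas(Z)}$, $g$ constantly equal to $m$, and $f(\mu) = \mu \times m$, promotes the family $(k^\mu)_\mu$ to an isometric continuous isomorphism
\[
\aContField^{[0,1]}|_{\spaceProbMeas(Z)} \;\cong\; \bigl(L^2([0,1], m, \aHHS)\bigr)_{\spaceProbMeas(Z)}.
\]
Composing with the trivialization $\bigl(L^2([0,1], m, \aHHS)\bigr)_{\spaceProbMeas(Z)} \cong (\aHHS)_{\spaceProbMeas(Z)}$ induced by randomization-stability of $\aHHS$ completes the argument. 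Once $(k^\mu)_\mu$ has been constructed, the rest is a clean assembly of the functorial machinery established in \Cref{sec:continuum_product} and \Cref{sec:continuum-power}.
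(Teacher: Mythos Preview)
Your argument is correct and follows essentially the same route as the paper: reduce to the constant-field case via randomization stability, identify the fibers of $\aContField^{[0,1]}|_{\spaceProbMeas(Z)}$ with $L^2(Z\times[0,1],\mu\times m,\aHHS)$, and then apply \Cref{cor:continuum-product-field-trivial-isomorphism} with the continuous family of measure-space isomorphisms furnished by \Cref{lem:trivialization-prob-interval-trick}. The paper packages the ``only if'' direction as \Cref{lem:trivialize-prob-field-specify} (assuming $\aHHS$ randomization-stable at the outset), whereas you additionally spell out why randomization stability of $\aContField$ forces that of $\aHHS$; otherwise the two arguments coincide.
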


The nontrivial part is the ``only if'' direction, since even the local triviality of $\aContField|_{\operatorname{Prob}{\baseSpace{\aContField}}}$ is far from clear. 
Before we dive into the details, let us briefly explain the key idea of the proof of  \Cref{prop:trivialize-prob-field} in a special case that is most relevant for the main results of the paper. Let us start with  $\operatorname{Riem}(N)$, which is the continuous field of {\hhs}s with the base space  $N$ and fiber $\operatorname{Riem}(N)_z := \operatorname{GL}(T_z N)/\operatorname{O}(T_zN)$ for any $z \in N$. Here $\operatorname{GL}(T_z N)/\operatorname{O}(T_zN)$ is simply the space of all inner products on the tangent space $T_zN$ of $N$ at $z$. The randomization  $\aContField= \operatorname{Riem}(N)^{[0, 1]}$ of $\operatorname{Riem}(N)$ (in the sense of \Cref{def:randomization}, cf. \Cref{rmk:randomization-stable}) is a continous field of {\hhs}s with the base space  $N$ and fiber $L^2$-integrable functions from $[0, 1]$ to $\operatorname{GL}(n, \mathbb R)/\operatorname{O}(n)$, where $n = \dim N$.  Now for the continous field $\aContField|_{\operatorname{Prob}{\baseSpace{\aContField}}}$ (cf. \Cref{def:continuum-product-field}), the fiber over $\mu \in \operatorname{Prob}{\baseSpace{\aContField}}$ is the $L^2$-integrable functions from $(N\times [0, 1], \mu\times m)$ to $\operatorname{GL}(n, \mathbb R)/\operatorname{O}(n)$, where $m$ is the standard Lebesgue measure on $[0, 1]$. Recall that all standard probability spaces are isomorphic as measure spaces. In particular, $(N\times [0, 1], \mu\times m)$ is isomorphic to $([0, 1], m)$  for any $\mu \in \operatorname{Prob}{\baseSpace{\aContField}}$. Now to show that $\aContField|_{\operatorname{Prob}{\baseSpace{\aContField}}}$ is trivial, our key step is to systematically  design  the isomorphism between $(N\times [0, 1], \mu\times m)$ and  $([0, 1], m)$ so that it varies continuously as $\mu$ varies in $ \operatorname{Prob}{\baseSpace{\aContField}}$. This will be accomplished in  \Cref{lem:trivialization-prob-continuous-partition} and  \Cref{lem:trivialization-prob-interval-trick}. Finally,  \Cref{prop:trivialize-prob-field} is proved by  reducing it to \Cref{lem:trivialize-prob-field-specify}.

Let us now proceed to prove \Cref{prop:trivialize-prob-field}. Recall that 
on the unit ball of $L^\infty([0,1])$, convergence in the $L^2$-norm is equivalent to convergence in the strong operator topology, i.e., the weakest topology such that $L^\infty([0,1]) \ni f \mapsto f \xi \in L^2([0,1])$ is continuous for any $\xi \in L^2([0,1])$.

\begin{lem} \label{lem:trivialization-prob-continuous-partition}
	Let $(Z,d)$ be a compact metric space. 
	Equip $[0,1] \times Z$ with the Manhattan metric, i.e., 
	\[
	d \left( \left( t_1, z_1 \right) , \left( t_2, z_2 \right) \right) := \left| t_1 - t_2 \right| + d \left( z_1, z_2 \right) 
	\quad \text{ for } t_1, t_2 \in [0,1] \text{ and } z_1, z_2 \in Z \; .
	\]
	Then there is a sequence $\left( \calP^{(k)} \right)_{k \in \mathbb{N}}$ of finite Borel partitions of $[0,1] \times Z$ satisfying
	\begin{enumerate}
		\item \label{lem:trivialization-prob-continuous-partition:monotone} $\calP^{({k+1})}$ refines $\calP^{({k})}$ for any $k \in \mathbb{N}$, 
		
		\item \label{lem:trivialization-prob-continuous-partition:separating} $\displaystyle \lim_{k \to \infty} \sup \left\{ \operatorname{diam} (P) \colon P \in \calP^{(k)} \right\} = 0$, and 
		
		\item \label{lem:trivialization-prob-continuous-partition:continuous} for any $k \in \mathbb{N}$ and any $P \in \calP^{({k})}$, the function 
		\[
		Z \to L^\infty([0,1]) \subseteq L^2([0,1]) \, , \quad z \mapsto \chi_{P_z} \; ,
		\]
		where $P_z := \left\{ t \in [0,1] \colon (t,z) \in P \right\}$, 
		is continuous in the $L^2$-norm. 
	\end{enumerate}
\end{lem}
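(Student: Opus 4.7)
The plan is to reduce the construction to building, for each $k$, a single finite Borel partition $\calQ^{(k)}$ of $[0,1]\times Z$ whose pieces have diameter less than $2^{-k}$ and whose $z$-slices vary continuously in $L^2([0,1])$, and then to take $\calP^{(k)}$ to be the common refinement $\calQ^{(1)} \wedge \calQ^{(2)} \wedge \cdots \wedge \calQ^{(k)}$ (that is, the collection of nonempty intersections of one piece from each). Property~\eqref{lem:trivialization-prob-continuous-partition:monotone} is then automatic from the definition of common refinement, and property~\eqref{lem:trivialization-prob-continuous-partition:separating} follows since every piece of $\calP^{(k)}$ is contained in a piece of $\calQ^{(k)}$. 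For property~\eqref{lem:trivialization-prob-continuous-partition:continuous}, I would use the fact that for indicator functions we have $\|\chi_A - \chi_B\|_{L^2}^2 = m(A \triangle B)$, together with the elementary inequality
\[
\bigl|\chi_A \chi_{A'} - \chi_B \chi_{B'}\bigr| \leq |\chi_A - \chi_B| + |\chi_{A'} - \chi_{B'}|,
\]
to show inductively that if each factor partition has $L^2$-continuous slices, so does their common refinement.

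To construct $\calQ^{(k)}$, I would fix a finite open cover $\{U_1,\ldots,U_N\}$ of the compact metric space $Z$ by sets of diameter less than $2^{-k-1}$, and a continuous partition of unity $u_1,\ldots,u_N$ subordinate to it. Set $F_0 \equiv 0$ and $F_i = u_1 + \cdots + u_i$, so that $F_N \equiv 1$ and each $F_i$ is continuous. With $M_k := 2^{k+1}$, define
\[
P_{i,j}^{(k)} := \bigl\{(t,z) \in [0,1] \times Z : F_{i-1}(z) + \tfrac{j-1}{M_k} u_i(z) \leq t < F_{i-1}(z) + \tfrac{j}{M_k} u_i(z)\bigr\}
\]
for $i = 1,\ldots,N$ and $j = 1,\ldots,M_k$ (absorbing the point $t=1$ into the last piece). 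At each $z$ these pieces partition $[0,1]$ into $N M_k$ subintervals (with possibly empty ones when $u_i(z) = 0$), so they form a finite Borel partition $\calQ^{(k)}$ of $[0,1] \times Z$. The $z$-slice of $P_{i,j}^{(k)}$ is a half-open interval whose endpoints are continuous functions of $z$, so property~\eqref{lem:trivialization-prob-continuous-partition:continuous} holds for $\calQ^{(k)}$. The $Z$-diameter of $P_{i,j}^{(k)}$ is at most $\operatorname{diam}(\operatorname{supp} u_i) < 2^{-k-1}$, and the $[0,1]$-diameter is at most $u_i(z)/M_k \leq 1/M_k = 2^{-k-1}$, giving Manhattan diameter less than $2^{-k}$.

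The main obstacle, in my view, is the verification of property~\eqref{lem:trivialization-prob-continuous-partition:continuous} for the common refinements rather than the individual $\calQ^{(k)}$. Concretely, one needs to check that if $P = Q_1 \cap \cdots \cap Q_k$ with $Q_l \in \calQ^{(l)}$, then $z \mapsto \chi_{P_z} = \prod_l \chi_{(Q_l)_z}$ is continuous into $L^2([0,1])$. Using the inequality above together with the uniform bound $\|\chi_{(Q_l)_z}\|_\infty \leq 1$ and Cauchy--Schwarz in the form $|x+y|^2 \leq 2(x^2+y^2)$, one inductively obtains
\[
\|\chi_{P_z} - \chi_{P_{z_0}}\|_{L^2}^2 \leq 2^{k-1} \sum_{l=1}^k \|\chi_{(Q_l)_z} - \chi_{(Q_l)_{z_0}}\|_{L^2}^2 \xrightarrow{z\to z_0} 0,
\]
completing the argument.
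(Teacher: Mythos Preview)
Your overall strategy---build one partition $\calQ^{(k)}$ with small-diameter pieces and $L^2$-continuous slices, then take common refinements---matches the paper's, and your treatment of the common-refinement step for property~\eqref{lem:trivialization-prob-continuous-partition:continuous} is correct (the paper argues this instead via strong-operator continuity of multiplication on the unit ball of $L^\infty[0,1]$, but your direct $L^2$ estimate works just as well).

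The problem is in your diameter bound for the pieces $P_{i,j}^{(k)}$. You claim the $[0,1]$-diameter is at most $u_i(z)/M_k$, but that is the length of a single $z$-slice, not the diameter of the projection of $P_{i,j}^{(k)}$ onto $[0,1]$. For $(t_1,z_1),(t_2,z_2)\in P_{i,j}^{(k)}$ with $z_1\neq z_2$, the intervals $[F_{i-1}(z_l)+\tfrac{j-1}{M_k}u_i(z_l),\,F_{i-1}(z_l)+\tfrac{j}{M_k}u_i(z_l))$ can sit at very different heights, because $F_{i-1}$ is not controlled on $\operatorname{supp}u_i$. Concretely, take $i=N$: on $\{u_N>0\}$ we have $F_{N-1}=1-u_N$, which can range from values near $1$ (where $u_N$ is small) down to $0$ (at a point where $u_N=1$). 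The piece $P_{N,1}^{(k)}$ then contains points with $t$-coordinate near $0$ and points with $t$-coordinate near $1$, so its Manhattan diameter is close to $1$ regardless of $M_k$.

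The paper fixes this by reversing the nesting: it first cuts $[0,1]$ into $k+1$ \emph{fixed} equal strips $[\tfrac{i-1}{k+1},\tfrac{i}{k+1}]$, and only then, inside each strip, uses the partition of unity (scaled by $\tfrac{1}{k+1}$) to subdivide. Each resulting piece then lies in $[\tfrac{i-1}{k+1},\tfrac{i}{k+1}]\times\operatorname{supp}h_j^{(k)}$, so both coordinates are localized and the diameter bound $\tfrac{2}{k+1}$ follows immediately. Your construction can be repaired in exactly this way: replace $P_{i,j}^{(k)}$ by
\[
\Bigl\{(t,z): \tfrac{j-1}{M_k}+\tfrac{1}{M_k}F_{i-1}(z)\le t < \tfrac{j-1}{M_k}+\tfrac{1}{M_k}F_i(z)\Bigr\},
\]
which now sits inside $[\tfrac{j-1}{M_k},\tfrac{j}{M_k}]\times\operatorname{supp}u_i$.
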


\begin{proof}
	We first construct a sequence $\left( \calQ^{(k)} \right)_{k \in \mathbb{N}}$ of finite Borel partitions of $[0,1] \times Z$ satisfying \eqref{lem:trivialization-prob-continuous-partition:separating} and \eqref{lem:trivialization-prob-continuous-partition:continuous} in place of $\left( \calP^{(k)} \right)_{k \in \mathbb{N}}$. 
	To this end, we choose a family of finite open covers $\calU^{(k)} = \left\{ U^{(k)}_1 , \ldots, U^{(k)}_{m^{(k)}} \right\}$ of $Z$, for $k = 1,2,\ldots$, such that $\operatorname{diam} \left( U^{(k)}_i \right) \leq \frac{1}{k+1}$ for any $k \in \mathbb{N}$ and $i \in \left\{ 0, 1, \ldots, m^{(k)}-1 \right\}$, 
	and then 
	choose a partition of unity $ \left\{ h^{(k)}_1 , \ldots, h^{(k)}_{m^{(k)}} \right\}$ subordinate to $\calU^{(k)}$. 
	Define $I^{(k)} := \left\{ 1, \ldots, k+1 \right\} \times \left\{ 1, \ldots, m^{(k)} \right\}$ and equip it with the alphabetical order, i.e., for any $(i,j), (i',j') \in I^{(k)}$, we have $(i,j) < (i',j')$ if and only if $i < i'$ or $i = i'$ and $j < j'$. 
	For any $(i,j) \in I^{(k)}$, define continuous functions 
	\begin{align*}
	H^{(k)}_{i,j,+} \colon Z \to [0,1] \, , \quad z \mapsto & \sum_{ \footnotesize{\begin{matrix} (i',j') \in I^{(k)} \colon \\ (i',j') \leq (i,j) 			\end{matrix}} } \frac{ h^{(k)}_{j'} (z) }{k+1} 
	&=& \  \frac{ i-1 }{k+1} + \sum_{j' = 1}^{j} \frac{ h^{(k)}_{j'} (z) }{k+1} \\
	H^{(k)}_{i,j,-} \colon Z \to [0,1] \, , \quad z \mapsto & \sum_{ \footnotesize{\begin{matrix} (i',j') \in I^{(k)} \colon \\ (i',j') < (i,j) 			\end{matrix}} } \frac{ h^{(k)}_{j'} (z) }{k+1} 
	&=& \  \frac{ i-1 }{k+1} + \sum_{j' = 1}^{j-1} \frac{ h^{(k)}_{j'} (z) }{k+1} 
	\end{align*}
	and define $Q^{(k)}_{i,j}$ to be the Borel set 
	\[
	\left\{ (t,z) \in [0,1] \times Z \colon H^{(k)}_{i,j,-} (z)  < t \leq H^{(k)}_{i,j,+} (z) \text{, or } t = 0 \text{ if } H^{(k)}_{i,j,-} (z) = 0 \right\} \; ,
	\]
	which has diameter no more than $\frac{2}{k+1}$, since for any $z \in Z$, we have 
	\[
	\frac{ i-1 }{k+1} \leq H^{(k)}_{i,j,-} (z) \leq H^{(k)}_{i,j,+} (z) \leq \frac{ i }{k+1} 
	\]
	and 
	\[
	H^{(k)}_{i,j,+} (z) - H^{(k)}_{i,j,-} (z) = \frac{ h^{(k)}_{j} (z) }{k+1}  \; ,
	\]
	the latter being supported in $U^{(k)}_j$. 
	For any $(i,j) \in I^{(k)}$, since both $H^{(k)}_{i,j,+}$ and $H^{(k)}_{i,j,-}$ are continuous, it is clear that the function 
	\[
	Z \to L^\infty([0,1]) \subseteq L^2([0,1]) \, , \quad z \mapsto \chi_{\left(Q^{(k)}_{i,j}\right)_z} = \chi_{ \left[ H^{(k)}_{i,j,-} (z), H^{(k)}_{i,j,+} (z) \right] } 
	\]
	is continuous in the $L^2$-norm. 
	Since we have $H^{(k)}_{1,1,-} = 0$, $H^{(k)}_{k+1,m^{(k)},+} = 1$, and $H^{(k)}_{i,j,-} = H^{(k)}_{i',j',+}$ whenever $(i,j)$ is the successor of $(i',j')$, it follows that 
	\[
	\calQ^{(k)} := \left\{ Q^{(k)}_{i,j} \colon (i,j) \in I^{(k)} \right\}
	\]
	forms a finite Borel partition of $[0,1] \times Z$, which, as we just proved, satisfies \eqref{lem:trivialization-prob-continuous-partition:separating} and \eqref{lem:trivialization-prob-continuous-partition:continuous}.

	Finally, we let $\calP^{(0)} := \calQ^{(0)}$ and inductively define, for $k = 0, 1, 2, \ldots$, the partition $\calP^{(k)}$ to be the common refinement of $\calP^{(k-1)}$ and $\calQ^{(k)}$, i.e., 
	\[
	\calP^{(k)} := \left\{ P \cap Q \colon P \in \calP^{(k-1)} , \, Q \in \calQ^{(k)} \right\} \; .
	\]
	It follows that $\left( \calP^{(k)} \right)_{k \in \mathbb{N}}$ is a sequence of finite Borel partitions of $[0,1] \times Z$ satisfying \eqref{lem:trivialization-prob-continuous-partition:monotone}, \eqref{lem:trivialization-prob-continuous-partition:separating} and \eqref{lem:trivialization-prob-continuous-partition:continuous}, where to prove \eqref{lem:trivialization-prob-continuous-partition:continuous}, we may use the remark before the lemma to turn $L^2$-continuity into continuity in the strong operator topology and then use the property that this topology is preserved by multiplication in $L^\infty([0,1])$. 
\end{proof}

\begin{lem} \label{lem:trivialization-prob-interval-trick}
	Let $Z$ be a compact metrizable space. 
	Let $m$ be the Lebesgue measure on $[0,1]$. 
	Then there is a family 
	$$\left( k^\mu \colon ([0,1], m) \to ([0,1] \times Z,  m \times \mu) \right)_{\mu \in \operatorname{Prob}(Z)}$$ 
	of measure space isomorphisms mod 0 that is continuous in the sense that for any continuous function $h \in C([0,1] \times Z)$, the assignment $\mu \mapsto h \circ k^\mu$ yields a continuous map $\operatorname{Prob}(Z) \to L^2([0,1], m)$.
\end{lem}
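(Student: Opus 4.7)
The plan is to use the refining partitions $\left(\calP^{(k)}\right)_{k \in \mathbb{N}}$ supplied by \Cref{lem:trivialization-prob-continuous-partition} to build $k^\mu$ as a ``binary-search'' style measure-preserving map that varies continuously with $\mu$. First I would fix, for each $k$, a total order on $\calP^{(k)}$ compatible with refinement in the sense that cells of $\calP^{(k+1)}$ lying inside a common cell of $\calP^{(k)}$ are listed consecutively. For each $P \in \calP^{(k)}$ and each $\mu \in \operatorname{Prob}(Z)$, Fubini gives $(m \times \mu)(P) = \int_Z m(P_z) \, d\mu(z)$; by property \eqref{lem:trivialization-prob-continuous-partition:continuous} of the previous lemma the function $z \mapsto m(P_z)$ is continuous on $Z$, so $\mu \mapsto (m \times \mu)(P)$ is continuous on $\operatorname{Prob}(Z)$ for the weak-$^*$ topology.

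Using the fixed order, I would arrange disjoint (up to endpoints) intervals $I^{(k)}_\mu(P) \subseteq [0,1]$ of length $(m \times \mu)(P)$ that partition $[0,1]$ and nest under refinement. Then I would define $k^\mu(t) \in [0,1] \times Z$ for $m$-a.e.\ $t$ as the unique point in $\bigcap_k \overline{P^{(k)}_\mu(t)}$, where $P^{(k)}_\mu(t)$ is the cell whose interval contains $t$; property \eqref{lem:trivialization-prob-continuous-partition:separating} (diameters going to zero) guarantees the intersection is a singleton for a.e.\ $t$. Measure preservation then follows because $(k^\mu)^{-1}(P) = I^{(k)}_\mu(P)$ up to null sets for every $P \in \calP^{(k)}$ and these sets generate the Borel $\sigma$-algebra of $[0,1] \times Z$; an analogous inverse construction (sending $(t,z)$ to the cumulative position of its cell and picking a point) shows $k^\mu$ is a measure space isomorphism mod $0$.

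To establish the continuity statement, I would fix $h \in C([0,1] \times Z)$ and $\varepsilon > 0$, and use the uniform continuity of $h$ on the compact space $[0,1] \times Z$ together with \eqref{lem:trivialization-prob-continuous-partition:separating} to pick $k$ so large that the oscillation of $h$ on every $P \in \calP^{(k)}$ is less than $\varepsilon$. Choosing a point $p_P \in \overline{P}$ for each cell, I would define the step function
\[
h^{(k)}_\mu := \sum_{P \in \calP^{(k)}} h(p_P) \, \chi_{I^{(k)}_\mu(P)} \in L^\infty([0,1]).
\]
Since $k^\mu(t) \in \overline{P^{(k)}_\mu(t)}$ for a.e.\ $t$, we have $\|h \circ k^\mu - h^{(k)}_\mu\|_{L^\infty} \leq \varepsilon$ uniformly in $\mu$, so it suffices to show $\mu \mapsto h^{(k)}_\mu$ is continuous into $L^2([0,1])$. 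The finite sum defining $h^{(k)}_\mu$ reduces this to showing $\mu \mapsto \chi_{I^{(k)}_\mu(P)}$ is continuous in $L^2$; but the endpoints of $I^{(k)}_\mu(P)$ are cumulative sums of the continuous functions $\mu \mapsto (m \times \mu)(P')$ (by Step 1), so this is immediate.

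The main obstacle is the interaction between two disparate continuity requirements: the partitions must refine so that $h \circ k^\mu$ can be uniformly approximated, yet the approximation must simultaneously remain continuous in $\mu$. The key trick that resolves this tension is that uniform continuity of $h$ on a compact space gives an $L^\infty$-estimate independent of $\mu$, while continuity in $\mu$ is needed only at the level of a finite, $\mu$-independent partition. A subsidiary technicality is to guarantee that the order on $\calP^{(k+1)}$ is compatible with that on $\calP^{(k)}$ so that $I^{(k+1)}_\mu(P')$ actually nests inside $I^{(k)}_\mu(P)$ whenever $P' \subseteq P$; this is arranged by choosing the order lexicographically along the chain of refinements, which poses no real difficulty since each $\calP^{(k)}$ is finite.
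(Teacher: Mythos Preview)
Your proposal is correct and shares the same mathematical core as the paper's proof: both use the refining partitions from \Cref{lem:trivialization-prob-continuous-partition}, order the cells compatibly with refinement, assign to each cell $P$ a subinterval of $[0,1]$ of length $(m\times\mu)(P)$, exploit the weak-$^*$ continuity of $\mu\mapsto(m\times\mu)(P)$ via Fubini and property~\eqref{lem:trivialization-prob-continuous-partition:continuous}, and approximate $h$ by step functions using uniform continuity on the compact space $[0,1]\times Z$.

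The difference is one of packaging. The paper recasts the problem operator-algebraically: it builds an AF $C^*$-algebra $A$ with $C([0,1]\times Z)\subseteq A\subseteq C_{\operatorname{w}}(Z,L^\infty([0,1]))$, defines $*$-homomorphisms $\varphi_\mu\colon A\to L^\infty([0,1])$ by sending $\chi_P\mapsto\chi_{I^{(k)}_\mu(P)}$, checks that $\tau\circ\varphi_\mu$ is the state induced by $m\times\mu$ and that the image is SOT-dense, and then invokes \cite[17.F]{Kechris1995} to lift the resulting measure-algebra isomorphism to a point map $k^\mu$. Your direct ``binary-search'' construction of $k^\mu$ is more elementary and geometric, but the step ``$(k^\mu)^{-1}(P)=I^{(k)}_\mu(P)$ up to null sets'' and the inverse construction need slightly more care than you indicate: a priori $k^\mu(t)$ lies only in $\overline{P^{(k)}_\mu(t)}$, so one must check that the cell boundaries are $(m\times\mu)$-null, which is true for the explicit partitions built in the proof of \Cref{lem:trivialization-prob-continuous-partition} (they are bounded by graphs of continuous functions) but is not recorded in its statement. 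The paper's algebraic route sidesteps this boundary issue entirely by working at the level of measure algebras.
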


\begin{proof}
	
	Since both $[0,1]$ and $[0,1] \times Z$ are standard Borel spaces, by \cite[17.F]{Kechris1995}, any measure-preserving isomorphism between the measure algebras associated to $([0,1], m)$ and $([0,1] \times Z , m \times \mu )$ gives rise, contravariantly, to a measure space isomorphism mod 0 between $([0,1], m)$ and $([0,1] \times Z , m \times \mu )$. It follows that any normal $*$-isomorphism $\varphi \colon L^\infty([0,1] \times Z , m \times \mu ) \to L^\infty([0,1], m)$ of von Neumann algebras that intertwines the two measures gives rise to a measure space isomorphism (mod 0) $k \colon ([0,1], m) \to ([0,1] \times Z , m \times \mu )$ such that $\varphi (f) = f \circ h$ for any $f \in L^\infty([0,1] \times Z , m \times \mu )$. 
	Furthermore, observe that normal $*$-isomorphisms $\varphi \colon L^\infty([0,1] \times Z , m \times \mu ) \to L^\infty([0,1], m)$ intertwining the two measures are in one-to-one correspondence with $*$-homomorphisms $C([0,1] \times Z)  \to L^\infty([0,1])$ such that, 
	if we write $\tau$ for the state on $L^\infty([0,1])$ induced by the Lebesgue measure $m$ on $[0,1]$, then the induced state on $C([0,1] \times Z)$ comes from the measure $m \times \mu$ on $[0,1] \times Z$. 
	
	Hence in order to prove the lemma, it suffices to show that there exists a family 
	\[
	\left( \varphi_\mu \colon C([0,1] \times Z)  \to L^\infty([0,1]) \right)_{\mu \in \operatorname{Prob}(Z)}
	\]
	of $*$-homomorphisms
	such that 
	\begin{enumerate}[label=(\roman*)]
		\item \label{lem:trivialization-prob-interval-trick:first} 
		
		\label{lem:trivialization-prob-interval-trick:measure} for any $\mu \in \operatorname{Prob}(Z)$, 
		$\tau \circ \varphi_\mu$ agrees with the state on $C([0,1] \times Z)$ induced by the measure $m \times \mu$,

		\item \label{lem:trivialization-prob-interval-trick:dense} 
		for any $\mu \in \operatorname{Prob}(Z)$, the image of $\varphi_\mu$ is dense in $L^\infty([0,1])$ in the strong operator topology, 
		and

		\item \label{lem:trivialization-prob-interval-trick:continuous} for any $a \in C([0,1] \times Z)$, the map $\operatorname{Prob}(Z) \ni \mu \mapsto \varphi_\mu (a) \in L^\infty([0,1]) \subseteq L^2([0,1])$ is continuous in the $L^2$-norm (or equivalently, in the strong operator topology). 
		
		\label{lem:trivialization-prob-interval-trick:last} 
	\end{enumerate}
	
	We will actually construct the $*$-homomorphisms $\varphi_\mu$ on a larger $C^*$-algebra containing $C([0,1] \times Z)$. 
	To this end, consider the commutative $C^*$-algebra $\prod_{z \in Z} L^\infty ([0,1])$ of bounded functions from $Z$ to $L^\infty ([0,1])$ and a $C^*$-subalgebra $C_{\operatorname{w}} \left(Z, L^\infty ([0,1]) \right)$ defined as 
	\[
	\left\{ f \in \prod_{z \in Z} L^\infty ([0,1]) 
	\colon f \text{ is continuous in the strong operator topology} \right\} \; .
	\]
	Observe that any $\mu \in \operatorname{Prob}(Z)$ gives rise to a state $\tau_\mu$ on $C_{\operatorname{w}} \left(Z, L^\infty ([0,1]) \right)$ such that 
	\[
	\tau_\mu (f) := \int_Z \left( \int_{0}^{1} f(z) \, \mathrm{d} m \right) \mathrm{d} \mu \quad \text{ for any } f \in C_{\operatorname{w}} \left(Z, L^\infty ([0,1]) \right) \; .
	\]
	We identify $C([0,1] \times Z)$ with $C(Z, C([0,1]))$, which we view as a $C^*$-subalgebra of $C_{\operatorname{w}} \left(Z, L^\infty ([0,1]) \right)$. 
	Observe that for any $\mu \in \operatorname{Prob}(Z)$, the restriction of $\tau_\mu$ on $C([0,1] \times Z)$ agrees with the state induced by the measure $m \times \mu$.

	For the domain of the $*$-homomorphisms $\varphi_\mu$, we will construct an AF $C^*$-algebra $A$ with $C([0,1] \times Z) \subseteq A \subseteq C_{\operatorname{w}} \left(Z, L^\infty ([0,1]) \right)$. 
	To this end, let us fix a compatible metric $d$ on $Z$ and apply Lemma~\ref{lem:trivialization-prob-continuous-partition} to obtain a sequence $\left( \calP^{(k)} \right)_{k \in \mathbb{N}}$ of finite Borel partitions of $[0,1] \times Z$ satisfying \eqref{lem:trivialization-prob-continuous-partition:monotone}, \eqref{lem:trivialization-prob-continuous-partition:separating} and \eqref{lem:trivialization-prob-continuous-partition:continuous} there. 
	
	In view of Lemma~\ref{lem:trivialization-prob-continuous-partition}\eqref{lem:trivialization-prob-continuous-partition:monotone}, 
	we may, 
	for $k = 0 , 1, \ldots$, enumerate $\calP^{(k)}$ as $\left\{ P^{(k)}_1, \ldots, P^{(k)}_{n^{(k)}} \right\}$ so that 
	for any $i, i' \in \left\{ 1, \ldots, n^{(k-1)} \right\}$ with $i < i'$ and any $j, j' \in \left\{ 1, \ldots, n^{(k)} \right\}$, if $P^{(k)}_j \subseteq P^{(k-1)}_i$ and $P^{(k)}_{j'} \subseteq P^{(k-1)}_{i'}$, then $j < j'$. 
	Hence for any $k \in \mathbb{N}$, 
	the map 
	\[
	\pi^{(k)} \colon \left\{ 1, \ldots, n^{(k+1)} \right\} \to \left\{ 1, \ldots, n^{(k)} \right\} \; ,
	\]
	determined by requiring that $P^{(k+1)}_i \subseteq P^{(k)}_{\pi^{(k)} (i)}$ for any $i \in \left\{ 1, \ldots, n^{(k+1)} \right\}$ is non-decreasing. 
	
	In view of Lemma~\ref{lem:trivialization-prob-continuous-partition}\eqref{lem:trivialization-prob-continuous-partition:continuous} and the remark before Lemma~\ref{lem:trivialization-prob-continuous-partition}, we may define, 
	for any $k \in \mathbb{N}$ and any $i \in \left\{ 1 , \ldots, n^{(k)} \right\}$, 
	a projection $p^{(k)}_i := \chi_{P^{(k)}_i} \in C_{\operatorname{w}} \left(Z, L^\infty ([0,1]) \right)$, or more precisely,  
	\[
	p^{(k)}_i (z)  = \chi_{ \left( P^{(k)}_i \right)_z } \in L^\infty([0,1]) \quad \text{ for } z \in Z \; .
	\]
	Observe that for any $k \in \mathbb{N}$ and any $i \in \left\{ 1 , \ldots, n^{(k)} \right\}$, we have 
	\[
	p^{(k)}_i (z)  = \sum_{ j \in \left( \pi^{(k)} \right)^{-1} (i) } p^{(k+1)}_{j} (z) 
	\]
	For any $k \in \mathbb{N}$, let $A^{(k)}$ be the finite-dimensional $C^*$-algebra generated by the disjoint family $\left\{ p^{(k)}_1, \ldots, p^{(k)}_{n^{(k)}} \right\}$ of projections in $C_{\operatorname{w}} \left(Z, L^\infty ([0,1]) \right)$. Then we have 
	\[
	A^{(0)} \subseteq A^{(1)} \subseteq \ldots \subseteq C_{\operatorname{w}} \left(Z, L^\infty ([0,1]) \right) \; .
	\]
	Define an AF $C^*$-algebra 
	\[
	A := \overline{\bigcup_{k=0}^\infty A^{(k)}} \; .
	\]
	
	Observe that $C([0,1] \times Z) \subseteq A$. Indeed, for any $f \in C([0,1] \times Z)$ and any $\varepsilon > 0$, since $f$ is uniformly continuous, there is $\delta > 0$ such that $\left| f(t,z) - f(t',z') \right| \leq \varepsilon$ for any $(t,z) , (t',z') \in [0,1] \times Z$ with $d \left( (t,z) , (t',z') \right) \leq \delta$, where we equip $[0,1] \times Z$ with the Manhattan metric as in Lemma~\ref{lem:trivialization-prob-continuous-partition}. 
	Since $\left( \calP^{(k)} \right)_{k \in \mathbb{N}}$ satisfies Lemma~\ref{lem:trivialization-prob-continuous-partition}\eqref{lem:trivialization-prob-continuous-partition:separating}, there is $k \in \mathbb{N}$ such that 
	for any $i \in \left\{ 1, \ldots, n^{(k)} \right\}$, we have
	$\operatorname{diam} \left( P^{(k)}_i \right) \leq \delta$ and thus
	$\left| f(t,z) - f(t',z') \right| \leq \varepsilon$ for any $(t,z) , (t',z') \in P^{(k)}_i$, 
	which implies 
	\[
	\inf_{g \in A^{(k)}} \|f - g\| 
	= \inf_{ \lambda_1\vphantom{^{(k)}} , \ldots, \lambda_{n^{(k)}} \in \mathbb{C}} \Big\| f - \sum_{i = 1}^{n^{(k)}} \lambda_i p^{(k)}_i \Big\|
	= \inf_{ \lambda_1\vphantom{^{(k)}} , \ldots, \lambda_{n^{(k)}} \in \mathbb{C}} \sup_{z \in P^{(k)}_i} | f(z) - \lambda_i |
	\leq \varepsilon
	\]
	Since $\varepsilon$ was chosen arbitrarily, this implies $f \in A$.

	Now, for any $k \in \mathbb{N}$ and any $\mu \in \operatorname{Prob}(Z)$, we define an orthogonal family $\left\{ q^{(k)}_{\mu, 1} , \ldots, q^{(k)}_{\mu, n^{(k)}} \right\}$ of
	projections 
	in $L^\infty([0,1])$, where $q^{(k)}_{\mu, i}$ is the characteristic function of the interval 
	\[
	{\displaystyle \left( \sum_{j=1}^{i-1} (m \times \mu) \left( P^{(k)}_j \right) , \sum_{j=1}^{i} (m \times \mu) \left( P^{(k)}_j \right) \right]}
	\]
	for any $i \in \left\{ 1, \ldots, n^{(k)} \right\}$, and observe that 
	\[
	\sum_{i=1}^{n^{(k)}} q^{(k)}_{\mu, i} = 1 \; \text{ and } \; \tau \left( q^{(k)}_{\mu, i} \right) = (m \times \mu) \left( P^{(k)}_i \right) = \tau_\mu \left( p^{(k)}_i \right) \text{ for } i \in \left\{ 1, \ldots, n^{(k)} \right\} \; .
	\]
	It follows that there is 	
	a $*$-homomorphism $\varphi^{(k)}_\mu \colon A^{(k)} \to L^\infty([0,1])$ determined by requiring $\varphi^{(k)}_\mu \left( p^{(k)}_i \right) = q^{(k)}_{\mu, i}$ 
	for $i \in \left\{ 1, \ldots, n^{(k)} \right\}$. It satisfies $\tau \circ \varphi^{(k)}_\mu = \tau_\mu |_{A^{(k)}}$. Moreover, observe that for any $k \in \mathbb{N}$ and any $\mu \in \operatorname{Prob}(Z)$, we have an analogous identity 
	\[
	q^{(k)}_{\mu,i} (z)  = \sum_{ j \in \left( \pi^{(k)} \right)^{-1} (i) } q^{(k+1)}_{\mu,j} (z) \quad \text{ for any } i \in \left\{ 1, \ldots, n^{(k)} \right\}
	\]
	and thus $\varphi^{(k+1)}_\mu$ extends $\varphi^{(k)}_\mu$. Combining these yields a $*$-homomorphism $\varphi_\mu \colon A \to L^\infty ([0,1])$, which we may then restrict to $C([0,1] \times Z)$. 
	
	It remains to show that $\left(\varphi_\mu\right)_{\mu \in \operatorname{Prob}(Z)}$ satisfies conditions~\ref{lem:trivialization-prob-interval-trick:first}-\ref{lem:trivialization-prob-interval-trick:last} above. Note that condition~\ref{lem:trivialization-prob-interval-trick:measure} is evident since we have proved  $\tau \circ \varphi^{(k)}_\mu = \tau_\mu |_{A^{(k)}}$ for any $k \in \mathbb{N}$. 
	
	To show condition~\ref{lem:trivialization-prob-interval-trick:dense}, we 
	first observe that 
	for any $k \in \mathbb{N}$ and any $\mu \in \operatorname{Prob}(Z)$, we have 
	\[
	\sup_{i \in \left\{ 1, \ldots, n^{(k)} \right\} } \tau \left( q^{(k)}_{\mu, i} \right)
	= 	\sup_{i \in \left\{ 1, \ldots, n^{(k)} \right\} } (m \times \mu) \left( P^{(k)}_i \right) 
	\leq \sup_{i \in \left\{ 1, \ldots, n^{(k)} \right\} }  \operatorname{diam} \left( P^{(k)}_i \right)  \; ,
	\] 
	which approaches $0$ as $k \to \infty$, whence 
	$L^\infty([0,1])$ is equal to $\overline{\varphi_\mu (A)}^{\operatorname{SOT}}$, the closure of $\varphi_\mu (A)$ in the strong operator topology. 
	In view of condition~\ref{lem:trivialization-prob-interval-trick:measure} and the fact that the representation $L^\infty([0,1]) \hookrightarrow B\left( L^2 ([0,1])  \right)$ is canonically identified with the GNS representation associated to $\tau$, 
	it follows that 
	the representation $A \xrightarrow{\varphi_\mu} L^\infty([0,1]) \hookrightarrow B\left( L^2 ([0,1])  \right)$ can be identified with the GNS representation $\rho_\mu$ of $A$ associated to $\tau_\mu|_A$. 
	Observe that 
	the closure of $\rho_\mu \left( C([0,1] \times Z) \right)$ in the strong operator topology is $L^\infty \left( [0,1] \times Z , m \times \mu \right)$, which contains $\rho_\mu \left( C_{\operatorname{w}} \left(Z, L^\infty ([0,1]) \right) \right)$ and thus also $\rho_\mu \left( A \right)$. 
	Hence we have $\overline{\varphi_\mu (C([0,1] \times Z))}^{\operatorname{SOT}} = \overline{\varphi_\mu (A)}^{\operatorname{SOT}} = L^\infty([0,1])$. 
	
	To verify condition~\ref{lem:trivialization-prob-interval-trick:continuous}, it suffices to focus on the generators and show, for any $k \in \mathbb{N}$ and $i \in \left\{ 1, \ldots, n^{(k)} \right\}$, 
	the map $\operatorname{Prob}(Z) \ni \mu \mapsto \varphi_\mu \left( p^{(k)}_i \right) = q^{(k)}_{\mu, i} \in L^\infty([0,1]) \subseteq L^2([0,1])$ is continuous in the $L^2$-norm.  
	To this end, we observe that for any $k \in \mathbb{N}$ and $i \in \left\{ 1, \ldots, n^{(k)} \right\}$, we have 
	\[
	(m \times \mu) \left( P^{(k)}_i \right) = \int_Z m \left( \left( P^{(k)}_i \right)_z  \right) \mathrm{d} \mu(z) \; .
	\]
	Since, by Lemma~\ref{lem:trivialization-prob-continuous-partition}\eqref{lem:trivialization-prob-continuous-partition:continuous}, the function $Z \ni z \mapsto m \left( \left( P^{(k)}_i \right)_z  \right) \in [0,1]$ is continuous, the function $\operatorname{Prob}(Z) \ni \mu \mapsto (m \times \mu) \left( P^{(k)}_i \right) \in [0,1]$ is also continuous in the weak-* topology. It thus follows from the definition of $q^{(k)}_{\mu, i}$ above that the map $\operatorname{Prob}(Z) \ni \mu \mapsto q^{(k)}_{\mu, i} \in L^\infty([0,1]) \subseteq L^2([0,1])$ is continuous in the weak-* topology and the $L^2$-norm, as desired. 
\end{proof}

\begin{lem}\label{lem:trivialize-prob-field-specify}
	Let $X$ be a randomization-stable {\hhs} and let $Z$ be a compact paracompact Hausdorff space. 
	Then using notations from \Cref{eg:cont-fields-trivial} and \Cref{def:continuum-product-field}, we have  $\left( (X)_Z \right) |_{\spaceProbMeas{\left(Z\right)}} \cong (X)_{\spaceProbMeas{\left(Z\right)}}$. 
\end{lem}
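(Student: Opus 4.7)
The proof plan is to construct the desired isomorphism via \Cref{cor:continuum-product-field-trivial-isomorphism}, fed with the continuous family of measure-space isomorphisms $k^\mu \colon ([0,1],m) \to ([0,1]\times Z, m \times \mu)$ from \Cref{lem:trivialization-prob-interval-trick} (indexed by $\mu \in \spaceProbMeas(Z)$), and then to match both sides against the target fields using the randomization-stability of $X$ together with the Fubini-type statement \Cref{lem:continuum-product-field-augmentation-maps}\eqref{lem:continuum-product-field-augmentation-maps:variation}. Concretely, I would introduce two auxiliary continuous fields over $\spaceProbMeas(Z)$: let $f_1 \colon \spaceProbMeas(Z) \to \spaceMeas([0,1]\times Z)$ send $\mu$ to $m \times \mu$, and let $g \colon \spaceProbMeas(Z) \to \spaceMeas([0,1])$ be the constant map with value $m$. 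Then \Cref{cor:continuum-product-field-trivial-isomorphism} applied with $\aHHS = X$, $h = \operatorname{id}_{\spaceProbMeas(Z)}$, and the family $k^\mu$ above yields an isometric continuous isomorphism
\[
f_1^*\bigl((X)_{[0,1]\times Z}\bigr) \cong g^*\bigl((X)_{[0,1]}\bigr) \, ,
\]
since $k^\mu$ is a measure-space isomorphism from $([0,1], m)$ to $([0,1]\times Z, m\times\mu)$ and the joint-continuity hypothesis of \Cref{lem:continuum-product-field-trivial-maps}---namely that $\mu \mapsto \int_{[0,1]} (\xi \circ k^\mu)\cdot \eta \, dm$ is continuous for $\xi \in C([0,1]\times Z)$ and $\eta \in C([0,1])$---follows at once by pairing the $L^2([0,1])$-continuous assignment $\mu \mapsto \xi \circ k^\mu$ from \Cref{lem:trivialization-prob-interval-trick} against $\eta$.

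Next, I would identify both auxiliary fields with the desired target fields. On the ``constant'' side, since $g$ is constant with value $m$, the field $g^*((X)_{[0,1]})$ shares its fibers and continuous sections with the constant continuous field $(L^2([0,1], m, X))_{\spaceProbMeas(Z)}$ (by comparison of generating sets, using the density of $C([0,1], X)$ in $L^2([0,1], m, X)$ and the uniqueness clause of \Cref{lem:cont-field-HHS-generate}); randomization-stability of $X$ then gives an isomorphism to $(X)_{\spaceProbMeas(Z)}$. On the ``varying'' side, \Cref{lem:continuum-product-field-augmentation-maps}\eqref{lem:continuum-product-field-augmentation-maps:variation}, applied with $\aContField = (X)_Z$, $(Y,\mu') = ([0,1], m)$, and the inclusion $f \colon \spaceProbMeas(Z) \hookrightarrow \spaceMeas(Z)$, yields $((X)_Z|_{\spaceProbMeas(Z)})^{[0,1]} \cong (X^{[0,1]})_Z|_{\spaceProbMeas(Z)}$, while randomization-stability $X^{[0,1]} \cong X$ (so $(X^{[0,1]})_Z \cong (X)_Z$) gives $(X^{[0,1]})_Z|_{\spaceProbMeas(Z)} \cong (X)_Z|_{\spaceProbMeas(Z)}$. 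Finally, the fibers of $(X^{[0,1]})_Z|_{\spaceProbMeas(Z)}$, namely $L^2(Z, \mu, L^2([0,1],m, X))$, coincide via Fubini with those of $f_1^*((X)_{[0,1]\times Z})$, namely $L^2([0,1]\times Z, m\times\mu, X)$, and their canonical generating sets indexed by $C(Z, L^2([0,1], m, X))$ and $C([0,1]\times Z, X)$ are both pointwise dense and (using the weak-$^*$ continuity of $\mu \mapsto \int_Z \phi\, d\mu$ for continuous $\phi$) mutually co-continuous, so \Cref{lem:cont-field-HHS-generate} identifies these as the same continuous field.

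Chaining the above gives
\[
(X)_Z|_{\spaceProbMeas(Z)} \cong f_1^*\bigl((X)_{[0,1]\times Z}\bigr) \cong g^*\bigl((X)_{[0,1]}\bigr) \cong (X)_{\spaceProbMeas(Z)} \, ,
\]
which is the desired conclusion. The main obstacle I anticipate is the Fubini-type identification $(X^{[0,1]})_Z|_{\spaceProbMeas(Z)} \cong f_1^*((X)_{[0,1]\times Z})$ as continuous fields rather than as a mere fiberwise isometry: the two canonical generating sets (from $C(Z, L^2([0,1],m,X))$ and $C([0,1] \times Z, X)$ respectively) are genuinely different collections of sections, so the identification rests on verifying their mutual co-continuity in the $L^2$-integrated sense and then invoking the uniqueness of the continuous field structure generated by a pointwise dense, mutually co-continuous set of sections.
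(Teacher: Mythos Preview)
Your proposal is correct and follows essentially the same route as the paper: both invoke \Cref{cor:continuum-product-field-trivial-isomorphism} with $f_1(\mu)=m\times\mu$, $g\equiv m$, $h=\operatorname{id}$, and the family $(k^\mu)$ from \Cref{lem:trivialization-prob-interval-trick}, then sandwich the result between two applications of randomization-stability of $X$. The paper's proof is terser---it writes the chain as $(X)_Z\cong ((X)_Z)^{[0,1]}$, then claims \Cref{cor:continuum-product-field-trivial-isomorphism} directly yields $((X)_Z)^{[0,1]}|_{\spaceProbMeas(Z)}\cong (X^{[0,1]})_{\spaceProbMeas(Z)}$---and in particular does not spell out the identification $f_1^*((X)_{[0,1]\times Z})\cong (X^{[0,1]})_Z|_{\spaceProbMeas(Z)}$ that you correctly flag as the nontrivial step.

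One minor remark: your invocation of \Cref{lem:continuum-product-field-augmentation-maps}\eqref{lem:continuum-product-field-augmentation-maps:variation} is a detour. That lemma gives $((X)_Z|_{\spaceProbMeas(Z)})^{[0,1]}\cong ((X)_Z)^{[0,1]}|_{\spaceProbMeas(Z)}$, and the left-hand side never reappears in your chain; what you actually use is just $(X)_Z\cong (X^{[0,1]})_Z$ (randomization-stability applied to the fiber), which already gives $(X)_Z|_{\spaceProbMeas(Z)}\cong (X^{[0,1]})_Z|_{\spaceProbMeas(Z)}$ directly. Your resolution of the flagged obstacle is sound: after the coordinate swap, the generating set $\{(y,z)\mapsto s(z):s\in C(Z,X)\}$ sits inside $C([0,1]\times Z,X)$ and is pointwise dense in each fiber by \Cref{lem:continuous-field-HHS-L2-dense}, so the uniqueness clause of \Cref{lem:cont-field-HHS-generate} forces the two fields to coincide.
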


\begin{proof}
	We assume that $\aContField \cong  \aHHS_{\baseSpace{\aContField}}$ for some {\hhs} $\aHHS$. 
	It follows that $\aContField \cong \aContField^{[0,1]} \cong \left( \aHHS_{\baseSpace{\aContField}} \right)^{[0,1]} \cong  \left( \aHHS^{[0,1]} \right)_{\baseSpace{\aContField}}$. 
	Hence it suffices to show that  $\left( \aHHS_{\baseSpace{\aContField}} \right)^{[0,1]}|_{\operatorname{Prob} \left(\baseSpace{\aContField}\right)}$ 
	is isometric to the trivial continuous field $\left(\aHHS^{[0,1]}\right)_{\operatorname{Prob} \left(\baseSpace{\aContField}\right)}$.
	To this end, we apply \Cref{cor:continuum-product-field-trivial-isomorphism} to $Y = Z := \operatorname{Prob} \left(\baseSpace{\aContField}\right)$, 
	$Z' = [0,1] \times \baseSpace{\aContField}$, $Y' := [0,1]$, $f \colon Z \to \spaceMeas(Z')$ taking any $\mu \in \operatorname{Prob} \left(\baseSpace{\aContField}\right)$ to $m \times \mu$, $g \colon Y \to \spaceMeas(Y')$ taking the constant value $m$, $h \colon Y \to Z$ being the identity map on $\operatorname{Prob} \left(\baseSpace{\aContField}\right)$ 
	and $\left( k^{y} \right)_{y \in Y}$ as in \Cref{lem:trivialization-prob-interval-trick}. Since both of the ``additional'' hypotheses in \Cref{cor:continuum-product-field-trivial-isomorphism} are clearly satisfied, it suffices to verify the continuity hypothesis in \Cref{lem:continuum-product-field-trivial-maps}, namely, 
	for any functions $\xi \in C([0,1] \times \baseSpace{\aContField})$ and $\eta \in C([0,1])$, the map  
	\[
	\operatorname{Prob} \left(\baseSpace{\aContField}\right) \ni \mu \mapsto \int_{0}^{1} 
	\left( \xi \circ k^{\mu} \right) \cdot \eta
	\, \operatorname{d} m \in [0, \infty)
	\]
	is continuous, but this clearly follows from our conclusion in \Cref{lem:trivialization-prob-interval-trick} that for any $\xi \in C([0,1] \times \baseSpace{\aContField})$, the assignment $\mu \mapsto \xi \circ k^\mu$ yields a continuous map $\operatorname{Prob}(Z) \to L^2([0,1], m)$.
\end{proof}

\begin{proof}[Proof of \Cref{prop:trivialize-prob-field}]
	The ``if'' direction is clear since $\aContField \cong \left( \aContField |_{\operatorname{Prob}(Z)} \right) |_{Z}$, where we embed $Z$ into $\operatorname{Prob}(Z)$ as point masses. The ``only if'' direction follows from \Cref{lem:trivialize-prob-field-specify}. 
\end{proof}

\section{Proper actions on continuous fields of $L^2$-Riemannian metrics}
\label{sec:diffeo}

In this section, we discuss the notion of proper actions on continuous fields of {\hhs}s, which is a straightforward generalization to that of proper actions on continuous fields of affine Euclidean spaces. Then we apply the general theory to a construction of continuous fields of $L^2$-Riemannian metrics on a closed smooth manifold.

\begin{defn} \label{defn:proper-action-cont-field}
	Let $\aContField$ be a continuous field of {\hhs}s with a compact base space $\baseSpace{\aContField}$. 
	An isometric action $\alpha$ of a countable group $\Gamma$ on $\aContField$ is \emph{metrically proper} (or simply \emph{proper}) if for any continuous section $s \in \spaceOfSections_{\operatorname{cont}} (\aContField)$, we have 
	\[
	\inf_{z \in \underline{\aContField}} d_{\aContField_z} \left( s(z),  \alpha_\gamma (s)  (z) \right) \xrightarrow{\gamma \to \infty \text{ in } \Gamma} \infty \; ,
	\]
	that is, for any $N>0$, only finitely many $\gamma \in \Gamma$ makes the left-hand side fall below $N$. 
\end{defn}

\begin{rmk}  \label{rmk:proper-action-cont-field-exists}
	Under the assumption that $\aContField$ is not empty, it follows from the compactness assumption on $\baseSpace{\aContField}$ and the triangle inequality that we may replace ``for any continuous section'' by ``there exists a continuous section'' in \Cref{defn:proper-action-cont-field}. 
\end{rmk}

\begin{rmk}  \label{rmk:proper-action-cont-field-reduction}
	It is clear from \Cref{defn:proper-action-cont-field} that if an action $\alpha \colon \Gamma \curvearrowright \aContField$ is proper and $Z \subseteq \baseSpace{\aContField}$ is a closed $\Gamma$-invariant subset, then the induced action $\alpha|_{Z} \colon \Gamma \curvearrowright \aContField|_Z$ is proper. 
\end{rmk}

This notion of metrically proper actions is closely related to that of coarse embeddings. Recall that a map $f \colon X \to Y$ between metric spaces is a \emph{coarse embedding} if there exist non-decreasing functions $\rho_+, \rho_-\colon [0, \infty) \to [0, \infty)$ such that 
\begin{enumerate}
	\item $\rho_-(d_X(x, z))\leq d_Y(f(x) ,  f(z)) \leq \rho_+(d_X(x, z))$ for all $x, z \in X$, 
	\item $\lim_{r\to \infty} \rho_-(r) = \infty$. 
\end{enumerate} 
If, in addition, there is $R > 0$ such that $f(X)$ is an $R$-net in $Y$, then $f$ is called a \emph{coarse equivalence}. 

Also recall that every countable group $\Gamma$ can be equipped with a left-invariant metric $d_\Gamma$ that is \emph{proper} in the sense that $\left\{ \gamma \in \Gamma \colon d_\Gamma ( 1 , \gamma ) \leq r \right\}$ is finite for any $r \geq 0$. Such a proper left-invariant metric is unique up to coarse equivalence. When we talk about a coarse embedding of a countable group into a metric space, we are implicitly fixing a left-invariant metric on $\Gamma$ (it does not matter which one). 

Furthermore, recall that left-invariant pseduo-metrics (respectively, metrics) on $\Gamma$ are canonically in one-to-one correspondence with \emph{length functions} (respectively, \emph{faithful} length functions) on $\Gamma$, namely functions $f \colon \Gamma \to [0,\infty)$ satisfying that $f(1_\Gamma) = 0$, $f(\gamma \gamma') \leq f(\gamma) + f(\gamma')$ and $f(\gamma^{-1}) = f(\gamma)$ (and faithfulness means $f(\gamma) = 0$ only when $\gamma = 1_\Gamma$). A left-invariant pseduo-metric is proper if and only if the corresponding length function $f$ is proper in the sense that $f(\gamma) \xrightarrow{\gamma \to \infty \text{ in } \Gamma} \infty$. 

The following lemma gives an equivalent characterization of metric properness of actions in terms of coarse embeddings into Hilbert-Hadamard spaces. 
\begin{lem} \label{lem:proper-action-cont-field-coarse}
	Let $\alpha \colon \Gamma \curvearrowright \aContField$ be an isometric action of a countable group $\Gamma$ on a continuous field of {\hhs}s with a compact base space. Let $z \in \baseSpace{\aContField}$ be such that $\baseSpace{\aContField} = \overline{z \cdot \Gamma}$. Let $s \in \spaceOfSections_{\operatorname{cont}}(\aContField)$ be a continuous section. 
	Then $\alpha$ is proper in the sense of \Cref{defn:proper-action-cont-field} if and only if the map 
	\[
	\iota \colon \Gamma \to \aContField_z \, , \quad \gamma \mapsto \alpha_\gamma (s) (z)
	\]
	is a coarse embedding of $\Gamma$ into the fiber $\aContField_z$. 
\end{lem}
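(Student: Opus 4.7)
The plan is to reduce both directions to a single identity expressing the distance between two orbit points in the single fiber $\aContField_z$ in terms of fiberwise distances between $s$ and its $\Gamma$-translates evaluated at translated base points. Specifically, using the formula $\alpha_\gamma(s)(y) = (\alpha_\gamma)_y(s(\baseSpace{\alpha_\gamma}(y)))$ from \Cref{defn:isometric-action}, together with the fact that $(\alpha_{\gamma_1^{-1}})_{z \cdot \gamma_1}$ is an isometric bijection $\aContField_z \to \aContField_{z \cdot \gamma_1}$ (writing $z \cdot \gamma := \baseSpace{\alpha_\gamma}(z)$) and the composition rule in \Cref{defn:cont-fields-morphism}\eqref{defn:cont-fields-morphism:composition}, I will verify that
\[
d_{\aContField_z}\bigl(\iota(\gamma_1), \iota(\gamma_2)\bigr) = d_{\aContField_{z \cdot \gamma_1}}\bigl(s(z \cdot \gamma_1), \alpha_{\gamma_1^{-1}\gamma_2}(s)(z \cdot \gamma_1)\bigr).
\]
Continuity of the action then guarantees $\alpha_\gamma(s) \in \spaceOfSections_{\operatorname{cont}}(\aContField)$, so by \Cref{def:cont-field-HHS}\eqref{def:cont-field-HHS::continuous} the function $z' \mapsto d_{\aContField_{z'}}(s(z'), \alpha_\gamma(s)(z'))$ is continuous on $\baseSpace{\aContField}$. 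Combined with the density assumption $\overline{z \cdot \Gamma} = \baseSpace{\aContField}$, this yields the key equality
\[
\inf_{z' \in \baseSpace{\aContField}} d_{\aContField_{z'}}\bigl(s(z'), \alpha_\gamma(s)(z')\bigr) = \inf_{\gamma_1 \in \Gamma} d_{\aContField_z}\bigl(\iota(\gamma_1), \iota(\gamma_1 \gamma)\bigr).
\]

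Next I will fix a proper left-invariant length function $|\cdot|$ on $\Gamma$ and define control functions
\[
\rho_+(r) := \sup\bigl\{d_{\aContField_z}(\iota(\gamma_1), \iota(\gamma_1 \gamma)) : \gamma_1 \in \Gamma,\ |\gamma| \leq r\bigr\},
\]
\[
\rho_-(r) := \inf\bigl\{\textstyle\inf_{z' \in \baseSpace{\aContField}} d_{\aContField_{z'}}(s(z'), \alpha_\gamma(s)(z')) : |\gamma| \geq r\bigr\}.
\]
Finiteness of $\rho_+(r)$ follows since $\{\gamma : |\gamma| \leq r\}$ is finite and, via the identity above together with the compactness of $\baseSpace{\aContField}$, each continuous function $z' \mapsto d_{\aContField_{z'}}(s(z'), \alpha_\gamma(s)(z'))$ is bounded. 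Thus the upper bound in the coarse-embedding inequality holds unconditionally, and the statement reduces to showing that $\rho_-(r) \to \infty$ if and only if $\alpha$ is metrically proper.

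Both directions now follow formally. If $\alpha$ is proper, then for each $N>0$ the set $F_N := \{\gamma \in \Gamma : \inf_{z'} d_{\aContField_{z'}}(s(z'), \alpha_\gamma(s)(z')) \leq N\}$ is finite, so by properness of $|\cdot|$ there is $R_N$ with $|\gamma| \leq R_N$ for all $\gamma \in F_N$; hence $\rho_-(r) > N$ whenever $r > R_N$, giving $\rho_-(r) \to \infty$. Conversely, if $\iota$ is a coarse embedding with lower control $\tilde\rho_-$, then the key equality gives $\inf_{z'} d_{\aContField_{z'}}(s(z'), \alpha_\gamma(s)(z')) \geq \tilde\rho_-(|\gamma|)$, which tends to infinity as $\gamma \to \infty$ in $\Gamma$ since $|\cdot|$ is proper, yielding the properness condition of \Cref{defn:proper-action-cont-field}. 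The main (though still mild) obstacle will be carefully unwinding the group-action conventions to establish the fiberwise distance identity in the first step; once that is done, everything else is a bookkeeping exercise using continuity, density, and the finiteness of balls in $\Gamma$.
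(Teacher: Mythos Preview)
Your proposal is correct and follows essentially the same approach as the paper: both arguments hinge on the identity $d_{\aContField_z}(\iota(\gamma_1),\iota(\gamma_2)) = d_{\aContField_{z\cdot\gamma_1}}(s(z\cdot\gamma_1),\alpha_{\gamma_1^{-1}\gamma_2}(s)(z\cdot\gamma_1))$, then use density of the orbit together with continuity of the fiberwise distance to identify $\inf_{z'\in\baseSpace{\aContField}} d_{\aContField_{z'}}(s(z'),\alpha_\gamma(s)(z'))$ with $\inf_{\gamma_1\in\Gamma} d_{\aContField_z}(\iota(\gamma_1),\iota(\gamma_1\gamma))$, and finally construct upper and lower control functions from, respectively, the maximum and infimum of these fiberwise distances over the compact base space.
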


\begin{proof}

	Since $\baseSpace{\aContField} = \overline{z \cdot \Gamma}$, it follows that for any $\gamma_1, \gamma_2 \in \Gamma$, we have 
	\begin{align*}
	\inf_{x \in \underline{\aContField}} d_{\aContField_x} \left( s (x) ,  \alpha_{\gamma_1^{-1}\gamma_2} (s)  (x) \right)
	& = \inf_{x \in \underline{\aContField}} d_{\aContField_x} \left( \alpha_{\gamma_1} (s)  (x) ,  \alpha_{\gamma_2} (s)  (x) \right) \\
	&= \inf_{x \in z\cdot \Gamma} d_{\aContField_x} \left( \alpha_{\gamma_1} (s)  (x) ,  \alpha_{\gamma_2} (s)  (x) \right) \\
	& = \inf_{\gamma' \in  \Gamma} d_{\aContField_z} \left( \alpha_{\gamma' \gamma_1} (s)  (z) ,  \alpha_{\gamma' \gamma_2} (s)  (z) \right). 
	\end{align*}
	If $\alpha$ is proper in the sense of Definition \ref{defn:proper-action-cont-field}, then it is clear that there exists a non-decreasing function $\rho_1\colon [0, \infty) \to [0, \infty)$ that satisfies $\lim_{r\to \infty} \rho_1(r) = \infty$ and 
	\[  \rho_1(d_\Gamma(1_\Gamma, \gamma)) \leq  \inf_{x \in \underline{\aContField}} d_{\aContField_x} \left(  s(x) ,  \alpha_{\gamma} (s)  (x) \right)  \quad \text{ for any } \gamma \in \Gamma \; . \]	
	Therefore, for any $\gamma_1, \gamma_2\in \Gamma$, we have 
	\begin{align*}
	\rho_1(d_\Gamma(\gamma_1, \gamma_2)) & = \rho_1(d_\Gamma(1, \gamma_1^{-1} \gamma_2)) \\
	&\leq  \inf_{x \in \underline{\aContField}} d_{\aContField_x} \left(  s(x) ,  \alpha_{\gamma_1^{-1} \gamma_2} (s)  (x) \right) \\
	& = \inf_{\gamma' \in  \Gamma} d_{\aContField_z} \left( \alpha_{\gamma' \gamma_1} (s)  (z) ,  \alpha_{\gamma' \gamma_2} (s)  (z) \right)\\
	&  \leq  d_{\aContField_z} \left( \alpha_{\gamma_1} (s)  (z) ,  \alpha_{\gamma_2} (s)  (z) \right) \; .
	\end{align*}
	
	Then, by using the compactness of $\baseSpace{\aContField}$ and considering 
	\[ \max_{x \in \underline{\aContField}} d_{\aContField_x} \left( s  (x) ,  \alpha_{\gamma_1^{-1}\gamma_2} (s)  (x) \right) \quad  \text{ in place of } \quad \inf_{x \in \underline{\aContField}} d_{\aContField_x} \left( s (x) ,  \alpha_{\gamma_1^{-1}\gamma_2} (s)  (x) \right) \] 
	in the above, 
	a similar argument shows  
	\[  d_{\aContField_z} \left( \alpha_{\gamma_1} (s)  (z) ,  \alpha_{\gamma_2} (s)  (z) \right) \leq \max_{x \in \underline{\aContField}} d_{\aContField_x} \left( s  (x) ,  \alpha_{\gamma_1^{-1}\gamma_2} (s)  (x) \right) \text{ for any } \gamma, \gamma' \in \Gamma \; . \]
	Hence the function $\rho_2 \colon [0, \infty) \to [0, \infty)$ given by 
	\[
	\rho_2(r) := \max_{\gamma \in \Gamma \colon d_\Gamma(1, \gamma) \leq r}  \max_{x \in \underline{\aContField}} d_{\aContField_x} \left( s  (x) ,  \alpha_{\gamma} (s)  (x) \right)  \; .
	\]
	meets the requirement. This proves the ``only if'' direction. 
	
	Now let us show the ``if'' direction. If the map 
	\[
	\iota \colon \Gamma \to \aContField_z \, , \quad \gamma \mapsto \alpha_\gamma (s) (z)
	\]
	is a coarse embedding of $\Gamma$ into the fiber $\aContField_z$, then there exists a non-decreasing function $\rho_1\colon [0, \infty) \to [0, \infty)$ that satisfies $\lim_{r\to \infty} \rho_1(r) = \infty$ and 
	\begin{align*}
	\rho_1(d_\Gamma(1_\Gamma, \gamma) ) = \rho_1(d_\Gamma(\gamma' , \gamma' \gamma)) &\leq   d_{\aContField_z} \left( \alpha_{\gamma' } (s)  (z) ,  \alpha_{\gamma' \gamma} (s)  (z) \right) \text{ for any } \gamma, \gamma' \in \Gamma \; .
	\end{align*}
	It follows that 
	\[ \inf_{x \in \underline{\aContField}} d_{\aContField_x} \left( s (x) ,  \alpha_{\gamma} (s)  (x) \right)  = \inf_{\gamma' \in  \Gamma} d_{\aContField_z} \left( \alpha_{\gamma' \gamma_1} (s)  (z) ,  \alpha_{\gamma' \gamma_2} (s)  (z) \right) \geq \rho_1(d_\Gamma(1_\Gamma, \gamma) ).  \]
	Since $\lim_{r\to \infty} \rho_1(r) = \infty$, we have 
	\[
	\lim_{\gamma \to \infty} \inf_{x \in \underline{\aContField}} d_{\aContField_x} \left( s (x) ,  \alpha_{\gamma} (s)  (x) \right) = \infty \; .
	\]
	Hence the action $\alpha$ is proper in the sense of Definition \ref{defn:proper-action-cont-field}. This completes the proof. 
\end{proof}

Let us look at how the property of properness interacts with the variation-of-measure construction in \Cref{def:continuum-product-field}. To this end, recall that there is a natural convex structure on $\spaceMeas (\baseSpace{\aContField})$ through weighted sums of measures, where $\spaceMeas (\baseSpace{\aContField})$ is the set of all finite regular Borel measures on $\baseSpace{\aContField}$. The following result essentially says properness is inherited through reduction on the base space while also preserved  by taking closed convex hulls. 

\begin{lem} \label{lem:proper-action-cont-field-convex}
	Let $\alpha \colon \Gamma \curvearrowright \aContField$ be an isometric (left) action of a countable group on a continuous field of {\hhs}s with a compact base space $\baseSpace{\aContField}$. 	
	Let $Z$ and $Y$ be compact Hausdorff spaces with right $\Gamma$-actions, and let $f \colon Z \to \spaceMeas(\baseSpace{\aContField})$ and $g \colon Y \to \spaceMeas(\baseSpace{\aContField})$ be (right) $\Gamma$-equivariant continuous maps. 
	Write $f^* \alpha \colon \Gamma \curvearrowright f^* \aContField$ and $g^* \alpha \colon \Gamma \curvearrowright g^* \aContField$ for the induced isometric actions as given in \Cref{cor:continuum-product-field-actions}. 
	Suppose that $f(Z) \subseteq \overline{ \operatorname{convex} \left( g(Y) \right) }$ and $g^* \alpha$ is proper. Then $f^* \alpha$ is proper, too. 
\end{lem}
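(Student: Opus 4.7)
The strategy is to reduce properness of each induced action to a family of continuous affine functionals on $\spaceMeas(\baseSpace{\aContField})$, and then exploit the fact that such functionals are monotone under taking closed convex hulls. First I would fix a continuous section $s_0 \in \spaceOfSections_{\operatorname{cont}}(\aContField)$ and, invoking \Cref{rmk:proper-action-cont-field-exists}, use $f^*(s_0)$ and $g^*(s_0)$ as the test sections of $f^* \aContField$ and $g^* \aContField$ respectively. For each $\gamma \in \Gamma$, let
\[
\phi_\gamma \colon \baseSpace{\aContField} \to [0,\infty) \, , \quad x \mapsto d_{\aContField_x}\bigl(s_0(x), \alpha_\gamma(s_0)(x)\bigr)^2,
\]
which is continuous by \Cref{def:cont-field-HHS}\eqref{def:cont-field-HHS::continuous} and bounded since $\baseSpace{\aContField}$ is compact; hence the evaluation
\[
\Phi_\gamma \colon \spaceMeas(\baseSpace{\aContField}) \to [0,\infty) \, , \quad \mu \mapsto \int_{\baseSpace{\aContField}} \phi_\gamma \, \mathrm{d}\mu
\]
is affine in $\mu$ and continuous in the weak-$*$ topology on $\spaceMeas(\baseSpace{\aContField})$.

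The next step is to identify these functionals with the fiberwise distances. Using \Cref{cor:continuum-product-field-actions} and the formula in \Cref{lem:continuum-product-maps}, a direct computation yields $(f^*\alpha)_\gamma (f^*(s_0)) = f^*(\alpha_\gamma(s_0))$ as continuous sections of $f^*\aContField$, and hence
\[
d_{(f^*\aContField)_z}\bigl(f^*(s_0)(z), (f^*\alpha)_\gamma(f^*(s_0))(z)\bigr)^2 = \int_{\baseSpace{\aContField}} \phi_\gamma \, \mathrm{d} f(z) = \Phi_\gamma(f(z))
\]
for every $z \in Z$. The analogous identity holds for $g^*\alpha$ on $Y$. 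Thus properness of $g^*\alpha$ (together with \Cref{rmk:proper-action-cont-field-exists}) is exactly the statement that for each $N > 0$ there is a finite $F \subseteq \Gamma$ with $\Phi_\gamma(g(y)) \geq N^2$ for every $\gamma \in \Gamma \setminus F$ and every $y \in Y$, i.e., $\Phi_\gamma \geq N^2$ on $g(Y)$.

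The final step is a one-line convexity argument. For $\gamma \in \Gamma \setminus F$, since $\Phi_\gamma$ is affine it satisfies $\Phi_\gamma \geq N^2$ on $\operatorname{convex}(g(Y))$, and since it is weak-$*$ continuous it also satisfies $\Phi_\gamma \geq N^2$ on the closure $\overline{\operatorname{convex}(g(Y))}$. By hypothesis $f(Z)$ is contained in this closure, so $\Phi_\gamma(f(z)) \geq N^2$ for all $z \in Z$, which combined with the identification above gives the required properness of $f^*\alpha$. I do not foresee a real obstacle here: the only step needing genuine care is verifying that $(f^*\alpha)_\gamma$ sends $f^*(s_0)$ to $f^*(\alpha_\gamma(s_0))$, for which one must unwind \Cref{cor:continuum-product-field-actions} together with the equivariance $f(z \cdot \gamma) = (\baseSpace{\alpha_\gamma})_* f(z)$; this is a bookkeeping exercise rather than a substantive difficulty.
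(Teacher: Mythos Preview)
Your proposal is correct and follows essentially the same route as the paper: both reduce the fiberwise distance in $f^*\aContField$ and $g^*\aContField$ to the integral $\mu \mapsto \int_{\baseSpace{\aContField}} d_{\aContField_x}(s_0(x),\alpha_\gamma(s_0)(x))^2\,\mathrm{d}\mu$, then use that this functional is affine and weak-$*$ continuous to pass the lower bound from $g(Y)$ to $\overline{\operatorname{convex}(g(Y))}\supseteq f(Z)$. Your presentation is a bit more explicitly packaged (naming $\Phi_\gamma$ and invoking affineness/continuity directly), but the substance and the key identification $(f^*\alpha)_\gamma(f^*(s_0)) = f^*(\alpha_\gamma(s_0))$ are the same.
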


\begin{proof}
	By \Cref{rmk:proper-action-cont-field-exists} and \Cref{def:continuum-product-field}, 
	it suffices to show that for any $s \in \spaceOfSections_{\operatorname{cont}} (\aContField)$ and any $\gamma \in \Gamma$, we have
	\begin{align*}
	\inf_{z \in Z} d_{(f^* \aContField)_z} \left( f^*(s)(z),  (f^* \alpha)_\gamma  \left( f^*(s) \right)  (z) \right) 
	\geq 
	\inf_{y \in Y} d_{(g^* \aContField)_y} \left( g^*(s)(y),  (g^* \alpha)_\gamma  \left( g^*(s) \right)  (y) \right)  \; ,
	\end{align*}
	or equivalently, 
	\begin{align*}
	\inf_{z \in Z} d_{L^2(\baseSpace{\aContField}, f(z), \aContField)} \left( s,  \alpha_\gamma  \left( s\right) \right) 
	\geq 
	\inf_{y \in Y} d_{L^2(\baseSpace{\aContField}, g(y), \aContField)} \left( s,  \alpha_\gamma  \left(s\right)  \right)  \; ,
	\end{align*}
	which in turn is equivalent to showing 
	\[
	\inf_{z \in Z} \int_{x \in \baseSpace{\aContField}} d_{\aContField_x} \left( s (x),  \alpha_\gamma  \left( s\right) (x) \right) ^2 \, \mathrm{d} (f(z))  (x)
	\geq 
	\inf_{y \in Y} \int_{x \in \baseSpace{\aContField}} d_{\aContField_x} \left( s (x),  \alpha_\gamma  \left( s\right) (x) \right) ^2 \, \mathrm{d} (g(y))  (x)  \; . 
	\]
	To this end, we observe that for any $\mu \in \operatorname{convex} \left( g(Y) \right) \subseteq \spaceMeas (\aContField)$, we have 
	\begin{align*}
	&\ \int_{x \in \baseSpace{\aContField}} d_{\aContField_x} \left( s (x),  \alpha_\gamma  \left( s\right) (x) \right) ^2 \, \mathrm{d} \mu  (x) \\
	\in &\    
	\operatorname{convex} 
	\left\{   \int_{x \in \baseSpace{\aContField}} d_{\aContField_x} \left( s (x),  \alpha_\gamma  \left( s\right) (x) \right) ^2 \, \mathrm{d} (g(y))  (x)  \colon y \in Y \right\} \\
	\subseteq &\ \left[ \inf_{y \in Y} \int_{x \in \baseSpace{\aContField}} d_{\aContField_x} \left( s (x),  \alpha_\gamma  \left( s\right) (x) \right) ^2 \, \mathrm{d} (g(y))  (x)  \; , \infty \right) \; .
	\end{align*}
	Since $f(Z) \subseteq \overline{ \operatorname{convex} \left( g(Y) \right) }$ by our assumption, 
	an approximation in the weak $*$-topology thus yields the desired inequality. 
\end{proof}

\begin{cor} \label{cor:proper-action-cont-field-convex-prob}
	Let $\alpha \colon \Gamma \curvearrowright \aContField$ be an isometric action of a countable group on a continuous field of {\hhs}s with a compact base space $\baseSpace{\aContField}$. 
	Let $Z$ be a closed subset of $\operatorname{Prob}(\baseSpace{\aContField})$. 
	Then the induced action $\alpha|_{Z} \colon \Gamma \curvearrowright \aContField|_{Z}$ is proper if and only if $\alpha|_{\overline{ \operatorname{convex} \left( Z \right) }} \colon \Gamma \curvearrowright \aContField|_{\overline{ \operatorname{convex} \left( Z \right) }}$ is. 
	
	In particular, $\alpha|_{\operatorname{Prob}(\baseSpace{\aContField})} \colon \Gamma \curvearrowright \aContField|_{\operatorname{Prob}(\baseSpace{\aContField})}$ is proper if and only if $\alpha$ is. 
\end{cor}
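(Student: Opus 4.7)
The plan is to deduce both statements directly from \Cref{lem:proper-action-cont-field-convex} and \Cref{rmk:proper-action-cont-field-reduction}. For the first (``if and only if'') assertion, note that the hypothesis that $\alpha|_Z$ makes sense implicitly requires $Z$ to be $\Gamma$-invariant under the pushforward action $\baseSpace{\alpha}_*$ on $\operatorname{Prob}(\baseSpace{\aContField})$. Since pushforwards are linear on $\spaceMeas(\baseSpace{\aContField})$ and continuous in the weak-$*$ topology, $\overline{\operatorname{convex}(Z)}$ is also closed and $\Gamma$-invariant, so $\alpha|_{\overline{\operatorname{convex}(Z)}}$ is well-defined.

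For the ``only if'' direction, I would apply \Cref{lem:proper-action-cont-field-convex} with $Z$ replaced by $\overline{\operatorname{convex}(Z)}$ (together with its inclusion $f$ into $\spaceMeas(\baseSpace{\aContField})$) and $Y$ taken to be $Z$ itself (together with its inclusion $g$). The containment $f\left(\overline{\operatorname{convex}(Z)}\right) \subseteq \overline{\operatorname{convex}(g(Z))}$ is a tautology, and $\Gamma$-equivariance of $f$ and $g$ is built in. The conclusion of \Cref{lem:proper-action-cont-field-convex} then gives that $\alpha|_{\overline{\operatorname{convex}(Z)}}$ is proper whenever $\alpha|_Z$ is. The ``if'' direction is immediate from \Cref{rmk:proper-action-cont-field-reduction}, since $Z$ is a closed $\Gamma$-invariant subset of $\overline{\operatorname{convex}(Z)}$.

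The ``In particular'' part follows by specializing to $Z = \baseSpace{\aContField}$, embedded in $\operatorname{Prob}(\baseSpace{\aContField})$ as point masses $z \mapsto \delta_z$. This embedding is $\Gamma$-equivariant and continuous, and the standard fact that probability measures on a compact Hausdorff space are weak-$*$ limits of finitely supported probability measures (a consequence of the Krein--Milman theorem applied to $\operatorname{Prob}(\baseSpace{\aContField})$, whose extreme points are precisely the point masses) gives $\operatorname{Prob}(\baseSpace{\aContField}) = \overline{\operatorname{convex}\{\delta_z : z \in \baseSpace{\aContField}\}}$. Identifying $\aContField$ with $\aContField|_{\baseSpace{\aContField}}$ via the canonical isomorphism $L^2(\baseSpace{\aContField}, \delta_z, \aContField) \cong \aContField_z$ (as observed right after \Cref{def:continuum-product-field}), the first part of the corollary then yields the second.

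Essentially no step is delicate: the main conceptual input is the convexity argument of \Cref{lem:proper-action-cont-field-convex}, which has already been carried out, and the rest is bookkeeping about how closed convex hulls and point-mass embeddings interact with the $\Gamma$-actions. The only point one needs to double-check is that $\overline{\operatorname{convex}(Z)}$ inherits compactness and $\Gamma$-invariance in the weak-$*$ topology, which is straightforward since $\operatorname{Prob}(\baseSpace{\aContField})$ is itself compact in this topology and the $\Gamma$-action on it is by affine homeomorphisms.
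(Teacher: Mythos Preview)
Your proof is correct and follows essentially the same approach as the paper: both directions are obtained from \Cref{lem:proper-action-cont-field-convex} and \Cref{rmk:proper-action-cont-field-reduction} applied to the inclusions of $Z$ and $\overline{\operatorname{convex}(Z)}$ into $\spaceMeas(\baseSpace{\aContField})$, and the ``in particular'' clause comes from the point-mass embedding $\baseSpace{\aContField}\hookrightarrow\operatorname{Prob}(\baseSpace{\aContField})$. Your writeup is in fact slightly more careful than the paper's (you spell out the $\Gamma$-invariance of $\overline{\operatorname{convex}(Z)}$ and the Krein--Milman identification $\operatorname{Prob}(\baseSpace{\aContField})=\overline{\operatorname{convex}\{\delta_z\}}$), and your assignment of which direction uses which ingredient is the standard one; the paper's proof appears to have the labels ``if'' and ``only if'' interchanged, but the underlying argument is identical to yours.
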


\begin{proof}
	The ``only if'' direction follows from \Cref{rmk:proper-action-cont-field-reduction} upon embedding $Z$ into $\overline{ \operatorname{convex} \left( Z \right) }$, while the ``if'' direction follows directly from Lemma \ref{lem:proper-action-cont-field-convex} by taking $f$ and $g$ to be inclusion maps. 
	
	The ``in particular'' part then follows upon embedding $\baseSpace{\aContField}$ into $\operatorname{Prob}(\baseSpace{\aContField})$ as point masses.  
\end{proof}

\begin{cor} \label{cor:proper-action-cont-field-convex-random}
	Let $\alpha \colon \Gamma \curvearrowright \aContField$ be an isometric action of a countable group on a continuous field of {\hhs}s with a compact base space $\baseSpace{\aContField}$. 
	Let $Z$ be a compact Hausdorff space and let $\mu$ be a finite regular Borel measure on $Z$. 
	Then 
	$\alpha$ is proper if and only if the induced action $\alpha^{(Z, \mu)}$ as in \Cref{lem:continuum-product-field-augmentation-maps}\eqref{lem:continuum-product-field-augmentation-maps:action} is proper. 
\end{cor}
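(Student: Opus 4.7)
The plan is to use the section-level map $\iota\colon \spaceOfSections_{\operatorname{cont}}(\aContField)\to\spaceOfSections_{\operatorname{cont}}(\aContField^{(Z,\mu)})$ sending a continuous section $s$ to the section of $\aContField^{(Z,\mu)}$ whose value at $z\in\baseSpace{\aContField}$ is the constant function $s(z)\in L^2(Z,\mu,\aContField_z)$. This is well-defined by (the proof of) \Cref{lem:continuum-product-field-augmentation-maps}\eqref{lem:continuum-product-field-augmentation-maps:embedding}, and combining parts~\eqref{lem:continuum-product-field-augmentation-maps:morphism} and \eqref{lem:continuum-product-field-augmentation-maps:action} of that lemma shows that $\iota$ intertwines the two actions, i.e., $\iota(\alpha_\gamma(s))=\alpha^{(Z,\mu)}_\gamma(\iota(s))$ for every $\gamma\in\Gamma$ and every continuous section $s$ of $\aContField$.

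The central observation will be that evaluating the $L^2$-distance on a pair of constant functions merely integrates a constant, so that
\[
d_{(\aContField^{(Z,\mu)})_z}\!\bigl(\iota(s)(z),\,\alpha^{(Z,\mu)}_\gamma(\iota(s))(z)\bigr) \;=\; \sqrt{\mu(Z)}\cdot d_{\aContField_z}\!\bigl(s(z),\,\alpha_\gamma(s)(z)\bigr)
\]
at every $z\in\baseSpace{\aContField}$ for every $s\in\spaceOfSections_{\operatorname{cont}}(\aContField)$ and every $\gamma\in\Gamma$, and hence the same identity persists after taking the infimum over $z$. Invoking \Cref{rmk:proper-action-cont-field-exists} to reduce properness of either action to a single continuous section, both implications then become formal: for the ``only if'' direction I would test properness of $\alpha^{(Z,\mu)}$ against $\iota(s)$ for some (any) $s\in\spaceOfSections_{\operatorname{cont}}(\aContField)$ and invoke properness of $\alpha$, while for the ``if'' direction I would apply properness of $\alpha^{(Z,\mu)}$ to $\iota(s)$ for an arbitrary $s$ and divide the resulting divergence by the nonzero constant $\sqrt{\mu(Z)}$.

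I do not foresee any genuine obstacle, the whole argument being essentially a direct unfolding of \Cref{lem:continuum-product-field-augmentation-maps} together with the formula for the $L^2$-metric on the fibers of $\aContField^{(Z,\mu)}$. The only mild subtlety is the implicit assumption $\mu\neq 0$: otherwise every fiber of $\aContField^{(Z,\mu)}$ collapses to a single point, and $\alpha^{(Z,\mu)}$ can be proper only when $\Gamma$ is finite, so the biconditional becomes meaningful only once this degenerate case has been excluded.
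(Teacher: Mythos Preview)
Your proposal is correct. The paper's proof is a one-line appeal to \Cref{lem:proper-action-cont-field-convex} together with the construction of $\alpha^{(Z,\mu)}$, i.e., it views $\aContField^{(Z,\mu)} = \tau_\mu^*(\aContField|_{\baseSpace{\aContField}\times Z})$ within the general variation-of-measures framework and uses the integral inequality there. Your route is more direct: you bypass that framework entirely and instead use the equivariant section map $\iota$ and the elementary computation that the $L^2$-distance between two constant functions is $\sqrt{\mu(Z)}$ times the fiberwise distance. This gives both implications at once as an \emph{equality} of infima, whereas the paper's route naturally produces two separate inequalities. Your argument is self-contained and arguably cleaner for this particular statement; the paper's has the virtue of exhibiting the corollary as an instance of the broader \Cref{lem:proper-action-cont-field-convex}. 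One minor comment: the paper's \Cref{lem:continuum-product-field-augmentation-maps}\eqref{lem:continuum-product-field-augmentation-maps:embedding} only asserts $\iota$ is an isometric continuous morphism when $\mu(Z)=1$, so strictly speaking you are using $\iota$ as an equivariant map of section sets rather than as a morphism in the sense of \Cref{defn:cont-fields-morphism}; this is exactly what your argument needs, and your distance computation handles the scaling factor correctly. Your observation about the degenerate case $\mu=0$ is apt and is not addressed in the paper's statement.
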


\begin{proof}
	This directly follows \Cref {lem:proper-action-cont-field-convex} and the construction of the induced action $\alpha^{(Z, \mu)}$. 
\end{proof}


In the rest of the section, we detail the construction of a continuous field of {\hhs}s consisting of $L^2$-Riemannian metrics on a closed smooth manifold, which is sketched in the introduction and will ultimately play a pivotal role in the proof of \Cref{main-theorem} in \Cref{sec:main}. 

We start with the following construction, which is an analog of \cite[Construction~4.1]{GongWuYu2021}. 

\begin{constr}
	\label{constr:symmetric-space-GL}
	Fix a positive integer $n$ and consider the set $\calP(V)$ 
	of all inner products on an $n$-dimensional $\mathbb{R}$-vector space $V$. 
	Upon identifying $V$ via a linear isomorphism with $\mathbb{R}^n$, we can identify $\calP(V)$ with the set $\calP(n)$ (also written as $M_n(\mathbb{R})_{>0}$) of positive definite symmetric real $n \times n$-matrices, which in turn can be identified, through the congruence left action of $\operatorname{GL}(n,\mathbb{R})$ on $\calP(n)$, with the quotient space $\operatorname{GL}(n,\mathbb{R}) / \operatorname{O}(n)$, with a base point chosen to be the identity matrix $I_n$, which is identified with the class $[I_n]$ of the identity element in $\operatorname{GL}(n,\mathbb{R})$. Explicitly, this identification takes $[T] \in \operatorname{GL}(n,\mathbb{R}) / \operatorname{O}(n)$ to $T T^* \in \calP(n)$. 
	Abstractly, we may also identity $\calP(V)$ with $\operatorname{GL}(V) / \operatorname{O} (V, g)$, where $g$ is an arbitrary element of $\calP(V)$ and $\operatorname{O}(V, g)$ is the group of all linear automorphisms of $V$ preserving $g$. 
	
	As a Riemannian symmetric space of noncompact type, $\calP(V)$ is a complete simply connected Riemannian manifold with non-positive curvature, and in particular, also an {\admhhs}. 
	More precisely, the standard metric $d_{\calP(n)}$ on $\calP(n)$ is the unique left $\operatorname{GL}(n,\mathbb{R})$-invariant Riemannian metric such that on the tangent space $T_{I_n} \calP(n)$, canonically identified with the linear space of all symmetric real $n \times n$-matrices, it is given by the $\operatorname{O}(n)$-congruence-invariant inner product $\langle A, B \rangle = \operatorname{Tr} (AB)$ for $A, B \in T_{I_n} \calP(n)$. For general $V$, the standard metric $d_{\calP(V)}$ on $\calP(V)$ is induced from an (arbitrary) identification of $V$ with $\mathbb{R}^n$; it follows from the $\operatorname{GL}(n,\mathbb{R})$-invariance of $d_{\calP(n)}$ that the choice of the linear isomorphism $V \cong \mathbb{R}^n$ does not make a difference here. 
	We also write $d_{\operatorname{GL}(n,\mathbb{R}) / \operatorname{O}(n)}$ and $d_{\operatorname{GL}(V) / \operatorname{O} (V, g)}$ for the corresponding metrics on the respective reincarnations. 
	
	Any linear isomorphism $T \colon V \to V'$ of $n$-dimensional $\mathbb{R}$-vector spaces gives rise to an isometry $T^* \colon \calP (V') \to \calP (V)$ by pulling back inner products, that is, 
	\begin{equation} \label{eq:symmetric-space-GL-pullback}
		T^* (g) = g \circ (T \otimes T) \colon V \otimes V \to \mathbb{R} \quad \text{ for any } g \in \calP(V') \; .
	\end{equation}
\end{constr}

\begin{rmk}
	\label{rmk:symmetric-space-GL-metric}
	We make a few remarks on the metric on $\calP(n)$. Note that as a symmetric space, $\calP(n)$ is reducible: indeed, 
	it can be identified with $\mathbb R \times \calP_1(n)$, where $\calP_1(n)$ denotes the set of all positive definite symmetric real $n \times n$-matrices \emph{of determinant $1$}. 
	An explicit identification is given by 
	\[   \calP(n) \ni D \mapsto \left( \frac{1}{\sqrt{n}}  \log \det D, \det(D)^{-\frac{1}{n}} \, D  \right) \in \mathbb R \times \calP_1(n) \; . \]
	Under this identification, the metric on $\calP(n)$ agrees with the product Riemannian metric of the standard Euclidean metric on $\mathbb R$ and the metric on $\calP_1(n)$ inherited from $\calP(n)$. Indeed, since the tangent space $T_{I_n} \calP_1(n)$ is canonically identified with the linear space of all symmetric real $n \times n$-matrices with trace $0$, the above identification induces a linear isomorphism 
	\[
		T_{I_n} \calP(n) \to T_{0} \mathbb R \oplus T_{I_n} \calP_1(n) \, , \quad A \mapsto \left(\frac{1}{\sqrt{n}} \operatorname{Tr} (A), A - \frac{1}{n} \operatorname{Tr} (A) I_n \right) \; ,
	\]
	and for any $A, B \in T_{I_n} \calP(n)$, we have 
	\[
		\langle A, B \rangle = \operatorname{Tr} (AB) = \frac{1}{n} \operatorname{Tr} (A) \operatorname{Tr} (B) + \left\langle  A - \frac{1}{n} \operatorname{Tr} (A) I_n , B - \frac{1}{n} \operatorname{Tr} (B) I_n \right\rangle \; ,
	\]
	it follows that the inner products on the tangent spaces agree. 
	Moreover, since there is a canonical direct sum decomposition of $\operatorname{GL}(n,\mathbb{R})$ as $\mathbb{Z}/2 \times \mathbb{R} \times \operatorname{SL}(n,\mathbb{R})$, such that $\calP_1(n)$ is invariant under $\operatorname{SL}(n,\mathbb{R})$ and $\mathbb{R}^{\times} \cdot I_n$ is invariant under $\mathbb{Z}/2 \times \mathbb{R} \cong \mathbb{R}^{\times}$, it is not hard to see that the product metric is $\operatorname{GL}(n,\mathbb{R})$-invariant, which proves the two Riemannian metrics coincide under the above identification. 
\end{rmk}

The following lemma provides, roughly speaking, an alternative, simpler description of the metric on $\calP(n)$, up to some uniform error. 

\begin{lem}
	\label{lem:GL-length}
	Consider maps $\operatorname{GL}(n,\mathbb{R}) \to [0,\infty)$ given by 
	\[
		T \mapsto d_{\operatorname{GL}(n,\mathbb{R}) / \operatorname{O}(n)} ([T], [I_n]) \quad \text{ and } \quad T \mapsto \log \max \left\{ \|T\| , \| T^{-1} \| \right\} \; ,
	\]
	where $\| - \|$ denotes the operator norm. 
	Then they form length functions that map $\operatorname{O}(n)$ to $0$ and are bilipschitz to each other, that is, there are constants $c_1, c_2 > 0$ such that for any $T \in \operatorname{GL}(n,\mathbb{R})$, we have 
	\[
		c_1 d_{\operatorname{GL}(n,\mathbb{R}) / \operatorname{O}(n)} ([T], [I_n]) \leq \log \max \left\{ \|T\| , \| T^{-1} \| \right\} \leq c_2 d_{\operatorname{GL}(n,\mathbb{R}) / \operatorname{O}(n)} ([T], [I_n]) \; .
	\]
\end{lem}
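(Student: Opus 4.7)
The plan is to exploit the identification from Construction~\ref{constr:symmetric-space-GL} of $\operatorname{GL}(n,\mathbb{R})/\operatorname{O}(n)$ with $\calP(n)$ via $[T] \mapsto TT^*$, and to use the classical explicit distance formula in this symmetric space: for any $P \in \calP(n)$ with (necessarily positive) eigenvalues $\lambda_1,\ldots,\lambda_n$, one has
\[
d_{\calP(n)}(I_n, P) \;=\; \Bigl(\sum_{i=1}^n (\log \lambda_i)^2\Bigr)^{1/2} \; .
\]
Applied to $P = TT^*$, whose eigenvalues are the squares $\sigma_1(T)^2, \ldots, \sigma_n(T)^2$ of the singular values of $T$, this gives $d_{\operatorname{GL}(n,\mathbb{R})/\operatorname{O}(n)}([T], [I_n]) = 2 \bigl(\sum_i (\log \sigma_i(T))^2\bigr)^{1/2}$. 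On the other hand, since $\|T\| = \sigma_{\max}(T)$ and $\|T^{-1}\| = \sigma_{\min}(T)^{-1}$, one has $\log \max\{\|T\|, \|T^{-1}\|\} = \max_i |\log \sigma_i(T)|$.

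Now the bilipschitz bound reduces to the elementary $\ell^2$-vs-$\ell^\infty$ comparison $\max_i |a_i| \leq (\sum_i a_i^2)^{1/2} \leq \sqrt{n}\, \max_i |a_i|$ applied to $a_i = \log \sigma_i(T)$, yielding
\[
\tfrac{1}{2\sqrt{n}}\, d_{\operatorname{GL}(n,\mathbb{R})/\operatorname{O}(n)}([T],[I_n]) \;\leq\; \log \max\{\|T\|, \|T^{-1}\|\} \;\leq\; \tfrac{1}{2}\, d_{\operatorname{GL}(n,\mathbb{R})/\operatorname{O}(n)}([T],[I_n]) \; ,
\]
so one can take $c_1 = \tfrac{1}{2\sqrt{n}}$ and $c_2 = \tfrac{1}{2}$. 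That both functions vanish on $\operatorname{O}(n)$ is immediate: if $T \in \operatorname{O}(n)$ then $[T] = [I_n]$, and also $\|T\| = \|T^{-1}\| = 1$.

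To verify the length function properties, I would treat the two maps separately. For the operator-norm expression, submultiplicativity $\|TS\| \leq \|T\|\|S\|$ (applied also to $(TS)^{-1} = S^{-1}T^{-1}$) combined with the elementary fact $\max\{ab, cd\} \leq \max\{a,c\}\max\{b,d\}$ for nonnegative reals gives subadditivity after taking $\log$; invariance under $T \mapsto T^{-1}$ is by inspection. For $d_{\operatorname{GL}(n,\mathbb{R})/\operatorname{O}(n)}([\,\cdot\,],[I_n])$, the subadditivity $d([TS],[I_n]) \leq d([T],[I_n]) + d([S],[I_n])$ follows from the triangle inequality together with the left $\operatorname{GL}(n,\mathbb{R})$-invariance of $d$, which was recorded in Construction~\ref{constr:symmetric-space-GL}. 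Invariance under $T \mapsto T^{-1}$ reduces, using the identification, to $d(I_n, P^{-1}) = d(I_n, P)$ for $P \in \calP(n)$, which is transparent from the explicit distance formula above since the eigenvalues of $P^{-1}$ are the reciprocals of those of $P$.

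The main technical step is justifying the distance formula $d_{\calP(n)}(I_n, P) = (\sum_i (\log \lambda_i)^2)^{1/2}$. One conjugates $P$ by an orthogonal matrix (which is an isometry of $\calP(n)$ since it belongs to the isotropy group $\operatorname{O}(n)$ at $I_n$ and the metric is $\operatorname{GL}(n,\mathbb R)$-invariant) to a diagonal positive matrix $D = \operatorname{diag}(\lambda_1,\ldots,\lambda_n)$, and then checks that the curve $t \mapsto D^t$ is a geodesic from $I_n$ to $D$: indeed, its image lies in the flat totally geodesic abelian subspace of diagonal positive-definite matrices, and in the $\operatorname{GL}$-invariant metric its velocity $\log D$ at time $t$, parallel-transported back via left-translation by $D^{-t/2}$, has constant norm $\sqrt{\sum_i (\log \lambda_i)^2}$. (This is the standard polar decomposition of distance in the noncompact symmetric space $\operatorname{GL}(n,\mathbb R)/\operatorname{O}(n)$, and may alternatively be cited from, e.g., any textbook reference on symmetric spaces of noncompact type.) Once this formula is in hand, the remainder of the argument is as above.
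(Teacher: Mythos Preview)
Your proof is correct and takes a genuinely different route from the paper's. The paper does not use the explicit eigenvalue distance formula $d_{\calP(n)}(I_n,P)=\bigl(\sum_i(\log\lambda_i)^2\bigr)^{1/2}$; instead it invokes the product decomposition $\calP(n)\cong\mathbb{R}\times\calP_1(n)$ from Remark~\ref{rmk:symmetric-space-GL-metric}, introduces an auxiliary ``third'' length function $|\log|\det T||+d_{\operatorname{SL}(n,\mathbb{R})/\operatorname{SO}(n)}([\widehat{T}],[I_n])$, and cites a bilipschitz estimate for the $\operatorname{SL}$-part from \cite[Remark~4.4]{GongWuYu2021}, then combines this with further inequalities relating $\log\|\widehat{T}\|$ and $\log\|(\widehat{T})^{-1}\|$. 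Your approach via singular values and the $\ell^2$-versus-$\ell^\infty$ inequality is more direct, is self-contained (modulo the standard geodesic formula in the symmetric space, which you justify), and yields the explicit constants $c_1=\tfrac{1}{2\sqrt{n}}$, $c_2=\tfrac{1}{2}$. The paper's route has the mild advantage of reusing an estimate already recorded in its predecessor, but yours is cleaner. One small remark: in your symmetry argument for the first length function, the identification $[T]\mapsto TT^*$ sends $[T^{-1}]$ to $(T^*T)^{-1}$ rather than $(TT^*)^{-1}$, so strictly speaking you need the (easy) fact that $T^*T$ and $TT^*$ share eigenvalues; alternatively, just note that $\sigma_i(T^{-1})=\sigma_i(T)^{-1}$ and invoke your distance formula directly.
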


\begin{proof}
	The fact that the first map is a length function follows directly from the left $\operatorname{GL}(n,\mathbb{R})$-invariance of $d_{\operatorname{GL}(n,\mathbb{R}) / \operatorname{O}(n)}$, while the same fact for the second map follows from a direct computation that utilizes the inequality $\max \left\{ a+b , a'+b' \right\} \leq \max \left\{ a , a' \right\} + \max \left\{ b , b' \right\}$ for arbitrary real numbers $a,a',b,b'$. It is also obvious that both maps take $\operatorname{O}(n)$ to $0$. 
	
	It remains to show they are bilipschitz to each other. 
	To this end, we see that under the identification $\operatorname{GL}(n,\mathbb{R}) / \operatorname{O}(n) \to \calP(n)$, $[T] \mapsto TT^*$, the ``explicit identification'' in \Cref{rmk:symmetric-space-GL-metric} can be reformulated as 
	\begin{align*}
	\operatorname{GL}(n,\mathbb{R}) / \operatorname{O}(n) & \to \mathbb R \times \operatorname{SL}(n,\mathbb{R}) / \operatorname{SO}(n)  \\
	[T] & \mapsto \left( \frac{2}{\sqrt{n}}  \log |\det (T)|, \left[\mathrm{sgn}(\det(T)) |\det(T)|^{-\frac{1}{n}} \, T \right] \right) \; .
	\end{align*}
	Writing $\widehat{T}$ for $\mathrm{sgn}(\det(T)) |\det(T)|^{-\frac{1}{n}} \, T$, we see that our first length function is bilipschitz to the ``third'' length function 
	\begin{align*}
		T \mapsto &\  \big| \log |\det (T)| \big| + d_{\operatorname{SL}(n,\mathbb{R}) / \operatorname{SO}(n)} \left( [\widehat{T}], [I_n]\right) \\
		&\  =  \log \max \left\{ |\det (T)| , |\det (T^{-1})| \right\} + d_{\operatorname{SL}(n,\mathbb{R}) / \operatorname{SO}(n)} \left( [\widehat{T}], [I_n]\right) \; ,
	\end{align*}
	where $d_{\operatorname{SL}(n,\mathbb{R}) / \operatorname{SO}(n)}$ is inherited from $d_{\operatorname{GL}(n,\mathbb{R}) / \operatorname{O}(n)}$. 
	
	It thus suffices to show the second and the third length functions are bilipschitz to each other. 
	The case of $n \leq 1$ is trivial, so let us assume $n > 1$. 
	To this end, for any $T \in \operatorname{GL}(n,\mathbb{R})$, since $\widehat{T} \in \operatorname{SL}(n,\mathbb{R})$, it follows from \cite[Remark~4.4]{GongWuYu2021}\footnote{Note that what we call $\calP_1(n)$ here is written as $P(n)$ in \cite[Remark~4.4]{GongWuYu2021}. } that 
	\begin{equation} \label{eq:bilipschitz-SL}
		0 < 2 \log (\|\widehat{T}\|) \leq d_{\operatorname{SL}(n,\mathbb{R}) / \operatorname{SO}(n)} \left( [\widehat{T}], [I_n]\right) \leq 2 \sqrt{n} \log (\|\widehat{T}\|) 
	\end{equation}
	while 
	\[
		\log (\|\widehat{T}\|) \leq ({n-1}) \log (\|(\widehat{T})^{-1}\|) \quad \text{ and vice versa, }
	\]
	whence 
	\begin{equation} \label{eq:bilipschitz-inverse-max}
		\max \left\{ \log (\|\widehat{T}\|) , \log (\|(\widehat{T})^{-1}\|) \right\} \leq ({n-1}) \log (\|\widehat{T}\|)  
	\end{equation}
	and
	\begin{equation} \label{eq:bilipschitz-inverse-min}
		\frac{1}{n-1} \log (\|\widehat{T}\|) \leq \min \left\{ \log (\|\widehat{T}\|) , \log (\|(\widehat{T})^{-1}\|) \right\} \; . 
	\end{equation}
	
	Since $\|T\| = |\det (T)|^{1/n} \cdot \left\| \widehat{T} \right\|$, we have 
	\begin{align*}
		&\ \log \max \left\{  \|T\| , \| T^{-1} \| \right\} \\
		= &\  \max \left\{ \frac{1}{n} \log |\det (T)| + \log \left\| \widehat{T} \right\| , \; \frac{1}{n} \log |\det (T^{-1})| + \log \left\| (\widehat{T})^{-1} \right\|\right\} \; ,
	\end{align*}
	which is, by \eqref{eq:bilipschitz-inverse-max}, no more than 
	\[
		\frac{1}{n} \log \max \left\{ |\det (T)| , |\det (T^{-1})| \right\} + ({n-1}) \log (\|\widehat{T}\|) 
	\]
	and, by \eqref{eq:bilipschitz-inverse-min}, no less than 
	\[
		\frac{1}{n} \log \max \left\{ |\det (T)| , |\det (T^{-1})| \right\} + \frac{1}{n-1} \log (\|\widehat{T}\|) \; .
	\]
	Combining these estimates with \eqref{eq:bilipschitz-SL}, we see that the second and the third length functions are bilipschitz to each other, as desired. 
\end{proof}

The next lemma is a coordinate-free reformulation of the last one. 

\begin{lem}
	\label{lem:GL-length-abstract}
	For any positive integer $n$, there exists $c_1, c_2 > 0$ such that for any linear isomorphism $T \colon V \to V'$ of $n$-dimensional $\mathbb{R}$-vector spaces, and for any inner products $g \in \calP(V)$ and $g' \in \calP(V')$, we have 
	\[
	c_1 d_{\calP(V)} (T^*(g'), g) \leq \log \max \left\{ \|T\|_{g,g'} , \| T^{-1} \|_{g',g} \right\} \leq c_2 d_{\calP(V)} (T^*(g'), g) \; ,
	\]
	where $T^*$ is given as in \eqref{eq:symmetric-space-GL-pullback}, and $\|-\|_{g,g'}$ is the operator norm defined for linear operators between the Hilbert spaces $(V,g)$ and $(V',g')$. 
\end{lem}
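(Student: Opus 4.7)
The plan is to reduce the coordinate-free statement to the concrete case handled in \Cref{lem:GL-length} by a suitable choice of orthonormal bases. The constants $c_1, c_2 > 0$ will simply be the ones provided by \Cref{lem:GL-length} (for the given $n$), and the task is to show that both quantities appearing in the desired inequality are invariants that do not depend on the coordinates.

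First I would choose a $g$-orthonormal basis of $V$ and a $g'$-orthonormal basis of $V'$. These bases induce linear isometries $\iota\colon (V,g)\to(\mathbb{R}^n,\langle\cdot,\cdot\rangle)$ and $\iota'\colon(V',g')\to(\mathbb{R}^n,\langle\cdot,\cdot\rangle)$, where $\langle\cdot,\cdot\rangle$ denotes the standard Euclidean inner product. Let $\widetilde{T} := \iota'\circ T\circ\iota^{-1}\in\operatorname{GL}(n,\mathbb{R})$ be the matrix representing $T$ in these bases. Since $\iota$ and $\iota'$ preserve inner products, the operator norms match directly:
\[
\|T\|_{g,g'} = \|\widetilde{T}\|, \qquad \|T^{-1}\|_{g',g} = \|\widetilde{T}^{-1}\|,
\]
where the norms on the right-hand sides are the standard operator norms on $\mathbb{R}^n$.

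Next I would translate the distance term. Using $\iota$ to identify $\calP(V)$ with $\calP(n)$, the inner product $g$ corresponds to $I_n$, while $T^*(g')$ pulls back under $\iota$ to the bilinear form on $\mathbb{R}^n$ given by $(u,w)\mapsto \langle\widetilde{T}u,\widetilde{T}w\rangle = \langle u,\widetilde{T}^\top\widetilde{T}w\rangle$, i.e., to the matrix $\widetilde{T}^\top\widetilde{T}\in\calP(n)$. Since $\iota$ induces an isometry between $\calP(V)$ and $\calP(n)$ (by the very definition of $d_{\calP(V)}$ in \Cref{constr:symmetric-space-GL}, and the $\operatorname{GL}(n,\mathbb{R})$-invariance of $d_{\calP(n)}$ which makes this definition independent of the chosen identification), we obtain
\[
d_{\calP(V)}(T^*(g'),g) \;=\; d_{\calP(n)}(\widetilde{T}^\top\widetilde{T},I_n) \;=\; d_{\operatorname{GL}(n,\mathbb{R})/\operatorname{O}(n)}\!\bigl([\widetilde{T}^\top],[I_n]\bigr),
\]
using that under the identification $[S]\mapsto SS^\top$ of \Cref{constr:symmetric-space-GL}, the class $[\widetilde{T}^\top]$ corresponds to $\widetilde{T}^\top\widetilde{T}$.

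Finally I would apply \Cref{lem:GL-length} to the element $\widetilde{T}^\top\in\operatorname{GL}(n,\mathbb{R})$, which yields constants $c_1, c_2 > 0$ (depending only on $n$) such that
\[
c_1\, d_{\operatorname{GL}(n,\mathbb{R})/\operatorname{O}(n)}([\widetilde{T}^\top],[I_n])
\;\leq\; \log\max\bigl\{\|\widetilde{T}^\top\|,\|(\widetilde{T}^\top)^{-1}\|\bigr\}
\;\leq\; c_2\, d_{\operatorname{GL}(n,\mathbb{R})/\operatorname{O}(n)}([\widetilde{T}^\top],[I_n]).
\]
Combining this with the identifications $\|\widetilde{T}^\top\|=\|\widetilde{T}\|=\|T\|_{g,g'}$ and $\|(\widetilde{T}^\top)^{-1}\|=\|\widetilde{T}^{-1}\|=\|T^{-1}\|_{g',g}$ yields the desired bilipschitz estimate. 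Since this argument is essentially a bookkeeping reduction, I do not expect any real obstacle; the only point requiring care is verifying that the pullback $T^*(g')$ corresponds to $\widetilde{T}^\top\widetilde{T}$ rather than $\widetilde{T}\widetilde{T}^\top$, and correspondingly lining up the element $\widetilde{T}^\top$ (not $\widetilde{T}$) for the application of \Cref{lem:GL-length}.
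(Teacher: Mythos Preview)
Your proposal is correct and follows essentially the same approach as the paper: choose isometries from $(V,g)$ and $(V',g')$ to $(\mathbb{R}^n,g_0)$, transport $T$ to a matrix $\widetilde{T}\in\operatorname{GL}(n,\mathbb{R})$, and apply \Cref{lem:GL-length}. Your handling of the transpose bookkeeping (applying \Cref{lem:GL-length} to $\widetilde{T}^\top$ and then using $\|\widetilde{T}^\top\|=\|\widetilde{T}\|$) is in fact slightly more careful than the paper's own write-up, which identifies $d_{\calP(V)}(T^*(g'),g)$ directly with $d_{\operatorname{GL}(n,\mathbb{R})/\operatorname{O}(n)}([\widetilde{T}],[I_n])$ without commenting on the transpose.
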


\begin{proof}
	Given a positive integer $n$, we let $c_1, c_2 > 0$ be as given in \Cref{lem:GL-length-abstract}. 
	To prove the desired inequalities, we fix (arbitrary) isometries $S$ and $S'$ connecting, respectively, $(V, g)$ and $(V',g')$ to $\mathbb{R}^n$ with the standard Euclidean inner product $g_0$. Then the desired inequalities follow from \Cref{lem:GL-length-abstract} as well as the observations that $\|T\|_{g,g'} = \| S' T S^{-1} \|$, $\| T^{-1} \|_{g',g} = \| (S' T S^{-1})^{-1} \|$, and 
	$d_{\calP(V)} (T^*(g'), g) = d_{\calP(\mathbb{R}^n)} ((S' T S^{-1})^*(g_0), g_0) = d_{\operatorname{GL}(n,\mathbb{R}) / \operatorname{O}(n)} ([S' T S^{-1}], [I_n])$, the last equation being a consequence of \Cref{constr:symmetric-space-GL}. 
\end{proof}

Now we consider, from a certain geometric perspective, the space of all Riemannian metrics on a given closed smooth manifold. 

\begin{constr} \label{constr:Riem-bundle}
	Let $N$ be an $n$-dimensional closed smooth manifold. 
	Note that a Riemannian metric on $N$ is nothing but a smooth field over $N$ of inner products on the tangent spaces of $N$. 
	Inspired by this,  we let $\operatorname{Riem}(N)$ be the continuous field of {\hhs}s with  
	\[
		\baseSpace{\operatorname{Riem}(N)} := N \, , \quad \operatorname{Riem}(N)_z := \calP(T_z N) \text{ for any } z \in N \; ,
	\]
	and $\spaceOfSections_{\operatorname{cont}} (\operatorname{Riem}(N))$ consisting of 
	all $s \in \prod_{z \in N} \calP(T_z N)$ such that for each smooth local chart $\varphi \colon \mathbb{R}^n \supseteq U \to N$, the restriction of $s$ on $U$ is a continuous function from $U\subseteq \mathbb{R}^n$ to $\calP(\mathbb R^n)$.

\end{constr}

\begin{rmk}
	We may also construct $\operatorname{Riem}(N)$ by first forming its total space (in the sense of \Cref{def:cont-field-HHS-total-space}) to be the smooth fiber bundle $\operatorname{GL}(N) \times_{\operatorname{GL}(n,\mathbb{R})} M_n(\mathbb{R})_{>0}$, where $\operatorname{GL}(N)$ is the frame bundle of $N$, and then taking the continuous sections on this smooth fiber bundle. The smooth structure does not play a significant role in this paper. 
\end{rmk}


\begin{constr}\label{constr:diff-riem}
	The group $\operatorname{Diff}(N)$ of diffeomorphisms over a closed smooth manifold $N$ acts isometrically on the continuous field $\operatorname{Riem}(N)$ of {\hhs}s by ``pushing forward Riemannian metrics''. 
	More precisely, each $\varphi \in \operatorname{Diff}(N)$ gives rise to an isometric continuous isomorphism 
	\[
		\varphi_* := \left(\varphi^{-1} \colon N \to N, \quad \left( ( D_{z} \varphi^{-1})^{*} \colon \calP(T_{\varphi^{-1} (z)} N) \to \calP(T_{z} N) \right)_{z \in N} \right)
	\]
	in the sense of \Cref{defn:cont-fields-morphism}, where $D_{z} \varphi^{-1} \colon T_{z} N \to T_{\varphi^{-1} (z)} N$ denotes the derivative of $\varphi^{-1}$ at $z$, and $( D_{z} \varphi^{-1})^{*}$ is given as in \eqref{eq:symmetric-space-GL-pullback}. 
	
	In particular, for any $s\in \operatorname{Riem}(N)$ (e.g.~$s$ may correspond to a Riemannian metric on $N$), we have 
	\[
		\varphi_*(s)_{\varphi(z)} \left( D_z\varphi(v), D_z\varphi(w) \right) = s_z \left( v, w \right)
	\]
	for any $z \in N$ and $v,w \in T_{z} N$. It is clear that the assignment $\varphi \mapsto \varphi_*$ is homomorphic. 
	
	Furthermore, by \Cref{cor:continuum-product-field-actions} and \Cref{lem:continuum-product-field-augmentation-maps}, this left  action  canonically induces isometric left actions of $\operatorname{Diff}(N)$ on 
	$\operatorname{Riem}(N)|_{\operatorname{Prob}(N)}$ and 
	$\operatorname{Riem}(N)^{[0,1]}|_{\operatorname{Prob}(N)}$. 
	
	If $\Gamma$ is a subgroup of $\operatorname{Diff}(N)$ and $Z$ is a $\Gamma$-invariant closed convex subspace of $\operatorname{Prob}(N)$, then the above canonical action restricts to an isometric action of $\Gamma$ on $\operatorname{Riem}(N)|_{Z}$. 
\end{constr}

Key to our proof of \Cref{main-theorem} is an analysis of the properness of actions $\Gamma \curvearrowright \operatorname{Riem}(N)|_{Z}$ arising from \Cref{constr:diff-riem}. With the help of \Cref{lem:GL-length-abstract}, we shall, in \Cref{lem:geometrically-discrete} below, unpack the layers of definitions to produce a more explicit criterion of properness, formulated in terms of the following pseudometrics. 

\begin{constr}\label{constr:pseudometric-integral}
	Let $\mu$ be a regular Borel probability measure on $N$. Also fix a Riemannian metric $g$ on $N$. For any diffeomorphism $\varphi \in \operatorname{Diff}(N)$ and $x \in N$, we write $D_x \varphi \colon T_x N \to T_{\varphi(x)} N$ for the derivative of $\varphi$ at $x$, viewed as a linear operator between finite-dimensional real Hilbert spaces, and write $\left\| D_x \varphi \right\|_g$ for its operator norm. 
	Observe that $x \mapsto \left\| D_x \varphi \right\|_g$ is a smooth nonzero function on $N$. 
	We then define 
	a pseudometric on $\operatorname{Diff}(N)$
	\begin{align*}
	d_{\mu, g} \colon & \ \operatorname{Diff}(N) \times \operatorname{Diff}(N)  \to  [0, \infty) \\
	& \ (\varphi, \psi )  \mapsto  \left( \int_{x \in N}  \left( \log \left( \max \left\{\left\| D_{\varphi^{-1}(x)} \left(\psi^{-1} \varphi\right) \right\|_g ,   \left\| D_{\psi^{-1}(x)} \left(\varphi^{-1} \psi\right) \right\|_g \right\} \right) \right)^2    \, \operatorname{d} \mu (x) \right)^{\frac{1}{2}} \; .
	\end{align*}
	We will verify that this is indeed a pseudometric in \Cref{lem:pseudometric-integral}. 
	We also observe that, for a second Riemannian metric $g'$ on $N$, we have 
	\begin{equation} \label{constr:pseudometric-integral::eq:g}
	\left| d_{\mu, g} \left( \varphi, \varphi' \right) - d_{\mu, g'} \left( \varphi, \varphi' \right) \right| \leq 
	C_{g,g'} := 
	\max_{x \in N}  \left( \log \left( \left\| \operatorname{id}_{T_x X} \right\|_{g, g'}  \left\| \operatorname{id}_{T_x X} \right\|_{g', g}  \right) \right)
	\end{equation}
	where $\left\| \operatorname{id}_{T_x X} \right\|_{g, g'}$ is the operator norm of the identity map from $\left( T_x X , g_x \right)$ to $\left( T_x X , g'_x \right)$ and $\left\| \operatorname{id}_{T_x X} \right\|_{g', g}$ is the norm of its inverse. 
	Moreover, we see compute that for any $\varphi, \psi, \tau \in \operatorname{Diff}(N)$, we have 
	\begin{align*}
		&\  d_{\tau_\ast \mu, g} (\varphi, \psi ) \\
		= &\  \left( \int_{x \in N}  \left( \log \left( \max \left\{\left\| D_{\varphi^{-1} \tau(x)} \left(\psi^{-1} \varphi\right) \right\|_g ,   \left\| D_{\psi^{-1} \tau(x)} \left(\varphi^{-1} \psi\right) \right\|_g \right\} \right) \right)^2    \, \operatorname{d} \mu (x) \right)^{\frac{1}{2}} \\
		= &\  d_{\mu, g} (\tau^{-1} \varphi, \tau^{-1} \psi ) \; ,
	\end{align*}
	where $\tau_\ast \mu$ denotes the pushforward measure. 
\end{constr}

\begin{lem} \label{lem:pseudometric-integral}
	The function $d_{\mu, g}$ defined in \Cref{constr:pseudometric-integral} is indeed a pseudometric, i.e., 
	for any $\varphi$, $\varphi'$ and $\varphi''$ in $\operatorname{Diff}(N)$, we have: 
	\begin{enumerate}
		\item $d_{\mu, g} \left( \varphi, \varphi \right) = 0$;
		\item $d_{\mu, g} \left( \varphi, \varphi' \right) = d_{\mu, g} \left( \varphi' , \varphi \right)$;
		\item $d_{\mu, g} \left(\varphi , \varphi'' \right) \leq d_{\mu, g}(\varphi, \varphi') + d_{\mu, g} ( \varphi', \varphi'' ) $. 
	\end{enumerate}	
\end{lem}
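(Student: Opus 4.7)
The plan is to verify the three pseudometric axioms directly, using the chain rule together with Minkowski's inequality in $L^2(N,\mu)$ to handle the triangle inequality. Set
\[
A(\varphi,\psi)(x) := \log\max\left\{\left\|D_{\varphi^{-1}(x)}(\psi^{-1}\varphi)\right\|_g,\; \left\|D_{\psi^{-1}(x)}(\varphi^{-1}\psi)\right\|_g\right\},
\]
so that $d_{\mu,g}(\varphi,\psi) = \|A(\varphi,\psi)\|_{L^2(N,\mu)}$. Note that $A(\varphi,\psi)(x)\geq 0$ always, because $(\psi^{-1}\varphi)^{-1} = \varphi^{-1}\psi$ and the chain rule gives, as linear maps on the relevant tangent spaces, $D_{\psi^{-1}(x)}(\varphi^{-1}\psi) = \bigl(D_{\varphi^{-1}(x)}(\psi^{-1}\varphi)\bigr)^{-1}$, so at least one of the two operator norms in the max is $\geq 1$.

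Reflexivity is immediate: when $\varphi=\psi$ the derivatives become identities and the integrand vanishes identically. Symmetry $d_{\mu,g}(\varphi,\psi)=d_{\mu,g}(\psi,\varphi)$ holds because the two terms inside the $\max$ are interchanged when $\varphi$ and $\psi$ are swapped, leaving $A$ unchanged.

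The substantive step is the triangle inequality. Fix $\varphi,\psi,\tau\in\operatorname{Diff}(N)$ and $x\in N$. I plan to show the \emph{pointwise} inequality
\[
A(\varphi,\tau)(x)\;\leq\;A(\varphi,\psi)(x)+A(\psi,\tau)(x).
\]
Using $\tau^{-1}\varphi = (\tau^{-1}\psi)\circ(\psi^{-1}\varphi)$, the chain rule yields
\[
D_{\varphi^{-1}(x)}(\tau^{-1}\varphi) = D_{\psi^{-1}(x)}(\tau^{-1}\psi)\circ D_{\varphi^{-1}(x)}(\psi^{-1}\varphi),
\]
and similarly $D_{\tau^{-1}(x)}(\varphi^{-1}\tau) = D_{\psi^{-1}(x)}(\varphi^{-1}\psi)\circ D_{\tau^{-1}(x)}(\psi^{-1}\tau)$. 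Submultiplicativity of the operator norm and taking $\log$ give two inequalities of sums of logs; applying the elementary bound $\max\{a_1+b_1,\,a_2+b_2\}\leq \max\{a_1,a_2\}+\max\{b_1,b_2\}$ together with $\log\max=\max\log$ for positive quantities produces the desired pointwise inequality. Then, since $A(\varphi,\tau)$, $A(\varphi,\psi)$, $A(\psi,\tau)$ are nonnegative measurable functions on $N$, Minkowski's inequality in $L^2(N,\mu)$ yields
\[
d_{\mu,g}(\varphi,\tau)=\|A(\varphi,\tau)\|_{L^2}\leq\|A(\varphi,\psi)\|_{L^2}+\|A(\psi,\tau)\|_{L^2}=d_{\mu,g}(\varphi,\psi)+d_{\mu,g}(\psi,\tau).
\]

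The only subtlety is bookkeeping of base points in the chain rule: one must track that the composed derivative $D_{\psi^{-1}(x)}(\tau^{-1}\psi)\circ D_{\varphi^{-1}(x)}(\psi^{-1}\varphi)$ indeed maps $T_{\varphi^{-1}(x)}N\to T_{\tau^{-1}(x)}N$, with the intermediate space $T_{\psi^{-1}(x)}N$ carrying the inner product $g_{\psi^{-1}(x)}$ used consistently in both factors of the submultiplicative bound. Once this is laid out carefully, the argument is otherwise elementary and no further obstacles arise.
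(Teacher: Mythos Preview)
Your proof is correct and follows essentially the same approach as the paper: both establish the pointwise inequality $A(\varphi,\tau)(x)\le A(\varphi,\psi)(x)+A(\psi,\tau)(x)$ via the chain rule, submultiplicativity of the operator norm, and the elementary bound $\max\{a+b,\,a'+b'\}\le\max\{a,a'\}+\max\{b,b'\}$, and then conclude with Minkowski's inequality in $L^2(N,\mu)$. Your bookkeeping of base points and the observation that $A\ge 0$ match the paper's treatment.
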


\begin{proof}
	The first two axioms are immediate from the definition of $d_{\mu, g}$. To verify the last axiom, we first observe that for any $x \in N$, we have 
	\[
	1 =  \left\| \operatorname{id}_{T_{\varphi''{}^{-1} (x) }X } \right\|_g \leq \left\| D_{\varphi^{-1} (x)} \left(\varphi''{}^{-1} \varphi\right) \right\|_g  \left\| D_{\varphi''{}^{-1} (x)} \left(\varphi^{-1} \varphi''\right) \right\|_g \; , 
	\]
	whence 
	\begin{align*}
	0 \leq &\ \log \left( \max \left\{\left\| D_{\varphi^{-1} (x)} \left(\varphi''{}^{-1} \varphi\right) \right\|_g ,  \left\| D_{\varphi''{}^{-1} (x)} \left(\varphi^{-1} \varphi''\right) \right\|_g \right\} \right) \\
	= &\ \log \left( \max \left\{\left\| D_{\varphi^{-1} (x)} \left(\varphi''^{-1} \varphi'{} \varphi'{}^{-1} \varphi\right) \right\|_g , \ \left\| D_{\varphi''{}^{-1} (x)} \left(\varphi^{-1} \varphi' \varphi'{}^{-1} \varphi''\right) \right\|_g \right\} \right) \\
	= &\  \max \left\{ \log \left\| D_{\varphi'{}^{-1} (x)} \left(\varphi''{}^{-1} \varphi' \right) D_{\varphi^{-1} (x)} \left( \varphi'{}^{-1} \varphi\right) \right\|_g , \right.    \\
	& \qquad \qquad  \left.   \log  \left\| D_{\varphi'{}^{-1} (x)} \left(\varphi^{-1} \varphi'  \right) D_{\varphi''{}^{-1} (x)} \left( \varphi'{}^{-1} \varphi''\right) \right\|_g \right\}   \\
	\leq &\ \max \left\{ \log \left\| D_{\varphi'{}^{-1} (x)} \left(\varphi''{}^{-1} \varphi' \right) \right\|_g +  \log \left\| D_{\varphi^{-1} (x)} \left( \varphi'{}^{-1} \varphi\right) \right\|_g , \right.    \\
	& \qquad \quad   \left.  \log \left\| D_{\varphi'{}^{-1} (x)} \left(\varphi^{-1} \varphi'  \right)  \right\|_g + \log \left\| D_{\varphi''{}^{-1} (x)} \left( \varphi'{}^{-1} \varphi''\right) \right\|_g \right\}   \\
	\leq &\ \max \left\{ \log \left\| D_{\varphi'{}^{-1} (x)} \left(\varphi''{}^{-1} \varphi' \right) \right\|_g , \log \left\| D_{\varphi''{}^{-1} (x)} \left( \varphi'{}^{-1} \varphi''\right) \right\|_g  \right\}     \\
	&     + \max \left\{ \log   \left\| D_{\varphi'{}^{-1} (x)} \left(\varphi^{-1} \varphi'  \right)  \right\|_g , \log \left\| D_{\varphi^{-1} (x)} \left( \varphi'{}^{-1} \varphi \right) \right\|_g \right\}   
	\end{align*}
	and thus 
	\begin{align*}
	&\ \left| \log \left( \max \left\{\left\| D_{\varphi^{-1} (x)} \left(\varphi''{}^{-1} \varphi\right) \right\|_g ,  \left\| D_{\varphi''{}^{-1} (x)} \left(\varphi^{-1} \varphi''\right) \right\|_g \right\} \right) \right| \\
	\leq &\ \log \left( \max \left\{  \left\| D_{\varphi'{}^{-1} (x)} \left(\varphi''{}^{-1} \varphi' \right) \right\|_g , \left\| D_{\varphi''{}^{-1} (x)} \left( \varphi'{}^{-1} \varphi''\right) \right\|_g  \right\}   \right)   \\
	&     + \log \left( \max \left\{  \left\| D_{\varphi'{}^{-1} (x)} \left(\varphi^{-1} \varphi'  \right)  \right\|_g ,  \left\| D_{\varphi^{-1} (x)} \left( \varphi'{}^{-1} \varphi \right) \right\|_g \right\} \right) \; . 
	\end{align*}
	The desired inequality thus follows by \Cref{constr:pseudometric-integral} and the Minkowski inequality: 
	\begin{align*}
	&\ \left( \int_{x \in N}  \left( \log \left( \max \left\{\left\| D_{\varphi^{-1} (x)} \left(\varphi''{}^{-1} \varphi\right) \right\|_g ,  \left\| D_{\varphi''{}^{-1} (x)} \left(\varphi^{-1} \varphi''\right) \right\|_g \right\} \right) \right)^2    \, \operatorname{d} \mu (x) \right)^{\frac{1}{2}} \\
	\leq &\ \left( \int_{x \in N}  \left( \log \left( \max \left\{  \left\| D_{\varphi'{}^{-1} (x)} \left(\varphi''{}^{-1} \varphi' \right) \right\|_g , \left\| D_{\varphi''{}^{-1} (x)} \left( \varphi'{}^{-1} \varphi''\right) \right\|_g  \right\}   \right)  \right. \right. \\
	& \ \left. \left. +  \log \left( \max \left\{  \left\| D_{\varphi'{}^{-1} (x)} \left(\varphi^{-1} \varphi'  \right)  \right\|_g ,  \left\| D_{\varphi^{-1} (x)} \left( \varphi'{}^{-1} \varphi \right) \right\|_g \right\} \right) \right)^2    \, \operatorname{d} \mu (x) \right)^{\frac{1}{2}} \\
	\leq &\ \left( \int_{x \in N}  \left( \log \left( \max \left\{  \left\| D_{\varphi'{}^{-1} (x)} \left(\varphi''{}^{-1} \varphi' \right) \right\|_g , \left\| D_{\varphi''{}^{-1} (x)} \left( \varphi'{}^{-1} \varphi''\right) \right\|_g  \right\}   \right) \right)^2    \, \operatorname{d} \mu (x) \right)^{\frac{1}{2}} \\
	& \ +  \left( \int_{x \in N}  \left( \log \left( \max \left\{  \left\| D_{\varphi'{}^{-1} (x)} \left(\varphi^{-1} \varphi'  \right)  \right\|_g ,  \left\| D_{\varphi^{-1} (x)} \left( \varphi'{}^{-1} \varphi \right) \right\|_g \right\} \right) \right)^2    \, \operatorname{d} \mu (x) \right)^{\frac{1}{2}} 
	\end{align*}
	as desired. 
\end{proof}

\begin{lem} \label{lem:pseudometric-bilipschitz}
	Let $g$ be a Riemannian metric on $N$. 
	Denote by $s$ the continuous section of $\operatorname{Riem}(N)|_{\operatorname{Prob}(N)}$ induced from $g$ via the canonical map $\spaceOfSections_{\operatorname{cont}} (\operatorname{Riem}(N)) \to \spaceOfSections_{\operatorname{cont}} (\operatorname{Riem}(N)|_{\operatorname{Prob}(N)})$ as in \Cref{def:continuum-product-field}, that is, we have $s(\mu) = g \in \spaceOfSections_{\operatorname{cont}} (\operatorname{Riem}(N)) \subseteq L^2(N, \mu, \operatorname{Riem}(N))$ for any $\mu \in \operatorname{Prob}(N)$. 
	
	Then there are $c_1, c_2 > 0$ such that for any $\mu \in \operatorname{Prob}(N)$, we have 
	\[
		c_1 \, d_{\mu, s} (\varphi , \psi) \leq \, d_{\mu, g} (\varphi , \psi) \leq c_2 \, d_{\mu, s} (\varphi , \psi)  \quad \text{ for any } \varphi, \psi \in \operatorname{Diff}(N) \; , 
	\]
	where $d_{\mu, s}$ is the pseudometric on $\operatorname{Diff}(N)$ defined by 
	\[
	d_{\mu, s} (\varphi , \psi) = d_{(\operatorname{Riem}(N)|_{\operatorname{Prob}(N)})_\mu} ( \alpha_\varphi (s), \alpha_\psi (s) ) \quad \text{ for any } \varphi, \psi \in \operatorname{Diff}(N) \; ,
	\]
	where $\alpha$ denotes the action $\Gamma \curvearrowright \operatorname{Riem}(N)|_{\operatorname{Prob}(N)}$ from \Cref{constr:diff-riem}. 
\end{lem}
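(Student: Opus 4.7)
The plan is to reduce the claim to a pointwise bilipschitz estimate on each fiber $\operatorname{Riem}(N)_x = \calP(T_x N)$, then integrate. By the definition of the action $\alpha$ from \Cref{constr:diff-riem}, for any $\varphi \in \operatorname{Diff}(N)$ and $x \in N$ we have $\alpha_\varphi(g)(x) = (D_x\varphi^{-1})^*(g_{\varphi^{-1}(x)}) \in \calP(T_x N)$, and since the section $s$ is fiberwise-constant (equal to $g$) over $\operatorname{Prob}(N)$, the $L^2$-metric in the fiber $L^2(N,\mu,\operatorname{Riem}(N))$ unpacks to
\[
d_{\mu,s}(\varphi,\psi)^2 = \int_N d_{\calP(T_x N)}\bigl(\alpha_\varphi(g)(x),\alpha_\psi(g)(x)\bigr)^2\,\mathrm{d}\mu(x) \, .
\]
Thus it suffices to prove that there exist constants $c_1,c_2>0$ (depending only on $n=\dim N$) such that for all $\varphi,\psi \in \operatorname{Diff}(N)$ and all $x \in N$,
\[
c_1 \, d_{\calP(T_x N)}\bigl(\alpha_\varphi(g)(x),\alpha_\psi(g)(x)\bigr) \leq L_x(\varphi,\psi) \leq c_2 \, d_{\calP(T_x N)}\bigl(\alpha_\varphi(g)(x),\alpha_\psi(g)(x)\bigr),
\]
where $L_x(\varphi,\psi) := \log\max\{\|D_{\varphi^{-1}(x)}(\psi^{-1}\varphi)\|_g,\|D_{\psi^{-1}(x)}(\varphi^{-1}\psi)\|_g\}$. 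Once this is established, squaring and integrating against $\mu$ yields the desired conclusion.

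To verify the pointwise estimate, I will set $y := \varphi^{-1}(x)$ and $z := \psi^{-1}(x)$ and abbreviate $A := D_x\varphi^{-1}\colon T_x N \to T_y N$ and $B := D_x\psi^{-1}\colon T_x N \to T_z N$. Since the pullback map $(A^{-1})^*\colon \calP(T_x N) \to \calP(T_y N)$ from \eqref{eq:symmetric-space-GL-pullback} is an isometry (it is inverse to the isometry $A^*$), and since $BA^{-1} = D_x\psi^{-1}\circ D_y\varphi = D_y(\psi^{-1}\varphi)\colon T_y N \to T_z N$, we obtain
\[
d_{\calP(T_x N)}\bigl(A^*(g_y),B^*(g_z)\bigr) = d_{\calP(T_y N)}\bigl(g_y,\,(D_y(\psi^{-1}\varphi))^*(g_z)\bigr) \, .
\]
Now \Cref{lem:GL-length-abstract}, applied to the linear isomorphism $T = D_y(\psi^{-1}\varphi)$ and to the inner products $g_y$ and $g_z$, gives exactly the two-sided bound
\[
c_1\, d_{\calP(T_y N)}\bigl(T^*(g_z),g_y\bigr) \leq \log\max\{\|T\|_{g_y,g_z},\|T^{-1}\|_{g_z,g_y}\} \leq c_2\, d_{\calP(T_y N)}\bigl(T^*(g_z),g_y\bigr).
\]
Noting that $T^{-1} = D_z(\varphi^{-1}\psi)$ and that the norms $\|\cdot\|_{g_y,g_z}$ and $\|\cdot\|_{g_z,g_y}$ are precisely what the paper denotes by $\|\cdot\|_g$, this is the pointwise estimate claimed.

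I expect the main obstacle to be purely bookkeeping: keeping track of which fiber each pullback operates on, and verifying carefully that the manipulation $d_{\calP(T_x N)}(A^*g_y,B^*g_z) = d_{\calP(T_y N)}(g_y,(BA^{-1})^*g_z)$ uses the isometry $A^*$ correctly (rather than, say, $(A^{-1})^*$ in the wrong direction). Once this identification is in place, the crucial input \Cref{lem:GL-length-abstract} is uniform in $x$ because its constants depend only on the dimension $n$, so $c_1$ and $c_2$ can be chosen independently of $x$, $\varphi$, $\psi$, and $\mu$. Everything else is Minkowski-type integration.
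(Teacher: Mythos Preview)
Your proof is correct and follows essentially the same approach as the paper: both unpack $d_{\mu,s}$ as an $L^2$-integral of fiberwise distances in $\calP(T_x N)$ and then invoke \Cref{lem:GL-length-abstract} to obtain a pointwise bilipschitz estimate with constants depending only on $n$. The only cosmetic difference is that the paper first reduces to the special case $\psi = \operatorname{id}_N$ via the equivariance relations $d_{\mu,g}(\varphi,\psi) = d_{\psi^{-1}_*\mu,g}(\psi^{-1}\varphi,\operatorname{id}_N)$ and $d_{\mu,s}(\varphi,\psi) = d_{\psi^{-1}_*\mu,s}(\psi^{-1}\varphi,\operatorname{id}_N)$, whereas you handle general $(\varphi,\psi)$ directly by pushing through the isometry $(A^{-1})^*$; both arrive at the same application of \Cref{lem:GL-length-abstract}.
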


\begin{proof}
	Let $n$ be the dimension of $N$ and let $c_1, c_2 > 0$ be the constants from \Cref{lem:GL-length-abstract} that depend only on $n$. 
	For any $\mu \in \operatorname{Prob}(N)$ and $\varphi \in \operatorname{Diff}(N)$, we have $\alpha_\varphi (s) (\mu) = \varphi_* (g) \in \spaceOfSections_{\operatorname{cont}} (\operatorname{Riem}(N)) \subseteq L^2(N, \mu, \operatorname{Riem}(N))$ 
	and thus 
	\begin{align*}
	d_{\mu, s} (\varphi , \idmap_N)^2 &= d_{(\operatorname{Riem}(N)|_{\operatorname{Prob}(N)})_\mu} ( \alpha_\varphi (s), s ) \\
	&= \int_{x \in N} d_{\calP (T_x N)} \left( \varphi_*(g)_x, g_x \right) ^2 \, \operatorname{d} \mu (x) \\
	&= \int_{x \in N} d_{\calP (T_x N)} \left((D_x \varphi^{-1})^* (g_{\varphi^{-1} (x)}), g_x \right) ^2 \, \operatorname{d} \mu (x) \; ,
	\end{align*}
	where the last equation follows from \Cref{constr:diff-riem};  
	hence it follows from \Cref{lem:GL-length-abstract} and the equation $\left(D_x \varphi^{-1}\right)^{-1} = D_{\varphi^{-1}(x)} \varphi$ that
	\begin{equation} \label{eq:pseudometric-bilipschitz-special-case}
		c_1 \, d_{\mu, s} (\varphi, \idmap_N) \leq d_{\mu, g} (\varphi , \idmap_N) \leq c_2 \, d_{\mu, s} (\varphi, \idmap_N) \; .
	\end{equation}
	Now for any $\varphi, \psi \in \operatorname{Diff}(N)$, we have $d_{\mu, g} (\varphi , \psi) = d_{\psi^{-1} _\ast \mu, g} (\psi^{-1} \varphi , \idmap_N)$ by \Cref{constr:pseudometric-integral}, while it follows from the isometricity of $\alpha$ that $d_{\mu, g} (\varphi , \psi) = d_{\psi^{-1} _\ast \mu, g} (\psi^{-1} \varphi , \idmap_N)$. 
	Hence the desired inequalities follow from the special case \eqref{eq:pseudometric-bilipschitz-special-case}. 
\end{proof}

To deliver the punchline of this section, we introduce the following explicit condition, which shows up in \Cref{main-theorem}. 

\begin{defn}\label{def:geometrically-discrete}
	Let $\mu$ be a regular Borel probability measure on $N$. 
	A subgroup $\Gamma$ of $\operatorname{Diff}(N)$ is 
	\emph{$\mu$-discrete} if 
	\[
	\inf_{\tau \in \Gamma} d_{\tau_\ast\mu, g} (\gamma, 1_\Gamma) \xrightarrow{\gamma \to \infty \text{ in } \Gamma} \infty \; ,
	\]
	that is, for any $N>0$, only finitely many $\gamma \in \Gamma$ makes the left-hand side above fall below $N$. 
\end{defn}

Note that by \eqref{constr:pseudometric-integral::eq:g}, the left-hand side in the above is also equal to $\inf_{\tau \in \Gamma} d_{\mu, g} (\tau \gamma, \tau)$, and this condition is independent of the choice of $g$. 

\begin{prop}\label{lem:geometrically-discrete}
	Let $\Gamma$ be a subgroup of the group of diffeomorphisms over a closed smooth manifold $N$. As a subgroup of $\operatorname{Diff}(N)$, $\Gamma$ thus inherits actions on the continuous fields $\operatorname{Riem}(N)$ over $N$ and $\operatorname{Riem}(N) |_{\operatorname{Prob}(N)}$ of {\hhs}s  over $\operatorname{Prob}(N)$. 
	Let $\mu$ be a regular Borel probability measure and let $\overline{\operatorname{convex}(\Gamma \cdot \mu)} \subseteq \operatorname{Prob}(N)$ be the closed convex hull of the orbit of $\mu$ under the action of $\Gamma$ on $\operatorname{Prob}(N)$. 
	Note the action $\Gamma \curvearrowright \operatorname{Riem}(N) |_{\operatorname{Prob}(N)}$ restricts to actions of $\Gamma$ on $\operatorname{Riem}(N) |_{\overline{\operatorname{convex}(\Gamma \cdot \mu)}}$ and $\Gamma \curvearrowright \operatorname{Riem}(N) |_{\overline{\Gamma \cdot \mu}}$. 
	Let $g$ be a Riemannian metric on $N$. 
	Then the following are equivalent: 
	\begin{enumerate}
		\item \label{lem:geometrically-discrete:discrete} $\Gamma$ is $\mu$-discrete;
		\item \label{lem:geometrically-discrete:orbit} the action $\Gamma \curvearrowright \operatorname{Riem}(N) |_{\overline{\Gamma \cdot \mu}}$ is {metrically proper};
		\item \label{lem:geometrically-discrete:hull} the action $\Gamma \curvearrowright \operatorname{Riem}(N) |_{\overline{\operatorname{convex}(\Gamma \cdot \mu)}}$ is {metrically proper};
		\item \label{lem:geometrically-discrete:coarse} the map $\Gamma \to L^2(N, \mu, \operatorname{Riem}(N))$, $\gamma \mapsto \gamma (g)$ is a coarse embedding. 
	\end{enumerate}
\end{prop}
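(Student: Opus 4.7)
The plan is to establish the chain of equivalences by unfolding the relevant definitions and invoking a handful of earlier results that have been set up precisely for this purpose. I will first handle the two ``geometric'' equivalences \eqref{lem:geometrically-discrete:orbit}$\Leftrightarrow$\eqref{lem:geometrically-discrete:hull} and \eqref{lem:geometrically-discrete:orbit}$\Leftrightarrow$\eqref{lem:geometrically-discrete:coarse}, and then work out the translation between \eqref{lem:geometrically-discrete:discrete} and \eqref{lem:geometrically-discrete:orbit}.

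First, \eqref{lem:geometrically-discrete:orbit}$\Leftrightarrow$\eqref{lem:geometrically-discrete:hull} is an immediate application of \Cref{cor:proper-action-cont-field-convex-prob} to the $\Gamma$-invariant closed subset $Z := \overline{\Gamma \cdot \mu}$ of $\operatorname{Prob}(N)$, whose closed convex hull is $\overline{\operatorname{convex}(\Gamma\cdot\mu)}$. For \eqref{lem:geometrically-discrete:orbit}$\Leftrightarrow$\eqref{lem:geometrically-discrete:coarse}, I will apply \Cref{lem:proper-action-cont-field-coarse} to the restricted action on $\operatorname{Riem}(N)|_{\overline{\Gamma\cdot\mu}}$, using $z = \mu$ as the base point (which by construction has dense orbit in $\overline{\Gamma\cdot\mu}$) and choosing $s$ to be the constant continuous section induced by $g$ via the canonical map $\spaceOfSections_{\operatorname{cont}}(\operatorname{Riem}(N)) \to \spaceOfSections_{\operatorname{cont}}(\operatorname{Riem}(N)|_{\operatorname{Prob}(N)})$ as in \Cref{def:continuum-product-field}. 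Unpacking \Cref{constr:diff-riem}, the fiber at $\mu$ is precisely $L^2(N,\mu,\operatorname{Riem}(N))$ and $\alpha_\gamma(s)(\mu) = \gamma_\ast(g) = \gamma(g)$, so the coarse embedding condition from \Cref{lem:proper-action-cont-field-coarse} reduces to the one in \eqref{lem:geometrically-discrete:coarse}.

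The main work is the equivalence \eqref{lem:geometrically-discrete:discrete}$\Leftrightarrow$\eqref{lem:geometrically-discrete:orbit}. By \Cref{lem:pseudometric-bilipschitz}, the fiberwise distance $d_{\mu', s}(1_\Gamma, \gamma)$ in $L^2(N,\mu',\operatorname{Riem}(N))$ is bilipschitz (with uniform constants) to the pseudometric $d_{\mu', g}(1_\Gamma, \gamma)$ from \Cref{constr:pseudometric-integral}. Applying \Cref{defn:proper-action-cont-field} with the constant section $s$ and \Cref{rmk:proper-action-cont-field-exists}, condition~\eqref{lem:geometrically-discrete:orbit} is therefore equivalent to
\[
\inf_{\mu' \in \overline{\Gamma\cdot\mu}} d_{\mu',g}(1_\Gamma,\gamma) \xrightarrow{\gamma\to\infty} \infty \; .
\]
Tracking the conventions in \Cref{constr:diff-riem} and \Cref{cor:continuum-product-field-actions}, the orbit $\Gamma\cdot\mu$ in $\operatorname{Prob}(N)$ coincides with $\{\tau_\ast\mu : \tau \in \Gamma\}$, so the infimum over the orbit is exactly the quantity $\inf_{\tau\in\Gamma} d_{\tau_\ast\mu,g}(1_\Gamma,\gamma)$ appearing in \eqref{lem:geometrically-discrete:discrete}. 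To conclude, I will promote the infimum over the orbit to the infimum over its closure, using that for each fixed $\gamma \in \Gamma$, the map
\[
\operatorname{Prob}(N) \ni \mu' \longmapsto d_{\mu',g}(1_\Gamma,\gamma)^2 = \int_N \left( \log \max\{\|D_{\gamma^{-1}(x)}\gamma^{-1}\|_g, \|D_x\gamma\|_g\} \right)^2 \, d\mu'(x)
\]
is continuous in the weak-$*$ topology, because the integrand is a continuous and bounded function on the compact manifold $N$.

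The main obstacle to watch for is the bookkeeping of left/right action conventions on the measure space and in the identification between orbit elements and pushforward measures; aside from that, each step reduces to an application of a previously established lemma or corollary.
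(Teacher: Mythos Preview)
Your proposal is correct and follows essentially the same route as the paper's proof: the equivalences \eqref{lem:geometrically-discrete:orbit}$\Leftrightarrow$\eqref{lem:geometrically-discrete:hull} and \eqref{lem:geometrically-discrete:orbit}$\Leftrightarrow$\eqref{lem:geometrically-discrete:coarse} are handled by \Cref{cor:proper-action-cont-field-convex-prob} and \Cref{lem:proper-action-cont-field-coarse} respectively, and \eqref{lem:geometrically-discrete:discrete}$\Leftrightarrow$\eqref{lem:geometrically-discrete:orbit} goes through the bilipschitz comparison of \Cref{lem:pseudometric-bilipschitz} together with density of the orbit. The only cosmetic difference is that the paper passes from the closure to the orbit using density and the co-continuity of the fiberwise distance $d_{\aContField_\nu}(s(\nu),\alpha_\gamma(s)(\nu))$, whereas you do the equivalent step on the $d_{\mu',g}$ side via weak-$*$ continuity of the integral; also note a small slip in your displayed integrand (the evaluation points of the derivatives should be $D_x\gamma^{-1}$ and $D_{\gamma^{-1}(x)}\gamma$), which does not affect the argument.
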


\begin{proof}
	The equivalence between \eqref{lem:geometrically-discrete:orbit} and \eqref{lem:geometrically-discrete:hull} is an immediate consequence of \Cref{cor:proper-action-cont-field-convex-prob}, while
	the equivalence between \eqref{lem:geometrically-discrete:orbit} and \eqref{lem:geometrically-discrete:coarse} follows directly from \Cref{lem:proper-action-cont-field-coarse}. 
	It remains to show that \eqref{lem:geometrically-discrete:discrete} and \eqref{lem:geometrically-discrete:orbit} are equivalent. For brevity, let us write  $\aContField = \operatorname{Riem}(N) |_{\overline{\Gamma \cdot \mu}}$ and  $\baseSpace{\aContField} = \overline{\Gamma \cdot \mu} $. 
	It follows from \Cref{lem:pseudometric-bilipschitz} that there are $c_1, c_2 > 0$ such that for any $\nu \in \operatorname{Prob}(N)$ and any $\gamma \in \Gamma$, we have 
	\[
	c_1 \, d_{\aContField_\nu} \left( s ( \nu ),  \alpha_\gamma (s)  (\nu) \right) \leq \, d_{\nu, g} (1_\Gamma, \gamma) \leq c_2 \, d_{\aContField_\nu} \left( s ( \nu ),  \alpha_\gamma (s)  (\nu) \right) \; , 
	\]
	where $s$ is the continuous section of $\operatorname{Riem}(N)|_{\operatorname{Prob}(N)}$ induced from $g$. 
	By the density of $\Gamma \cdot \mu$ in $\baseSpace{\aContField}$, we have  
	\begin{align*}
		\inf_{\nu \in \baseSpace{\aContField}} d_{\aContField_\nu} \left( s(\nu),  \alpha_\gamma (s)  (\nu) \right) &=   \inf_{\tau \in  \Gamma} d_{\aContField_{\tau_* \mu}} \left( s ( \tau_* \mu ),  \alpha_\gamma (s)  (\tau_* \mu) \right) \\
		&\in \left[ \frac{1}{c_2} \, \inf_{\tau \in \Gamma}  d_{\tau_* \mu, g} (1_\Gamma, \gamma),  \frac{1}{c_1} \, \inf_{\tau \in \Gamma} d_{\tau_* \mu, g} (1_\Gamma, \gamma)  \right] \; ,   
	\end{align*}
	whence we have $\inf_{\nu \in \baseSpace{\aContField}} d_{\aContField_\nu} \left( s(\nu),  \alpha_\gamma (s)  (\nu) \right) \xrightarrow{\gamma \to \infty \text{ in } \Gamma} \infty$ if and only if $\inf_{\tau \in \Gamma}  d_{\tau_* \mu, g} (1_\Gamma, \gamma) \xrightarrow{\gamma \to \infty \text{ in } \Gamma} \infty$, which is what we needed to prove according to \Cref{rmk:proper-action-cont-field-exists} and \Cref{def:geometrically-discrete}. 
\end{proof}

\begin{cor}\label{cor:geometrically-discrete-convex-hull}
	Let $\Gamma$ be a subgroup of the group of diffeomorphisms over a closed smooth manifold $N$. Then for any regular Borel probability measures $\mu$ and $\mu'$ on $N$, if $\Gamma$ is $\mu$-discrete and $\mu' \in \overline{\operatorname{convex}(\Gamma \cdot \mu)}$, then $\Gamma$ is $\mu'$-discrete. 
\end{cor}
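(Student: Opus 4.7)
The plan is to deduce this from \Cref{lem:geometrically-discrete} combined with the inheritance of properness under restriction to closed invariant subspaces (\Cref{rmk:proper-action-cont-field-reduction}). The conceptual point is that although $\mu$-discreteness is defined via an explicit analytic condition, \Cref{lem:geometrically-discrete} translates it into properness of an action on a continuous field over a certain $\Gamma$-invariant closed convex subset of $\operatorname{Prob}(N)$, and this geometric formulation behaves well under inclusions of closed $\Gamma$-invariant subspaces.

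More precisely, first I would record that $\overline{\operatorname{convex}(\Gamma \cdot \mu)}$ is $\Gamma$-invariant: the pushforward action $\nu \mapsto \gamma_* \nu$ on $\operatorname{Prob}(N)$ is affine (pushforward commutes with convex combinations) and continuous in the weak-$^*$ topology, so it preserves both the convex hull and its closure. Next, the hypothesis $\mu' \in \overline{\operatorname{convex}(\Gamma \cdot \mu)}$ yields $\Gamma \cdot \mu' \subseteq \overline{\operatorname{convex}(\Gamma \cdot \mu)}$ by $\Gamma$-invariance, and since $\overline{\operatorname{convex}(\Gamma \cdot \mu)}$ is itself closed and convex, we deduce
\[
\overline{\operatorname{convex}(\Gamma \cdot \mu')} \subseteq \overline{\operatorname{convex}(\Gamma \cdot \mu)} \; .
\]

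Now, by the equivalence \eqref{lem:geometrically-discrete:discrete} $\Leftrightarrow$ \eqref{lem:geometrically-discrete:hull} in \Cref{lem:geometrically-discrete}, $\mu$-discreteness of $\Gamma$ means precisely that the action $\Gamma \curvearrowright \operatorname{Riem}(N) |_{\overline{\operatorname{convex}(\Gamma \cdot \mu)}}$ is metrically proper. Applying \Cref{rmk:proper-action-cont-field-reduction} to the inclusion above then gives that the restricted action $\Gamma \curvearrowright \operatorname{Riem}(N) |_{\overline{\operatorname{convex}(\Gamma \cdot \mu')}}$ is also metrically proper. Invoking the equivalence \eqref{lem:geometrically-discrete:hull} $\Rightarrow$ \eqref{lem:geometrically-discrete:discrete} in \Cref{lem:geometrically-discrete}, now applied to $\mu'$, yields that $\Gamma$ is $\mu'$-discrete, as desired.

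There is essentially no obstacle here; the only minor subtlety is confirming that the pushforward action on $\operatorname{Prob}(N)$ is affine and weak-$^*$ continuous so that $\overline{\operatorname{convex}(\Gamma \cdot \mu)}$ is $\Gamma$-invariant. Once that is in hand, the corollary is a direct consequence of the geometric reinterpretation of $\mu$-discreteness already established in \Cref{lem:geometrically-discrete} together with the trivial fact that reductions of metrically proper actions to closed invariant subspaces are still metrically proper.
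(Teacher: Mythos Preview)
Your proof is correct and follows exactly the approach the paper intends: the corollary is stated without proof in the paper, as an immediate consequence of \Cref{lem:geometrically-discrete} via the inclusion $\overline{\operatorname{convex}(\Gamma \cdot \mu')} \subseteq \overline{\operatorname{convex}(\Gamma \cdot \mu)}$ and \Cref{rmk:proper-action-cont-field-reduction}, which is precisely what you have written out.
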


\section{A $C^*$-algebra associated to a continuous field of {\hhs}s}
\label{sec:AofM}

\newcommand{\piunbdd}{\Pi}
\newcommand{\pialg}{\Pi_{\operatorname{b}}}

In this section, we define a $C^*$-algebra $\AofM$ associated to a continuous field $\aContField$ of {\hhs}s.

Throughout this section, we let $\aContField$ be a continuous field of {\hhs}s with a compact base space (see \Cref{def:cont-field-HHS}). Recall that for any point $x$ in a {\hhs} $\aHHS$, we use $\hhil_x \aHHS$ to denote the tangent Hilbert space at $x$. 
Also recall the definition of the total space $|\aContField|$ and the canonical map $\pi \colon |\aContField| \to \baseSpace{\aContField}$ in \Cref{def:cont-field-HHS-total-space}. 
Thus for any $(z,x) \in |\aContField|$, where $z \in \baseSpace{\aContField}$ and $x \in \aContField_z$ as in \Cref{def:cont-field-HHS-total-space}, we write $\hhil_x \aContField_{z}$ for the tangent Hilbert space at $x$ as in Definition \ref{defn:hhs} and Construction \ref{constr:Hilbert-space-span}. 

\begin{defn}\label{defn:Pi-M}
	We define the $*$\-/algebra
	\[
	\piunbdd (\aContField) = \prod_{(z,x,t) \in  |\aContField|  \times [0,\infty) } \cliffc (\hhil_x \aContField_{z} \oplus t \rbbd ) 
	= \prod_{z \in  \baseSpace{\aContField} } \prod_{x \in  \aContField_z } \prod_{ t \in  [0,\infty) } \cliffc (\hhil_x \aContField_{z} \oplus t \rbbd ) 
	\; ,
	\]
	where 
	\[
	t \rbbd = 
	\begin{cases}
	\rbbd \, , & t > 0 \\
	\{0\} \, , & t = 0 
	\end{cases}
	\; 
	\]
	and $\rbbd$ carries the canonical inner product (independent of $t$). 
	We also define the $C^*$-algebra  
	\[
	\pialg(\aContField) = \left\{ \sigma \in \piunbdd (\aContField) \colon \sup_{(z, x,t) \in |\aContField| \times [0,\infty)} \| \sigma({z, x,t}) \| < \infty \right\}
	\] 
	equipped with pointwise algebraic operations and the uniform norm. 
\end{defn}

Although $\pialg(\aContField)$ is too large a $C^*$-algebra to be of much use, it will contain our key object in this section, $\AofM$, as a $C^*$-subalgebra. A main ingredient in its construction is given below, which can be understood as arising from ``fiberwise Euler vector fields''. 

\begin{defn}[{\cite[Definition~5.2]{GongWuYu2021}}]\label{defn_Cliffordmultiplier}\label{defn_Botthomomorphism}
	Fix a continuous section $s \in \spaceOfSections_{\operatorname{cont}} (\aContField)$.
	We define the \emph{Clifford operator $\cliffmult^{\aContField} \in \piunbdd (\aContField)$ based at $s$} by
	\[
	\cliffmult^{\aContField} (z, x , t) = \left(-\log_{\xpt} (s(z)) , t \right) \in \tanbndl_{\xpt} \aContField_z \times t \rbbd \subset \cliffc \left({\hhil_{\xpt}{\aContField_z} \oplus t \rbbd} \right)
	\]
	for any $z \in \baseSpace{\aContField}$, $x \in \aContField_z$ and $t \in [0,\infty)$. 
	We also define a map 
	\[
	\botthom^{\aContField}  \colon C(\baseSpace{\aContField}, \salg )  \to \piunbdd (\aContField)
	\]
	by
	\[
	\botthom^{\aContField} (\Ffunc) (z, x,t) := \Ffunc (z) \left(\cliffmult[s]^{\aContField}(z, x, t) \right)
	\]
	for any $z \in \baseSpace{\aContField}$, $x \in \aContField_z$, $t \in [0,\infty)$, and $\Ffunc \in C(\baseSpace{\aContField}, \salg)$, where we apply functional calculus with $\Ffunc(z) \in \salg$ to $\cliffmult[s]^{\aContField}(z, x, t) \in \cliffc \left({\hhil_{\xpt}{\aContField_z} \oplus t \rbbd} \right)$. 
	The map $\botthom^{\aContField}$ will turn out to be a graded $*$\-/homomorphism into $\pialg (\aContField)$ (see \Cref{prop_welldefinednessofBotthomomorphism}) and will be called the \emph{Bott homomorphism} based at $s$.
	
	We may drop the superscript $\aContField$ in $\cliffmult^{\aContField}$ and $\botthom^{\aContField}$ when there is no risk of confusion. 
\end{defn}

\begin{rmk} \label{rmk:botthom-geometric}
	Let us also present a more explicit formulation of $\botthom$ that avoids the use of functional calculus. 
	Observe that there is an isomorphism 
	\begin{align*}
	\salg & \cong \left\{ h \in C_0( [0,\infty), \cliffc (\rbbd) ) \colon h(0) \in \cbbd \cdot 1 \subseteq \cliffc (\rbbd) \right\} \\
	f & \mapsto f_{\operatorname{even}} \cdot 1 + f_{\operatorname{odd}} \cdot e_1
	\end{align*}
	where $1$ is the unit of $\cliffc(\rbbd)$, $e_1$ is a generator of $\cliffc(\rbbd)$ corresponding to the unit vector $1 \in \rbbd$, and $f_{\operatorname{even}}, f_{\operatorname{odd}} \in C_0([0,\infty))$ are defined by
	\[
	f_{\operatorname{even}} (t) = \frac{f(t) + f(-t)}{2} \quad \text{ and } \quad f_{\operatorname{odd}} (t) = \frac{f(t) - f(-t)}{2} \; ,
	\]
	whereby we note that $f_{\operatorname{odd}} (0) = 0$. 
	It follows from \cite[Remark~5.4]{GongWuYu2021} that for any $z \in \baseSpace{\aContField}$, $x \in \aContField_z$, $t \in [0,\infty)$, and $\Ffunc \in C(\baseSpace{\aContField}, \salg)$, we have 
	\[
	\botthom (\Ffunc) (z,x,t) = \Ffunc(z)_{\operatorname{even}} \left( \left\| \cliffmult[s]^{\aContField}(z, x, t) \right\| \right) + \Ffunc(z)_{\operatorname{odd}} \left( \left\| \cliffmult[s]^{\aContField}(z, x, t) \right\| \right)  \frac{\cliffmult[s]^{\aContField}(z, x, t)}{\left\| \cliffmult[s]^{\aContField}(z, x, t) \right\|} \; .
	\]
	Also note that $\left\| \cliffmult[s]^{\aContField}(z, x, t) \right\| = \sqrt{t^2 + d_{\aContField_z} \left( x, s(z) \right) ^2 }$. 
\end{rmk}

In the following proposition, we view both $C(\baseSpace{\aContField}, \salg )$ and $\pialg(\aContField)$ as graded $C^*$-algebras, the former via the flip on $\mathbb{R}$ and the latter induced by the grading on Clifford algebras. 
We also treat both as $C(\baseSpace{\aContField})$-$C^*$-algebras, as each $g \in C(\baseSpace{\aContField})$ can be viewed as a central multiplier of $C(\baseSpace{\aContField}, \salg )$ and $\pialg(\aContField)$ in the obvious way.

\begin{prop}\label{prop_welldefinednessofBotthomomorphism}
	For any continuous section $s \in \spaceOfSections_{\operatorname{cont}} (\aContField)$, 
	the map
	$\botthom $ is a graded $*$\-/homomorphism from $C(\baseSpace{\aContField}, \salg )$ to the $*$\-/subalgebra $\pialg(\aContField)$. 
	It is also $C(\baseSpace{\aContField})$-linear in the sense that 
	\[
	\botthom (\Ffunc g) = \botthom(\Ffunc) g \quad \text{ for any } \Ffunc \in C(\baseSpace{\aContField}, \salg ) \text{ and } g \in C(\baseSpace{\aContField}) \; .
	\]
\end{prop}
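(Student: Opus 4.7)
The plan is to reduce all four claims to pointwise statements at each $(z,x,t) \in |\aContField| \times [0,\infty)$, since $\piunbdd(\aContField)$ is by construction a pointwise product $C^*$-algebra. The key structural observation is that $\cliffmult^{\aContField}(z,x,t)$, being the image of the vector $(-\log_x(s(z)),t) \in \hhil_x\aContField_z \oplus t\mathbb{R}$ under the canonical inclusion into the Clifford algebra, is a self-adjoint element of odd degree, with norm equal to $\sqrt{d_{\aContField_z}(x,s(z))^2 + t^2}$. In particular, it is finite and bounded above by $d_{\aContField_z}(x,s(z)) + t$ via \Cref{constr:log-map}.

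For any self-adjoint odd element $a$ in a $\mathbb{Z}/2$-graded $C^*$-algebra $B$, continuous functional calculus yields a $*$-homomorphism $\salg \to B$, $f \mapsto f(a)$, of norm at most $\|f\|_\infty$. Equipping $\salg = C_0(\mathbb{R})$ with its flip grading $f_{\mathrm{even}} + f_{\mathrm{odd}}$ as in \Cref{rmk:botthom-geometric}, this homomorphism is graded: a direct computation on polynomials (which are dense) shows that even polynomials in $a$ lie in $B_{\mathrm{even}}$ and odd polynomials in $B_{\mathrm{odd}}$, since $a$ itself is odd. Applied to $a = \cliffmult^{\aContField}(z,x,t)$ and composed with the evaluation graded $*$-homomorphism $\mathrm{ev}_z \colon C(\baseSpace{\aContField},\salg) \to \salg$, we obtain a graded $*$-homomorphism
\[
C(\baseSpace{\aContField},\salg) \to \cliffc(\hhil_x\aContField_z \oplus t\mathbb{R}), \qquad F \mapsto F(z)\bigl(\cliffmult^{\aContField}(z,x,t)\bigr)
\]
of norm at most $\|F\|_\infty$.

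Taking the product of these pointwise maps over all $(z,x,t)$, we obtain a graded $*$-homomorphism $\botthom \colon C(\baseSpace{\aContField},\salg) \to \piunbdd(\aContField)$ satisfying
\[
\sup_{(z,x,t) \in |\aContField| \times [0,\infty)} \|\botthom(F)(z,x,t)\| \;\leq\; \sup_{z \in \baseSpace{\aContField}} \|F(z)\| \;=\; \|F\|_\infty,
\]
so its image lies in the bounded subalgebra $\pialg(\aContField)$. Finally, $C(\baseSpace{\aContField})$-linearity is immediate from the fact that $g(z) \in \mathbb{C}$ is a scalar: $(Fg)(z) = g(z)F(z)$ and functional calculus respects multiplication by scalars, so $\botthom(Fg)(z,x,t) = g(z)\,\botthom(F)(z,x,t) = (\botthom(F)g)(z,x,t)$.

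I do not expect any real obstacle here; the argument is essentially bookkeeping once $\cliffmult^{\aContField}(z,x,t)$ is identified as a self-adjoint odd element. The only mildly delicate point is the grading compatibility, which reduces to the parity statement for polynomials of a single odd self-adjoint generator. Note that this proposition does not yet address any continuity of $\botthom$ in the $(z,x,t)$-variable — it is purely a pointwise, algebraic statement — so no analysis of the topology on $|\aContField|$ from \Cref{lem:cont-field-HHS-total-space} is required at this stage.
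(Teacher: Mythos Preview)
Your proof is correct and follows essentially the same approach as the paper's. The paper's own proof is a two-sentence sketch asserting exactly the facts you identify: that $\cliffmult^{\aContField}(z,x,t)$ is odd and self-adjoint (the paper also says ``bounded'', meaning it is an honest element of the Clifford algebra so that functional calculus applies), whence the first statement follows, and that $C(\baseSpace{\aContField})$-linearity is immediate from the definition. Your version simply unpacks these assertions with the explicit norm bound $\|F(z)(\cliffmult^{\aContField}(z,x,t))\| \leq \|F(z)\|_\infty$ to land in $\pialg(\aContField)$ and spells out the grading check, which the paper leaves implicit.
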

\begin{proof}
	The first statement follows from the fact that  $\cliffmult^{\aContField} (z, x , t)$ is an odd bounded self-adjoint operator for any $z \in \baseSpace{\aContField}$, $x \in \aContField_z$ and $t \in [0,\infty)$. The second statement about being $C(\baseSpace{\aContField})$-linear is an immediate consequence of the definition. 
\end{proof}

We are now ready to introduce the main definition of this section. 

\begin{defn}\label{def:AofM}
	The algebra $\AofM$ is the $\cstar$-subalgebra of $\pialg(\aContField)$ generated by 
	\[
	\left\{ \botthom^{\aContField}(\Ffunc) \colon s \in \spaceOfSections_{\operatorname{cont}} (\aContField),\ \Ffunc \in C \left(\baseSpace{\aContField}, \salg \right) \right\} \; .
	\]
	We also define $ \AevenofM $ to be the $\cstar$-subalgebra of $\AofM$ generated by 
	\[
	\left\{ \botthom^{\aContField}(\Ffunc) \colon s \in \spaceOfSections_{\operatorname{cont}} (\aContField),\ \Ffunc \in C\left(\baseSpace{\aContField}, \salg_{\operatorname{ev}}\right)  \right\} \; ,
	\]
	where $\salg_{\operatorname{ev}}$ is the subalgebra of $\salg$ consisting of even functions. 
\end{defn}

\begin{rmk}\label{rmk:AofM-single-fiber}
	The constructions we present above generalize those in \cite[Section~5]{GongWuYu2021}, in the sense that if we view a {\hhs} $X$ as a continuous field $\aContField$ with a singleton base space, then we have canonical identifications (we use the equal sign to indicate the canonicity)
	\begin{align*}
	& \spaceOfSections_{\operatorname{cont}} (\aContField) = X = |\aContField|,  &&
	\piunbdd(\aContField) = \piunbdd(X),  && \pialg(\aContField) = \pialg(X), \\ 
	& C(\baseSpace{\aContField}, \salg ) = \salg, && \cliffmult[x]^{\aContField} = \cliffmult[x]^{X} \quad \text{ and } && \botthom[x]^{\aContField} = \botthom[x]^{X} \text{ for any } x \in X , \\
	& \AofM = \AofM[X], \quad \text{ and }  &&  \AevenofM = \AevenofM[X] \; . 
	\end{align*}
\end{rmk}

\begin{rmk}\label{rmk:AofM-functoriality}
	We discuss the functoriality of the above construction with regard to the construction in \Cref{def:cont-field-HHS-maps}. 
	Let $Y$ be a compact Hausdorff space, let $\aContField$ be a continuous field of {\hhs}s, and let $f \colon Y \to \baseSpace{\aContField}$ be a continuous map. Then in the context of \Cref{defn:Pi-M}, composition with $f$ gives a graded $*$-homomorphism 
	\[
	f^* \colon \piunbdd(\aContField) \to  \piunbdd(f^*\aContField) \; ,
	\]
	which restricts to graded $*$-homomorphisms (denoted by the same symbol) $f^* \colon \pialg(\aContField) \to  \pialg(f^* \aContField)$, $f^* \colon \AofM \to \AofM[f^*\aContField]$ and $f^* \colon \AevenofM \to \AevenofM[f^*\aContField]$.  
	Indeed, the first restriction is immediate from \Cref{defn:Pi-M}, while the latter two restrictions are valid because for any $s \in \spaceOfSections_{\operatorname{cont}} (\aContField)$, we have the equation 
	\[
	f^* \left(\cliffmult^{\aContField}\right) = \cliffmult[f^*(s)]^{f^* \aContField}
	\]
	and the commuting diagram 
	\begin{equation}\label{rmk:AofM-single-fiber::eq:botthom-commute}
	\xymatrix{
		C(\baseSpace{\aContField}, \salg )  \ar[r]^-{\botthom^{\aContField}} \ar[d]_{f^*} & \pialg (\aContField) \ar[d]^{f^*} \\
		C(Y, \salg )  \ar[r]^-{\botthom[f^*(s)]^{f^* \aContField}} & \pialg (f^* \aContField)
	}
	\end{equation}
	both being immediate consequences of \Cref{defn_Cliffordmultiplier}. 
	
	In particular, if $f$ is the inclusion map of a singleton $\{z\}$ into $\baseSpace{\aContField}$, then we may write $\mathrm{ev}_z$ for $f^*$ and use the identifications in \Cref{rmk:AofM-single-fiber} to obtain graded $*$-homomorphisms 
	\begin{align*}
	& \mathrm{ev}_z \colon \piunbdd(\aContField) \to  \piunbdd(\aContField_z) \; , && \mathrm{ev}_z \colon \pialg(\aContField) \to  \pialg(\aContField_z), \\ 
	& \mathrm{ev}_z \colon \AofM \to \AofM[\aContField_z] , \quad  \text{ and }    &&  \mathrm{ev}_z \colon \AevenofM \to \AevenofM[\aContField_z]  \; ,
	\end{align*}
	as well as the equation $\mathrm{ev}_z \left(\cliffmult^{\aContField}\right) = \cliffmult[s(z)]^{\aContField_z}$ and the commuting diagram 
	\begin{equation}\label{rmk:AofM-single-fiber::eq:botthom-commute-ev}
	\xymatrix{
		C(\baseSpace{\aContField}, \salg )  \ar[r]^-{\botthom^{\aContField}} \ar[d]_{\mathrm{ev}_z} & \pialg (\aContField) \ar[d]^{\mathrm{ev}_z} \\
		\salg  \ar[r]^{\botthom[s(z)]^{\aContField_z}} & \pialg (\aContField_z)
	}
	\end{equation}
	for any $s \in \spaceOfSections_{\operatorname{cont}} (\aContField)$. 
\end{rmk}

The next few results discuss the behavior of the Clifford multipliers and the Bott homomorphisms in reaction to perturbations in the base section. 

\begin{lem}\label{lem:cliffmult-base-point-change}
	For any two sections $s_1, s_2 \in \spaceOfSections_{\operatorname{cont}}(\aContField)$, we have $\cliffmult[s_1] - \cliffmult[s_2] \in \pialg(\aContField)$ and in fact 
	\[
	\left\| \cliffmult[s_1] - \cliffmult[s_2] \right\| \leq \sup_{z \in \baseSpace{\aContField}} d_{\aContField_z} \left( s_1(z) , s_2(z) \right) \; .
	\]
\end{lem}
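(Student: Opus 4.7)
The plan is to compute the difference $\cliffmult[s_1] - \cliffmult[s_2]$ pointwise, observe that it lives in the Hilbert space factor of each Clifford algebra fiber, and then invoke the non-expansiveness of the logarithm map on a CAT(0) space to obtain the stated bound (which simultaneously establishes boundedness, hence membership in $\pialg(\aContField)$).

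More concretely, for each $(z,x,t) \in |\aContField| \times [0,\infty)$, \Cref{defn_Cliffordmultiplier} gives
\[
\cliffmult[s_1](z,x,t) - \cliffmult[s_2](z,x,t) = \bigl(\log_{x}(s_2(z)) - \log_{x}(s_1(z)),\ 0\bigr),
\]
i.e., the $t\mathbb{R}$-components cancel and the difference lies in $\tanbndl_{x}\aContField_z \subseteq \hhil_{x}\aContField_z$, viewed as a subspace of $\cliffc(\hhil_{x}\aContField_z \oplus t\mathbb{R})$. A standard property of Clifford algebras is that the operator norm on $\cliffc(H)$ restricts to the Hilbert space norm on $H$, so
\[
\left\| \cliffmult[s_1](z,x,t) - \cliffmult[s_2](z,x,t) \right\| = \left\| \log_{x}(s_2(z)) - \log_{x}(s_1(z)) \right\|_{\hhil_{x}\aContField_z}.
\]

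Next I would apply \Cref{constr:log-map}: for the CAT(0) space $\aContField_z$, the logarithm map $\log_x \colon \aContField_z \to T_x \aContField_z$ is non-expansive, and the tangent cone $T_x \aContField_z$ embeds isometrically into $\hhil_{x}\aContField_z$ by \Cref{defn:hhs} and \Cref{constr:Hilbert-space-span}. Combining these gives
\[
\left\| \log_{x}(s_2(z)) - \log_{x}(s_1(z)) \right\|_{\hhil_{x}\aContField_z} \leq d_{\aContField_z}\bigl(s_1(z), s_2(z)\bigr).
\]
Taking the supremum over $(z,x,t)$ yields the desired norm estimate, which is finite by compactness of $\baseSpace{\aContField}$ together with the co-continuity of $s_1, s_2$, so in particular the difference lies in $\pialg(\aContField)$.

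The whole argument is essentially a one-line calculation combined with two imported facts, so there is no real obstacle; the only point requiring care is the passage from $T_x \aContField_z$ to the ambient Hilbert space $\hhil_{x}\aContField_z$, but this is immediate from the isometric embedding built into the definition of a Hilbert-Hadamard space.
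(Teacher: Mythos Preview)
Your proof is correct and follows essentially the same approach as the paper's own proof: compute the pointwise difference, observe it lands in the Hilbert space factor, and bound it using the non-expansiveness of $\log_x$ from \Cref{constr:log-map}. The paper's version is terser but identical in substance.
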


\begin{proof}
	By Construction~\ref{constr:log-map}, the logarithm map $\log_x$ is non-expansive for any $x$. It follows that 
	\begin{align*}
	\|\cliffmult[s_1](z, \xpt, t)-\cliffmult[s_2](z, \xpt, t)\| & = \|- \log_x(s_1(z)) + \log_x(s_2(z))\|_{\hhil_{\xpt}{\aContField_z}} \\
	& \le d_{\aContField_z} \left( s_1(z) , s_2(z) \right)
	\end{align*}
	for any $z \in \baseSpace{\aContField}$, $x \in \aContField_z$ and $t \in [0,\infty)$, whence the claims follow. 
\end{proof}

In order to convert the above error estimate for the Clifford multipliers into one for the Bott homomorphisms, we make use of the following auxiliary functions from \cite[Definition~5.9]{GongWuYu2021}. 

\begin{defn}\label{defn:Omega-Theta}
	For any $\ffunc\in\salg$ and $r \geq 0$, let us define its \emph{$\rdist$-oscillation} by 
	\begin{equation}\label{eq:oscillation}
	\oscill{\rdist}\ffunc = \sup \left\{ |\ffunc(\tvar) - \ffunc(\tvar')| : \ \tvar,\tvar'\in\rbbd, |\tvar-\tvar'|\le \rdist \right\} \; .
	\end{equation}
	For $r>0$, we also define
	\begin{equation}\label{eq:mean}
	\meansym{\rdist} \ffunc = \rdist \cdot \sup \left\{ \frac{| f(t) - f(-t) |}{2t} \colon t \geq \rdist \right\} \; .
	\end{equation}
	It is clear that for any $r > 0$ and $s \geq 0$, we have 
	\begin{align}
	\label{defn:Omega-Theta::eq:scalar} &\oscill{\rdist}(s \ffunc) = s \, \oscill{\rdist}\ffunc \, , && \meansym{\rdist} (s \ffunc) = s \, \meansym{\rdist} \ffunc \, , \\
	\label{defn:Omega-Theta::eq:bound-sup-norm} &\oscill{\rdist}\ffunc \leq 2 \|f\|   &\text{ and }\quad & \meansym{\rdist} \ffunc \leq \|f\| \; .
	\end{align}
\end{defn}

We record the following basic property for these functions. 

\begin{lem}[{see \cite[Lemma~5.10]{GongWuYu2021}}]\label{lem:Omega-Theta-lim-r}
	For any $\ffunc \in \salg$, we have
	\[
	\lim_{r \to 0} \oscill{\rdist}\ffunc = 0 = \lim_{r \to 0} \meansym{\rdist}\ffunc  \; .
	\]
\end{lem}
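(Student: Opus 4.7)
The plan is to handle the two limits separately, both exploiting the fact that $f \in \salg = C_0(\mathbb{R})$ is bounded and uniformly continuous (since it is continuous and vanishes at infinity).

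For $\oscill{r} f$, the argument is essentially by definition: since $f$ is continuous on $\mathbb{R}$ and $f(t) \to 0$ as $|t| \to \infty$, $f$ is uniformly continuous on $\mathbb{R}$, and $\oscill{r} f$ is precisely the modulus of uniform continuity evaluated at $r$, so $\oscill{r} f \to 0$ as $r \to 0$.

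For $\meansym{r} f$, the point is to show that the factor $r$ in front of the supremum beats the potential blow-up of $(f(t) - f(-t))/(2t)$ as $t \to 0^+$. Fix $\varepsilon > 0$. Introduce the modulus of continuity of $f$ at $0$, namely $\omega(t) := \sup_{|s| \le t} |f(s) - f(0)|$, which satisfies $\omega(t) \to 0$ as $t \to 0$. The triangle inequality gives the two-sided bound
\[
\frac{|f(t) - f(-t)|}{2t} \;\le\; \min\!\left\{ \frac{\omega(t)}{t} ,\; \frac{\|f\|}{t} \right\} \quad \text{for } t > 0.
\]
First choose $\delta > 0$ such that $\omega(\delta) < \varepsilon$. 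Then for any $r \le t \le \delta$, using the first bound,
\[
r \cdot \frac{|f(t) - f(-t)|}{2t} \;\le\; \frac{r}{t} \, \omega(t) \;\le\; \omega(\delta) \;<\; \varepsilon,
\]
since $r/t \le 1$ and $\omega$ is nondecreasing. For $t \ge \delta$, using the second bound,
\[
r \cdot \frac{|f(t) - f(-t)|}{2t} \;\le\; \frac{r \, \|f\|}{t} \;\le\; \frac{r \, \|f\|}{\delta},
\]
which is less than $\varepsilon$ as soon as $r < \varepsilon\delta/\|f\|$. Taking the supremum over $t \ge r$ then yields $\meansym{r} f \le \varepsilon$ for all sufficiently small $r$, proving the second limit.

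There is no real obstacle here; the only subtle point is the splitting at $\delta$ (to trade the $\omega(t)/t$ bound, good for small $t$, against the $\|f\|/t$ bound, good for large $t$) and the observation that the factor $r$ combined with the constraint $t \ge r$ allows one to absorb the $1/t$ singularity in the first range via $r/t \le 1$.
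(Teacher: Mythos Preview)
Your proof is correct. The paper does not supply its own argument for this lemma; it simply records the statement and cites \cite[Lemma~5.10]{GongWuYu2021}, so there is nothing to compare against beyond noting that your elementary splitting at $\delta$ (trading the $\omega(t)/t$ bound near $0$ against the $\|f\|/t$ bound away from $0$, with the constraint $t \ge r$ absorbing the $1/t$ via $r/t \le 1$) is exactly the kind of direct estimate one expects. The only cosmetic gap is the trivial case $\|f\| = 0$, where the bound $r < \varepsilon\delta/\|f\|$ is ill-formed but the conclusion is immediate.
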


Then we have the following estimate. 

\begin{prop}\label{prop_Botthomandchangeofbasepoint}
	For any two sections $s_1, s_2 \in \spaceOfSections_{\operatorname{cont}}(\aContField)$ and for any $\Ffunc\in C \left(\baseSpace{\aContField}, \salg \right)$, writing $\rdist = \sup_{z \in \baseSpace{\aContField}} \dist_{z}( s_1(z), s_2 (z) )$, we then have
	\[
	\|\botthom[s_2]^{\aContField}(\Ffunc)- \botthom[s_1]^{\aContField}(\Ffunc)\| \le \max_{z \in \baseSpace{\aContField} } \left( 2 \; \oscill{\rdist}\Ffunc (z) + \max\left\{  \oscill{2\rdist}\Ffunc (z) , \, \meansym{\rdist} \Ffunc (z) \right\} \right)\; . 
	\]
\end{prop}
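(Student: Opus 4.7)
The strategy is to reduce the proposition to its fiberwise (single Hilbert--Hadamard-space) version, namely Proposition~5.12 of \cite{GongWuYu2021}, applied on each fiber $\aContField_z$ separately, and then take the supremum over $z$.

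Since $\pialg(\aContField)$ carries the uniform norm over the total space (\Cref{defn:Pi-M}), for any $\sigma \in \pialg(\aContField)$ we have
\[
\|\sigma\|_{\pialg(\aContField)} \;=\; \sup_{z \in \baseSpace{\aContField}} \|\mathrm{ev}_z(\sigma)\|_{\pialg(\aContField_z)}\,,
\]
where $\mathrm{ev}_z$ is the evaluation $*$-homomorphism from \Cref{rmk:AofM-functoriality}. Applied to $\sigma = \botthom[s_2]^{\aContField}(\Ffunc) - \botthom[s_1]^{\aContField}(\Ffunc)$, and combined with the commuting square~\eqref{rmk:AofM-single-fiber::eq:botthom-commute-ev}, which gives $\mathrm{ev}_z \circ \botthom[s_i]^{\aContField} = \botthom[s_i(z)]^{\aContField_z} \circ \mathrm{ev}_z$ for $i=1,2$, this reduces the problem to bounding
\[
\|\botthom[s_2(z)]^{\aContField_z}(\Ffunc(z)) - \botthom[s_1(z)]^{\aContField_z}(\Ffunc(z))\|
\]
uniformly in $z \in \baseSpace{\aContField}$.

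For each fixed $z$, I would invoke \cite[Proposition~5.12]{GongWuYu2021} applied to the Hilbert--Hadamard space $\aContField_z$, with base points $s_1(z), s_2(z)$ (whose distance is at most $r$) and function $\Ffunc(z) \in \salg$; this yields
\[
\|\botthom[s_2(z)]^{\aContField_z}(\Ffunc(z)) - \botthom[s_1(z)]^{\aContField_z}(\Ffunc(z))\| \;\leq\; 2\,\oscill{r}\Ffunc(z) + \max\{\oscill{2r}\Ffunc(z),\,\meansym{r}\Ffunc(z)\}\,.
\]
Taking the maximum over $z$ --- attained thanks to compactness of $\baseSpace{\aContField}$ and continuity of the right-hand side in $z$ (inherited from $\Ffunc \in C(\baseSpace{\aContField},\salg)$ together with the Lipschitz bounds in~\eqref{defn:Omega-Theta::eq:bound-sup-norm} applied to differences $\Ffunc(z) - \Ffunc(z')$) --- produces the claimed inequality.

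The only delicate point, which I expect to be the main obstacle, is verifying that the single-fiber estimate of \cite{GongWuYu2021} remains valid when the exact distance $d_{\aContField_z}(s_1(z), s_2(z))$ is replaced by the uniform upper bound $r$ (since $\meansym{\cdot}\Ffunc$ is not monotone in its parameter in general). However, this is harmless: the proof there takes as input only the Clifford-operator norm bound $\|\cliffmult[x_1]-\cliffmult[x_2]\| \leq r$ (the fiberwise version of \Cref{lem:cliffmult-base-point-change}), and the $\meansym{r}$-term arises precisely as an upper bound for $r \cdot |f_{\operatorname{odd}}(a)/a|$ with $a \geq r$, which remains valid whenever $r$ is any upper bound for $\|\cliffmult[x_1]-\cliffmult[x_2]\|$. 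Granting this observation, the proof is complete.
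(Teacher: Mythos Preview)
Your proof is correct and follows essentially the same route as the paper: reduce to fibers via the evaluation maps and the commuting square~\eqref{rmk:AofM-single-fiber::eq:botthom-commute-ev}, then invoke the single-space estimate from \cite{GongWuYu2021}. One minor point: the paper cites \cite[Proposition~5.11]{GongWuYu2021} rather than~5.12, so you may want to double-check the numbering; and your careful remark about replacing the exact fiberwise distance by the uniform bound $r$ is a useful clarification that the paper leaves implicit.
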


\begin{proof}
	We start by noting that the first maximum on the right-hand side exists thanks to \eqref{defn:Omega-Theta::eq:bound-sup-norm} and the compactness of $\baseSpace{\aContField}$. 
	Using the evaluation map from \Cref{rmk:AofM-functoriality}, it suffices to show, for any $z \in \baseSpace{\aContField}$, 
	\[
	\| \mathrm{ev}_z \circ \botthom[s_2]^{\aContField}(\Ffunc) - \mathrm{ev}_z \circ \botthom[s_1]^{\aContField}(\Ffunc) \| \le 2 \; \oscill{\rdist}\Ffunc (z) + \max\left\{  \oscill{2\rdist}\Ffunc (z) , \, \meansym{\rdist} \Ffunc (z) \right\} \; . 
	\]
	Since it follows from \eqref{rmk:AofM-single-fiber::eq:botthom-commute-ev} that $\mathrm{ev}_z \circ \botthom[s_k]^{\aContField}(\Ffunc) = \botthom[s_k(z)]^{\aContField_z}(\Ffunc(z))$ for $k \in \{1, 2\}$,  
	the desired inequality is exactly the conclusion of \cite[Proposition~5.11]{GongWuYu2021} (whose proof is just a computation using elementary planar geometry). 
\end{proof}

\begin{lem}\label{lem:botthom-homotopy}
	For any two sections $s_0, s_1 \in \spaceOfSections_{\operatorname{cont}}(\aContField)$, the Bott homomorphisms $\botthom[s_0]^{\aContField}$ and $\botthom[s_1]^{\aContField}$ are homotopic graded $*$-homomorphisms. 
\end{lem}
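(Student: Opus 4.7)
The plan is to construct the homotopy via the geodesic bicombing on each fiber. Specifically, for each $t \in [0,1]$, I would define the section $s_t \in \prod_{z \in \baseSpace{\aContField}} \aContField_z$ by
\[
s_t(z) := [s_0(z), s_1(z)](t) \quad \text{for all } z \in \baseSpace{\aContField},
\]
using the geodesic bicombing notation from \Cref{rmk:CAT0-facts}\eqref{rmk:CAT0-facts-bicombing}. By \Cref{cor:cont-field-HHS-geodesic-approximate} (applied to the constant function $f \equiv t$), each $s_t$ lies in $\spaceOfSections_{\operatorname{cont}}(\aContField)$, so the Bott homomorphism $\botthom[s_t]^{\aContField}$ is defined. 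I would then assemble these into a graded $*$-homomorphism $\Phi \colon C(\baseSpace{\aContField}, \salg) \to C([0,1], \AofM)$ by $\Phi(\Ffunc)(t) := \botthom[s_t]^{\aContField}(\Ffunc)$; fiberwise, this is already a graded $*$-homomorphism by \Cref{prop_welldefinednessofBotthomomorphism}, so the only nontrivial point is that the image actually lands in $C([0,1], \AofM)$, i.e.\ that $t \mapsto \botthom[s_t]^{\aContField}(\Ffunc)$ is norm-continuous.

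To verify this norm-continuity, I would apply \Cref{prop_Botthomandchangeofbasepoint}, which yields, for any $t, t' \in [0,1]$,
\[
\bigl\|\botthom[s_t]^{\aContField}(\Ffunc) - \botthom[s_{t'}]^{\aContField}(\Ffunc)\bigr\| \;\leq\; \max_{z \in \baseSpace{\aContField}}\Bigl(2\,\oscill{\rdist_{t,t'}}\Ffunc(z) + \max\bigl\{\oscill{2\rdist_{t,t'}}\Ffunc(z),\,\meansym{\rdist_{t,t'}}\Ffunc(z)\bigr\}\Bigr),
\]
where $\rdist_{t,t'} = \sup_{z} d_{\aContField_z}\bigl(s_t(z), s_{t'}(z)\bigr)$. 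By \Cref{rmk:CAT0-facts}\eqref{rmk:CAT0-facts-Lipschitz} and the definition of the bicombing, $d_{\aContField_z}(s_t(z), s_{t'}(z)) = |t-t'|\,d_{\aContField_z}(s_0(z), s_1(z))$; since $s_0, s_1$ are mutually co-continuous and $\baseSpace{\aContField}$ is compact, $z \mapsto d_{\aContField_z}(s_0(z), s_1(z))$ is bounded by some constant $D < \infty$, so $\rdist_{t,t'} \leq D\,|t-t'|$, which tends to $0$ as $t \to t'$.

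The main remaining point, which I anticipate being the technical heart of the argument, is showing that the right-hand side of the displayed estimate tends to $0$ as $\rdist \to 0$ \emph{uniformly in} $z \in \baseSpace{\aContField}$. Pointwise, this is exactly \Cref{lem:Omega-Theta-lim-r}, but we need the convergence to be uniform over the family $\{\Ffunc(z) : z \in \baseSpace{\aContField}\}$. This family is a compact subset of $\salg$ (being the continuous image of the compact set $\baseSpace{\aContField}$). For any compact $K \subseteq \salg$ and $\varepsilon > 0$, one can cover $K$ by finitely many balls of radius $\varepsilon/4$ around $f_1, \ldots, f_n \in \salg$; the elementary inequalities
\[
\bigl|\oscill{\rdist} f - \oscill{\rdist} g\bigr| \leq 2\,\|f-g\| \quad\text{and}\quad \bigl|\meansym{\rdist} f - \meansym{\rdist} g\bigr| \leq \|f-g\|,
\]
together with \Cref{lem:Omega-Theta-lim-r} applied to each $f_i$, give $\sup_{f \in K}\oscill{\rdist} f \to 0$ and $\sup_{f \in K} \meansym{\rdist} f \to 0$ as $\rdist \to 0$. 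Applied to $K = \Ffunc(\baseSpace{\aContField})$, this delivers the required uniformity.

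Putting the three ingredients together, the map $\Phi$ is a well-defined graded $*$-homomorphism $C(\baseSpace{\aContField}, \salg) \to C([0,1], \AofM)$ whose compositions with the evaluations at $t = 0$ and $t = 1$ are precisely $\botthom[s_0]^{\aContField}$ and $\botthom[s_1]^{\aContField}$, thereby exhibiting the desired homotopy.
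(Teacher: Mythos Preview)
Your proposal is correct and follows essentially the same approach as the paper: both construct the homotopy via the geodesic interpolation $s_t(z) = [s_0(z), s_1(z)](t)$, invoke \Cref{cor:cont-field-HHS-geodesic-approximate} to get $s_t \in \spaceOfSections_{\operatorname{cont}}(\aContField)$, and then appeal to \Cref{prop_Botthomandchangeofbasepoint} together with \Cref{lem:Omega-Theta-lim-r} for the norm-continuity. Your version is more explicit about the uniformity in $z$ (via compactness of $\Ffunc(\baseSpace{\aContField}) \subset \salg$), a point the paper's proof leaves implicit.
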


\begin{proof}
	For each $t\in [0, 1]$, we apply \Cref{cor:cont-field-HHS-geodesic-approximate} to obtain $s_\lambda\in \spaceOfSections_{\operatorname{cont}}(\aContField)$ such that  
	\[
	s_t (z) = \left[ s_{0} \left( z \right) , s_{1} \left( z \right)  \right] ( t )
	\quad \text{ for any } z \in \baseSpace{\aContField} \; .
	\]
	Note that the notations are compatible at $t = 0$ and $1$. 
	It follows that for any $t_1, t_2 \in [0, 1]$, we have 
	\[
	d_{\aContField_{z}} \left( s_{t_1} \left( z \right) , s_{t_2} \left( z \right) \right) = | t_1 - t_2| \; d_{\aContField_{z}} \left( s_{0} \left( z \right) , s_{1} \left( z \right) \right) 
	\quad \text{ for any } z \in \baseSpace{\aContField} \; .
	\]
	Then by \Cref{lem:Omega-Theta-lim-r} and \Cref{prop_Botthomandchangeofbasepoint}, the family $\left(\botthom[s_t]^{\aContField}\right)_{t\in [0,1 ]}$ gives a homotopy of graded $*$-homomorphisms between $\botthom[s_0]^{\aContField}$ and $\botthom[s_1]^{\aContField}$. 
\end{proof}

\begin{defn}\label{defn:non-equivariant-Bott}
	Let $[\botthom[]^{\aContField}] \in K_1(\AofM) \cong KK(\salg, \AofM)$ be the class of the following composition of $*$-homomorphisms
	\[
	\salg \xrightarrow{1 \otimes \idmap} C(\baseSpace{\aContField}) \otimes \salg \cong C(\baseSpace{\aContField}, \salg) \xrightarrow{\botthom[s]^{\aContField}} \AofM \; ,
	\]
	where $s$ is an arbitrary continuous section (by \Cref{lem:botthom-homotopy}, the class $[\botthom[]^{\aContField}]$ does not depend on the choice of $s$). We may drop the superscript $\aContField$ when there is no risk of confusion. 
\end{defn}

Now we discuss group actions on $\AofM$. 
First note that any continuous isometric automorphism $\varphi \in \isomgrp(\aContField)$ gives rise to a $*$-automorphism $\varphi_*$ of $\piunbdd(\aContField)$ by sending $\sigfunc \in \piunbdd(\aContField)$ to $\varphi_* (\sigfunc) \in \piunbdd(\aContField)$ in the following way: 
for any $z \in \baseSpace{\aContField}$, $x \in \aContField_z$, and $t \in [0,\infty)$, we let 
\begin{equation}\label{eq:action-Clifford-algebra}
\varphi_* (\sigfunc) (z, x, t) = \cliffc \left( \hhil \deriv_{x'} \varphi_z  \oplus \idmap_{t \rbbd} \right) \; \left( \sigfunc \left(z', x',  t \right) \right) 
\end{equation}
where
\begin{itemize}
	\item $z' = \baseSpace{\varphi} (z)$ and $x' = (\varphi_z)^{-1} (x)$ (see \Cref{defn:cont-fields-morphism} for the notations), 
	\item $\deriv_{x'} \varphi_z \colon \tanbndl_{x'} \aContField_{z'} \to \tanbndl_{x} \aContField_{z}$ is the derivative of $\varphi_{z} \colon \aContField_{z'} \to \aContField_{z}$ at $x'$, 
	\item $\hhil\deriv_{x'} \varphi_z \colon \hhil_{x'} \aContField_{z'} \to \hhil_{x} \aContField_{z}$ is the induced isometric isomorphism (see Construction \ref{constr:Hilbert-space-span}), and
	\item $\cliffc \left( \hhil \deriv_{x'} \varphi_z  \oplus \idmap_{t \rbbd}  \right)$ is the induced graded $*$\-/isomorphism between the corresponding Clifford algebras. 
\end{itemize} 
Since each $\cliffc \left( \hhil \deriv_{x'} \varphi_z  \oplus \idmap_{t \rbbd}  \right)$ preserves the norm, it follows that $\varphi_*$ restricts to a $*$-automorphism of $\pialg(\aContField)$. 
It is straightforward to check that the assignment $\varphi \mapsto \varphi_*$ gives rise to group homomorphisms 
\[
\isomgrp(\aContField) \to \autgrp(\piunbdd(\aContField)) \quad \text{ and } \quad \isomgrp(\aContField) \to \autgrp(\pialg(\aContField)) \; .
\]
Next we show this latter action further restricts to an action on $\AofM$.

\begin{lem}\label{lem_Botthomandgroupaction}
	For any $\varphi \in \isomgrp(\aContField)$ and any $s \in \spaceOfSections_{\operatorname{cont}} (\aContField)$, we have
	\[
	\varphi_* (\cliffmult^{\aContField}) = \cliffmult[\varphi (s)]^{\aContField}
	\quad \text{ and } \quad 
	\varphi_* \circ \botthom[s]^{\aContField} = \botthom[\varphi (s)]^{\aContField} \circ \baseSpace{\varphi}^* \; ,
	\]
	where $\baseSpace{\varphi}^* \in \autgrp \left(C\left(\baseSpace{\aContField}, \salg \right)\right) $ is induced from $\baseSpace{\varphi} \colon \baseSpace{\aContField} \to \baseSpace{\aContField}$. 
\end{lem}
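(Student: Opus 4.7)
The plan is to verify both identities fiberwise via a direct computation from the definitions, using only the facts that $\varphi_z \colon \aContField_{\baseSpace{\varphi}(z)} \to \aContField_z$ is an isometric bijection and that continuous functional calculus is covariant with respect to graded $*$\-/isomorphisms of Clifford algebras.

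Fix $z \in \baseSpace{\aContField}$, $x \in \aContField_z$, $t \in [0, \infty)$, and set $z' = \baseSpace{\varphi}(z)$ and $x' = (\varphi_z)^{-1}(x)$, following the notation in \eqref{eq:action-Clifford-algebra}. The first step is to verify the identity $\varphi_*(\cliffmult^{\aContField}) = \cliffmult[\varphi(s)]^{\aContField}$. The key observation is that any isometric bijection between geodesic CAT(0) spaces intertwines the logarithm maps in the sense that
\[
\hhil \deriv_{x'} \varphi_z \bigl( \log_{x'}(y) \bigr) = \log_{x}\bigl(\varphi_z(y)\bigr) \quad \text{ for any } y \in \aContField_{z'} \, .
\]
Indeed, $\varphi_z$ maps the unique geodesic segment $[x', y]$ isometrically onto $[\varphi_z(x'), \varphi_z(y)] = [x, \varphi_z(y)]$; this identification descends to the directional equivalence classes in $\Sigma_{x'}$ and extends by isometry to the Hilbert space span $\hhil_{x'} \aContField_{z'}$, so it must agree with the linear map $\hhil \deriv_{x'} \varphi_z$ on the image of $\log_{x'}$. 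Applying this with $y = s(z')$ and using $\varphi_z(s(z')) = \varphi(s)(z)$, we obtain
\[
\cliffc\bigl(\hhil \deriv_{x'} \varphi_z \oplus \idmap_{t \rbbd}\bigr)\bigl(-\log_{x'}(s(z')), t\bigr) = \bigl(-\log_{x}(\varphi(s)(z)), t\bigr),
\]
which by \Cref{defn_Cliffordmultiplier} is exactly $\cliffmult[\varphi(s)]^{\aContField}(z, x, t)$, proving the first identity.

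For the second identity, I apply the evaluation at $(z, x, t)$ to both sides. On the left,
\begin{align*}
\bigl(\varphi_* \circ \botthom[s]^{\aContField}(\Ffunc)\bigr)(z, x, t)
&= \cliffc\bigl(\hhil \deriv_{x'} \varphi_z \oplus \idmap_{t \rbbd}\bigr)\bigl(\botthom[s]^{\aContField}(\Ffunc)(z', x', t)\bigr) \\
&= \cliffc\bigl(\hhil \deriv_{x'} \varphi_z \oplus \idmap_{t \rbbd}\bigr)\bigl(\Ffunc(z')(\cliffmult[s]^{\aContField}(z', x', t))\bigr).
\end{align*}
Since $\cliffc\bigl(\hhil \deriv_{x'} \varphi_z \oplus \idmap_{t \rbbd}\bigr)$ is a graded $*$\-/isomorphism of Clifford algebras and $\cliffmult[s]^{\aContField}(z', x', t)$ is a self-adjoint odd element, the covariance of continuous functional calculus with $\Ffunc(z') \in \salg$ allows us to pull $\Ffunc(z')$ through the $*$\-/isomorphism, yielding
\[
\Ffunc(z')\bigl(\cliffc(\hhil \deriv_{x'} \varphi_z \oplus \idmap_{t \rbbd})(\cliffmult[s]^{\aContField}(z', x', t))\bigr)
= \Ffunc(z')\bigl(\cliffmult[\varphi(s)]^{\aContField}(z, x, t)\bigr)
\]
by the first identity of the lemma, which in turn equals $(\baseSpace{\varphi}^* \Ffunc)(z)\bigl(\cliffmult[\varphi(s)]^{\aContField}(z, x, t)\bigr) = \botthom[\varphi(s)]^{\aContField}(\baseSpace{\varphi}^* \Ffunc)(z, x, t)$. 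Since this equality holds at every $(z, x, t)$, the second identity follows.

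There is no substantial obstacle; the only point that warrants care is the naturality statement $\hhil \deriv_{x'} \varphi_z \circ \log_{x'} = \log_x \circ \varphi_z$ for isometries of CAT(0) spaces, which I would either spell out briefly (as above) or cite from the basic theory of tangent cones since both constructions in \Cref{constr:log-map} and \Cref{constr:Hilbert-space-span} are manifestly functorial under isometric maps.
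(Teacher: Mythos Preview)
Your proof is correct and follows essentially the same approach as the paper's: a fiberwise verification using that the isometry $\varphi_z$ carries geodesics to geodesics (hence intertwines $\log_{x'}$ with $\log_x$), followed by covariance of functional calculus under the graded $*$\nobreakdash-isomorphism $\cliffc(\hhil\deriv_{x'}\varphi_z \oplus \idmap_{t\rbbd})$. The only cosmetic difference is that you isolate the naturality statement $\hhil\deriv_{x'}\varphi_z \circ \log_{x'} = \log_x \circ \varphi_z$ as a separate observation before applying it, whereas the paper folds it directly into the computation.
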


\begin{proof}
	For any $z \in \baseSpace{\aContField}$, $x \in \aContField_z$ and $t \in [0,\infty)$, let us write $z' = \baseSpace{\varphi} (z)$ and $x' = (\varphi_z)^{-1} (x)$ as before (see \Cref{defn:cont-fields-morphism} for the notations) and observe that the isometry $\varphi_z \colon \aContField_{z'} \to \aContField_z$ maps the geodesic segment $[x', s(z')]$ to $[\varphi_z (x'), \varphi_z (s (z'))]$. This allows us to apply \Cref{defn_Cliffordmultiplier} to compute 
	\begin{align*}
	\varphi_* (\cliffmult^{\aContField}) (z, x, t) = \ &  \cliffc \left( \hhil \deriv_{x'} \varphi_z  \oplus \idmap_{t \rbbd} \right) \; \left( \cliffmult^{\aContField} \left(z', x',  t \right) \right) \\
	=\ & \left( \left( \deriv_{x'} \varphi_z  \right) \; \left(  -\log_{x'}  (s(z'))\right), \idmap_{t \rbbd} (t) \right)  \\
	=\ & \left( - \log_{\varphi_z (x')} (\varphi_z (s (z'))) , t \right)  \\
	=\ & \left( - \log_{x} (\varphi(s) (z)) , t \right)  \\
	=\ & \cliffmult[\varphi(s)]^{\aContField} (z, x , t) \; . 
	\end{align*}
	Since functional calculus commutes with automorphisms of $\cstar$-algebras, 
	it follows from the above computation that for any $ \Ffunc \in C \left(\baseSpace{\aContField}, \salg \right) $, we have 
	\begin{align*}
	\varphi_* \left( \botthom[s]^{\aContField} (\Ffunc) \right) (z, x, t) =\ & \cliffc \left( \hhil \deriv_{x'} \varphi_z  \oplus \idmap_{t \rbbd} \right) \; \left( \botthom^{\aContField} (\Ffunc) \left(z', x', t \right) \right)  \\
	=\ & \cliffc \left( \hhil \deriv_{x'} \varphi_z  \oplus \idmap_{t \rbbd} \right) \; \left( \Ffunc (z') \left( \cliffmult^{\aContField} (z', x' , t) \right)  \right)  \\
	=\ & \Ffunc (z') \Big( \cliffc \left( \hhil \deriv_{x'} \varphi_z  \oplus \idmap_{t \rbbd} \right) \;  \left(\cliffmult^{\aContField} (z', x' , t) \right) \Big) \\
	=\ & \baseSpace{\varphi}^*(\Ffunc) (z) \left( \cliffmult[\varphi(s)]^{\aContField} (z, x , t) \right) \\
	=\ & \botthom[\varphimap (s)]^{\aContField} \left( \baseSpace{\varphi}^*(\Ffunc) \right)(z, x, t) \; ,
	\end{align*}
	as desired. 
\end{proof}

It follows then that the action $\isomgrp(\aContField) \curvearrowright \pialg(\aContField)$ restricts to an action $\isomgrp(\aContField) \curvearrowright \AofM$.

\begin{prop}\label{prop:AofM-action}
	Let $\aContField$ be a continuous field of {\hhs}s and let $G$ be a group. Let $\alpha$ be an action of $G$ on $\aContField$ by continuous isometric isomorphisms.  Then $\AofM$ is a $G$-$Y$-$C^*$-algebra, where $Y$ is the spectrum of the center of $\AofM$. 
\end{prop}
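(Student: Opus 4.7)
The plan is to deduce the statement essentially formally from the $G$-action on $\AofM$ constructed in the paragraph immediately following \Cref{lem_Botthomandgroupaction}. First I would compose the given homomorphism $\alpha \colon G \to \isomgrp(\aContField)$ with the group homomorphism $\isomgrp(\aContField) \to \autgrp(\AofM)$, $\varphi \mapsto \varphi_*$, to obtain an action of $G$ on $\AofM$ by $*$-automorphisms; the only nontrivial point here, namely that each $\varphi_*$ preserves the subalgebra $\AofM \subseteq \pialg(\aContField)$, is already guaranteed by \Cref{lem_Botthomandgroupaction} through the identity $\varphi_* \circ \botthom[s] = \botthom[\varphi(s)] \circ \baseSpace{\varphi}^*$, which shows that the $G$-action permutes the generators of $\AofM$.

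Next I would extend each $\varphi_* \in \autgrp(\AofM)$ uniquely to an automorphism of the multiplier algebra $M(\AofM)$, which in turn restricts to an automorphism of the commutative $C^*$-algebra $Z(M(\AofM))$. Applying Gelfand duality, I would identify $Z(M(\AofM))$ with $C(Y)$ for a compact Hausdorff space $Y$ (compactness follows from $M(\AofM)$ being unital). The induced $G$-action on $Z(M(\AofM))$ corresponds by contravariant duality to a $G$-action on $Y$ by homeomorphisms, and the identification $C(Y) \cong Z(M(\AofM))$ is by construction $G$-equivariant.

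Finally I would observe that the inclusion $C(Y) \cong Z(M(\AofM)) \hookrightarrow M(\AofM)$ is a $G$-equivariant non-degenerate $*$-homomorphism into the center of the multiplier algebra: non-degeneracy is automatic because $1 \in Z(M(\AofM))$ acts as the identity on $\AofM$, so $C(Y) \cdot \AofM = \AofM$ holds trivially. This exhibits $\AofM$ as a $G$-$Y$-$C^*$-algebra in the sense of \Cref{subsec:KK}.

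The proof is therefore largely formal, and there is no substantial obstacle; the only real input beyond Gelfand duality and the universal property of multiplier algebras is \Cref{lem_Botthomandgroupaction}, which is what ensures that the action $\varphi \mapsto \varphi_*$ on $\pialg(\aContField)$ descends to the subalgebra $\AofM$ generated by the Bott homomorphisms. If one prefers to identify $Y$ with the spectrum of $Z(\AofM)$ itself rather than $Z(M(\AofM))$, the same argument goes through after noting that the structural map $C_0(Y) \to Z(M(\AofM))$ still has dense range on $\AofM$, since the generators $\botthom[s](F)$ with $F \in C(\baseSpace{\aContField}, \salg)$ are already obtained by acting with elements of $C(\baseSpace{\aContField}) \subseteq Z(M(\AofM))$ on a norm-total subset.
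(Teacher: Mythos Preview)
Your outline handles the $G$-equivariance correctly and matches the paper's use of \Cref{lem_Botthomandgroupaction} for that part. The gap is in the non-degeneracy step when $Y$ is taken, as the statement requires, to be the spectrum of $Z(\AofM)$ rather than of $Z(M(\AofM))$.

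For a general $C^*$-algebra $A$, the condition $Z(A)\cdot A$ dense in $A$ is \emph{not} automatic: take $A=\koprs(\hhil)$ with $\hhil$ infinite-dimensional, where $Z(A)=0$. So your remark that ``non-degeneracy is automatic because $1\in Z(M(\AofM))$'' only settles the question for the wrong $Y$. Your final paragraph tries to repair this, but the justification you give invokes $C(\baseSpace{\aContField})\subseteq Z(M(\AofM))$, which again lives in the multiplier center, not in $Z(\AofM)$ itself; knowing that generators factor through central \emph{multipliers} says nothing about $Z(\AofM)\cdot\AofM$.

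The paper's proof supplies exactly the missing ingredient: it observes that $\AevenofM\subseteq Z(\AofM)$ (since for even $\Ffunc$ the element $\botthom[s](\Ffunc)(z,x,t)$ is a scalar in each Clifford fiber) and then shows $\AevenofM\cdot\AofM$ is dense by the factorization trick $\Ffunc=\Ffunc_1\Ffunc_2$ with $\Ffunc_1\in C(\baseSpace{\aContField},\salg_{\mathrm{ev}})$, e.g.\ $\Ffunc_1(z)(t)=\sqrt{|\Ffunc(z)(t)|}+\sqrt{|\Ffunc(z)(-t)|}$. This yields $\botthom[s](\Ffunc)=\botthom[s](\Ffunc_1)\,\botthom[s](\Ffunc_2)$ with the first factor genuinely in $Z(\AofM)$. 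That is the content you are missing.
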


\begin{proof}
	It follows from \Cref{lem_Botthomandgroupaction} and \Cref{def:AofM} that $\AofM$ is a $G$-invariant subalgebra of $\pialg (\aContField)$. 
	Let $Z(\AofM)$ be the center of $\AofM$. We need to show that $Z(\AofM) \cdot \AofM$ is dense in $\AofM$. Observe that $\AevenofM$ is a sub-$\cstar$-algebra of  $Z(\AofM)$.  It  suffices to show that $\AevenofM \cdot \AofM$ is dense in $\AofM$, which follows from the definition of $\AofM$ and the fact that every $\Ffunc\in C \left(\baseSpace{\aContField}, \salg \right)$ can be written as a product $\Ffunc = \Ffunc_1 \Ffunc_2$ where $\Ffunc_1 \in C \left(\baseSpace{\aContField}, \salg_\text{ev} \right)$ and $ \Ffunc_2 \in C \left(\baseSpace{\aContField}, \salg \right)$. For example, we may define 
	\[
	\Ffunc_1 (z)(t) = \sqrt{|\Ffunc (z)(t)|} + \sqrt{|\Ffunc (z)(-t)|} 
	\quad \text{ and } \quad 
	\Ffunc_2 (z)(t) = \frac{\Ffunc (z)(t)}{\Ffunc_1 (z)(t)} 
	\]
	for any $z \in \baseSpace{\aContField}$ and $t \in \mathbb{R}$. 
\end{proof}

The following result links the notion of (metric) properness in \Cref{defn:proper-action-cont-field} with the notion of proper actions on $C^*$-algebras (see the definition just above
\Cref{thm:proper-GHT}). 

\begin{cor}\label{cor:AofM-action-proper}
	Let $\aContField$ be a continuous field of {\hhs}s and let $\Gamma$ be a discrete group. Let $\alpha$ be an action of $\Gamma$ on $\aContField$ by isometric continuous isomorphisms. Assume $\alpha$ is metrically proper in the sense of \Cref{defn:proper-action-cont-field}. Then $\AofM$ is a proper $\Gamma$-$Y$-$C^*$-algebra, where $Y$ is the spectrum of the center of $\AofM$. 
\end{cor}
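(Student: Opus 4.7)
The plan is to leverage Proposition 10.15, which already equips $\AofM$ with a $\Gamma$-$Y$-$C^*$-algebra structure (with $Y$ modeled on $\operatorname{Spec}(\AevenofM)$, a commutative subalgebra of $Z(\AofM)$). What remains is to verify that the induced $\Gamma$-action on $Y$ is topologically proper. By Gelfand duality, this reduces to showing: for every pair of positive elements $a_1, a_2 \in \AevenofM$, the product $\alpha_\gamma(a_1) \cdot a_2$ vanishes for all but finitely many $\gamma \in \Gamma$. By density and linearity, it suffices to check this on generators $a_i = \botthom[s_i]^{\aContField}(\Ffunc_i)$, where $s_i \in \spaceOfSections_{\operatorname{cont}}(\aContField)$ and $\Ffunc_i \in C(\baseSpace{\aContField}, \salg_{\mathrm{ev}})$ is nonnegative with uniformly bounded ``radial support,'' i.e., $\Ffunc_i(z)(r) = 0$ whenever $r \geq R_i$, for some $R_i > 0$.

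Combining \Cref{lem_Botthomandgroupaction} with \Cref{rmk:botthom-geometric}, the value of $\alpha_\gamma(a_1)$ at a fiber point $(z, x, t) \in |\aContField| \times [0, \infty)$ is a scalar multiple of the identity in the fiber Clifford algebra, explicitly
\[
\alpha_\gamma(a_1)(z,x,t) \;=\; \Ffunc_1(\baseSpace{\alpha_\gamma}(z))\left(\sqrt{t^2 + d_{\aContField_z}(x,\alpha_\gamma(s_1)(z))^2}\right),
\]
and analogously $a_2(z, x, t) = \Ffunc_2(z)\bigl(\sqrt{t^2 + d_{\aContField_z}(x, s_2(z))^2}\bigr)$. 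Nonvanishing of the pointwise product at some $(z,x,t)$ then forces the two radial arguments to lie below their respective thresholds, whence $d_{\aContField_z}(x,\alpha_\gamma(s_1)(z)) < R_1$ and $d_{\aContField_z}(x, s_2(z)) < R_2$. The triangle inequality then yields $d_{\aContField_z}(\alpha_\gamma(s_1)(z), s_2(z)) < R_1 + R_2$ for some $z \in \baseSpace{\aContField}$.

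To close the argument, note that $\sup_z d_{\aContField_z}(s_1(z), s_2(z)) < \infty$ by mutual co-continuity of sections and compactness of $\baseSpace{\aContField}$. Combined with the metric properness of $\alpha$ (applied, say, to $s_1$) and one more use of the triangle inequality, this yields
\[
\inf_{z \in \baseSpace{\aContField}} d_{\aContField_z}(\alpha_\gamma(s_1)(z), s_2(z)) \;\longrightarrow\; \infty \quad \text{as } \gamma \to \infty.
\]
Hence only finitely many $\gamma$ permit the strict inequality above, and for all remaining $\gamma$ we have $\alpha_\gamma(a_1) \cdot a_2 = 0$, establishing properness of $\Gamma \curvearrowright Y$.

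The main obstacle I anticipate is not the geometric estimate, which is essentially a two-line triangle-inequality argument, but rather the preliminary reduction to generators of $\AevenofM$: one must confirm that verifying the vanishing condition on these generators really suffices for properness of $\Gamma$ on the full spectrum of $Z(\AofM)$ rather than merely on $\operatorname{Spec}(\AevenofM)$. This is handled by observing that the $Y$-structure produced in Proposition 10.15 already factors through $\operatorname{Spec}(\AevenofM)$, and that topological properness on this (possibly smaller) spectrum transfers to $\operatorname{Spec}(Z(\AofM))$ via the continuous $\Gamma$-equivariant restriction-of-characters map between the two.
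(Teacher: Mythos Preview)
Your proposal is correct and follows essentially the same route as the paper: use the explicit scalar formula from \Cref{rmk:botthom-geometric} for even $\Ffunc$, invoke \Cref{lem_Botthomandgroupaction} to translate the $\Gamma$-action into a shift of base section, and conclude via the triangle inequality and metric properness that products of compactly supported generators in $\AevenofM$ vanish for cofinitely many $\gamma$. One small phrasing issue: the statement ``for every pair of positive elements $a_1, a_2 \in \AevenofM$, the product $\alpha_\gamma(a_1)\cdot a_2$ vanishes for all but finitely many $\gamma$'' is literally false for general (non--compactly-supported) $a_i$; the paper instead asserts the weaker $\lim_{\gamma\to\infty}\|\alpha_\gamma(\sigma)\cdot\sigma\|=0$ for all $\sigma\in\AevenofM$, which is what survives the density argument and still suffices for topological properness of the action on $\operatorname{Spec}(\AevenofM)$.
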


\begin{proof}
	It follows from \Cref{rmk:botthom-geometric} that for any $\Ffunc \in C(\baseSpace{\aContField},\salg_{\operatorname{ev}})$, $z \in \baseSpace{\aContField}$, $x \in \aContField_z$ and $t \in [0, \infty)$, we have 
	\[
	\botthom^{\aContField} (\Ffunc) (z , x, t) = \Ffunc (z) \left(\sqrt{t^2 + d_{\aContField_z} \left( x, s(z) \right) ^2 }\right) \; .
	\]
	Thus if $\Ffunc$ is compactly supported, i.e., there is $R \geq 0$ such that $\Ffunc (z) (t) = 0$ for any $t \geq R$, then the element $\botthom^{\aContField} (\Ffunc) \in \AevenofM \subset \pialg(\aContField)$ is supported in the bounded ``tube'' 
	\[
	\left\{ (z,x,t) \in |\aContField| \times [0,\infty) \colon \sqrt{t^2 + d_{\aContField_z} \left( x, s(z) \right) ^2 } \leq R \right\} \; ,
	\]
	whence by the metric properness of the action $\Gam\curvearrowright\aContField$, it follows from \Cref{lem_Botthomandgroupaction} that 
	\[
	\alfmap_{\gamelem} (\botthom^{\aContField} (\Ffunc)) \cdot \botthom^{\aContField} (\Ffunc) = \botthom[\varphi(s)]^{\aContField} (\Ffunc \circ \baseSpace{\varphi})) \cdot \botthom^{\aContField} (\Ffunc) \; ,
	\]
	which vanishes for all but finitely many elements $\gamelem$ of $\gamgrp$. 
	Since any $\Ffunc \in C(\baseSpace{\aContField},\salg_{\operatorname{ev}})$ is approximated by compactly supported ones, it follows from \Cref{def:AofM} that every element $\sigma$ of $\AevenofM$ satisfies 
	\[
	\lim_{\gamelem\to\infty} \| \alfmap_{\gamelem} (\sigma) \cdot \sigma \| = 0 \; .
	\]
	This ensures the action of $\gamgrp$ on the spectrum of $\AevenofM$ is (topologically) proper,  i.e., $\AevenofM$ is a commutative proper $\Gamma$-\cstaralg. It follows that $\AofM$ is a proper $\Gamma$-$Y$-\cstaralg.
\end{proof}

Let us lift the class $[\botthom[]^{\aContField}] \in KK(\salg, \AofM)$ introduced in \Cref{defn:non-equivariant-Bott} to the equivariant $KK$-group $\kkgam(\salg, \AofM)$, where $\Gamma$ acts on $\salg$ trivially. We closely follow the approach in \cite[Section~8]{GongWuYu2021} (which in turn is analogous to the treatment in \cite{higsonkasparov}), where the main idea is to make the representatives of $[\botthom[]^{\aContField}]$ ``almost flat'' by a rescaling procedure. 

\begin{constr}
	Let 
	\[
	\sigma \colon \rbbd_+^{\ast} \curvearrowright \salg 
	\]
	be the rescaling action given by 
	\[
	(\lambda \cdot f) (t) = f( \lambda^{-1} t )
	\]
	for any $\lambda \in \rbbd_+^{\ast}$, $f \in \salg$, and $t \in \rbbd$. This action preserves the grading by even and odd functions. 
\end{constr}

\begin{lem}[{see \cite[Lemma~8.5]{GongWuYu2021}}]\label{lem:Omega-Theta-rescale}
	Following the notations of Definition~\ref{defn:Omega-Theta}, for any $\ffunc \in \salg$ and $r \in \rbbd_+^{\ast}$, we have 
	\[
	\oscill{\rdist} (\sigma_\lambda (\ffunc)) = \oscill{\lambda^{-1} \rdist}\ffunc \qquad \text{and} \qquad \meansym{\rdist} (\sigma_\lambda (\ffunc)) = \meansym{\lambda^{-1} \rdist}\ffunc
	\]
	for all $\lambda \in \rbbd_+^{\ast}$ and thus
	\[
	\lim_{\lambda \to \infty} \sup_{\rdist' \leq \rdist} \oscill{\rdist'} (\sigma_\lambda (\ffunc)) = 0  = \lim_{\lambda \to \infty} \sup_{\rdist' \leq \rdist} \meansym{\rdist'} (\sigma_\lambda (\ffunc)) \; .
	\]
\end{lem}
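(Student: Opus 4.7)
The plan is to establish the two stated equalities by direct substitution and then deduce the limit assertions from Lemma~\ref{lem:Omega-Theta-lim-r}, which records that $\oscill{r} f$ and $\meansym{r} f$ both tend to $0$ as $r\to 0$.

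For the oscillation equality, I would unfold the definition:
\[
\oscill{r}(\sigma_\lambda f) = \sup\bigl\{\, |f(\lambda^{-1} t) - f(\lambda^{-1} t')|  : t,t' \in \mathbb{R},\ |t-t'| \leq r \,\bigr\}\,,
\]
and substitute $s = \lambda^{-1} t$, $s' = \lambda^{-1} t'$, noting that $|t - t'| \leq r$ is equivalent to $|s-s'|\leq \lambda^{-1} r$. The right-hand side becomes $\oscill{\lambda^{-1}r} f$. For the mean, I would similarly expand
\[
\meansym{r}(\sigma_\lambda f) = r \cdot \sup\Bigl\{ \tfrac{|f(\lambda^{-1} t) - f(-\lambda^{-1} t)|}{2t} : t \geq r \Bigr\}\,,
\]
substitute $s = \lambda^{-1} t$ (so that $t = \lambda s$ and the condition $t \geq r$ becomes $s \geq \lambda^{-1} r$), and pull the factor of $\lambda^{-1}$ out of the fraction to obtain
\[
\meansym{r}(\sigma_\lambda f) = (\lambda^{-1} r) \cdot \sup\Bigl\{ \tfrac{|f(s) - f(-s)|}{2s} : s \geq \lambda^{-1} r \Bigr\} = \meansym{\lambda^{-1} r} f\,.
\]

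For the limit statements, I would use the two identities just proved to rewrite
\[
\sup_{r' \leq r} \oscill{r'}(\sigma_\lambda f) = \sup_{r' \leq r} \oscill{\lambda^{-1} r'} f = \sup_{\rho \leq \lambda^{-1} r} \oscill{\rho} f\,,
\]
and analogously for $\meansym{}$. As $\lambda \to \infty$, the range $\rho \leq \lambda^{-1} r$ shrinks to $\{0\}$, and Lemma~\ref{lem:Omega-Theta-lim-r} implies that $\oscill{\rho} f$ and $\meansym{\rho} f$ can be made uniformly small on such a neighborhood of $0$: concretely, for any $\varepsilon > 0$ there exists $\delta > 0$ such that $\oscill{\rho} f < \varepsilon$ and $\meansym{\rho} f < \varepsilon$ for all $\rho \in (0, \delta]$ (extended by $\oscill{0} f = 0$ and by the convention for $\meansym{}$ at $0$). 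Choosing $\lambda$ so large that $\lambda^{-1} r \leq \delta$ gives both suprema at most $\varepsilon$, which delivers the claimed limits.

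There is no real obstacle here: the proof is essentially a change of variables plus invocation of Lemma~\ref{lem:Omega-Theta-lim-r}. The only minor subtlety worth flagging in writing is that while $\oscill{r} f$ is visibly monotone in $r$, the function $\meansym{r} f$ is not obviously monotone; however, taking the supremum over $r' \leq r$ sidesteps this, so the limit argument still goes through uniformly via the uniform smallness guaranteed by Lemma~\ref{lem:Omega-Theta-lim-r}.
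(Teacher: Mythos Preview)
Your argument is correct. The paper does not actually supply its own proof of this lemma: it is stated with a citation to \cite[Lemma~8.5]{GongWuYu2021} and no proof environment follows. Your direct change-of-variables verification of the two identities and the deduction of the limits from Lemma~\ref{lem:Omega-Theta-lim-r} is exactly the expected elementary argument.
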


\begin{lem}\label{lem:botthom-rescaled-asymptotic-invariant-01}
	For any section $s \in \spaceOfSections_{\operatorname{cont}}(\aContField)$, the family 
	\[
	\left( {\botthom^{\aContField}} \circ \left(\idmap_{C \left(\baseSpace{\aContField} \right)} \otimes  \sigma_\lambda \right) \right)_{\lambda \in [1, \infty)}
	\]
	of {\shom}s from $C \left(\baseSpace{\aContField}, \salg \right) \cong C \left(\baseSpace{\aContField} \right) \otimes \salg$ to $\AofM$ is asymptotically equivariant in the sense that for any $\Ffunc \in C \left(\baseSpace{\aContField}, \salg \right)$ and $\varphi \in \isomgrp(\aContField)$, 
	\[
	\left\| \varphi_* \circ \botthom[s]^{\aContField} \circ \left(\idmap_{C \left(\baseSpace{\aContField} \right)} \otimes  \sigma_\lambda \right) (\Ffunc) - \botthom[\varphi (s)]^{\aContField} \circ \left(\idmap_{C \left(\baseSpace{\aContField} \right)} \otimes  \sigma_\lambda \right)  \circ \baseSpace{\varphi}^* (\Ffunc) \right\| \xrightarrow{\lambda \to \infty} 0 \; .
	\]
	In particular, the family 
	\[
	\left( {\botthom^{\aContField}} \circ \left(1_{C \left(\baseSpace{\aContField} \right)} \otimes  \sigma_\lambda \right) \right)_{\lambda \in [1, \infty)}
	\]
	of {\shom}s from $\salg$ to $\AofM$ is asymptotically invariant. 
\end{lem}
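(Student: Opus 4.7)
The plan is to prove the first assertion as an \emph{exact} identity using \Cref{lem_Botthomandgroupaction}, and then to deduce the ``in particular'' statement from \Cref{prop_Botthomandchangeofbasepoint} together with \Cref{lem:Omega-Theta-rescale}.

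For the first assertion, I would start by observing that $\baseSpace{\varphi}^*$ acts on $C(\baseSpace{\aContField}) \otimes \salg$ only through its first tensor factor, while $\sigma_\lambda$ acts only on the second, so the two operators commute:
\[
\baseSpace{\varphi}^* \circ \left(\idmap_{C(\baseSpace{\aContField})} \otimes \sigma_\lambda\right) = \left(\idmap_{C(\baseSpace{\aContField})} \otimes \sigma_\lambda\right) \circ \baseSpace{\varphi}^* \, .
\]
Combining this with the equality $\varphi_* \circ \botthom[s]^{\aContField} = \botthom[\varphi(s)]^{\aContField} \circ \baseSpace{\varphi}^*$ from \Cref{lem_Botthomandgroupaction}, one gets, for every $\Ffunc \in C(\baseSpace{\aContField}, \salg)$ and every $\lambda \in [1,\infty)$, an exact equality
\[
\varphi_* \circ \botthom[s]^{\aContField} \circ \left(\idmap \otimes \sigma_\lambda\right)(\Ffunc) = \botthom[\varphi(s)]^{\aContField} \circ \left(\idmap \otimes \sigma_\lambda\right) \circ \baseSpace{\varphi}^* (\Ffunc) \, ,
\]
so the norm in the statement is identically zero.

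For the ``in particular'' part, we specialize to $\Ffunc \in \salg$ viewed as $1_{C(\baseSpace{\aContField})} \otimes \Ffunc$. Since $\baseSpace{\varphi}^*(1_{C(\baseSpace{\aContField})}) = 1_{C(\baseSpace{\aContField})}$, the exact equality above reduces to
\[
\varphi_* \circ \botthom[s]^{\aContField} \left( 1 \otimes \sigma_\lambda (\Ffunc) \right) = \botthom[\varphi(s)]^{\aContField} \left( 1 \otimes \sigma_\lambda (\Ffunc) \right) \, ,
\]
so the content of asymptotic invariance is to compare $\botthom[\varphi(s)]^{\aContField}$ with $\botthom[s]^{\aContField}$ on these rescaled elements. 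Applying \Cref{prop_Botthomandchangeofbasepoint} with $r := \sup_{z \in \baseSpace{\aContField}} d_{\aContField_z}(\varphi(s)(z), s(z))$, which is finite since $z \mapsto d_{\aContField_z}(\varphi(s)(z), s(z))$ is continuous on the compact space $\baseSpace{\aContField}$ by mutual co-continuity, one obtains the bound
\[
\left\| \botthom[\varphi(s)]^{\aContField} (1 \otimes \sigma_\lambda(\Ffunc)) - \botthom[s]^{\aContField}(1 \otimes \sigma_\lambda(\Ffunc)) \right\| \leq 2 \, \oscill{r}\sigma_\lambda(\Ffunc) + \max\left\{ \oscill{2r}\sigma_\lambda(\Ffunc), \meansym{r}\sigma_\lambda(\Ffunc) \right\} .
\]
By \Cref{lem:Omega-Theta-rescale} each of $\oscill{r}\sigma_\lambda(\Ffunc)$, $\oscill{2r}\sigma_\lambda(\Ffunc)$, $\meansym{r}\sigma_\lambda(\Ffunc)$ equals the corresponding quantity for $\Ffunc$ at parameter $\lambda^{-1} r$ (or $2 \lambda^{-1} r$), and hence tends to $0$ as $\lambda \to \infty$ by \Cref{lem:Omega-Theta-lim-r}. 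This yields the desired asymptotic invariance.

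There is no real obstacle here: the structural identity \Cref{lem_Botthomandgroupaction} does all the work for the first assertion, and the only substantive ingredient for the second is that $r$ is finite (compactness of $\baseSpace{\aContField}$ and co-continuity) so that the rescaling can absorb it. The calculations are otherwise routine applications of the cited lemmas.
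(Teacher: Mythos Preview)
Your proof is correct. Your observation that the displayed difference in the first assertion is \emph{identically} zero is a valid reading of the statement as written: since $\baseSpace{\varphi}^*$ and $\idmap\otimes\sigma_\lambda$ commute, \Cref{lem_Botthomandgroupaction} gives an exact equality. In fact the paper's own proof computes a slightly different quantity, namely
\[
\left\| \botthom[s]^{\aContField}\bigl((\idmap\otimes\sigma_\lambda)(\baseSpace{\varphi}^*\Ffunc)\bigr) - \varphi_*\bigl(\botthom[s]^{\aContField}((\idmap\otimes\sigma_\lambda)\Ffunc)\bigr) \right\|,
\]
with $\botthom[s]$ in both terms; this is the natural meaning of ``asymptotic equivariance'' and is genuinely nonzero in general, so the $\botthom[\varphi(s)]$ in the second term of the stated display appears to be a typo. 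For the ``in particular'' clause, your argument coincides with the paper's: reduce via \Cref{lem_Botthomandgroupaction} to comparing $\botthom[s]$ and $\botthom[\varphi(s)]$ on the rescaled element, bound by \Cref{prop_Botthomandchangeofbasepoint} with $r=\sup_z d_{\aContField_z}(s(z),\varphi(s)(z))<\infty$, and conclude via \Cref{lem:Omega-Theta-rescale} and \Cref{lem:Omega-Theta-lim-r}. The paper additionally phrases the last step as uniform convergence over $z\in\baseSpace{\aContField}$ via an equicontinuity argument, but for $\Ffunc$ of the form $1\otimes f$ this is unnecessary, and your direct bound suffices.
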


\begin{proof}
	For any $\varphi \in \isomgrp(M)$, by the compactness of $\baseSpace{\aContField}$ and \Cref{def:cont-field-HHS}\eqref{def:cont-field-HHS::continuous}, we have 
	\[
	\sup_{z \in \baseSpace{\aContField}} d_{\aContField_z} (s(z) , \varphi (s) (z) ) < \infty \; .
	\] 
	We denote this finite nonnegative number by $r$. 
	Then by Proposition~\ref{prop_Botthomandchangeofbasepoint} and \Cref{lem_Botthomandgroupaction}, we have, for any $\lambda \in [1, \infty)$,  
	\begin{align*}
	&~ \left\| \botthom^{\aContField}\left(\left(\idmap_{C \left(\baseSpace{\aContField} \right)} \otimes  \sigma_\lambda \right)\left(\baseSpace{\varphi}_* (f)\right)\right) - \varphi_* \left( \botthom^{\aContField}\left(\left(\idmap_{C \left(\baseSpace{\aContField} \right)} \otimes  \sigma_\lambda \right)(f)\right) \right) \right\| \\
	= &~ \left\| \botthom^{\aContField}\left(\left(\idmap_{C \left(\baseSpace{\aContField} \right)} \otimes  \sigma_\lambda \right)(f)\right) - \botthom[\varphi(s)]^{\aContField}\left(\left(\idmap_{C \left(\baseSpace{\aContField} \right)} \otimes  \sigma_\lambda \right)(f)\right) \right\| \\
	\leq &~ \max_{z \in \baseSpace{\aContField} } \sup_{\rdist' \leq \rdist}   \left( 2 \; \oscill{\rdist'} (\sigma_\lambda(f(z))) + \max\left\{ \oscill{2\rdist'}(\sigma_\lambda(f(z)))  , \, \meansym{\rdist'} (\sigma_\lambda(f(z))) \right\} \right)\; , \
	\end{align*}
	which converges to $0$ as $\lambda \to \infty$, since the assignment
	\[
	\baseSpace{\aContField} \ni z \mapsto \sup_{\rdist' \leq \rdist} \left( 2 \; \oscill{\rdist'} (\sigma_\lambda(f(z))) + \max\left\{ \oscill{2\rdist'}(\sigma_\lambda(f(z)))  , \, \meansym{\rdist'} (\sigma_\lambda(f(z))) \right\} \right)
	\]
	gives a family (parametrized by $\lambda$) of functions on the compact space $\baseSpace{\aContField}$ that is equicontinuous by \eqref{defn:Omega-Theta::eq:bound-sup-norm} and the isometricality of each $\sigma_\lambda$ and converges pointwise to $0$ as $\lambda \to \infty$ by \Cref{lem:Omega-Theta-rescale}. 
\end{proof}

\begin{constr}\label{constr:bott-element}
	Thanks to \Cref{lem:botthom-rescaled-asymptotic-invariant-01}, we may define the \emph{(equivariant) Bott element}
	\[
	[\botthom[]^{\aContField}] \in \kkgam[1]( \cbbd, \AofM ) 
	\]
	as the one induced by the family 
	\[
	\left( {\botthom^{\aContField}} \circ \left( 1_{C \left(\baseSpace{\aContField} \right)} \otimes  \sigma_\lambda \right) \colon \salg \to \AofM \right)_{\lambda \in [1, \infty)}
	\]
	according to \Cref{constr:KK-facts-asymptotic}, for an arbitrary section $s \in \spaceOfSections_{\operatorname{cont}}(\aContField)$. This is independent of the choice of $s$ by \Cref{lem:botthom-homotopy}. Thus the forgetful map 
	\[
	\kkgam[1]( \cbbd, \AofM ) \to KK_1 ( \cbbd, \AofM ) \cong KK_0(\salg, \AofM ) \; , 
	\]
	maps the equivariant Bott element to the non-equivariant Bott element $[\botthom[]^{\aContField}] \in KK_1( \cbbd, \AofM ) $ defined in \Cref{defn:non-equivariant-Bott}. 
	We may drop the superscript $\aContField$ when there is no risk of confusion. 
\end{constr}

In the last part of this section, we discuss the structure of the $C^*$-algebra $\AofM[\aContField |_{\baseSpace{\aContField} \times Z}]$. 
To do this, we first establish a lemma which tells us that in \Cref{def:AofM}, in order to generate $\AofM$ or $\AevenofM$, it suffices to let $s$ and $\Ffunc$ range over generating sets. 

\begin{lem}\label{lem:AofM-generating-sets}
	Let $\Sigma$ be a generating set for a continuous field $\aContField$ of {\hhs}s as in \Cref{def:cont-field-HHS-generate}. Let $S$ be a generating set for the $\cstar$-algebra $C \left(\baseSpace{\aContField}, \salg \right)$. Then $\AofM$ is the $\cstar$-subalgebra of $\pialg(\aContField)$ generated by 
	\[
	\left\{ \botthom^{\aContField}(\Ffunc) \colon s \in \Sigma,\ \Ffunc \in S \right\} \; .
	\]
	Similarly, if we let $S'$ be a generating set for the $\cstar$-algebra $C \left(\baseSpace{\aContField}, \salg_{\operatorname{ev}} \right)$, then $ \AevenofM $ is the $\cstar$-subalgebra of $\AofM$ generated by 
	\[
	\left\{ \botthom^{\aContField}(\Ffunc) \colon s \in \Sigma,\ \Ffunc \in S'  \right\} \; . 
	\]
\end{lem}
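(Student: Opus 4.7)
Denote by $B$ the $C^*$-subalgebra of $\pialg(\aContField)$ generated by $\{\botthom^{\aContField}_{s}(\Ffunc) : s \in \Sigma,\ \Ffunc \in S\}$. We plainly have $B \subseteq \AofM$, and the task is the reverse containment. The first half is easy: for any fixed $s \in \Sigma$, Proposition \ref{prop_welldefinednessofBotthomomorphism} says $\botthom^{\aContField}_s$ is a $*$-homomorphism, so since $S$ generates $C(\baseSpace{\aContField}, \salg)$ as a $C^*$-algebra, $B$ already contains $\botthom^{\aContField}_s(\Ffunc)$ for every $\Ffunc \in C(\baseSpace{\aContField}, \salg)$.

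The substantive step is to show that $\botthom^{\aContField}_{s}(\Ffunc) \in B$ for an arbitrary $s \in \spaceOfSections_{\operatorname{cont}}(\aContField)$ and $\Ffunc \in C(\baseSpace{\aContField}, \salg)$. The plan is a partition-of-unity localization combined with Proposition \ref{prop_Botthomandchangeofbasepoint}. Given $\varepsilon > 0$, I would first use Lemma \ref{lem:Omega-Theta-lim-r} together with compactness of $\baseSpace{\aContField}$ (which makes $\{\Ffunc(z)\}_{z}$ a compact subset of $\salg$) to choose $\delta > 0$ so small that $\sup_z \bigl(2\,\oscill{\delta}\Ffunc(z) + \max\{\oscill{2\delta}\Ffunc(z),\,\meansym{\delta}\Ffunc(z)\}\bigr) < \varepsilon$. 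Next, applying Lemma \ref{lem:cont-field-HHS-Diximier-generate} to $\Sigma$ and $s$ (which is valid because $\Sigma$ satisfies pointwise density and mutual co-continuity), I would extract for each $z \in \baseSpace{\aContField}$ a section $\sigma_z \in \Sigma$ and an open neighborhood $V_z$ of $z$ on which $d(s, \sigma_z) \leq \delta$, then pass to a finite subcover $V_1,\dots,V_N$ with associated sections $\sigma_1,\dots,\sigma_N \in \Sigma$ and a subordinate partition of unity $\{g_i\}$. Writing $\botthom^{\aContField}_s(\Ffunc) = \sum_i \botthom^{\aContField}_s(g_i \Ffunc)$, I would compare each summand with $\botthom^{\aContField}_{\sigma_i}(g_i \Ffunc)$, the latter lying in $B$ by the easy step, and deduce $\botthom^{\aContField}_s(\Ffunc) \in \overline{B}=B$ as $\varepsilon \to 0$.

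The main technical obstacle is controlling the error $\|\botthom^{\aContField}_s(g_i \Ffunc) - \botthom^{\aContField}_{\sigma_i}(g_i \Ffunc)\|$. Proposition \ref{prop_Botthomandchangeofbasepoint} as stated uses the global quantity $r = \sup_z d(s(z),\sigma_i(z))$, which may be large outside $V_i$ and would be useless here; however, the proof of that proposition proceeds by evaluating at each $z$ and invoking a pointwise planar-geometry estimate, and therefore actually delivers a fiberwise bound in which $r$ is replaced by $r(z) = d_{\aContField_z}(s(z),\sigma_i(z))$. With this refinement, at any $z \notin \supp(g_i)$ the vanishing of $(g_i \Ffunc)(z)$ kills every oscillation term regardless of $r(z)$, while at $z \in \supp(g_i) \subseteq V_i$ we have $r(z) \leq \delta$, so each pointwise error is controlled by $2\oscill{\delta}(g_i\Ffunc)(z) + \max\{\oscill{2\delta}(g_i\Ffunc)(z),\meansym{\delta}(g_i\Ffunc)(z)\} \leq 2\oscill{\delta}\Ffunc(z)+\cdots < \varepsilon$ (using $0 \leq g_i(z) \leq 1$ and the scaling in \eqref{defn:Omega-Theta::eq:scalar}).

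The even-graded case $\AevenofM$ runs on the same template, restricting throughout to $\salg_{\operatorname{ev}}$ and $S'$, and noting that $\botthom^{\aContField}_s$ maps $C(\baseSpace{\aContField},\salg_{\operatorname{ev}})$ into $\AevenofM$ by definition, so every step above remains internal to $\AevenofM$.
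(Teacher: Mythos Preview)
Your proposal is correct and follows essentially the same approach as the paper: reduce from $S$ to all of $C(\baseSpace{\aContField},\salg)$ using that each $\botthom^{\aContField}_s$ is a $*$-homomorphism, then approximate $\botthom^{\aContField}_s(\Ffunc)$ by $\sum_i \botthom^{\aContField}_{\sigma_i}(g_i\Ffunc)$ via a partition of unity, the pointwise refinement of Proposition~\ref{prop_Botthomandchangeofbasepoint}, and a uniform choice of $\delta$. The only cosmetic difference is that the paper pulls $g_V$ outside using the $C(\baseSpace{\aContField})$-linearity of $\botthom^{\aContField}$ (Proposition~\ref{prop_welldefinednessofBotthomomorphism}) before estimating, rather than invoking the scaling identity \eqref{defn:Omega-Theta::eq:scalar} for $g_i\Ffunc$, and it obtains the uniform $\delta$ via a finite cover of $\baseSpace{\aContField}$ together with the Lipschitz bound \eqref{defn:Omega-Theta::eq:bound-sup-norm} rather than via compactness of $\Ffunc(\baseSpace{\aContField})$ in $\salg$; both routes are equivalent.
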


\begin{proof}
	We only prove the statement about $\AofM$, the one about $\AevenofM$ being completely analogous. 
	To this end, 
	for any $s \in \Sigma$, since $\botthom^{\aContField}$ is a $*$-homomorphism, it follows that $\botthom^{\aContField} \left( C \left(\baseSpace{\aContField}, \salg \right) \right)$ is the $C^*$-algebra generated by $\botthom^{\aContField} (S)$. Hence in view of \Cref{def:AofM}, we may assume without loss of generality that $S = C \left(\baseSpace{\aContField}, \salg \right) $. 
	
	It thus suffices to show that for any fixed $s \in \spaceOfSections_{\operatorname{cont}} (\aContField)$, $\Ffunc \in C \left(\baseSpace{\aContField}, \salg \right)$ and $\varepsilon > 0$, the element $\botthom^{\aContField} (\Ffunc)$ is within distance $\varepsilon$ to a finite linear combination of elements of the form $\botthom[s']^{\aContField} (\Ffunc')$ where $s' \in \Sigma$ and $\Ffunc' \in C \left(\baseSpace{\aContField}, \salg \right)$. 
	To this end, we use the compactness of $\baseSpace{\aContField}$ and the continuity of $\Ffunc$ to choose a finite open cover $\mathcal{U}$ of $\baseSpace{\aContField}$ and points $z_U \in U$ for $U \in \mathcal{U}$ such that $\| \Ffunc(z) - \Ffunc(z_U) \| \leq \varepsilon / 12$ for any $z \in U$. Since $\mathcal{U}$ is finite, we may apply \Cref{lem:Omega-Theta-lim-r} to choose $\delta > 0$ such that for any $U \in \mathcal{U}$, we have 
	\begin{equation*}
	\sup_{r \leq \delta} \left( 2 \; \oscill{\rdist}\Ffunc (z_U) + \max\left\{  \oscill{2\rdist}\Ffunc (z_U) , \, \meansym{\rdist} \Ffunc (z_U) \right\} \right) \leq \varepsilon / 2 \; .
	\end{equation*}
	Hence it follows from \eqref{defn:Omega-Theta::eq:bound-sup-norm} that for any $z \in \baseSpace{\aContField}$, we have 
	\begin{equation}
	\sup_{r \leq \delta} \left( 2 \; \oscill{\rdist}\Ffunc (z) + \max\left\{  \oscill{2\rdist}\Ffunc (z) , \, \meansym{\rdist} \Ffunc (z) \right\} \right) \leq \varepsilon  
	\; .
	\end{equation}
	Now since $\Sigma$ is a generating set, by \Cref{def:cont-field-HHS-generate} and \Cref{lem:cont-field-HHS-Diximier-generate}, for any $z \in \baseSpace{\aContField}$, there exist $s' \in \Sigma$ and a neighborhood $V$ of $z$ such that 
	\[
	d_{\aContField_{z'}} \left( s \left( z' \right) , s' \left( z' \right) \right) \leq \delta \quad \text{ for any } z' \in V \; .
	\]
	It follows from the compactness of $\baseSpace{\aContField}$ that we may choose a finite open cover $\mathcal{V}$ of $\baseSpace{\aContField}$ together with $s_V \in \Sigma$ for each $V \in \mathcal{V}$ satisfying the last inequality with $s'$ replaced by $s_V$. Now choosing a partition of unity $(g_V)_{V \in \mathcal{V}}$ subordinate to $\mathcal{V}$, we apply the $C(\baseSpace{\aContField})$-linearity in \Cref{prop_welldefinednessofBotthomomorphism} and \eqref{defn:Omega-Theta::eq:scalar} to see that for all $z \in \baseSpace{\aContField}$, $x \in \aContField_z$, and $t \in [0,\infty)$, we have
	\begin{align*}
	& \ \left\|\botthom^{\aContField} (\Ffunc) (z,x,t) -  \sum_{V \in \mathcal{V}} \botthom[s_V]^{\aContField} (\Ffunc g_V)(z, x, t) \right\| \\
	= & \ \left\| \sum_{V \in \mathcal{V}} \left( \botthom^{\aContField} (\Ffunc g_V) (z,x,t) -   \botthom[s_V]^{\aContField} (\Ffunc g_V)(z, x, t) \right)\right\| \\
	= & \ \left\| \sum_{V \in \mathcal{V}} \left( \botthom^{\aContField} (\Ffunc) (z,x,t) -   \botthom[s_V]^{\aContField} (\Ffunc)(z, x, t) \right) g_V\right\| \\
	\leq & \ \max_{z \in \baseSpace{\aContField} } \sup_{r \leq \delta} \left( 2 \; \oscill{\rdist}\Ffunc (z) + \max\left\{  \oscill{2\rdist}\Ffunc (z) , \, \meansym{\rdist} \Ffunc (z) \right\} \right) \\
	\leq & \ \varepsilon \; ,
	\end{align*}
	whence $\left\|\botthom^{\aContField} (\Ffunc) -  \sum_{V \in \mathcal{V}} \botthom[s_V]^{\aContField} (\Ffunc g_V) \right\| \leq \varepsilon$ as desired. 
\end{proof}

The following lemma is a direct consequence of 
equation~\eqref{rmk:AofM-single-fiber::eq:botthom-commute}. 
\begin{lem}\label{lem:botthom-extension}
	Let $Z$ be a compact paracompact Hausdorff space and let $\aContField$ be a continuous field of {\hhs}s. 
	Let $s \in \spaceOfSections_{\operatorname{cont}} (\aContField)$. 
	With notation as in \Cref{def:cont-field-HHS-maps}\eqref{def:cont-field-HHS-maps:restrict}, we have Bott homomorphisms 
	\[
	\botthom^{\aContField} \colon C(\baseSpace{\aContField}, \salg )  \to \pialg (\aContField) \quad \text{ and } \quad \botthom[\pi^* (s)]^{\aContField |_{\baseSpace{\aContField} \times Z}} \colon C(\baseSpace{\aContField} \times Z, \salg )  \to \pialg \left(\aContField |_{\baseSpace{\aContField} \times Z} \right)  \; .
	\]
	Then 
	under the canonical isomorphism $C(\baseSpace{\aContField} \times Z, \salg ) \cong C(\baseSpace{\aContField}, \salg ) \otimes C(Z)$, 
	we have 
	\[
	\botthom[\pi^* (s)]^{\aContField |_{\baseSpace{\aContField} \times Z}}(\Ffunc \otimes g) ((y,z) , x, t) = g(z) \cdot  \botthom^{\aContField} (\Ffunc) (y,x,t) 
	\]
	for any $\Ffunc \in C(\baseSpace{\aContField}, \salg )$, any $g \in C(Z)$, any $y \in \baseSpace{\aContField}$, any $z \in Z$, any $x \in \aContField_y = \left( \aContField |_{\baseSpace{\aContField} \times Z}  \right)_{(y,z)}$, and any $t \in [0,\infty)$. 
\end{lem}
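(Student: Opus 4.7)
The plan is to recognize this statement as essentially a combination of two already-established properties: the functoriality of the Bott homomorphism (the commutative diagram in Remark~\ref{rmk:AofM-functoriality}, equation~\eqref{rmk:AofM-single-fiber::eq:botthom-commute}) and the $C(\baseSpace{\aContField})$-linearity established in Proposition~\ref{prop_welldefinednessofBotthomomorphism}. No genuinely new computation is needed; the content is bookkeeping.

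First I would identify $\aContField|_{\baseSpace{\aContField} \times Z}$ with $\pi^* \aContField$, where $\pi \colon \baseSpace{\aContField} \times Z \to \baseSpace{\aContField}$ is the canonical projection, per Definition~\ref{def:cont-field-HHS-maps}\eqref{def:cont-field-HHS-maps:restrict}. Applying the commutative diagram~\eqref{rmk:AofM-single-fiber::eq:botthom-commute} with $f = \pi$ and section $s$ then yields the identity
\[
\botthom[\pi^*(s)]^{\aContField|_{\baseSpace{\aContField} \times Z}} \bigl( \pi^* \Ffunc \bigr) \;=\; \pi^* \bigl( \botthom^{\aContField}(\Ffunc) \bigr)
\]
for every $\Ffunc \in C(\baseSpace{\aContField}, \salg)$. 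Under the identification $C(\baseSpace{\aContField} \times Z, \salg) \cong C(\baseSpace{\aContField}, \salg) \otimes C(Z)$, the element $\pi^* \Ffunc$ corresponds to $\Ffunc \otimes 1$, so evaluated pointwise the above becomes
\[
\botthom[\pi^*(s)]^{\aContField|_{\baseSpace{\aContField} \times Z}}(\Ffunc \otimes 1)\bigl((y,z), x, t \bigr) \;=\; \botthom^{\aContField}(\Ffunc)(y, x, t),
\]
which settles the $g \equiv 1$ case.

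To extend to general $g \in C(Z)$, I would use that $\Ffunc \otimes g$ factors in $C(\baseSpace{\aContField} \times Z, \salg)$ as the product $(\Ffunc \otimes 1)\cdot (1 \otimes g)$, where $1 \otimes g \in C(\baseSpace{\aContField} \times Z)$ acts as the function $(y,z) \mapsto g(z)$. By the $C(\baseSpace{\aContField} \times Z)$-linearity part of Proposition~\ref{prop_welldefinednessofBotthomomorphism} applied to the continuous field $\aContField|_{\baseSpace{\aContField} \times Z}$, we obtain
\[
\botthom[\pi^*(s)]^{\aContField|_{\baseSpace{\aContField} \times Z}}(\Ffunc \otimes g) \;=\; \botthom[\pi^*(s)]^{\aContField|_{\baseSpace{\aContField} \times Z}}(\Ffunc \otimes 1) \cdot (1 \otimes g),
\]
and evaluating at $((y,z), x, t)$ combines with the previous step to give precisely $g(z) \cdot \botthom^{\aContField}(\Ffunc)(y, x, t)$, as required.

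There is no real obstacle here; the only mild pitfalls are notational, namely keeping straight (i) that the pullback of $s$ along $\pi$ is the section $\pi^*(s)(y,z) = s(y)$, which matches Remark~\ref{rmk:cont-field-HHS-total-space-maps}, and (ii) that the product $\Ffunc \otimes g = (\Ffunc \otimes 1)(1 \otimes g)$ takes place inside the commutative $C(\baseSpace{\aContField} \times Z)$-module structure on $C(\baseSpace{\aContField} \times Z, \salg)$ so that the $C(\baseSpace{\aContField} \times Z)$-linearity of $\botthom$ applies directly. Both are immediate from the definitions in Section~\ref{sec:cont-fields} and Section~\ref{sec:AofM}.
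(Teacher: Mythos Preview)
Your proposal is correct and follows essentially the same approach as the paper, which simply notes that the lemma is a direct consequence of the commutative diagram~\eqref{rmk:AofM-single-fiber::eq:botthom-commute}. Your two-step argument (first $g\equiv 1$ via that diagram, then general $g$ via the $C(\baseSpace{\aContField}\times Z)$-linearity of Proposition~\ref{prop_welldefinednessofBotthomomorphism}) is a perfectly reasonable unpacking of that one-line justification; alternatively one can unwind Definition~\ref{defn_Cliffordmultiplier} directly and observe that $(\Ffunc\otimes g)(y,z)=g(z)\Ffunc(y)$ while $\cliffmult[\pi^*(s)]^{\aContField|_{\baseSpace{\aContField}\times Z}}((y,z),x,t)=\cliffmult[s]^{\aContField}(y,x,t)$.
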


\begin{prop}\label{cor:AofM-extension}
	Let $Z$ be a compact paracompact Hausdorff space and let $\aContField$ be a continuous field of {\hhs}s. Then the following hold: 
	\begin{enumerate}
		\item \label{cor:AofM-extension::algebra} 
		With notation as in \Cref{def:cont-field-HHS-maps}\eqref{def:cont-field-HHS-maps:restrict} there is a $*$-isomorphism
		\[
		\AofM[\aContField |_{\baseSpace{\aContField} \times Z}] \cong \AofM[\aContField] \otimes C(Z) \; . 
		\]
		
		\item \label{cor:AofM-extension::base} If $Y$ is another compact paracompact Hausdorff space and $f \colon Y \to Z$ is a continuous function, then there is a commutative diagram
		\[\begin{tikzcd}
		{\AofM[\aContField |_{\baseSpace{\aContField} \times Z}]} & { \AofM[\aContField] \otimes C(Z)} \\
		{\AofM[\aContField |_{\baseSpace{\aContField} \times Y}]} & {\AofM[\aContField] \otimes C(Y)}
		\arrow["{\left( \operatorname{id}_{\baseSpace{\aContField}} \times f \right)^*}"', from=1-1, to=2-1]
		\arrow["{\operatorname{id}_{\AofM[\aContField]} \otimes f ^*}", from=1-2, to=2-2]
		\arrow["\cong"', from=1-2, to=1-1]
		\arrow["\cong", from=2-2, to=2-1]
		\end{tikzcd}\]
		where the horizontal maps are as given in \eqref{cor:AofM-extension::algebra}. 
		
		\item \label{cor:AofM-extension::action} If $\varphi \colon Z \to \operatorname{Isom}(\aContField)$ is a continuous function and $\widetilde{\varphi} \in \operatorname{Isom}(\aContField|_{\baseSpace{\aContField} \times Z})$ is obtained as in \Cref{lem:isometric-action-topologize-curry}\eqref{lem:isometric-action-topologize:isometries-curry:two-sided}, 
		then for any $z \in Z$, there is a commutative diagram
		\[\begin{tikzcd}
		{\AofM[\aContField |_{\baseSpace{\aContField} \times Z}]} & { \AofM[\aContField] \otimes C(Z)}  & { \AofM[\aContField] } \\
		{\AofM[\aContField |_{\baseSpace{\aContField} \times Z}]} & {\AofM[\aContField] \otimes C(Z)}  & { \AofM[\aContField] } 
		\arrow["{\widetilde{\varphi}_*}"', from=1-1, to=2-1]
		\arrow["{(\varphi(z))_* }", from=1-3, to=2-3]
		\arrow["\cong"', from=1-2, to=1-1]
		\arrow["\cong", from=2-2, to=2-1]
		\arrow["\idmap \otimes \operatorname{ev}_z", from=1-2, to=1-3]
		\arrow["\idmap \otimes \operatorname{ev}_z"', from=2-2, to=2-3]
		\end{tikzcd}\]
		where the horizontal maps on the left are as given in \eqref{cor:AofM-extension::algebra}, those on the right come from the evaluation map at $z$, and the vertical maps come from \eqref{eq:action-Clifford-algebra}. 
	\end{enumerate}
\end{prop}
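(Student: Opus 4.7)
The plan is to construct an explicit isomorphism $\Phi \colon \AofM[\aContField] \otimes C(Z) \to \AofM[\aContField |_{\baseSpace{\aContField} \times Z}]$ as the unique $*$-homomorphism that sends an elementary tensor $\botthom^{\aContField}(\Ffunc) \otimes g$ to $\botthom[\pi^{*}(s)]^{\aContField|_{\baseSpace{\aContField} \times Z}}(\Ffunc \otimes g)$ for any $s \in \spaceOfSections_{\operatorname{cont}}(\aContField)$, $\Ffunc \in C(\baseSpace{\aContField}, \salg)$ and $g \in C(Z)$, where $\pi \colon \baseSpace{\aContField} \times Z \to \baseSpace{\aContField}$ is the projection. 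First I will construct $\Phi$ at the level of the ambient algebra $\pialg$: there is a pullback $*$-homomorphism $\pi^{*} \colon \pialg(\aContField) \to \pialg(\aContField|_{\baseSpace{\aContField} \times Z})$ from \Cref{rmk:AofM-functoriality}, and $C(Z)$ embeds into the center of $\pialg(\aContField|_{\baseSpace{\aContField} \times Z})$ by pulling back along the second projection and acting as scalars on each Clifford algebra. These two images commute (one lies in the center), so they induce a $*$-homomorphism from the tensor product; since $C(Z)$ is nuclear, there is no ambiguity in the tensor norm.

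Next, I will restrict this map to $\AofM[\aContField] \otimes C(Z)$. By the formula in \Cref{lem:botthom-extension}, elementary tensors $\botthom^{\aContField}(\Ffunc) \otimes g$ are sent to $\botthom[\pi^{*}(s)]^{\aContField|_{\baseSpace{\aContField} \times Z}}(\Ffunc \otimes g)$, which manifestly lies in $\AofM[\aContField|_{\baseSpace{\aContField} \times Z}]$; closedness of the latter algebra and continuity of $\Phi$ then force the full image into $\AofM[\aContField|_{\baseSpace{\aContField} \times Z}]$. For surjectivity of $\Phi$, I will apply \Cref{lem:AofM-generating-sets} with the generating set $\{\pi^{*}(s) \colon s \in \spaceOfSections_{\operatorname{cont}}(\aContField)\}$ of $\spaceOfSections_{\operatorname{cont}}(\aContField|_{\baseSpace{\aContField} \times Z})$ (which generates by \Cref{def:cont-field-HHS-maps}) and the generating set of elementary tensors for $C(\baseSpace{\aContField} \times Z, \salg) \cong C(\baseSpace{\aContField}, \salg) \otimes C(Z)$: every generator of $\AofM[\aContField|_{\baseSpace{\aContField} \times Z}]$ produced this way is already of the form $\Phi(\botthom^{\aContField}(\Ffunc) \otimes g)$.

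For injectivity, I will observe that for each $z \in Z$ the evaluation map $\operatorname{ev}_{z} \colon \AofM[\aContField|_{\baseSpace{\aContField} \times Z}] \to \AofM[\aContField]$ of \Cref{rmk:AofM-functoriality} satisfies $\operatorname{ev}_{z} \circ \Phi = \idmap \otimes \operatorname{ev}_{z}$, as is checked on elementary tensors directly from \Cref{lem:botthom-extension}. Since $\AofM[\aContField] \otimes C(Z) \cong C(Z, \AofM[\aContField])$ and the collection $\{\idmap \otimes \operatorname{ev}_{z}\}_{z \in Z}$ separates points, $\Phi$ is injective and hence a $*$-isomorphism, proving \eqref{cor:AofM-extension::algebra}. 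Parts \eqref{cor:AofM-extension::base} and \eqref{cor:AofM-extension::action} will then both be verified on the generators $\botthom^{\aContField}(\Ffunc) \otimes g$. For \eqref{cor:AofM-extension::base} the commutativity reduces, after translating via \Cref{lem:botthom-extension}, to the naturality square \eqref{rmk:AofM-single-fiber::eq:botthom-commute} of \Cref{rmk:AofM-functoriality} applied to $\idmap_{\baseSpace{\aContField}} \times f$. For \eqref{cor:AofM-extension::action}, it suffices (using the injectivity argument above) to show $\operatorname{ev}_{z} \circ \widetilde{\varphi}_{*} = (\varphi(z))_{*} \circ \operatorname{ev}_{z}$, which is immediate from the fact that, by the defining property of $\widetilde{\varphi}$ in \Cref{lem:isometric-action-topologize-curry}\eqref{lem:isometric-action-topologize:isometries-curry:two-sided}, the restriction of $\widetilde{\varphi}$ to the slice $\baseSpace{\aContField} \times \{z\}$ is exactly $\varphi(z)$, combined with the functoriality of the construction $\varphi \mapsto \varphi_{*}$.

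The main technical point will be ensuring that $\Phi$ is well-defined on the whole tensor product (rather than merely on elementary tensors) as a bounded $*$-homomorphism that actually lands in $\AofM$ and not just $\pialg$; this is where one simultaneously exploits the nuclearity of $C(Z)$, the centrality of its image, and the identity of \Cref{lem:botthom-extension}. Once $\Phi$ is in hand, surjectivity and injectivity are each a single application of, respectively, \Cref{lem:AofM-generating-sets} and the pointwise-separation property of evaluation maps, and the remaining naturality and equivariance statements are formal.
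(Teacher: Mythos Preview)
Your proposal is correct and follows essentially the same approach as the paper, which also rests on \Cref{lem:AofM-generating-sets} and \Cref{lem:botthom-extension} to identify the generators of $\AofM[\aContField |_{\baseSpace{\aContField} \times Z}]$ with elementary tensors, and then declares parts \eqref{cor:AofM-extension::base} and \eqref{cor:AofM-extension::action} to be ``straightforward computations'' using the explicit formula (together with \Cref{lem_Botthomandgroupaction}). Your write-up is in fact more explicit than the paper's: you spell out the construction of $\Phi$ at the level of $\pialg$, justify well-definedness via nuclearity of $C(Z)$ and centrality of its image, and give a clean injectivity argument via the family $\{\operatorname{ev}_z\}_{z\in Z}$ that the paper leaves implicit.
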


\begin{proof}
	Since the map $C \left(\baseSpace{\aContField}, \salg \right) \otimes C(Z) \to C \left(\baseSpace{\aContField} \times Z, \salg \right)$ taking $\Ffunc \otimes g \in C \left(\baseSpace{\aContField}, \salg \right) \otimes C(Z)$ to the function $\Ffunc \cdot g \colon \baseSpace{\aContField} \times Z \ni (y,z) \mapsto f(y) g(z) \in \salg$ is a $*$-isomorphism, it follows from \Cref{lem:AofM-generating-sets} and \Cref{lem:botthom-extension} 
	that
	if we write $\pi \colon \baseSpace{\aContField} \times Z \to \baseSpace{\aContField}$ for the projection onto the first factor, then $\AofM[\aContField |_{\baseSpace{\aContField} \times Z}]$ is the $\cstar$-subalgebra of $\pialg\left(\aContField |_{\baseSpace{\aContField} \times Z}\right)$ generated by
	\[
	\left\{ \botthom[\pi^* (s)]^{\aContField |_{\baseSpace{\aContField} \times Z}}(\Ffunc \cdot g) \colon s \in \spaceOfSections_{\operatorname{cont}} \left(\aContField\right),\ \Ffunc \in C \left(\baseSpace{\aContField}, \salg \right) , \  g \in C(Z) \right\} \; .
	\]
	It follows that 
	\[
	\AofM[\aContField |_{\baseSpace{\aContField} \times Z}] \cong \AofM[\aContField] \otimes C(Z) \; , 
	\]
	which proves \eqref{cor:AofM-extension::algebra} 
	Moreover, \eqref{cor:AofM-extension::base} and \eqref{cor:AofM-extension::action} follow from straightforward computations using the explicit formula of the $\ast$-isomorphism $\AofM[\aContField |_{\baseSpace{\aContField} \times Z}] \cong \AofM[\aContField] \otimes C(Z)$ provided by \Cref{lem:botthom-extension} (also with the help of \Cref{lem_Botthomandgroupaction}). 
\end{proof}

\section{Proofs of the main theorems}
\label{sec:main}

In this section, we prove the main results of the paper, i.e., \Cref{main-theorem} and \Cref{main-theorem-fields} in the introduction. The former is a corollary of the latter. 

We first need to specify the meaning of an admissible continuous field of {\hhs}s in \Cref{main-theorem-fields}. 

\begin{defn} \label{def:admissible-field} \label{def:quasitrivial}
	Let $\aContField$ be a continuous field of {\hhs}s. Then 
	it is \emph{admissible} if it is second-countable, each fiber $\aContField_z$ is admissible in the sense of \Cref{defn:hhs-admissible}, and the randomization $\aContField^{(\Omega,\mu)}$ is a trivial continuous field of {\hhs}s. 
\end{defn}

\begin{proof}[Proof of \Cref{main-theorem-fields}]
	By \Cref{defn:strong-Novikov}, we need to prove the composition 
	\[
	K_{*+1}^{\Gam}(\univspfree\Gamma)\otimes_{\zbbd} \qbbd 
	\xrightarrow{\pi_*} K_{*+1}^{\Gam}(\univspproper\Gamma)\otimes_{\zbbd} \qbbd 
	\xrightarrow{\mu} K_{*+1}^{\Gam} (C^*(\Gamma) ) \otimes_{\zbbd} \qbbd 
	\]
	is injective, 
	where $\pi_*$ is induced by the natural map $\univspfree\Gamma \to \univspproper\Gamma$, and $\mu$ is the (rationalized) Baum-Connes assembly map. We reduce the problem in a few steps:

	\begin{enumerate}[itemindent=*,leftmargin=0.5em,label=\textit{Step~(\arabic*).},ref=\arabic*]
		\item By our assumption, there is an isometric and proper action $\alpha$ of $\Gamma$ on an admissible continuous field $\aContField$ of {\hhs}s. 
		In anticipation of our next steps, we would like to consider the induced action $\left( \alpha^{[0,1]} \right) |_{\spaceProbMeas{\left(\baseSpace{\aContField}\right)}} \colon \Gamma \curvearrowright \left( \aContField^{[0,1]} \right) |_{\spaceProbMeas{\left(\baseSpace{\aContField}\right)}}$, which is also proper by \Cref{cor:proper-action-cont-field-convex-prob} and \Cref{cor:proper-action-cont-field-convex-random}. 
		To simplify notations, we define $\anotContField := \left( \aContField^{[0,1]} \right) |_{\spaceProbMeas{\left(\baseSpace{\aContField}\right)}}$ and write $\alpha$ in place of  $\left( \alpha^{[0,1]} \right) |_{\spaceProbMeas{\left(\baseSpace{\aContField}\right)}}$ when there is no risk of confusion. This induces an action $\alpha_*$ of $\Gamma$ on the C*-algebra $\AofM[\anotContField]$ by \Cref{prop:AofM-action}. 
		Consider the following commutative diagram 
		\[
		\xymatrix{
			K_{*+1}^{\Gam}(\univspfree\Gamma)\otimes_{\zbbd}\qbbd \ar[d]^{[\bottmap]} \ar[r]^{\pi_*} & K_{*+1}^{\Gam}(\univspproper\Gamma)\otimes_{\zbbd}\qbbd \ar[d]^{[\bottmap]} \ar[r]^\mu & K_{*+1}^{\Gam} (C^*(\Gamma) ) \otimes_{\zbbd} \qbbd \ar[d]^{[\bottmap] \rtimesred \Gam} \\
			\kk_{\rbbd,*}^{\Gam}(\univspfree\Gamma, \AofM[\anotContField])\vphantom{\otimes_{\zbbd}\qbbd} \ar[r]^{\pi_*} & \kk_{\rbbd,*}^{\Gam}(\univspproper\Gamma, \AofM[\anotContField])\vphantom{\otimes_{\zbbd}\qbbd} \ar[r]^\mu & \kfunctr_{\rbbd,*}(\AofM[\anotContField] \rtimes_{\operatorname{r}} \Gam)\vphantom{\otimes_{\zbbd}\qbbd} 
		}
		\]
		where the horizontal maps $\pi_*$ are induced by the natural map $\univspfree\Gamma \to \univspproper\Gamma$, the horizontal maps $\mu$ are the Baum-Connes assembly maps (with and without a coefficient), 
		and the vertical maps $[\beta]$ and $[\beta] \rtimesred \Gam$ are induced by taking the Kasparov product with the Bott element in $\kfunctr_{1}^{\Gamma} (\AofM[\anotContField])$ (constructed in \Cref{constr:bott-element}) and then taking the natural maps from the rationalized $KK$-groups to the $KK$-groups with real coefficients. 
		In order to prove the composition of the top row is injective, it suffices to prove each of the three maps on the left column and the bottom row is injective. 
		
		To this end, it follows from \Cref{cor:AofM-action-proper} and the properness of $\alpha \colon \Gamma \curvearrowright \anotContField$ that $\AofM[\anotContField]$ is a proper $\Gamma$-$Y$-C*-algebra, where $Y$ is the spectrum of $Z (\AofM[\anotContField])$. 
		Hence by \cite[Theorem~13.1]{guentnerhigsontrout}, the map
		\[
		\mu \colon \kk_{\rbbd,*}^{\Gam}(\univspproper\Gamma, \AofM[\anotContField]) \to \kfunctr_{\rbbd,*}(\AofM[\anotContField] \rtimes_{\operatorname{r}} \Gam)
		\]
		is an isomorphism. It follows from 
		\Cref{lem:KKR-EGam-inj} 
		that 
		\[
		\pi_* \colon \kk_{\rbbd,*}^{\Gam}(\univspfree\Gamma, \AofM[\anotContField]) \to \kk_{\rbbd,*}^{\Gam}(\univspproper\Gamma, \AofM[\anotContField])\vphantom{\otimes_{\zbbd}\qbbd}
		\]
		is injective. 
		Hence it suffices to show that the map 
		\[
		[\beta] \colon K_{*+1}^{\Gam}(\univspfree\Gamma)\otimes_{\zbbd}\qbbd \to \kk_{\rbbd,*}^{\Gam}(\univspfree\Gamma, \AofM[\anotContField])\vphantom{\otimes_{\zbbd}\qbbd}
		\]
		is injective, for which we are going to apply a trivialization and deformation technique. 
		
		\item
		By \Cref{def:admissible-field}, there is an admissible {\hhs} $X$ such that the randomization $\aContField^{[0,1]}$ is isometrically continuously isomorphic to the constant field $(X)_{\baseSpace{\aContField}}$ over the same base space $\baseSpace{\aContField}$. 
		By 
		\Cref{lem:trivialize-prob-field-specify}, we have
		\[
		\anotContField = \left( \aContField^{[0,1]} \right) |_{\spaceProbMeas{\left(\baseSpace{\aContField}\right)}} \cong (X)_{\baseSpace{\aContField}} |_{\spaceProbMeas{\left(\baseSpace{\aContField}\right)}}  \cong (X)_{\spaceProbMeas{\left(\baseSpace{\aContField}\right)}} = (X)_{\baseSpace{\anotContField}}  \; . 
		\] 
		By \Cref{lem:deformation}, there is a homotopy $\left( \alpha_t \right)_{t \in [0,1]}$ of isometric actions of $\Gamma$ on $\left( \aContField^{[0,1]} \right) |_{\spaceProbMeas{\left(\baseSpace{\aContField}\right)}}$ such that $\alpha_1 = \alpha$, $\baseSpace{\alpha_t} = \baseSpace{\alpha_1}$ for any $t \in [0,1]$, and $\alpha_0$ is a fiberwise trivial action with $\underline{\alpha_0} = \underline{\alpha}$, i.e., $(\alpha_0)_\gamma = \underline{\alpha_\gamma}^*$ for any $\gamma \in \Gamma$. 
		By \Cref{lem:homotopy-actions}, 
		we have an isometric action 
		\[
		\widetilde{\alpha} \colon \Gamma \curvearrowright 
		\anotContField|_{\baseSpace{\anotContField} \times [0,1]}
		\cong (X)_{\baseSpace{\anotContField}} |_{\baseSpace{\anotContField} \times [0,1]} 
		\cong (X)_{\baseSpace{\anotContField} \times [0,1]} 
		\]
		such that for any $t \in [0,1]$ and $\gamma \in \Gamma$, we have 
		\[
		\baseSpace{\widetilde{\alpha}_{\gam}} (z , t)  = \left( \baseSpace{\alpha_{t, \gam}} (z) , t \right) \quad \text{ and } \quad \left( \widetilde{\alpha}_{\gam} \right)_{(z,t)} = \left( \alpha_{t, \gam} \right)_{z} 
		\colon \anotContField_{\baseSpace{\alpha_{\gam}} (z)} \to \anotContField_z
		\; .
		\]
		It thus follows from \Cref{cor:AofM-extension} that, upon identifying $\AofM[\anotContField|_{\baseSpace{\anotContField} \times [0,1]}]$ with $\AofM[\anotContField] \otimes C([0,1])$, we have, for any $t \in [0,1]$, the quotient map 
		\[
		\AofM[\anotContField|_{\baseSpace{\anotContField} \times [0,1]}] \cong \AofM[\anotContField] \otimes C([0,1]) \xrightarrow{\idmap \otimes \operatorname{ev}_{t}} \AofM[\anotContField]
		\]
		intertwines the actions $\widetilde{\alpha}$ and $\alpha_t$. 
		Hence in view of \Cref{lem:botthom-extension}, we have the following commutative diagram of equivariant $KK$-groups, where we write in the superscripts after ``$KK$'' not only the acting group, but also the actions on the coefficient $C^*$-algebras. 
		\[\begin{tikzcd}
		& {K_{*+1}^{\Gam}(\univspfree\Gamma)\otimes_{\zbbd}\qbbd } \\
		& {\kk_{\rbbd,*}^{\Gam,\widetilde{\alpha}}\left(\univspfree\Gamma, \AofM[\anotContField|_{\baseSpace{\anotContField} \times [0,1]}]\right)\vphantom{\otimes_{\zbbd}\qbbd}} \\
		{\kk_{\rbbd,*}^{\Gam,\alpha_1}(\univspfree\Gamma, \AofM[\anotContField])\vphantom{\otimes_{\zbbd}\qbbd}} 
		& {\kk_{\rbbd,*}^{\Gam,\widetilde{\alpha}}\left(\univspfree\Gamma, \AofM[\anotContField] \otimes C([0,1])\right)\vphantom{\otimes_{\zbbd}\qbbd}}
		& {\kk_{\rbbd,*}^{\Gam,\alpha_0}(\univspfree\Gamma, \AofM[\anotContField])\vphantom{\otimes_{\zbbd}\qbbd}}
		\arrow["{\cong}", from=3-2, to=2-2]
		\arrow["{(\idmap \otimes \operatorname{ev}_{1})_*}", from=3-2, to=3-1]
		\arrow["{(\idmap \otimes \operatorname{ev}_{0})_*}"', from=3-2, to=3-3]
		\arrow["{[\bottmap]}"', from=1-2, to=2-2]
		\arrow["{[\bottmap]}"', curve={height=12pt}, from=1-2, to=3-1]
		\arrow["{[\bottmap]}", curve={height=-12pt}, from=1-2, to=3-3]
		\end{tikzcd}\]
		Since both the $*$-homomorphism $\idmap_{\AofM[\anotContField]} \otimes \operatorname{ev}_{0}$ and $\idmap_{\AofM[\anotContField]} \otimes \operatorname{ev}_{1}$ are (non-equivariant) homotopy equivalences, 
		it follows from \Cref{lem:KK-homotopy-eq-nonequivariant} that 		
		the maps $(\idmap_{\AofM[\anotContField]} \otimes \operatorname{ev}_{0})_*$ and $(\idmap_{\AofM[\anotContField]} \otimes \operatorname{ev}_{1})_*$ are isomorphisms. 
		Hence showing the leftmost map 
		$[\bottmap] \colon K_{*+1}^{\Gam}(\univspfree\Gamma)\otimes_{\zbbd}\qbbd \to \kk_{\rbbd,*}^{\Gam,\alpha_1}(\univspfree\Gamma, \AofM[\anotContField])\vphantom{\otimes_{\zbbd}\qbbd}$ is injective is equivalent to showing the rightmost map 
		\[
		[\bottmap] \colon K_{*+1}^{\Gam}(\univspfree\Gamma)\otimes_{\zbbd}\qbbd \to \kk_{\rbbd,*}^{\Gam,\alpha_0}(\univspfree\Gamma, \AofM[\anotContField])\vphantom{\otimes_{\zbbd}\qbbd}
		\]
		is injective. 
		
		\item 
		Since $(\alpha_0)_\gamma = \underline{\alpha_\gamma}^*$ for any $\gamma \in \Gamma$, it follows by \Cref{cor:AofM-extension} that under the isomorphism $\AofM[\anotContField] \cong \AofM[(X)_{\baseSpace{\anotContField}}] \cong \AofM[X] \otimes C(\baseSpace{\anotContField})$, the action $\alpha_0$ is conjugate to $\operatorname{id} \otimes \baseSpace{\alpha}^*$ and thus 
		by \Cref{lem:botthom-extension}, 
		we have a commutative diagram
		\[\begin{tikzcd}
		&& {\kk_{\rbbd,*}^{\Gam, \operatorname{id} } \left(\univspfree\Gamma, \AofM[X]  \right)\vphantom{\otimes_{\zbbd}\qbbd}} \\
		{K_{*+1}^{\Gam} \left(\univspfree\Gamma \right)\otimes_{\zbbd}\qbbd} && {\kk_{\rbbd,*}^{\Gam, \operatorname{id} \otimes \baseSpace{\alpha}^*} \left(\univspfree\Gamma, \AofM[X] \otimes C(\baseSpace{\anotContField}) \right)\vphantom{\otimes_{\zbbd}\qbbd}} \\
		&& {\kk_{\rbbd,*}^{\Gam, \alpha_0 } \left(\univspfree\Gamma, \AofM[\anotContField]  \right)\vphantom{\otimes_{\zbbd}\qbbd}}
		\arrow["\cong", from=2-3, to=3-3]
		\arrow["{\left[\bottmap^X \right] \otimes \left[1_{C(\baseSpace\anotContField)} \right]}", from=2-1, to=2-3]
		\arrow["{\left[ \bottmap^{\anotContField} \right]}"', curve={height=-6pt}, from=2-1, to=3-3]
		\arrow["{\left[ \bottmap^X \right]}", curve={height=-12pt}, from=2-1, to=1-3]
		\arrow["{\left( \operatorname{id}_{\AofM[X]} \otimes 1_{C(\baseSpace\anotContField)} \right)_*}", from=1-3, to=2-3]
		\end{tikzcd}\]
		where we distinguish the Bott homomorphisms into $\AofM[\anotContField]$ and $\AofM[X]$ using superscripts in the notation $[\botthom[]]$. 
		Note that since $\baseSpace{\anotContField} = \operatorname{Prob} (\baseSpace{\aContField})$, which is contractible, the top-right vertical map above induced by the unital embedding $\cbbd \to C(\baseSpace{\anotContField})$ is an isomorphism by \Cref{lem:KK-homotopy-eq-nonequivariant}. 
		Hence it suffices to show 
		\[
		[\beta] \colon K_{*+1}^{\Gam} (\univspfree\Gamma)\otimes_{\zbbd}\qbbd \to \kk_{\rbbd,*}^{\Gam, \operatorname{id} } \left(\univspfree\Gamma, \AofM[X]  \right)\vphantom{\otimes_{\zbbd}\qbbd}
		\]
		is injective. 
		
		\item 
		This last claim above was established in \cite[proof of Proposition~8.8]{GongWuYu2021}. We briefly recall its proof: 
		
		Since the action $\Gamma \curvearrowright \AofM[X]$ is trivial in $\kk_{\rbbd,*}^{\Gam, \operatorname{id} } \left(\univspfree\Gamma, \AofM[X]  \right)$, by \Cref{defn:KK-Gam-trivial-coeff}, there is a commutative diagram
		\[\begin{tikzcd}
		{K_{*+1}^{\Gam} (\univspfree\Gamma)\otimes_{\zbbd}\qbbd} & {\kk_{\rbbd,*}^{\Gam, \operatorname{id} } \left(\univspfree\Gamma, \AofM[X]  \right)} \\
		{K_{*+1} (B\Gamma)\otimes_{\zbbd}\qbbd} & {\kk_{\rbbd,*} \left(B\Gamma, \AofM[X]  \right)} \\
		{\displaystyle \bigoplus_{j \in \zbbd/2\zbbd} K_{j}(B\Gamma)\otimes_{\zbbd}K_{*-j+1}(\cbbd)\otimes_{\zbbd}\qbbd} & {\displaystyle \bigoplus_{j \in \zbbd/2\zbbd} K_{j}(B\Gamma)\otimes_{\zbbd} K_{\rbbd,*-j} \left( \AofM[X] \right)}
		\arrow["{[\beta]}", from=1-1, to=1-2]
		\arrow["{[\beta]}", from=2-1, to=2-2]
		\arrow["\cong"', from=1-1, to=2-1]
		\arrow["\cong", from=1-2, to=2-2]
		\arrow["{\operatorname{id} \otimes_{\zbbd} [\beta]}", from=3-1, to=3-2]
		\arrow["\cong", from=3-1, to=2-1]
		\arrow["\cong"', from=3-2, to=2-2]
		\end{tikzcd}\]
		By \cite[Propositions~7.6 and~8.8]{GongWuYu2021}, the bottom horizontal map is injective. Hence the top horizontal map is also injective, as desired. 
	\end{enumerate}
	This completes the proof. 
\end{proof}

\begin{rmk}
	Note that the coefficient algebra $\AofM[\anotContField]$ we used in the proof above was eventually written, thanks to the triviality of $\anotContField$, simply as $C(Z) \otimes \AofM[X]$, a $C^*$-algebra that can be written down using just the machinery of \cite{GongWuYu2021}, without resorting to the theory we developed in this paper about general continuous fields of {\hhs}s and their $C^*$-algebras. However, the continuous field $\anotContField$ is trivial for nontrivial reasons that necessitate the study of the general theory. 
\end{rmk}

Finally, we prove \Cref{main-theorem} in the introduction regarding groups of diffeomorphisms. 

\begin{proof}[Proof of \Cref{main-theorem}]
	By \Cref{constr:diff-riem}, there is an isometric action $\alpha$ of $\Gamma$ on the continuous field $\operatorname{Riem}(N)$ of {\hhs}s. 
	This induces an isometric action of $\Gamma$ on $\operatorname{Riem}(N)|_{\operatorname{Prob}(N)}$ by \Cref{cor:continuum-product-field-actions}, which we denote by $\alpha$ as well. 
	
	By our assumption, $\Gamma$ is $\mu$-discrete as in \Cref{def:geometrically-discrete}. 
	Thus by \Cref{lem:geometrically-discrete}, 
	if we write 
	\[
	Z := \overline{\operatorname{convex}(\Gamma \cdot \mu)} \subseteq \operatorname{Prob}(N) \; ,
	\]
	then the restriction of the isometric action $\alpha \colon \Gamma \curvearrowright \operatorname{Riem}(N)|_{\operatorname{Prob}(N)}$ to $\operatorname{Riem}(N)|_Z$ is proper. 
	
	In view of \Cref{main-theorem-fields}, it remains to show that $\operatorname{Riem}(N)|_{Z}$ is admissible as in \Cref{def:admissible-field}. To this end, we observe that since $\operatorname{Riem}(N)$ is locally trivial, it follows from \Cref{cor:quasitrivial-locally-trivial} that $\operatorname{Riem}(N)^{[0,1]}$ is trivial, 
	and thus it follows from \Cref{lem:trivialize-prob-field-specify} that  $\operatorname{Riem}(N)^{[0,1]}|_{\operatorname{Prob}(N)}$ is also trivial, 
	the latter continuous field being continuously isometrically isomorphic to $\left(\operatorname{Riem}(N)|_{\operatorname{Prob}(N)}\right)^{[0,1]}$ by \Cref{lem:continuum-product-field-augmentation-maps}\eqref{lem:continuum-product-field-augmentation-maps:variation}, whence $\left(\operatorname{Riem}(N)|_{\operatorname{Prob}(N)}\right)^{[0,1]}$ is trivial and thus $\left(\operatorname{Riem}(N)|_{Z}\right)^{[0,1]}$ is trivial, too. 
	
	On the other hand, for any $\mu \in Z$, the fiber of $\operatorname{Riem}(N)|_{Z}$ at $\mu$ is isometric to the $L^2$-continuum product $L^2(N, \mu, \operatorname{Riem}(N))$ by \Cref{def:continuum-product-field}, the latter being an abbreviation of $L^2(N, \mu, \operatorname{Riem}(N)_{\operatorname{meas}}; g)$ by \Cref{defn:continuous-field-HHS-L2-notations} and \Cref{def:continuum-product}, where $g$ is an arbitrary continuous section of $\operatorname{Riem}(N)$. Upon choosing a Borel trivialization of the locally trivial fiber bundle $\operatorname{Riem}(N)$, we conclude from \Cref{lem:cont-field-HHS-meas} that $\left(\operatorname{Riem}(N)_{\operatorname{meas}} , \mu \right)$ is isometrically measure-preservingly isomorphic to the constant measured field  $P(n)_{(N,g)}$, where $P(n)$ is as in \Cref{constr:symmetric-space-GL}. It thus follows from \Cref{lem:continuum-product-maps} that $L^2(N, \mu, \operatorname{Riem}(N)_{\operatorname{meas}}; g)$ is isometric to $L^2(N, \mu, P(n))$ as in \cite[Construction~3.8]{GongWuYu2021}, which is an admissible Hilbert-Hadamard space by \cite[Proposition~3.13]{GongWuYu2021}. 
	
	Combining the last two paragraphs, we conclude that the continuous field $\operatorname{Riem}(N)|_{Z}$ is admissible, as desired. 
\end{proof}



\bibliographystyle{alpha}
\bibliography{Novikov-continuous-field}

\end{document}